\setlist[enumerate]{topsep = -5pt, itemsep = 0pt}
\theoremstyle{remark} 
\theoremstyle{remark}
\newtheorem{theorem}{Theorem}[section]
\numberwithin{equation}{theorem}
\newtheorem{definition}[theorem]{Definition}
\newtheorem{prop}[theorem]{Proposition}
\newtheorem{lemma}[theorem]{Lemma}
\newtheorem{corollary}[theorem]{Corollary}
\newtheorem{example}[theorem]{Example}
\newtheorem{remark}[theorem]{Remark}
\newtheorem{fact}[theorem]{Fact}
\newtheorem{notation}[theorem]{Notation}
\newtheorem{assumption}[theorem]{Assumption}
\newcommand{\nat}[6][large]{%
  \begin{tikzcd}[ampersand replacement = \&, column sep=#1]
    #2\ar[bend left=40,""{name=U}]{r}{#4}\ar[bend right=40,',""{name=D}]{r}{#5}\& #3
          \ar[shorten <=10pt,shorten >=10pt,Rightarrow,from=U,to=D]{d}{~#6}
    \end{tikzcd}
}
\newcommand{\dCrl}[0]{\mathfrak{dCrl}}
\newcommand{\defeq}{\vcentcolon=}
\newcommand{\R}{\mathbbm{R}}
\newcommand{\C}{\mathbbm{C}}
\newcommand{\N}{\mathbbm{N}}
\newcommand{\pdiv}[2]{\frac{\partial{#1}}{\partial{#2}}}
\newcommand{\discatp}{\displaystyle\bigsqcap}
\newcommand{\discats}{\displaystyle\bigsqcup}
\newcommand{\uF}[1]{\underline{\mathbbm{F}}}
\newcommand{\one}{\mathbbm{1}}
\newcommand{\two}{\mathbbm{2}}
\newcommand{\disp}{\displaystyle\prod}
\newcommand{\disu}{\displaystyle\bigcup}
\newcommand{\diss}{\displaystyle\sum}
\newcommand{\disg}{\displaystyle\int}
\newcommand{\disbop}{\displaystyle\bigotimes}
\newcommand{\disbos}{\displaystyle\bigoplus}
\newcommand{\dissup}{\displaystyle\sup}
\newcommand{\dismax}{\displaystyle\max}
\newcommand{\dismin}{\displaystyle\min}
\newcommand{\Ab}[0]{\mathbbm{A}}
\newcommand{\Bb}[0]{\mathbbm{B}}
\newcommand{\Sb}[0]{\mathbbm{S}}
\newcommand{\Tb}[0]{\mathbbm{T}}
\newcommand{\Ub}[0]{\mathbbm{U}}
\newcommand{\sF}[1]{\mathsf{#1}}
\newcommand{\Ac}[0]{\mathcal{A}}
\newcommand{\Cc}[0]{\mathcal{C}}
\newcommand{\Dc}[0]{\mathcal{D}}
\newcommand{\Fc}[0]{\mathcal{F}}
\newcommand{\Gc}[0]{\mathcal{G}}
\newcommand{\Hc}[0]{\mathcal{H}}
\newcommand{\Ic}[0]{\mathcal{I}}
\newcommand{\Lc}[0]{\mathcal{L}}
\newcommand{\Oc}[0]{\mathcal{O}}
\newcommand{\Sc}[0]{\mathcal{S}}
\newcommand{\Tc}[0]{\mathcal{T}}
\newcommand{\Uc}[0]{\mathcal{U}}
\newcommand{\Xc}[0]{\mathcal{X}}
\newcommand{\Xf}[0]{\mathfrak{X}}
\newcommand{\af}[0]{\mathfrak{a}}
\newcommand{\bff}[0]{\mathfrak{b}}
\newcommand{\ef}[0]{\mathfrak{e}}
\newcommand{\ff}[0]{\mathfrak{f}}
\newcommand{\gf}[0]{\mathfrak{g}}
\newcommand{\ifrak}{\mathfrak{i}}
\newcommand{\pf}[0]{\mathfrak{p}}
\newcommand{\zf}[0]{\mathfrak{z}}
\newcommand{\scB}[0]{\mathscr{B}}
\newcommand{\scC}[0]{\mathscr{C}}
\newcommand{\scF}[0]{\mathscr{F}}
\newcommand{\scG}[0]{\mathscr{G}}
\newcommand{\scH}[0]{\mathscr{H}}
\newcommand{\scJ}[0]{\mathscr{J}}
\newcommand{\scM}[0]{\mathscr{M}}
\newcommand{\scP}[0]{\mathscr{P}}
\newcommand{\scS}[0]{\mathscr{S}}
\newcommand{\scT}[0]{\mathscr{T}}
\newcommand{\scU}[0]{\mathscr{U}}
\newcommand{\fA}[0]{\mathsf{A}}
\newcommand{\fB}[0]{\mathsf{B}}
\newcommand{\fC}[0]{\mathsf{C}}
\newcommand{\fD}[0]{\mathsf{D}}
\newcommand{\fE}[0]{\mathsf{E}}
\newcommand{\fO}[0]{\mathsf{O}}
\newcommand{\fS}[0]{\mathsf{S}}
\newcommand{\fX}[0]{\mathsf{X}}
\newcommand{\fY}[0]{\mathsf{Y}}
\newcommand{\fa }[0]{\mathsf{a}}
\newcommand{\fb }[0]{\mathsf{b}}
\newcommand{\fc }[0]{\mathsf{c}}
\newcommand{\fd}[0]{\mathsf{d}}
\newcommand{\fe}[0]{\mathsf{e}}
\newcommand{\fs}[0]{\mathsf{s}}
\newcommand{\ft }[0]{\mathsf{t}}
\newcommand{\fx}[0]{\mathsf{x}}
\newcommand{\fy}[0]{\mathsf{y}}
\newcommand{\fz}[0]{\mathsf{z}}
\renewcommand{\dh}[0]{\dot{h}}
\newcommand{\dt}[0]{\dot{t}}
\newcommand{\dv}[0]{\dot{v}}
\newcommand{\dx}[0]{\dot{x}}
\newcommand{\ox}[0]{\overline{x}}
\renewcommand{\a}{\alpha}
\renewcommand{\b}{\beta}
\renewcommand{\d}{\delta}
\newcommand{\e}{\varepsilon}
\newcommand{\g}{\gamma}
\renewcommand{\i}{\iota}
\renewcommand{\k}{\kappa}
\renewcommand{\l}{\lambda}
\newcommand{\m}{\mu}
\newcommand{\p}{\pi}
\newcommand{\ph}{\varphi}
\renewcommand{\r}{\rho}
\newcommand{\s}{\sigma}
\renewcommand{\t}{\tau}
\newcommand{\z}{\zeta}
\newcommand{\G}{\Gamma}
\begin{document}

\title{Morphisms of Networks of Hybrid Open Systems} 
\author{Arnold James Schmidt}
\department{Mathematics} 
\schools{B.A., University of Notre Dame, 2014\\
		B.S., University of Notre Dame, 2014\\
		M.S., University of Illinois at Urbana-Champaign, 2016}
		\phdthesis
		\adviser{Professor Eugene Lerman}
		\degreeyear{2019}
		\committee{Professor Lee DeVille, Chair \\ Professor Eugene Lerman, Director of Research\\
		 Assistant Professor Dan Berwick-Evans \\
		Visiting Scholar Eddie Nijholt}
		\maketitle
		
\frontmatter
\begin{abstract}

This dissertation develops a theory of networks of hybrid open systems and morphisms.  It builds upon a framework of networks of continuous-time open systems as product and interconnection.  We work out categorical notions for hybrid systems, deterministic  hybrid systems,  hybrid open systems, networks of hybrid open systems, and morphisms of networks of hybrid open systems.  

We also develop categorical notions for abstract systems, abstract open systems, networks of abstract open systems, and morphisms of networks of abstract open systems. We show  that a collection of relations holding among pairs of systems induces a relation between interconnected systems. We use this result for abstract systems to prove a corresponding result for networks of hybrid systems. 

This result  translates as saying that our procedure for building networks preserves morphisms of open  systems: a collection of morphisms of (sub)systems is sent to a morphism of  networked systems.  We thus   both justify our formalism and concretize the intuition that a network is a collection of systems pieced together in a certain way. 
\end{abstract}
\newpage 
\chapter*{Acknowledgments}\normalsize

I owe an immense debt of gratitude to the people in my  life who have supported  me throughout my education and career.  First, to my adviser, Eugene, whose direction, input,  critique,  bottomless patience, and friendship have sustained and shaped me as a researcher and mathematician (and climber).  I  thank my committee---Lee, Dan, and Eddie---for their time, encouragement,  and guidance with my thesis.  Next, Guosong Yang---my academic big brother in engineering---I looked and look up to you for your rigor in detail and your  patience in explanation. I value the time we had in graduate school together, and treasure the advice and encouragement you have given me.  To  Daniel Liberzon and Ali Belabbas, thank you for your understanding and support during my work in engineering.  I call to mind professors who mentored me during my undergraduate years: Sam Evens,  Michael Seelinger, Curtis Franks, Arlo Caine, Dave Hoelzle, and Cameron Hill.  You guided me through my undergraduate journey, gave me opportunity, and helped me follow through. I also thank my high school calculus teacher, Mr.\ Wahl who first introduced me and inspired me to appreciate the beauty of mathematics.  I am still inspired.  

Finally, and most importantly, I thank my family.  You have consistently encouraged me to pursue my dreams, buttressed my efforts with your unfailing support, and celebrated my accomplishments as your own. Your love keeps me strong.

\tableofcontents
\mainmatter
\singlespacing

\chapter{Introduction}\label{section:Intro}\label{ch1}
\section{Motivation} 

Networks of systems are ubiquitous in engineering.  In fact, elementary examples of higher order ordinary differential equations (ODEs)---e.g.\ coupled oscillators (\cite[\S7.1]{boyce}, \cite[\S6.1]{goodwine}) and systems of ODEs generally (\cite[\S8]{arnold})---arise naturally as networks.  Informally, a network interconnects a collection of open systems, systems with external inputs, to make up a higher dimensional closed system.   Networks are not, however, merely  high dimensional dynamical systems (\cite{golubitsky06nonlineardynamics}): the process of combining subsystems together reveals structure and regularity (symmetry or synchronous behavior, for example) which may be isolated, formalized, and separately understood. 

Many perspectives have been developed in the literature for building a  theory of networks of dynamical systems.  A common one, including work of Golubitsky and Steward, uses graph theoretic formalism  (\cite{golubitsky06nonlineardynamics}, \cite{golubitskyRecentAdvances}): nodes depict a space where dynamics live, and edges depict interaction among or information flow between spaces.  This framework allows for insight into the structure of network dynamics to be seen through discrete patterns  in graphs, for example to highlight synchrony in coupled oscillators (\cite{golubitskyCPG}). Work by Lerman and DeVille (\cite{lermannetworks}, \cite{dl1}) capitalizes on and extends this approach by considering a special class of \textit{maps} or morphisms of networks---as maps of graphs carrying information from the underlying dynamical systems---which induce related interconnected dynamical systems. 

Though graphs seem like a natural structure with which to represent networks---especially in engineering since combinatorial  properties are both interpretable and  computable---other perspectives in the literature view networks instead from a category theoretic angle.  One reason is that category theory is particularly apt for isolating features of  mathematical phenomena at a certain, and usually correct, level of generality: the universal property of product, e.g.,\ ``really'' defines  products. Following the trope that ``networks of systems are not just bigger systems,'' a categorical approach may aim to define what precisely the nuance is, whereas a combinatorial approach may, on the other hand, look for ways to compute particular network-specific behavior.  After all, both products and monoidal products arise often in many systematic investigations of networks, concepts which are at home in the study of category theory.   For example, work in \cite{lermanSpivakVagner} uses wiring diagrams to model input-output relations among a collection of systems and to formally interconnect  those systems. Wiring diagrams---which define a monoidal category (\cite{lermanSpivakVagner})---represent ways of ``interaction,'' or input/output relations, and an algebra on these diagrams introduces a notion of state or space with  which dynamics may be included.  This is one approach for isolating network defining features and---more to the point---provides an instance of the idea that networks may be well modeled as objects or morphisms in a monoidal category.  Additional work treating wiring diagrams includes \cite{SpivakOperadWiring}, 
\cite{spivak2016nesting}, and  \cite{SpivakOperadWiring2} and uses them  to depict both  visually and categorically  the notion of building a larger object from a collection of smaller ones.  Other examples of the monoidal viewpoint may also be found among \cite{baezPetri}, \cite{fongAlgebraOpen}, and \cite{baezMarkov}. We develop yet another one here.

Our starting point for  a theory of networks is developed in \cite{lermanopennetworks}, which treats in particular networks of  open dynamical systems.  This theory differs   from the theory of wiring diagrams in two notable respects.  First, we focus  on a fixed space with a collection of dynamics (vector fields or control)  over the space, instead of scrutinizing  input-output relations for varying spaces.  Secondly, and related, a notion 
 of morphism or map of networks of systems is introduced: fixing the space and looking at the collection of possible dynamics over the space makes way for the notion of \textit{related} dynamics between maps of spaces, and related dynamics on networks.   One reason for studying this notion of morphism of networks arises   from the central role which (the analogous notion of) morphism of dynamical systems plays in the theory of dynamical systems.  Not only do maps of dynamical systems preserve structural information (in a way analogous to homomorphisms of algebraic objects), but also  intrinsic properties of dynamical systems may in fact be encoded \textit{as} maps of dynamical systems.  Equilibria, periodic orbits, and---more generally---invariant subspaces are instances of (arise as) maps of  systems.  Integral curves  are also a special kind of map of dynamical systems, and are moreover preserved by maps: maps of dynamical systems send integral curves to integral curves. Synchronous behavior of subsystems is an analog of invariance for networks, and may---in our framework---be realized as a map of networks.

Another  motivation for the ``morphism-centric'' perspective is the categorical worldview which says that objects are known by the class of morphisms into or out of them: the Yoneda embedding is fully faithful (\cite{mazurGrothendieck}).  These motivations are related. For example, it turns out the  functor from the category of complete dynamical systems to the category of sets which drops both dynamics and smooth structure is \textit{representable} (\cref{def:functorRepresentable}, \cref{prop:existenceAndUniquenessRepresentable}).  At its core, this is the    fundamental existence and uniqueness theorem  from the theory of dynamical systems for complete dynamical systems (\cref{theorem:E&U}) masquerading as a categorical statement:  an initial element in the category of elements of the forgetful functor (\cref{prop:functorRepresentableIFFcategoryOfElementsHasUniversal}, \cref{prop:existenceAndUniquenessRepresentable}) is the pair $\big((\R,\frac{d}{dt}),0\big)$, a morphism from which is a solution (integral curve) to some dynamical system together with a choice of initial condition.

 While it is encouraging that category theoretic formulations  exist for otherwise concrete theorems in dynamical systems, we emphasize again that category theory provides a useful conceptual apparatus for suitably isolating the relevant aspects of a mathematical topic, and how  similar kinds of ideas and  arguments may extend to other settings. In our case, we develop  a theory for network of \textit{deterministic hybrid system},  piggybacking on \cite{lermanopennetworks} for networks of (continuous-time) open  systems.  We work out hybrid analogs for continuous-time systems concepts; a hybrid phase space is the hybrid version of a manifold, a hybrid system is the hybrid version of a continuous-time dynamical system, etc. Each of these comprises the object part of some category, and we define the corresponding morphisms and categories.  We show that there are ``structure preserving'' functors from the relevant hybrid category to the non-hybrid category, and use said functors to deduce results for the hybrid setting from those in the non-hybrid setting. 
 
 This project leads to a categorical abstraction of the notion of network, which is applicable to other kinds of dynamical systems. In fact, we also abstract the very notion of system.  These tasks are interrelated: the abstraction is justified in part by its effectiveness in applying to both continuous-time systems and hybrid systems, while derived out of what we perceived to be exclusively hybrid versus what turns out to be general, again illustrative of the power of a categorical  viewpoint. 
 
 We spell out our main result and highlight where the abstraction occurs.  Our conception of network takes a collection of disparate spaces and combines them together into one, which we realize by taking a product in a monoidal category, and interconnecting (\cref{def:HybridInterconnection}, \cref{def:SSubInterconnection}). (We thus see a monoidal viewpoint centrally present in our work as well.)   Interpreted  in the category of continuous-time open systems,   the main result (\cite[Theorem 9.3]{lermanopennetworks}, restated in \cref{theorem:mainTheoremLermanOpenNetworks}) says that if two distinct collections of open systems are ``pairwise related,'' then interconnection on the product systems of each collection results in a pair of open systems which are related.  This generalizes the main result in \cite{lermannetworks} described in terms of graph fibrations for the existence of maps between closed systems, and allows for more complex (e.g.\ non-diagonal) examples of subsystem invariance.  We will make precise both ``relatedness'' and  ``pairwise,'' but a quick takeaway  from this appetizer  on networks is that information of the pieces (open subsystems) can be systematically pieced together to provide information about the whole (the networked system), and moreover, that the category ``holds together'' under the process of taking networks, i.e.\ morphisms still make sense and behave as we would like.  Our main theorems for a class of  hybrid open systems (\cref{theorem:mainTheoremFordeterministicHybridSystems}) and for abstract systems (\cref{theorem:mainTheoremAbstract}), generally, say the same thing, modulo the category in which each respective statement is made. 
 
 Here is  how we  piggyback on \cite{lermanopennetworks}:  since we claim that the notion of network is essentially categorical, and  as we construct a category of hybrid systems,   we  should be able to obtain \textit{mutatis mutandis} a similar result with  similar arguments for hybrid systems,   and indeed we can.  Though functors automatically and always preserve some structure, namely identity and composition (and therefore, e.g., also isomorphism  (\cref{fact:functorsPreserveIsos})), we show that a functor from some hybrid category to its  non-hybrid analog preserves products (\cref{lemma:PUUPLite}) as well.   This is  the structure preservation  we need in order to translate results about networks of continuous-time systems to similar ones for hybrid systems. A forthcoming version of our work (\cite{lermanSchmidt1}) produces  this  translation directly. 
 
 In this work, we instead opt to rework  the underlying framework.  We develop a theory of system as object and section of a split epimorphism, which epimorphism comes from a natural transformation between two functors, the source of which we may think of as a ``tangent'' functor and the target as an underlying phase space. The target of the functors represents a category whose ``dynamics'' we treat as proxy for dynamics in the source.  Thus  one approach to presenting a theory for networks of hybrid systems  is to lay the groundwork for mapping hybrid to non-hybrid and then \textit{to cite} the original theorem from the continuous-time case (\cite{lermanSchmidt1}). By contrast, the presentation we develop  here builds from scratch---using the continuous-time case as strong guidance---a general theorem and cites \textit{it} to produce as individual instances results for discrete-time, continuous-time, hybrid, and deterministic hybrid networks.

\section{Summary of Results}

We  present two detailed contributions.  We conclude the thesis with a third, which uses the morphism-centric viewpoint of continuous-time dynamical systems from a different angle in application.

First, we  develop a categorical notion of deterministic hybrid systems and networks of deterministic hybrid systems.  One aspect of working categorically is  that we define morphisms for each concept we introduce.    We state a result for maps of  networks of deterministic hybrid open systems (\cref{theorem:mainTheoremFordeterministicHybridSystems}) which illustrates a concrete extension of \cite[theorem 9.3]{lermanopennetworks}: two distinct collections of \textit{deterministic hybrid} open systems which are ``pairwise related'' induce a pair of related \textit{deterministic hybrid} open systems after interconnection.   While we state \cref{theorem:mainTheoremFordeterministicHybridSystems} in \cref{ch3}, we give its proof as an instance of a more general result in \cref{ch4}.

 Secondly, we develop an abstract  notion  of system and network of systems which makes sense apart from ordinary continuous-time dynamical systems and even hybrid systems.   
  We state and prove the main result  (\cref{theorem:mainTheoremAbstract}) which says that two distinct collections of abstract open systems which are ``pairwise related'' induce a pair of related abstract open systems after interconnection.  This is the general  result we use to prove \cref{theorem:mainTheoremFordeterministicHybridSystems} as a corollary.   Put differently, \cref{theorem:mainTheoremAbstract} is a theorem of which \cref{theorem:mainTheoremFordeterministicHybridSystems} is an example.\footnote{Treating theorems as examples of other theorems (as opposed to illustrating said theorems with examples) is reminiscent of a  quip attributed to David Spivak.} We also deduce  \cite[theorem 9.3]{lermanopennetworks}  and  \cite[theorem 6.19]{lermanSchmidt1}---an analogous non-deterministic hybrid networks result---as corollaries of \cref{theorem:mainTheoremAbstract}.  We end with a version of the main theorem for networks of discrete-time systems.  We abstracted  enough to prove four distinct, yet similar, network statements, while leaving enough room for further application for different ``kinds of systems.'' 

We thus both formulate an abstract framework and work out details for a particular class to which the abstraction applies.  We discuss these in turn.   

\subsection{Networks of Systems and Interconnection}\label{subsubNetworksVanillaInIntro}

We start with a concrete example from continuous-time dynamical systems to introduce our overall  approach to networks.  For us, a continuous-time dynamical system $(M,X)$ is a pair where   $M$ is a manifold and $X\in \Xf(M)$  is a vector field over the manifold. Consider a two-dimensional dynamical system $(\R^2,X)$ with  vector field $X\in \Xf(\R^2)$,   a  smooth map $X:\R^2\rightarrow T\R^2$ sending $(x,y)\mapsto X(x,y)\in T_{(x,y)}\R^2\cong T_x\R\times T_y\R$. Thus $X(x,y) = (X_1(x,y),X_2(x,y))$ with $X_1(x,y)\in T_x\R$ and $X_2(x,y)\in T_y\R$.  The maps $X_1$ and $X_2$ are not vector fields over $\R$:  the tangent vector $X_1(x,y)\in T_x\R$, e.g., depends on a variable other than $x$.  Instead they make up two \textit{open} systems over surjective submersion $\R^2\xrightarrow{p_i}\R$, namely  maps $X_i:\R^2\rightarrow T\R$ compatible with $p_i$: $\t_\R\circ X_i = p_i$, where $\t_\R:T\R\rightarrow\R$ is the canonical projection of the tangent bundle and $p_i$ the projection onto the $i$th component. These open systems induce a product open system $X_1\times X_2$ over surjective submersion $(\R^2\rightarrow\R)\times (\R^2\rightarrow\R)\cong (\R^4\rightarrow\R^2)$ which is defined by $(x,y,x',y')\mapsto (X_1(x,y),X_2(x',y'))\in T_x\R\times T_{y'}\R$.

We recover the original vector field $X$ as a (closed) system over $\R^2$ by ``interconnection.''  We define an embedding $\i:\R^2\hookrightarrow\R^4$ sending $(x,y)\mapsto \i(x,y)\defeq (x,y,x,y)$.  Precomposing the open system $X_1\times X_2$ with interconnection returns $X = X_1\times X_2 \circ \i$. We call $\i$ \textit{interconnection} because through it we may  ``interconnect'' two separate open systems  $(\R^2\xrightarrow{p_i}\R, X_i)$ into a single (closed) dynamical system $\big(\R^2, \i^* (X_1\times X_2)\big) = (\R^2, X)$.  This  example provides a quick taste of how systems are constructed out of a collection of systems, and this process of interconnecting leads to our concept of networks. We will formalize this model and show that it applies to other kinds (e.g.\ hybrid) of systems.   

\subsection{Hybrid Phase Spaces}\label{subsec:intoHYPH}
A hybrid system consists of both continuous and discrete behavior. We will successively construct various notions of hybrid systems, the principal building block of which is a hybrid phase space. Just as a continuous-time system consists of manifold and vector field (``space'' and ``dynamics''), similarly many of our hybrid systems notions are defined as a hybrid space with  other data specifying hybrid dynamics. 

A hybrid phase space $\left(\begin{tikzcd}G_1\arrow[r,shift left,"s"]\arrow[r,shift right,swap,"t"] & G_0\arrow[l]\end{tikzcd}, \scH:G\rightarrow\sF{RelMan}\right)$ may be described by a \textit{directed reflexive (multi) graph} $G$ (with nodes $G_0$, edges $G_1$, source and target maps $s,t: \begin{tikzcd}G_1\arrow[r,shift left]\arrow[r, shift right] &  G_0\end{tikzcd}$ and unit  map $u:G_0\rightarrow G_1$), and assignments of manifolds $\scH(g)$ to each node $g\in G_0$. For us, all manifolds are manifolds-with-corners (even if there are no corners) and edges encode relations: $\scH(\g)\subseteq \scH(g)\times \scH(g')$ for each edge $g\xrightarrow{\g}g'$ of $G_1$. We require unit relation $\scH(u(g)) = \Delta(\scH(g)) \defeq \big\{ (x,x')\in \scH(g)\times \scH(g):\, x = x'\big\}$ for technical reasons which we will elaborate and make use of later: first this condition allows us to define a global  jump map, and secondly it allows us to take products of hybrid systems (a key ingredient to building networked systems from a collection) while circumventing a highly restrictive, and otherwise unrealistic, constraint of simultaneous state transitions. Each manifold $\scH(g)$ represents (part of) a phase space where a state may flow continuously,  whereas the relations $\scH(\g)$ represent possible jumps.  We are rather flexible about the topological nature of the relations.  

We present an example which serves as a phase space for the dynamic behavior of a standard hybrid system, a thermostat, which controls the temperature of a room.  A heater turns on to drive  temperature up to some specified level at which point the heater turns off and the temperature decreases until it falls below some threshold. For now, we only discuss the space. 
Consider directed graph $\begin{tikzcd}
	0\arrow[r,bend left,"e_{1,0}"]  & 1\arrow[l,bend left, "e_{0,1}"] 
\end{tikzcd}$ (note that we do not explicitly display unit edges $u(i)$) and manifold assignment $\scH(i) \defeq \R\times \{i\}$.  To edges, we assign relations $\scH(e_{1,0})\defeq \big\{\big((x,0),(x,1)\big):\, t\leq -1\big\}$ and similarly $\scH(e_{0,1}) \defeq \big\{\big((x,1),(x,0)\big):\, t\geq 1\big\}$.  The physical interpretation is the following: variable $x\in \R$ represents the temperature of some room, while the second  variable $i = 0,1$ will indicate whether the heater is on or off. So far we have not specified \textit{how} this discrete change is realized in practice, nor have we explicitly defined (continuous) dynamics.   We will formalize this example in \cref{example:ThermostatVanilla} after defining hybrid systems.  Before continuing with the conversational version of this example, we discuss dynamics and determinism in hybrid systems.

\subsection{Deterministic Hybrid Systems}\label{subsec:introDetHySys}

From the data of a hybrid phase space, we may recover an underlying manifold by taking the disjoint union $\discats_{g\in G_0}\scH(g)$ over nodes $g\in G_0$ of each manifold $\scH(g)$. It turns out that this operation, the coproduct,  is functorial (to the category of manifolds).  This coproduct functor serves two purposes for us: first we  may define many hybrid notions as a hybrid phase space plus some data in the category of manifolds, which data is related in some way to the hybrid phase space by said functor. Secondly, the functor gives us a way to apply results  from a more general theory of networks, without requiring ad hoc modifications rehashing the original theory to make it fit. 
In light of the first point, we define a hybrid system as a hybrid phase space with a vector field on the underlying manifold.  Having a notion of hybrid dynamical system,  there is an analogous notion of integral curve, or \textit{execution}, which is something like a ``piecewise continuous'' curve satisfying the governing dynamics (differential equation or  vector field equation) and at end points is compatible with relations (the pair of left and right limit points is  an element of one of the relations).  In fact, executions are a special class of map of hybrid dynamical systems---a map of hybrid phase spaces relating dynamics---and they provide concrete validation that the formalism is capturing familiar notions. 

One departure  of (this definition of) executions from a classical theory of dynamical systems is that executions are not unique.  We introduce a mechanism which, under mild conditions, enforces uniqueness of executions.  Recall that a vector field $X\in \Xf(M)$ of a smooth manifold $M$ is a smooth section of the tangent bundle, an element of $\G(TM\xrightarrow{\t_M} M)$. We construct a corresponding notion of section on a ``continuous-discrete bundle'' $\Tb \scH$ of a hybrid phase space $\scH$: $\Tb\scH$ is the product of the (ordinary) tangent bundle $T\discats_{g\in G_0}\scH(g)$ of the underlying (disjoint union) manifold and the underlying manifold itself $\discats_{g\in G_0}\scH(g)$, as a set.  There is a natural projection to the underlying manifold, obtained by first projecting to the tangent bundle, then taking the canonical projection of tangent bundle to the underlying manifold.  A section, therefore, assigns a tangent vector and element of the underlying space to each point in the underlying space.  This pair captures both continuous and discrete behavior at once: the tangent vectors should be smoothly varying and indicates a direction to flow, while the point assignment is required only to be compatible with relations, and represents discrete jumps.  In principle, each point is assigned a jump; however, our previous stipulation  that each node has a unit arrow permits (and we make use of)  trivial jumps $x\mapsto x$.  We return to the thermostat to understand deterministic hybrid systems in an example.

\subsection{Digital Control and Interconnection}\label{subsec:digitalControl}
   We model the dynamics of a thermostat as an interconnection of a continuous hybrid open system and a discrete one.  

Let the temperature be represented by $x\in \R$ and whether the heater is on or off by $i\in \{0,1\}$.  Together we have states $(x,i)\in \R\times\{0,1\}$ and we define two open systems $X_1(x,i) \defeq (-1)^{1-i}\in T_x\R$ and $X_2(x,i) \defeq 0  \in T_i\{0,1\}$.  Each vector represents the continuous dynamics: $X_1$ is positive when heat is on and negative otherwise, and the discrete space $\{0,1\}$ has no (or better: zero) continuous dynamics.  On the other hand, we define open jump maps to capture  switching behavior.  Temperature never jumps, indicated by  $\r_1(x,i) \defeq x$ and digital control is defined by $$\r_2(x,i) \defeq\left\{\begin{array}{ll} i & \mbox{if} \;(-1)^{1-i}x < 1 \\ 1-i & \mbox{if}\;(-1)^{1-i}x \geq 1. \end{array}\right.$$  Here $x=1$ represents the upper threshold for the heater to turn off, and $x=-1$ the lower threshold for the  heater to turn (back) on. 

We represent these hybrid open systems by $(\R\times\{0,1\}\xrightarrow{p_1}\R, X_1,\r_1)$ and $(\R\times\{0,1\}\xrightarrow{p_2}\{0,1\}, X_2,\r_2)$, where $p_i$ denotes the projection onto the $i$th factor. These induce a product open system $(\R\times \{0,1\}\times \R\times\{0,1\}\xrightarrow{p_1\times p_2}\R\times \{0,1\}, X_1\times X_2, \r_1\times \r_2)$  where $(p_1\times p_2)(x,i,x',i') = (x,i')$. In a manner analogous to the two-dimensional system $X\in\Xf(\R^2)$ outlined in \cref{subsubNetworksVanillaInIntro}, we define interconnection $\i:\R\times \{0,1\}\rightarrow\R\times \{0,1\}\times \R\times \{0,1\}$ sending $(x,i)\mapsto (x,i,x,i)$ and obtain the thermostat as a result of precomposing interconnection  $\i$ with the deterministic closed system: $\i^*(X_1\times X_2)(x,i) = ((-1)^{1-i},0)$ (governing the vector field on temperature) and $$\i^*(\r_1\times \r_2)(x,i) = \left\{\begin{array}{lll}(x,i) & \mbox{if} & (-1)^{1-i}x< 1 \\ 
(x,1-i) & \mbox{if} & (-1)^{1-i}x \geq 1, 
\end{array}\right.$$
governing digital control.

In presenting this example, we specified numerous data: a phase space where the state lives, a (smooth) map (control) which determines continuous flow, and a jump map representing discrete behavior.  We can now comment on how  the relations of a hybrid phase space constrain the jump map: according to the relation $\scH(e_{1,0})$ we defined above (in  \cref{subsec:intoHYPH}),  a non-trivial jump for $(x,i)$ may only occur if both $x\leq -1$ and $i=0$. We see that the jump map does nothing to the continuous variable $x$: $\r_1(x,i) = p_1 \circ i^*(\r_1\times \r_2)(x,i)=x$. In this way, interconnection represents digital control, as the  discrete change merely switches between continuous dynamics, and otherwise does nothing to the state.

\subsection{Abstract Systems}

We believe that a categorical theory of networks extends beyond the notion of ``network of dynamical systems.''  In fact, even the notion  of system may be appropriately generalized.  Recall that a continuous-time system for us is a manifold $M$ and   a (smooth) section $X\in \G(TM)$ of the tangent bundle. We only  need  a category and some sort of fibered space on which to take sections; relatedness of sections straightforwardly generalizes relatedness of vector fields (some diagram commutes).  Our abstraction is roughly as follows: we consider two functors $\Tc, \Uc:\fC\rightarrow\fD$ between concrete and locally small categories, and natural transformation  $\nat{\fC}{\fD}{\Tc}{\Uc}{\t}$ which we called ``fibered'' if  each morphism $\t_\fc:\Tc \fc\rightarrow\Uc\fc$ is a split epimorphism. We denote sections by $\G_\t(\fc)\defeq \big\{X:\Uc\fc\rightarrow\Tc \fc:\, \t_\fc\circ X = id_{\Uc \fc}\big\}$. 

Since $\G_\t(\fc)$ is nonempty for each object $\fc\in \fC$, we define an abstract system $(\fc,X)$ as a pair where $X\in \G_\t(\fc)$. Supposing that integral curves realize the dynamics portion of a (continuous-time) dynamical system, we take our cue from the Yoneda version of existence and uniqueness to justify this abstraction (\cref{prop:existenceAndUniquenessRepresentable}): a ``solution'' of system is a quasi-initial element in the category of elements $\disg_{\fC}\upsilon$ of a faithful functor $\upsilon:\fC\rightarrow\sF{Set}$ (\cref{def:abstractSolution}).

The functors $\Tc, \Uc:\fC\rightarrow\fD$ allow us to use dynamics in one category (in $\fD$) as proxy for structure in the other (in $\fC$).  In the case of \textit{non}-deterministic hybrid systems, $\sF{C}$ is the category of hybrid phase spaces, while $\fD$ is that of manifolds, in which we already have a theory for dynamical systems (as object and  section of the tangent bundle pair).  The functor $\Uc$ is the forgetful functor  alluded to in \cref{subsec:introDetHySys}, and $\Tc$ the composition of the tangent endofunctor $T:\sF{Man}\rightarrow\sF{Man}$ with forgetful functor $\Uc$. 

The setup for  \textit{deterministic} hybrid systems is slightly different.  We keep the category $\fC$ the same (hybrid phase spaces), but this time map to $\fD=\sF{Set}$.  Now $\Tc$ is $\Tb$, the c.d.\ endofunctor we introduced in \cref{subsec:introDetHySys}, and $\Uc$ maps a hybrid phase space to the underlying set of the underlying manifold.  The reason for working in $\sF{Set}$ has to do with taking sections of the bundle $\t:\Tc\Rightarrow \Uc$: namely, the jump map $\r$ is not required to be smooth or even continuous. 

We want a notion of open system, which we obtain by considering a subcategory $\fA\subseteq \sF{Arrow(C)}$. In the continuous-time case, $\fA$ is the category of surjective submersions.  Additionally, we define interconnection abstractly as a subcategory $\fA_{int}\subseteq\fA$, whose objects are the objects of $\fA$ but whose morphisms are isomorphism on codomain. This data assembles to form a double category $\fA^\square$ whose $0$-morphisms are interconnection and whose $1$-morphisms are objects of $\fA$. We extend  sections  to a (double) functor $\G_\t:A^\square\rightarrow\sF{Set}^\square$, which loosely parses as: interconnection preserves related open systems.  This makes up half of the proof of our main theorem. Next, we need to interpret interconnection in the context of networks.

\subsection{Abstract Networks}

Construction of networks occurs by putting and connecting multiple open systems together.  We take this to mean applying a monoidal product over a finite indexed list of objects in a monoidal category and taking interconnection, respectively.  In our example of interconnection in \cref{subsubNetworksVanillaInIntro}, we considered two open systems and obtained another one by taking the cartesian product of both, e.g.\ $(\R^2\xrightarrow{p_1}\R, X_1)\times (\R^2\xrightarrow{p_2}\R,X_2) \cong (\R^4\xrightarrow{p_1\times p_2} \R^2,X_1\times X_2)$.   To apply this procedure in general,  we require  $\Tc, \Uc:\fC\rightarrow\fD$ to be (strong)  \textit{monoidal} functors, and we require the target space $\fD$ to be cartesian.  Again, $\fD$ is the space in which we indirectly (functorially) make sense of dynamics in $\fC$.  We show that a collection of open systems induces a single open system on the product.  This is the step for making one system from many. We prove, additionally, that this mechanism preserves related systems.  In slightly more detail, if two collections are ``pairwise'' related, then taking the product on each collection results in a pair of related systems.  This makes up the second half of the proof for our main theorem. 

Piecing together product (2nd half) and interconnection (1st half), we  present the main abstract result: a pair of collections of open systems related in some way induces related open systems after interconnection.  This result is similar to but generalizes the main theorem of \cite{lermanopennetworks}---which could be stated as an instance of ours with $\fC = \fD = \sF{Man}$, $\Uc = id_{\sF{Man}}$, and $\Tc = T$---and captures the intuition we hinted at earlier.  Namely: a network pieces subsystems together, but more importantly, the fact that relations are preserved in the networked systems suggests that our formalism is on the right track.

\subsection{Application: Maps of Systems and Stability} 
We end this dissertation with a more grounded investigation into maps of dynamical systems as a means of verifying concrete systems properties.  This result is not about networks, but we present it as a springboard for future development in the context of networks.   We use maps of dynamical systems to answer questions about stability of dynamical systems.  Stability, for us, means that integral curves stay close to each other if they have initial conditions close to each other.  Traditionally, techniques for studying stability include Lyapunov's first and second methods, namely linearization of nonlinear dynamics at a point, or construction of a decreasing-along-solutions Lypanov or energy function.  We introduce a result which says that a class of maps of dynamical systems preserves stability; in other words, a stable point in the domain is sent to a stable point in the image.  The advantage of this method, akin to Lyapunov's second method, is that stability questions  which can be answered for some system may under suitable mapping be translated to another hitherto inscrutable dynamical system.  
\section{Outline of Dissertation}

In \cref{ch2} we review mathematical notions and theorems requisite for understanding the machinery built up in this thesis.  This includes category theory, at the level of basic definitions, universal properties of product and coproduct, the arrow category, Yoneda lemma, double categories, and monoidal categories.  We also review manifolds, manifolds with corners,  and dynamical systems, and provide proofs of basic facts from geometry using category theory as reference for some of the techniques employed later on.  This review is rather extensive, for the purposes of making this thesis as  self-contained as possible. 

 In \cref{ch3} we develop a concrete categorical theory of hybrid and deterministic hybrid system.  This includes constructing a category of hybrid phase spaces, hybrid surjective submersions, a  control/sections functor, a monoidal product, and a strong monoidal functor $\Ub:\sF{HyPh}\rightarrow\sF{Man}$ from the category of hybrid phase spaces to the category of manifolds.  
 
 In \cref{ch4} we work out abstract categorical notions of system (as pair of object  and section of a bundle) and networks of abstract systems (as interconnection to a monoidal product).  We then state and prove the main abstract result that a morphism of networks of systems induces a 1-morphism of related sections. We conclude with various concrete examples of morphisms of networks of open continuous-time systems, of  hybrid open  systems, and of deterministic hybrid open systems as instances of the abstract result.  

We end in \cref{ch5} with a light appetizer on a more applied direction of the morphism-centric viewpoint in the study of dynamical systems.   We review the notion of Lyapunov stability definable as ``continuity of a [certain kind of] map,'' and prove a preliminary result that a certain class of maps of dynamical systems preserves stability. Our goal is to use this perspective as guidance for continued study of stability for hybrid systems, networks of continuous-time systems (especially string stability, an oft used notion for autonomous traffic flow algorithms, e.g.\ \cite{hedrickabc}), and networks of hybrid systems.

\chapter{Background}\label{subsec:Prelims}\label{ch2}

\section{Introduction}
Almost all of \cref{ch2} is background material used in the development of theory in \cref{ch3} and \cref{ch4}.  We review ordinary categories, arrow categories, double categories, and monoidal categories, in addition to some light differential geometry, and dynamical systems theory.  Because the bulk of our work is categorical, we emphasize details on the category theory, while keeping details on  geometry to a minimum.

\section{Category Theory}\label{section:CategoryTheory}
\subsection{Ordinary Categories}\label{subsection:BasicCategoryTheory}
Concepts in \cref{subsection:BasicCategoryTheory} are standard and may be found in \cite{riehlb}, \cite{awodey}, \cite{maclane}, or \cite{leinsterCategory}. Pacing is brisk, and we refer the reader to these texts for more complete treatment of categories. Our goal here is to set notation and highlight the relevant concepts we will be using.

\begin{definition}\label{def:category}
	A \textit{category} $\fC = \left(\begin{tikzcd} \fC_1\arrow[r,shift left,"\sF{dom}"]\arrow[r,shift right,swap,"\sF{cod}"] & \fC_0\arrow[l] \end{tikzcd}\right)$ consists of a collection of \textit{objects} $\fC_0$ and collection of \textit{morphisms} $\fC_1$, assignments $\sF{dom},\sF{cod}:\fC_1\rightarrow \fC_0$ (domain and codomain) and  map $id_{(\cdot)}:\fC_0\rightarrow\fC_1$ (the \textit{identity} or 	\textit{unit}). We may write morphism $f$ as $\sF{dom}(f)\xrightarrow{f}\sF{cod}(f)$ to specify domain and codomain. For each object $\fc\in \fC_0$, $\fc = \sF{dom}(id_\fc) =\sF{cod}(id_\fc)$, and for morphisms $f,g\in \fC_1$ with $\sF{cod}(f) = \sF{dom}(g)$, there is morphism $\sF{dom}(f)\xrightarrow{g\circ f} \sF{cod}(g)$.\footnote{Implicit here is that $\sF{dom}(g\circ f)  = \sF{dom}(f)$ and $\sF{cod}(g\circ f ) = \sF{cod}(g)$.} 
	 
	Finally, categories satisfy the following two axioms. 
	\begin{enumerate}
		\item For each morphism $f \in \fC_1$, $f = id_{\sF{cod}(f)} \circ f = f\circ id_{\sF{dom}(f)}$. 
		\item  Composition of morphisms is \textit{associative}: for every triple $f,g,h\in \fC_1$ of composable  morphisms,\footnote{In other words, $\sF{cod}(f) = \sF{dom}(g)$ and $\sF{cod}(g) = \sF{dom}(h)$.} $(h\circ g) \circ f = h\circ (g\circ f).$ 
	\end{enumerate}
When we write a morphism $f$ as $\fc\xrightarrow{f}\fc'$, it is clear that $\fc = \sF{dom}(f)$ and $\fc'= \sF{cod}(f)$. For objects $\fc, \fc'\in \fC$, we will let $\fC(\fc,\fc')$ denote the collection of morphisms from $\fc$ to $\fc'$.
\end{definition}

\begin{example}\label{ex:examplesOfCategories}
	Examples of categories include $\sF{Set}$ (objects are sets, morphisms are functions), $\sF{Man}$ (objects are smooth manifolds with corners (\cref{def:manifoldsWithCorners}), morphisms are smooth maps),  $\sF{Top}$ (objects are topological spaces, morphisms are continuous maps), and $\sF{Cat}$ (objects are small categories (\cref{def:smallCategory}), and morphisms are \textit{functors} (\cref{def:functor})).  Oftentimes, objects in a category are sets with additional structure and morphisms are structure preserving maps, but they need not always be (e.g.\ a group may succinctly be defined as a category with one object all of whose morphisms are invertible).  
\end{example}

The next example, the category of relations, will show up again in our development of dynamical systems.  Here we define relations as an ordinary category. Later we will redefine  a double category of relations (\cref{subsection:doubleCat}, \cref{def:categoryRelSet}).

\begin{example}\label{ex:(single)CategoryOfRelations}
	We define the (ordinary) category $\sF{Rel}$ of relations as follows:\begin{enumerate}
		\item Objects $X$ are sets $\{x\in X\}$.
		\item Morphisms $X\xrightarrow{R}Y$ are \textit{relations} $R\subseteq X\times Y$, subsets of the product. 
	\end{enumerate}
	 Composition is defined as follows: for relations $R\subseteq X\times Y$ and $S\subseteq Y\times Z$, we define \begin{equation}\label{eq:definingCompositionOfRelations} S\circ R\defeq\{(x,z)\in X\times Z:\, \exists\, y\in Y\;\mbox{with}\;(y,z)\in S\,\mbox{and}\, (x,y)\in R\}.\end{equation}  For set $X$, the identity relation is $id_X\defeq \Delta(X) = \{(x',x)\in X^2:\, x'= x\}$.  It is a formal verification that this category satisfies the defining axioms of a category (\cref{def:category}). 
\end{example}
\begin{remark}
	Defining a category requires specifying objects and morphisms, as well as verifying conditions 1 and 2 in \cref{def:category}.  Often, the checks of conditions are routine and we skip them.  Our one exception is the definition of (the category of) hybrid phase spaces (\cref{def:hybridPhaseSpace0}, \cref{def:categoryHyPh}). 
\end{remark}
Emphasis on morphisms  is central in the philosophy of category theory.  For example, the identity morphism is defined according to how it behaves when composed with other morphisms, as opposed to where it sends elements.  Similarly,  a surjection in set theory is a map for which each preimage is nonempty.  The analogous notion in category theory is \textit{epimorphism}. 

\begin{definition}\label{def:epimorphism}
A morphism $\fc\xrightarrow{p}\fc'$ in category $\fC$ is called an \textit{epimorphism} if for any pair of morphisms $\begin{tikzcd} \fc'\arrow[r,shift left,"g"]\arrow[r,swap,shift right,"h"] & \fc'',\end{tikzcd}$ $g\circ p = h\circ p$ implies that $g= h$. 
\end{definition}

\begin{definition}\label{def:splitEpi}
A morphism $\fc'\xrightarrow{p}\fc$ in category $\fC$ is called a \textit{split epimorphism} if there is a right inverse of $p$, i.e.\ a morphism $s:\fc\rightarrow \fc'$ such that $p \circ s = id_\fc$.
The morphism $s$ is called a \textit{section of} $p$, and we will generally denote the collection of such maps by $$\G(p)\defeq\big\{s:\sF{cod}(p)\rightarrow\sF{dom}(p):\, p\circ s = id_\fc\big\}.$$ 	
\end{definition}
Precomposing a section $s\in \G(p)$ to both sides of $h\circ p = g\circ p$ shows that a split epimorphism $p$ is an epimorphism. 

There is also a categorical version of injection or embedding: 
\begin{definition}\label{def:monomorphism}
	A morphism $\fc'\xrightarrow{i}\fc$ is called a \textit{monomorphism} if for any pair of morphisms $\begin{tikzcd} \fc''\arrow[r,shift left,"g"]\arrow[r,swap,shift right,"h"] & \fc',\end{tikzcd}$ $i\circ g = i\circ h$ implies that $g=h$. When there is monomorphism $\fc'\xrightarrow{i}\fc$, we also say that $\fc'$ is a \textit{subobject} of $\fc$. 
\end{definition}
	\begin{definition}\label{def:locallySmall}\label{def:smallCategory}
	A category $\fC$ is said to be \textit{locally small} if $\fC(\fc,\fc')$ is a \textit{set} for every pair of objects $\fc, \fc'\in \fC$. A category is \textit{small} if $\fC_1$ is a set.  
\end{definition}
We do not preoccupy ourselves with size issues, and will generally assume enough smallness whenever necessary (e.g.\ anything involving Yoneda, \cref{lemma:YonedaLemma}). 

\begin{definition}\label{def:functor}
A (covariant) \textit{functor} $\Fc:\fC\rightarrow\fD$ between two categories assigns an object $\Fc(\fc)\in \fD_0$ to every object $\fc\in \fC_0$ and a morphism $\Fc(\fc)\xrightarrow{\Fc(f)}\Fc(\fc')\in \fD_1$ to every morphism $\fc\xrightarrow{f}\fc'\in \fC_1$. These assignments  satisfy the following two conditions: \begin{enumerate}
	\item $\Fc(id_\fc) = id_{\Fc(\fc)}$ for every object $\fc\in \fC_0$ and
	\item $\Fc(g\circ f) = \Fc(g)\circ \Fc(f)$ for every composition of morphisms $g\circ f\in \fC_1$. 
\end{enumerate}
\end{definition}
\begin{definition}\label{def:contravariantFunctor}
	A \textit{contravariant functor} $\Fc:\fC^{op}\rightarrow \fD$ from $\fC$ to $\fD$ assigns an object $\Fc(\fc)\in \fD_0$ to each object $\fc\in \fC_0$ and morphism $\Fc(\fc)\xrightarrow{\Fc(f)}\Fc(\fc')$ to each morphism $\fc\xleftarrow{f}\fc'\in \fC_1$. These assignments  satisfy the following two conditions: \begin{enumerate}
	\item $\Fc(id_\fc) = id_{\Fc(\fc)}$ for every object $\fc\in \fC_0$ and
	\item $\Fc(g\circ f) = \Fc(f)\circ \Fc(g)$ for every composition of morphisms $g\circ f\in \fC_1$. 
\end{enumerate} 
\end{definition}
\begin{remark}
	$\fC^{op}$ is the opposite category of $\fC$, with the same objects but with the direction of all morphisms flipped:  $\fc\xrightarrow{f}\fc'\in \fC_1$ if and only if $\fc'\xrightarrow{f}\fc\in \fC_1^{op}.$
\end{remark}

\begin{remark}\label{remark:functoriality}
	We encode conditions 1 and 2 in \cref{def:functor} and \cref{def:contravariantFunctor} by the following two phrases:
\textit{functoriality on identity} refers to satisfaction of condition 1, and \textit{functoriality on composition} refers to satisfaction of condition 2.   
\end{remark}
Consider the well-known fact that functors preserve isomorphism. Many proofs in mathematics that an isomorphism in one category induces an isomorphism in another amount to demonstrating  a functorial relationship between both categories.  First we recall the definition of isomorphism in a category.  
\begin{definition}\label{def:isomorphism}
Let $\fC$ be a category and $\fc\xrightarrow{f}\fc'$ a morphism in $\fC$.  We say that $f$ is an \textit{isomorphism} if $f$ has a (necessarily unique) left and right inverse: a morphism $\fc'\xrightarrow{g}\fc$ such that $id_\fc = g\circ f$ and $id_{\fc'} = f\circ g$. 
\end{definition}
\begin{remark}
	Uniqueness of the inverse follows directly from the axioms of a category (\cref{def:category}): suppose that $g',g:\fc'\rightarrow\fc$ are both inverses of $f$.  Then $$g' = g' \circ id_{\fc'} = g'\circ (f\circ g) = (g'\circ f)\circ g = id_{\fc}\circ g = g.$$
\end{remark}

\begin{fact}\label{fact:functorsPreserveIsos}
	Let $\Fc:\fC\rightarrow\fD$ be a functor (\cref{def:functor}) and $\fc\xrightarrow{f} \fc'$ an isomorphism in $\fC$ (\cref{def:isomorphism}).  Then $\Fc\fc \xrightarrow{\Fc f} \Fc\fc'$ is an isomorphism in $\fD$. 
\end{fact}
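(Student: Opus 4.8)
The plan is to produce an explicit two-sided inverse for $\Fc f$ in $\fD$ by applying $\Fc$ to the inverse of $f$ in $\fC$. First I would invoke the hypothesis that $f$ is an isomorphism (\cref{def:isomorphism}) to extract a morphism $\fc'\xrightarrow{g}\fc$ satisfying $g\circ f = id_\fc$ and $f\circ g = id_{\fc'}$. The candidate inverse for $\Fc f$ is then $\Fc g$, and the entire content of the argument is verifying that this candidate does the job.

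Next I would apply $\Fc$ to each of the two defining equations and push it through the compositions and identities using the two functor axioms (\cref{def:functor})—what \cref{remark:functoriality} names functoriality on composition and functoriality on identity. Concretely,
\[
\Fc g \circ \Fc f = \Fc(g\circ f) = \Fc(id_\fc) = id_{\Fc\fc},
\qquad
\Fc f \circ \Fc g = \Fc(f\circ g) = \Fc(id_{\fc'}) = id_{\Fc\fc'}.
\]
Hence $\Fc g$ is simultaneously a left and right inverse of $\Fc f$, so $\Fc f$ is an isomorphism (again by \cref{def:isomorphism}), with $(\Fc f)^{-1} = \Fc g$.

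There is no genuine obstacle to overcome: the statement is a direct consequence of the functor axioms, and the only point worth flagging is that \emph{both} axioms are essential—functoriality on composition to rewrite $\Fc g\circ \Fc f$ as $\Fc(g\circ f)$, and functoriality on identity to recognize the result as an identity morphism in $\fD$. As the surrounding text observes, this one short computation is precisely the engine behind many arguments that an isomorphism in one setting induces an isomorphism in another: one exhibits a suitable functor and then cites this fact.
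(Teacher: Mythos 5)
Your proof is correct and follows exactly the same route as the paper's: extract the inverse $g$ of $f$, apply $\Fc$, and use functoriality on composition and on identity to show $\Fc g$ is a two-sided inverse of $\Fc f$. No differences worth noting.
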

We prove this rudimentary fact only to demonstrate use of  terminology in \cref{remark:functoriality}. \begin{proof}
	Let $g:\fc'\rightarrow\fc$ be the inverse of $f$, so that $id_\fc = g\circ f$.  Then $$\Fc g\circ \Fc f = \Fc(g\circ f)= \Fc(id_\fc) =id_{\Fc\fc }.$$ The first equality follows  by \textit{functoriality on composition}, and the last one by \textit{functoriality on identity}.  A formally identical computation shows that $id_{\Fc \fc'} = \Fc f \circ \Fc g$, and hence that $\Fc f:\Fc\fc\xrightarrow{\sim}\fc'$ is an isomorphism in $\fD$. 
\end{proof}

Another useful consequence of functoriality is that commutative diagrams are preserved by functors.
\begin{lemma}\label{lemma:functorsPreserveDiagrams}
Functors preserve commutative diagrams. 
\end{lemma}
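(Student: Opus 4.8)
The plan is to reduce the (informal) notion of commutativity of a diagram to the single equational fact that a functor respects composition. First I would make precise what a commutative diagram in $\fC$ is: a collection of objects together with morphisms between them, which \emph{commutes} if for any two of its objects $\fc,\fc'$, any two directed paths from $\fc$ to $\fc'$---that is, any two composable strings of the displayed morphisms with common source $\fc$ and common target $\fc'$---compose to the same morphism of $\fC$. Thus commutativity is precisely a family of equations of the form
\[
  f_n\circ\cdots\circ f_1 = g_m\circ\cdots\circ g_1,
\]
where both sides are morphisms $\fc\rightarrow\fc'$.

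Next I would observe that $\Fc$ carries such a diagram to a diagram of the same shape in $\fD$. Indeed, by the definition of functor (\cref{def:functor}) a morphism $\fc\xrightarrow{f}\fc'$ is sent to $\Fc\fc\xrightarrow{\Fc f}\Fc\fc'$, so domains and codomains---and hence all the incidences of the diagram---are preserved. Each node $\fc$ becomes $\Fc\fc$ and each edge $f$ becomes $\Fc f$, yielding a genuine diagram in $\fD$ over the same underlying graph.

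The heart of the argument is then that $\Fc$ preserves each commutativity equation. By induction on path length, applying \textit{functoriality on composition} (\cref{remark:functoriality}) at each step, one gets
\[
  \Fc(f_n\circ\cdots\circ f_1) = \Fc(f_n)\circ\cdots\circ\Fc(f_1),
\]
with the base case $n=1$ immediate and the degenerate case of an identity edge handled by \textit{functoriality on identity}. Since $\Fc$ is a well-defined assignment on morphisms, applying it to both sides of $f_n\circ\cdots\circ f_1 = g_m\circ\cdots\circ g_1$ gives $\Fc(f_n\circ\cdots\circ f_1)=\Fc(g_m\circ\cdots\circ g_1)$; expanding each side via the displayed identity yields $\Fc(f_n)\circ\cdots\circ\Fc(f_1)=\Fc(g_m)\circ\cdots\circ\Fc(g_1)$, which is exactly commutativity of the corresponding paths in the image diagram.

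I do not anticipate a genuine obstacle: the content is purely formal. The only care required is (i) pinning down ``commutative diagram'' as a family of path equations, and (ii) the routine induction promoting functoriality on composition---a statement about composites of \emph{two} morphisms---to composites of arbitrary length. Everything else is bookkeeping about sources and targets, already guaranteed by \cref{def:functor}.
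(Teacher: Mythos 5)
Your proposal is correct and is essentially the standard argument: the paper itself gives no proof, deferring to \cite[Lemma 1.6.5]{riehlb}, and the argument there is exactly the one you outline --- formalize commutativity as equality of composites along parallel paths, then apply functoriality on composition (extended to composites of arbitrary length by the routine induction you describe). Nothing is missing.
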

\begin{proof}
	See \cite[Lemma 1.6.5]{riehlb}].
\end{proof}

We catalog a few additional properties of functors. 
\begin{definition}\label{def:faithfulFunctor}\label{def:fullFunctor}
Let  $\Fc:\fC\rightarrow\fD$ be a functor between locally small categories $\fC$ and  $\fD$ (\cref{def:locallySmall}). We say that functor  $\Fc$ is \textit{faithful} if for each pair of objects $\fc, \fc'\in \fC_0$, the map $\Fc:\fC(\fc,\fc')\rightarrow \fD(\Fc\fc,\Fc,\fc')$ is injective.  We say that $\Fc$ is \textit{full} if for each pair of objects $\fc, \fc'\in \fC_0$, the map $\Fc:\fC(\fc,\fc')\rightarrow \fD(\Fc\fc,\Fc,\fc')$ is surjective. Finally, we say that $\Fc$ is \textit{fully faithful} if $\Fc$ is both full and faithful. 
\end{definition}

\begin{remark}\label{remark:concreteCategory}
	In \cref{ex:examplesOfCategories}, we noted that oftentimes categories have sets with additional structure as their objects.  Such categories are said to be \textit{concrete}. The formal definition of a concrete category is a category which admits a \textit{faithful} (\cref{def:faithfulFunctor}) \textit{functor} (\cref{def:functor}) $\scU:\fC\rightarrow \sF{Set}$ to the category of sets. Every category we will encounter is concrete.  In practice, we will treat objects in a concrete category $\fC$ as sets-with-structure, whose faithful functor $\scU:\fC\rightarrow \sF{Set}$ forgets the structure.  
\end{remark}

\begin{definition}\label{def:naturalTrans}\label{def:naturalIso}
	Let $\Fc,\Gc:\fC\rightarrow\fD$ be two functors.   A \textit{natural transformation} $\nat{\fC}{\fD}{\Fc}{\Gc}{\a}$ (also denoted by $\a:\Fc\Rightarrow \Gc$) is an assignment $\a:\fC_0\rightarrow \fD_1$ which sends $\fc\mapsto \big(\Fc\fc\xrightarrow{\a_\fc}\Gc\fc\big)$ satisfying the following \textit{naturality} condition:\begin{enumerate}
		\item[] For each morphism $\fc\xrightarrow{f}\fc'$ in $\fC$, the diagram $\begin{tikzcd}
	\Fc\fc \arrow[d,"\Fc f"] \arrow[r,"\a_\fc"] & \Gc\fc \arrow[d,swap,"\Gc f"] \\
	\Fc\fc'\arrow[r,"\a_{\fc'}"] & \Gc \fc'
\end{tikzcd}$ commutes in $\fD$. 	\end{enumerate} 

	 We say that $\a:\Fc\Rightarrow \Gc$ is a natural \textit{isomorphism} if each component $\g_\fc:\Fc\fc\xrightarrow{\sim} \Gc\fc $ is an isomorphism in $\fD$. 
\end{definition}

\begin{example}\label{def:functorCategory}
	Let $\fC$ and $\fD$ be two locally small categories.  We define the \textit{functor category} $\fD^\fC$ as follows: \begin{enumerate}
		\item Objects $\Fc\in (\fD^\fC)_0$ are functors $\Fc:\fC\rightarrow\fD$ (\cref{def:functor}).
		\item Morphisms $\big(\a:\Fc\Rightarrow\Gc\big)\in(\fD^\fC)_1$ are natural transformations (\cref{def:naturalTrans}) between functors $\Fc$ and $\Gc$. 
	\end{enumerate}
	That $\fD^\fC$ is a category follows from the axioms defining categories (\cref{def:category}): each functor $\Fc:\fC\rightarrow\fD$ has an identity transformation $id_\Fc:\Fc\Rightarrow\Fc$ because each object $\fd\in \fD_0$ has an identity morphism $id_\fd$.  Therefore, we define natural transformation $id_{\Fc}$ on components by $(id_\Fc)_\fc\defeq  id_{\Fc \fc}$. The same idea implies that $(\g\circ \b ) \circ \a = \g\circ (\b\circ \a)$ for natural transformations $\a:\Fc\Rightarrow\Gc$, $\b:\Gc\Rightarrow\Hc$, and $\g:\Hc\Rightarrow\Ic$. 
\end{example}

We define a more general version of the functor category $\fD^\fC$.
\begin{definition}\label{def:generalizedFunctorCategory}
	Let $\fE$ be a category.  We define the category $(\sF{Cat}/\fE)^\Rightarrow$ as follows: 
	\begin{enumerate}
		\item Objects are functors $\upsilon:\fC\rightarrow\fE$ from a locally small category $\fC$ to $\fE$. 
		\item For objects $\upsilon:\fC\rightarrow\fE$, $\z:\fD\rightarrow\fE$, a morphism $\upsilon\xrightarrow{(\a,\af)}\z$ is a pair, where $\a:\fC\rightarrow\fD$ is a functor and $\af:\upsilon \Rightarrow \z\circ \a$ is a natural transformation. 
	\end{enumerate}
	The category $(\sF{Cat}/\fE)^\Leftarrow$ has the same objects.  Now a morphism $\upsilon\xrightarrow{(\b,\bff)}\z$  still is a pair with  functor $\b:\fC\rightarrow\fD$, but the  natural transformation $\bff:\z\circ \b\Rightarrow \upsilon$ goes the other direction.
\end{definition}
\begin{remark}\label{remark:generalizedFunctorCategoryMod}
	We will often use some subcategory of $(\sF{Cat}/E)^\bullet$. The category of hybrid phase spaces (\cref{def:categoryHyPh}) and the category of lists (\cref{def:categoryOfLists}) may be interpreted as some variant of this category. 
\end{remark}
\begin{fact}[Ex.\ 1.4.i \cite{riehlb}]\label{fact:inverseOfNaturalIsoIsNatural}
	Let $\g:\Fc\Rightarrow \Gc$ be a natural isomorphism.  Then the inverse $\g^{-1}:\Gc\Rightarrow \Fc$ defined on components as $(\g^{-1})_\fc\defeq (\g_\fc)^{-1}$ is a natural transformation (and therefore, natural isomorphism). 
\end{fact}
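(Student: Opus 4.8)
The plan is to verify directly that $\g^{-1}$, with components $(\g^{-1})_\fc \defeq (\g_\fc)^{-1}$, meets the two requirements of a natural transformation (\cref{def:naturalTrans}): that each component is a well-defined morphism of the correct type, and that the naturality square commutes. The first requirement is immediate. Since $\g$ is a natural isomorphism, each $\g_\fc:\Fc\fc\rightarrow\Gc\fc$ is an isomorphism in $\fD$ (\cref{def:isomorphism}), hence possesses a (unique) inverse $(\g_\fc)^{-1}:\Gc\fc\rightarrow\Fc\fc$. This is precisely a morphism from $\Gc\fc$ to $\Fc\fc$, the correct type for a component of a prospective transformation $\g^{-1}:\Gc\Rightarrow\Fc$.

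The substance of the proof is the naturality check, and the only tool needed is the naturality square already satisfied by $\g$. First I would fix a morphism $\fc\xrightarrow{f}\fc'$ in $\fC$ and record the naturality condition for $\g$, namely $\g_{\fc'}\circ \Fc f = \Gc f\circ \g_\fc$. The goal is to derive the corresponding identity for $\g^{-1}$, that is $\Fc f\circ (\g_\fc)^{-1} = (\g_{\fc'})^{-1}\circ \Gc f$. The natural move is to conjugate the known equation by the appropriate inverses: compose the naturality square for $\g$ on the left by $(\g_{\fc'})^{-1}$ and on the right by $(\g_\fc)^{-1}$. Invoking associativity of composition (\cref{def:category}) together with the cancellations $(\g_{\fc'})^{-1}\circ \g_{\fc'} = id_{\Fc\fc'}$ and $\g_\fc\circ (\g_\fc)^{-1} = id_{\Gc\fc}$, the two sides collapse to exactly the desired identity, which is the naturality condition for $\g^{-1}$.

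Finally, to upgrade $\g^{-1}$ from a natural transformation to a natural isomorphism, I would observe that each component $(\g_\fc)^{-1}$ is itself an isomorphism in $\fD$, its two-sided inverse being $\g_\fc$, so that $\g^{-1}$ satisfies the definition of natural isomorphism (\cref{def:naturalIso}). I do not anticipate any genuine obstacle here; this is an elementary diagram chase. The only point demanding care is bookkeeping of sources and targets: one must ensure that at each step the composites are well-typed and that the correct component inverse ($(\g_\fc)^{-1}$ versus $(\g_{\fc'})^{-1}$) is placed on the correct side, so that the cancellations produce the identities at $\Fc\fc'$ and $\Gc\fc$ respectively.
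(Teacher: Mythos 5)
Your proof is correct and is the standard argument: conjugating the naturality square for $\g$ by the componentwise inverses yields exactly the naturality square for $\g^{-1}$. The paper itself gives no proof of this fact, deferring entirely to the citation of Riehl (Ex.\ 1.4.i), and your argument is precisely the one that exercise expects, so there is nothing to reconcile.
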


Finally, we single out a special class of functors.  Suppose that $\fC$ is locally small and let $\fc\in \fC_0$ be an object.  Then there is functor $\fC(\fc,\cdot):\fC\rightarrow\sF{Set}$ sending an object $\fc'\mapsto \fC(\fc,\fc')$ the set of morphisms from from $\fc$ to $\fc'$. Functors naturally isomorphic to $\fC(\fc,\cdot)$ have a special name: 

\begin{definition}\label{def:functorRepresentable}
	A covariant functor $\Fc:\fC\rightarrow\sF{Set}$ from is \textit{representable} if there is a natural isomorphism $\g:\fC(\fc,\cdot) \Rightarrow \Fc $ for some object $\fc\in \fC_0$. 
\end{definition}

\subsubsection{Limits, Colimits, and Universal Properties}
We recall universal properties of products and coproducts in a category $\fC$.  

\begin{definition}\label{def:terminalObject}\label{def:initialObject}
	An object $\fc_t\in \fC$ is said to be \textit{terminal} if for each object $\fc\in \fC$, there is exactly one morphism $\fc\xrightarrow{}\fc_t$ in $\fC$. Similarly, an object $\fc_i\in \fC$ is \textit{initial} if for each object $\fc\in \fC$, there is exactly one morphism $\fc_i\rightarrow\fc$. 
\end{definition}

\begin{definition}\label{def:products}
 Let $\fc,\, \fc'\in \fC$ be objects in a category.  A \textit{product} $\fc\times \fc'$ is an object in $\fC$  equipped with two maps $\fc\times \fc'\xrightarrow{p_\fc}\fc$ and $\fc\times \fc'\xrightarrow{p_{\fc'}} \fc'$ satisfying the following universal property: for any object $\fz\in \fC$ and pair of maps $\fz\xrightarrow{f_\fc}\fc$, $\fz\xrightarrow{f_{\fc'}} \fc'$, there is a unique map $f:\fz\dashrightarrow\fc\times \fc'$ through which $f_\fc$ and $f_{\fc'}$ factor.  In other words, $p_\fc\circ f = f_\fc$ and $p_{\fc'}\circ f = f_{\fc'}$ in the following diagram: $$\begin{tikzcd}
	\fz \arrow[rrd,bend left,"f_{\fc'}"]\arrow[ddr,swap,bend right,"f_\fc"]\arrow[dr,dashed,near end,"f"] & & \\ & \fc\times \fc' \arrow[d,"p_\fc"] \arrow[r,swap,"p_{\fc'}"] & \fc'.\\
	& \fc & 
\end{tikzcd}$$
\end{definition}

A product is not only an object, but an object with some morphisms satisfying some universal property. This is a standard trope in category theory. 
\begin{definition}\label{def:categoryWithProducts}
	We say that a category $\fC$ has finite products if for any two objects $\fc,\,\fc'\in \fC$, the product $\fc\times \fc'\in \fC$. 
\end{definition}

We will use the universal property frequently, so it may be helpful to recall how it works in a routine application. 
\begin{prop}
	Let $\fc\times \fc'$ be a product of $\fc$ and $\fc'$ in category $\fC$.  If another object (label it) $\fc\fc'$ with two maps $\fc\fc'\xrightarrow{P_\fc}\fc$ and $\fc\fc'\xrightarrow{P_{\fc'}}\fc'$ satisfy the universal property of products (\cref{def:products}), then $\fc\fc' \cong \fc\times \fc'$ and the isomorphism is unique. 
\end{prop}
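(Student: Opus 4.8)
The plan is to construct a comparison morphism in each direction by invoking the two universal properties, and then to prove these morphisms are mutually inverse using the \emph{uniqueness} half of the universal property --- the standard trick that a morphism factoring a cone through itself must be the identity.

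First I would feed the pair of maps $\fc\fc'\xrightarrow{P_\fc}\fc$, $\fc\fc'\xrightarrow{P_{\fc'}}\fc'$ into the universal property of the genuine product $\fc\times\fc'$ (\cref{def:products}), taking $\fz=\fc\fc'$, to obtain a unique morphism $u:\fc\fc'\to\fc\times\fc'$ with $p_\fc\circ u=P_\fc$ and $p_{\fc'}\circ u=P_{\fc'}$. Symmetrically, since $\fc\fc'$ is assumed to satisfy the same universal property, feeding the projections $\fc\times\fc'\xrightarrow{p_\fc}\fc$, $\fc\times\fc'\xrightarrow{p_{\fc'}}\fc'$ into it yields a unique morphism $v:\fc\times\fc'\to\fc\fc'$ with $P_\fc\circ v=p_\fc$ and $P_{\fc'}\circ v=p_{\fc'}$.

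Next I would show $u\circ v=id_{\fc\times\fc'}$ and $v\circ u=id_{\fc\fc'}$. For the first, consider $u\circ v:\fc\times\fc'\to\fc\times\fc'$. Using the factorization identities twice gives $p_\fc\circ(u\circ v)=(p_\fc\circ u)\circ v=P_\fc\circ v=p_\fc$, and likewise $p_{\fc'}\circ(u\circ v)=p_{\fc'}$. But $id_{\fc\times\fc'}$ trivially satisfies these same two equations, so by the uniqueness clause in the universal property of $\fc\times\fc'$ (with $\fz=\fc\times\fc'$, $f_\fc=p_\fc$, $f_{\fc'}=p_{\fc'}$) we conclude $u\circ v=id_{\fc\times\fc'}$. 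A formally identical computation, now invoking the universal property of $\fc\fc'$, gives $v\circ u=id_{\fc\fc'}$. Hence $u$ is an isomorphism with inverse $v$ (\cref{def:isomorphism}), so $\fc\fc'\cong\fc\times\fc'$.

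Finally I would address uniqueness of the isomorphism, which is really uniqueness relative to the projection cones: if $w:\fc\fc'\to\fc\times\fc'$ is \emph{any} morphism with $p_\fc\circ w=P_\fc$ and $p_{\fc'}\circ w=P_{\fc'}$, then $w=u$ immediately by the uniqueness clause of the universal property of $\fc\times\fc'$. In particular the isomorphism $u$ produced above is the only projection-respecting morphism, giving uniqueness. I do not anticipate a genuine obstacle here; the only point demanding care is that both the existence and the uniqueness halves of each universal property are used, and that ``the isomorphism is unique'' must be read as uniqueness among morphisms compatible with the two projection pairs rather than as a statement about all isomorphisms $\fc\fc'\to\fc\times\fc'$.
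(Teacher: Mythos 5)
Your proposal is correct and follows essentially the same route as the paper: construct the two comparison morphisms from the two universal properties, show each composite factors the projection cone through itself and hence equals the identity by the uniqueness clause, and read ``unique isomorphism'' as uniqueness among projection-compatible morphisms. No gaps.
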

\begin{proof} 
The universal property of $\fc\times \fc'$ implies that there is a unique map $f:\fc\fc'\dashrightarrow \fc\times \fc'$ with $$\left\{\begin{array}{ll}p_\fc\circ f &= P_\fc\\ p_{\fc'} \circ f & = P_{\fc'}.\end{array}\right.$$ Consider, then, the following diagram  \begin{equation}\label{diagram:product} \begin{tikzcd}
	\fc\times \fc'\arrow[dr,dashed,"g"]\arrow[dddrr,bend right,swap,"p_\fc"]\arrow[ddrrr,bend left,"p_{\fc'}"] & & & \\ & \fc\fc'\arrow[dr,dashed,"f"]\arrow[drr,bend left,swap,"P_{\fc'}"] \arrow[ddr,bend right,"P_\fc"] & & \\ & & \fc\times \fc'\arrow[d,"p_\fc"]\arrow[r,swap,"p_{\fc'}"]& \fc'.\\
	& & \fc & 
\end{tikzcd} \end{equation} The unique map $g:\fc\times \fc'\dashrightarrow \fc\fc'$ similarly arises from  the universal property for $\fc\fc'$ and satisfies $$\left\{\begin{array}{ll}P_\fc\circ f &= p_\fc\\ P_{\fc'} \circ f & = p_{\fc'}.\end{array}\right.$$ Composing, we have a map $f\circ g: \fc\times \fc'\rightarrow\fc\times \fc'$ with $$\left\{\begin{array}{ll}p_\fc\circ f \circ g  &= p_\fc\\ p_{\fc'} \circ f \circ g & = p_{\fc'}.\end{array}\right.$$ The map $f\circ g$ is unique, also by the universal property.  But $id_{\fc\times \fc'}:\fc\times \fc'\rightarrow \fc\times \fc'$ also satisfies $$\left\{\begin{array}{ll}p_\fc\circ  id_{\fc\times \fc'} &= p_\fc\\ p_{\fc'} \circ id_{\fc\times \fc'}  & = p_{\fc'},\end{array}\right.$$ so $f\circ g = id_{\fc \times \fc'}.$ 
An identical argument, swapping $\fc\fc'$ and $\fc\times \fc'$ in diagram \eqref{diagram:product}, shows that $g\circ f = id_{\fc\fc'}$, and hence that $\fc\fc' \cong \fc\times \fc'$ (\cref{def:isomorphism}).  Moreover, since the morphisms $f$ and $g$ making diagram $\eqref{diagram:product}$ commute are unique, $\fc\fc'$ and $\fc\times \fc'$ are isomorphic up to unique isomorphism. 	\end{proof}
\begin{notation}\label{notation:projForProduct}
  For product $\fc\times \fc'\in \fC$, we will generally reserve $p_\fc$  as the notation for the canonical projections $p_\fc:\fc\times \fc'\rightarrow \fc$ to the $\fc$th component.  
\end{notation}

Another easy fact: 
\begin{prop}\label{prop:productIsosIsIso}
	Suppose $\fc\xrightarrow{f}\fc'$ and $\fd\xrightarrow{g}\fd'$ are isomorphisms in category $\fC$. Then the product morphism $\fc\times \fd\xrightarrow{f\times g} \fc'\times \fd'$ is an isomorphism.  Moreover, the inverse is given by $(f\times g)^{-1}= f^{-1}\times g^{-1}$.
\end{prop}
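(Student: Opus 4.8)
The plan is to prove the claim directly from the universal property of products (\cref{def:products}), using its uniqueness clause as the main lever and avoiding any elementwise reasoning. First I would pin down what the symbol $f\times g$ denotes. Given isomorphisms $\fc\xrightarrow{f}\fc'$ and $\fd\xrightarrow{g}\fd'$, the morphism $f\times g:\fc\times\fd\to\fc'\times\fd'$ is the unique map produced by the universal property of $\fc'\times\fd'$ applied to the test object $\fc\times\fd$ and the two composites $f\circ p_\fc:\fc\times\fd\to\fc'$ and $g\circ p_\fd:\fc\times\fd\to\fd'$; that is, it is characterized by
$$p_{\fc'}\circ(f\times g)=f\circ p_\fc,\qquad p_{\fd'}\circ(f\times g)=g\circ p_\fd.$$
Likewise $f^{-1}\times g^{-1}:\fc'\times\fd'\to\fc\times\fd$ is characterized by $p_\fc\circ(f^{-1}\times g^{-1})=f^{-1}\circ p_{\fc'}$ and $p_\fd\circ(f^{-1}\times g^{-1})=g^{-1}\circ p_{\fd'}$, which exist since $f$ and $g$ are invertible (\cref{def:isomorphism}).

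Next I would show $(f^{-1}\times g^{-1})\circ(f\times g)=id_{\fc\times\fd}$. Both sides are endomorphisms of $\fc\times\fd$, so by the uniqueness part of the universal property of $\fc\times\fd$ it suffices to check that they induce the same pair of composites with the projections $p_\fc$ and $p_\fd$. The identity trivially gives $p_\fc\circ id_{\fc\times\fd}=p_\fc$ and $p_\fd\circ id_{\fc\times\fd}=p_\fd$. For the composite, chaining the characterizing equations yields
$$p_\fc\circ(f^{-1}\times g^{-1})\circ(f\times g)=f^{-1}\circ p_{\fc'}\circ(f\times g)=f^{-1}\circ f\circ p_\fc=p_\fc,$$
and symmetrically $p_\fd\circ(f^{-1}\times g^{-1})\circ(f\times g)=p_\fd$, using $g^{-1}\circ g=id_\fd$. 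Since both maps satisfy the same two factorization equations against $p_\fc$ and $p_\fd$, uniqueness forces them to coincide.

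Finally I would run the formally identical computation on the other side to obtain $(f\times g)\circ(f^{-1}\times g^{-1})=id_{\fc'\times\fd'}$, this time testing against $p_{\fc'}$ and $p_{\fd'}$ and using $f\circ f^{-1}=id_{\fc'}$ and $g\circ g^{-1}=id_{\fd'}$. Together these two equalities exhibit $f^{-1}\times g^{-1}$ as a two-sided inverse of $f\times g$, so $f\times g$ is an isomorphism with the stated inverse (\cref{def:isomorphism}).

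I do not expect a genuine obstacle here; this is essentially a routine unwinding of definitions. The only point requiring care is to invoke the uniqueness clause against the correct product object on each side—probing the endomorphism of $\fc\times\fd$ with $p_\fc,p_\fd$ and the endomorphism of $\fc'\times\fd'$ with $p_{\fc'},p_{\fd'}$—rather than conflating the two. This is exactly the \emph{probe against the projections and appeal to uniqueness} pattern already illustrated in the excerpt's proof that products are unique up to unique isomorphism.
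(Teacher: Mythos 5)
Your proof is correct and follows essentially the same route as the paper: the paper also characterizes $f\times g$ and $f^{-1}\times g^{-1}$ by their composites with the projections and invokes the uniqueness clause of the universal property (displayed as one large commuting diagram) to identify each composite with the identity. No gaps.
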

\begin{proof}
The statement follows immediately from the universal property of product (\cref{def:products}).  The following diagram commutes. 
\begin{equation}\begin{tikzcd}[column sep = large, row sep = large]\label{diagram:productIsomorphism}
	\fc\times\fd\arrow[r,"p_\fd"]\arrow[d,swap,"p_\fc"] \arrow[dr,dashed,near end,"f\times g"] & \fd\arrow[dr,"g"]\arrow[ddrr,bend left,"id_\fd"] & & \\
	\fc \arrow[dr,swap,"f"]\arrow[ddrr,bend right,swap,"id_\fc"] & \fc'\times \fd' \arrow[r,"p_{\fd'}"]\arrow[d,swap,"p_{\fc'}"] \arrow[dr,dashed,near end,"f^{-1}\times g^{-1}"] & \fd'\arrow[dr,near start,"g^{-1}"] & \\
	& \fc'\arrow[dr,swap,"f^{-1}"] & \fc\times \fd\arrow[r,"p_\fd"] \arrow[d,swap,,"p_\fc"] & \fd\\
	& &  \fc& 
\end{tikzcd}\end{equation} 
Thus the composition of maps $(f^{-1}\times g^{-1})\circ (f\times g) = id_{\fc\times \fd}$.  Swapping $\fc\times \fd$ and $\fc'\times \fd'$ in diagram \eqref{diagram:productIsomorphism} shows that $(f\times g) \times (f^{-1}\times g^{-1}) = id_{\fc'\times \fd'}$.  Together, these equalities prove   that $\fc\times \fd\cong \fc'\times \fd'$ and that $f^{-1}\times g^{-1} = (f\times g)^{-1}$.\end{proof}

That $(f\times g)^{-1} = f^{-1}\times g^{-1}$ is a consequence of a more general fact. 

\begin{lemma}\label{prop:productBifunctor}
	Let $\fC$ be a category with products (\cref{def:categoryWithProducts}).   Then the product $\times:\fC\times \fC\rightarrow\fC$ sending $(\fc,\fc')\mapsto \fc\times \fc'$ is functor. 
\end{lemma}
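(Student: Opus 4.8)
The plan is to verify the two functor axioms from \cref{def:functor} directly for the assignment $\times:\fC\times\fC\rightarrow\fC$. On objects this sends a pair $(\fc,\fc')$ to the product object $\fc\times\fc'$, and on morphisms it must send a pair $(f,g)$ with $f:\fc\rightarrow\fd$ and $g:\fc'\rightarrow\fd'$ to a morphism $f\times g:\fc\times\fc'\rightarrow\fd\times\fd'$. The first order of business is therefore to \emph{define} the action on morphisms, since the statement only specifies the object assignment. I would define $f\times g$ as the unique map supplied by the universal property of the target product $\fd\times\fd'$ (\cref{def:products}) applied to the two composites $f\circ p_\fc:\fc\times\fc'\rightarrow\fd$ and $g\circ p_{\fc'}:\fc\times\fc'\rightarrow\fd'$. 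By construction this is characterized by the two equations $p_\fd\circ(f\times g)=f\circ p_\fc$ and $p_{\fd'}\circ(f\times g)=g\circ p_{\fc'}$, and uniqueness is exactly what makes the subsequent axiom checks go through.

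Next I would verify \emph{functoriality on identity}. Here I need $id_\fc\times id_{\fc'}=id_{\fc\times\fc'}$. The candidate $id_{\fc\times\fc'}$ satisfies $p_\fc\circ id_{\fc\times\fc'}=p_\fc=id_\fc\circ p_\fc$ and likewise for the other projection, so it satisfies the defining equations of $id_\fc\times id_{\fc'}$; uniqueness in \cref{def:products} forces the two to coincide. Then I would verify \emph{functoriality on composition}: given composable pairs $(f,g)$ and $(f_1,g_1)$ in $\fC\times\fC$, I must show $(f_1\times g_1)\circ(f\times g)=(f_1\circ f)\times(g_1\circ g)$. Using the defining equations for each factor, one computes
\[
p\circ\big((f_1\times g_1)\circ(f\times g)\big)=(f_1\circ f)\circ p_\fc
\]
for the first projection $p$ and the analogous identity for the second, so the composite satisfies the universal characterization of $(f_1\circ f)\times(g_1\circ g)$; uniqueness again yields equality. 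I should also record that composition in the product category $\fC\times\fC$ is computed componentwise, which is what lets me match $(f_1,g_1)\circ(f,g)=(f_1\circ f,\,g_1\circ g)$.

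The main obstacle, such as it is, is conceptual rather than computational: every step reduces to invoking the uniqueness clause of the universal property of products, so the real care lies in making sure the chosen maps genuinely satisfy the two projection equations before appealing to uniqueness. There is no hard estimate or construction—the proof is a clean instance of the ``a product is characterized up to unique isomorphism'' philosophy emphasized just before the statement, and indeed the earlier \cref{prop:productIsosIsIso} is the special case where $f,g$ are isomorphisms. I would keep the write-up brief, stating the definition of $f\times g$, the two characterizing equations, and then dispatching both axioms by uniqueness, noting in passing that this simultaneously reproves the inverse formula from \cref{prop:productIsosIsIso}.
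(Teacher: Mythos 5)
Your proof is correct and follows essentially the same route as the paper's: the paper's proof sketch displays the two commuting diagrams whose unique induced maps verify $id_a\times id_b=id_{a\times b}$ and $(f'\times g')\circ(f\times g)=(f'\circ f)\times(g'\circ g)$, which is exactly your appeal to the uniqueness clause of the universal property after checking the projection equations. Your extra step of explicitly defining $f\times g$ on morphisms is implicit in the paper's diagrams, so there is no substantive difference.
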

 \begin{proof}[Proof Sketch.]
 We display the relevant commuting diagrams depicting the universal property. For identity, the commuting diagram $$\begin{tikzcd}
	a\times b\arrow[r,"p_b"]\arrow[d,"p_a"]\arrow[dr,dashed] & b\arrow[dr,"id_b"] & \\
	a\arrow[dr,swap,"id_a"] & a\times b\arrow[r,swap,"p_b"]\arrow[d,"p_a"] &b \\
	 & a & 
\end{tikzcd}$$ shows that $id_a\times id_b = id_{a\times b}$.  

Similarly, the commuting diagram $$\begin{tikzcd}
	a\times b\arrow[r,"p_b"]\arrow[d,"p_a"]\arrow[dr,dashed] & b\arrow[dr,"g"] \arrow[ddrr,bend left,"g'\circ g"] & & \\
	a\arrow[dr,"f"] \arrow[ddrr,bend right,"f'\circ f"] & a'\times b'\arrow[r,"p_{b'}"] \arrow[d,"p_{a'}"] \arrow[dr,dashed] & b'\arrow[dr,"g'"] & \\
	& a'\arrow[dr,"f'"] & a''\times b''\arrow[d,"p_{a''}"] \arrow[r,"p_{b''}"] & b''\\
	& & a'' & 
\end{tikzcd}$$
shows that $(f'\times g')\circ (f\times g) = (f'\circ f)\times (g'\circ g)$. \end{proof}
 
\begin{remark}
	We  observe from a different angle that $\fc\times \fd\cong \fc'\times \fd'$ when $\fc\cong \fc'$ and $\fd\cong \fd'$, this time by  \cref{fact:functorsPreserveIsos} and \cref{prop:productBifunctor}. The fact that the proofs of \cref{prop:productIsosIsIso} and \cref{prop:productBifunctor} are formally identical is no accident. 
\end{remark}

Reversing all the arrows in \cref{def:products} gives us the \textit{coproduct}: 
\begin{definition}\label{def:coproducts}
 Let $\fc,\, \fc'\in \fC$ be objects in a category.  A \textit{coproduct} $\fc\sqcup \fc'$ is an object in $\fC$  equipped with two maps $\fc\sqcup \fc'\xleftarrow{i_\fc}\fc$ and $\fc\sqcup \fc'\xleftarrow{i_{\fc'}} \fc'$ satisfying the following universal property: for any object $\fz\in \fC$ and pair of maps $\fz\xleftarrow{f_\fc}\fc$, $\fz\xleftarrow{f_{\fc'}} \fc'$, there is a unique map $\fz\dashleftarrow\fc\sqcup \fc':f$ through which $f_\fc$ and $f_{\fc'}$ factor: $$\begin{tikzcd}
	& \fc'\arrow[ddr,bend left,"f_{\fc'}"] \arrow[d,swap,"i_{\fc'}"] & \\
	\fc\arrow[drr,bend right, swap,"f_\fc"] \arrow[r,"i_\fc"] & \fc\sqcup \fc' \arrow[dr,dashed,"f"] & \\
	& & \fz. 
\end{tikzcd}$$
\end{definition}

Now we make a more technical observation, namely that coproducts and products ``interact.''  Interaction in one direction always exists and is canonical  (e.g.\ \cref{prop:canonicalMapCoproduct2Product}).  Interaction in the other direction, if it exists, generally uses category-specific information (e.g.\ \cref{prop:coproductProductCommuteInSet}). 
	\begin{prop}\label{prop:canonicalMapCoproduct2Product}
		Let $\fC$ be a category with finite products and coproducts, $J$, $\{K_j\}_{j\in J}$ be  finite sets, and $\left\{\fc_{k}^{j}\right\}_{k\in K_j, j\in J}$ an indexed collection of objects in $\fC$. Then there is a canonical map \begin{equation}\label{eq:canonicalMapCoproduct2Product} \Omega:\discats_{(k_{j}) \in \bigsqcap_{j\in J} K_{j}}\discatp_{j\in J} \fc_{k_{j}^j} \dashrightarrow \discatp_{j\in J}\discats_{k\in K_j} \fc_{k}^j.\end{equation} 
	\end{prop}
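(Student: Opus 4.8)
The plan is to build $\Omega$ by invoking two universal properties in succession: the universal property of the product (\cref{def:products}) on the target, followed by the universal property of the coproduct (\cref{def:coproducts}) on the source. Since $J$ and each $K_j$ are finite, the indexing set $\discatp_{j\in J}K_j$ is finite, so every product and coproduct appearing in \eqref{eq:canonicalMapCoproduct2Product} exists by the standing hypothesis on $\fC$. Throughout I would write $\vec{k}=(k_j)_{j\in J}$ for a typical element of $\discatp_{j\in J}K_j$, and abbreviate the inner product $P_{\vec k}\defeq\discatp_{j\in J}\fc_{k_j}^j$ (the summand of the source indexed by $\vec k$) and the inner coproduct $S_\ell\defeq\discats_{k\in K_\ell}\fc_k^\ell$ (the factor of the target indexed by $\ell$).

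First I would fix a tuple $\vec k$ and construct a morphism $\omega_{\vec k}\colon P_{\vec k}\to\discatp_{\ell\in J}S_\ell$ into the target product. By the universal property of the product it suffices to specify, for each $\ell\in J$, a morphism $P_{\vec k}\to S_\ell$. For this I take the composite $\iota_{k_\ell}^\ell\circ p_\ell$, where $p_\ell\colon P_{\vec k}\to\fc_{k_\ell}^\ell$ is the canonical projection (\cref{notation:projForProduct}) onto the $\ell$-th factor, and $\iota_{k_\ell}^\ell\colon\fc_{k_\ell}^\ell\to S_\ell$ is the coproduct inclusion of the summand indexed by the $\ell$-th coordinate $k_\ell\in K_\ell$ of $\vec k$. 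The universal property then yields a unique $\omega_{\vec k}$ whose $\ell$-th component equals $\iota_{k_\ell}^\ell\circ p_\ell$ for every $\ell$.

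Next I would let $\vec k$ vary. The family $\{\omega_{\vec k}\}$ is indexed by $\vec k\in\discatp_{j\in J}K_j$, with $\omega_{\vec k}$ emanating from the summand $P_{\vec k}$ of the source coproduct $\discats_{\vec k}P_{\vec k}$. By the universal property of the coproduct there is then a unique morphism $\Omega\colon\discats_{\vec k}P_{\vec k}\to\discatp_{\ell\in J}S_\ell$ satisfying $\Omega\circ\lambda_{\vec k}=\omega_{\vec k}$ for every $\vec k$, where $\lambda_{\vec k}$ denotes the coproduct inclusion of $P_{\vec k}$. This $\Omega$ is the desired map \eqref{eq:canonicalMapCoproduct2Product}.

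The construction is canonical in the sense that every ingredient---the projections $p_\ell$, the inclusions $\iota_{k_\ell}^\ell$ and $\lambda_{\vec k}$, and the two mediating morphisms $\omega_{\vec k}$ and $\Omega$---is pinned down uniquely by a universal property, with no arbitrary choices made. Accordingly, I expect no genuine obstacle here; the only point demanding care is index bookkeeping, in particular remembering that for a fixed tuple $\vec k$ the $\ell$-th component map uses precisely the inclusion determined by the $\ell$-th coordinate $k_\ell$ of $\vec k$. It is worth noting that $\Omega$ is exactly the distributivity comparison from a coproduct of products to a product of coproducts; the reverse comparison is not canonical in general and requires category-specific input, as the excerpt flags for $\sF{Set}$.
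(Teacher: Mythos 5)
Your construction is correct and is essentially the one in the paper: the component maps $\iota_{k_\ell}^\ell\circ p_\ell$ are exactly the paper's $\Omega^{j'}_{(k'_j)}\defeq i^{j'}_{k'_{j'}}\circ p^{j'}_{(k'_j)}$, and your order of assembly (product universal property inside each summand first, then the coproduct universal property over tuples) is precisely the paper's secondary construction of the map it calls $\omega$. The paper's primary construction applies the universal properties in the opposite order (coproduct first within each factor, then product over $j'$) and then verifies $\omega=\Omega$; that extra verification is not needed to establish existence of a canonical map, so your single construction suffices.
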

	Compare with a more general statement for limits and colimits in \cite[Lemma 3.8.3]{riehlb}.

Because notation is unwieldy, we first provide the argument in a concrete case. The general version is very similar. Let $J = \{a,b\}$, $K_a = \{1,2,3\}$, $K_b = \{1,2\}$, and $\{\fc_1^a,\fc_2^a,\fc_3^a,\fc_1^b,\fc_2^b\}$ be a collection of objects in $\fC$.  Then \cref{prop:canonicalMapCoproduct2Product} says that there is canonical map \small $$\Omega:(\fc_1^a\times \fc_1^b)\sqcup (\fc_1^a\times \fc_2^b)\sqcup (\fc_2^a\times \fc_1^b)\sqcup(\fc_2^a\times \fc_2^b)\sqcup (\fc_3^a\times \fc_1^b)\sqcup(\fc_3^a\times \fc_2^b)\rightarrow\big(\fc_1^a\sqcup\fc_2^a\sqcup \fc_3^a\big)\times \big(\fc_1^b\sqcup\fc_2^b\big).$$
\normalsize 
We see how this map is constructed. Because the right-hand side is a product, the map $\Omega$ is defined uniquely by a pair of maps $\Omega^a$ and $\Omega^b$ from the left-hand side \textit{to} each component $(\fc_1^a\sqcup\fc_2^a\sqcup\fc_3^a)$ and $(\fc_1^b\sqcup\fc_2^b)$, respectively. Start with $\Omega^a$.  And because the left-hand side is a coproduct, $\Omega^a$ is defined uniquely by collection of maps $$\big\{\Omega^a_{(k_a,k_b)}:  \fc_{k_a}^a\times \fc_{k_b}^b\rightarrow (\fc_1^a\sqcup\fc_2^a\sqcup\fc_3^a)\big\}_{(k_a,k_b)\in \{1,2,3\}\times\{1,2\}}$$ from each component $\fc_{k_a}^a\times \fc_{k_b}^b$. We thus define $\Omega_{(k_a,k_b)}^a\defeq i_{k_a}\circ p^a$, where $p^a:\fc_{k_a}^a\times \fc_{k_b}^b\rightarrow \fc_{k_a}^a$ is the canonical projection of product, and $i_{k_a}:\fc_{k_a}^a\hookrightarrow \fc_1^a\sqcup\fc_2^a\sqcup\fc_3^a$ is the canonical injection of coproduct.  This collection of maps $\big\{\Omega^a_{(k_a,k_b)}\big\}$  induces unique map $\Omega^a$, and $\Omega^b$ is defined similarly.  

Before reproducing this argument  for arbitrary finite sets $J$, $\{K_j\}_{j\in J}$, consider that we could have first defined collection of maps $$\left\{\omega_{(k_a,k_b)}:\fc_{k_a}^a\times\fc_{k_b}^b\rightarrow \big(\fc_1^a\sqcup\fc_2^a\sqcup\fc_3^a\big)\times \big(\fc_1^b\sqcup\fc_2^b\big)\right\}_{(k_a,k_b)\in \{1,2,3\}\times\{1,2\}}$$ (using the universal property of coproduct) and \textit{then}  maps $\omega_{(k_a,k_b)}^a$, $\omega_{(k_a,k_b)}^b$ to each factor $(\fc_1^a\sqcup\fc_2^a\sqcup\fc_3^a)$ and $\fc_1^b\sqcup\fc_2^b$, respectively.  As we define these maps identically---i.e.\ $\omega_{(k_a,k_b)}^a \defeq i_{k_a}\circ p^a  = \Omega_{(k_a,k_b)}^a$---it turns out that the induced map from the coproduct of products to product of coproducts is identical as well.  
We now give the general construction in both ways and show that they are equal. 

	\begin{proof}[Proof of \cref{prop:canonicalMapCoproduct2Product}.]
	
	We start with notation. Let $(k_j)_{j\in J}\in \discatp_{j\in J} K_j$ denote a $J$-tuple of indices, where $k_j\in K_j$.  We will write $(k_j)$ for the tuple and $k_j$ for an element (index) of $K_j$. 
	
	A map \begin{equation}\label{eq:constructOmega} \Omega:\discats_{(k_j)\in \bigsqcap K_j}\discatp_{j\in J}\fc_{k_j}^j\longrightarrow \discatp_{j\in J}\discats_{k\in K_j} \fc_k^j\end{equation} is uniquely defined (c.f.\ \cref{def:products}) by collection of maps \begin{equation}\label{eq:constructOmega2} \left\{\Omega^{j'}:\discats_{(k_j)\in \bigsqcap K_j}\discatp_{j\in J}\fc_{k_j}^j\longrightarrow \discats_{k\in K_{j'}}\fc_k^{j'}\right\}_{j'\in J},\end{equation} since the the right-hand side of \eqref{eq:constructOmega} is a product. Similarly, for each $j'\in J$, the map $\Omega^{j'}$ is uniquely defined (\cref{def:coproducts}) by collection of maps \begin{equation}\label{eq:constructOmega3}
		\left\{\Omega^{j'}_{(k'_j)}:\discatp_{j\in J}\fc_{k'_j}^j\longrightarrow\discats_{k\in K_{j'}}\fc_k^{j'}\right\}_{(k'_j)\in \bigsqcap K_j}.\footnote{The prime in $k_j'$ modifies  $k$, not $j$.}
	\end{equation} 
	We set \begin{equation}\label{eq:constructOmega4}
	\Omega_{(k'_j)}^{j'}\defeq i^{j'}_{k'_{j'}}\circ p^{j'}_{(k'_j)},
	\end{equation} where $p^{j'}_{(k'_j)}:\discatp_{j\in J}\fc_{k'_j}^j\twoheadrightarrow \fc_{k'_{j'}}^{j'}$ is the canonical projection of product, and $i^{j'}_{k'_{j'}}:\fc_{k'_{j'}}^{j'}\hookrightarrow\discats_{k\in K_{j'}}\fc_k^{j'}$ is the canonical inclusion of coproduct.\footnote{Notice that the \textit{super}script of $p$ and the \textit{sub}script of $i$ indicate the canonical map; the subscript of $p$ and superscript of $i$ only play the role of  distinguishing the domains and codomains.}   Having thus defined collection \begin{equation}\label{eq:constructOmega5}
	\left\{\Omega^{j'}_{(k'_j)}:\discatp_{j\in J}\fc_{k'_j}^j\longrightarrow\discats_{k\in K_{j'}}\fc_k^{j'}\right\}_{j'\in J, \,(k'_j)\in \bigsqcap K_j},
	\end{equation}
	we conclude there is a unique (canonical) map $\Omega$ in  \eqref{eq:constructOmega}.  This map $\Omega$ satisfies $p^{j'}\circ \Omega = \Omega^{j'}$, where $p^{j'}:\discatp_{j\in J}\discats_{k\in K_j}\fc_k^j\rightarrow\discats_{k\in K_{j'}} \fc_k^{j'}$ is the canonical projection, and $\Omega^{j'}$ satisfies $\Omega_{(k'_j)}^{j'} = \Omega^{j'} \circ i_{(k'_j)}$, where $i_{(k'_j)}:\discatp_{j\in J}\fc_{k'_j}^j\hookrightarrow\discats_{(k_j)\in \bigsqcap K_j}\discatp_{j\in J} \fc_{k_j}^j$ is the canonical inclusion of coproduct. 
	
	Now, we begin with the universal property of coproduct, and then use the universal property of product.  A  map \begin{equation}\label{eq:constructVarOmega0} \omega:\discats_{(k_j)\in \bigsqcap K_j}\discatp_{j\in J}\fc_{k_j}^j\longrightarrow \discatp_{j\in J}\discats_{k\in K_j}\fc_k^j
 \end{equation}
 is uniquely defined (\cref{def:coproducts}) by collection of maps \begin{equation}\label{eq:constructVarOmega1}\left\{\omega_{(k'_j)}:\discatp_{j\in J} \fc_{k'_j}^j\longrightarrow\discatp_{j\in J}\discats_{k\in K_j}\fc_k^j\right\}_{(k'_j)\in \bigsqcap K_j}.\end{equation}
	Similarly, for each $(k'_j)\in \discatp_{j\in J}K_j$, the map $\omega_{(k'_j)}$ is uniquely defined (\cref{def:products}) by collection of maps \begin{equation}\label{eq:constructVarOmega2}
 	\left\{\omega_{(k'_j)}^{j'}:\discatp_{j\in J}\fc_{k'_j}^j\longrightarrow\discats_{k\in K_{j'}}\fc_k^{j'}\right\}_{j'\in J},
 \end{equation}
which we also (c.f.\ \eqref{eq:constructOmega4}) define by \begin{equation}\label{eq:constructVarOmega3}
	\omega_{(k'_j)}^{j'}\defeq  i^{j'}_{k'_{j'}}\circ p^{j'}_{(k'_j)} = \Omega_{(k'_j)}^{j'}.
\end{equation}
Again, the collection 
\begin{equation}\label{eq:constructVarOmega4}
	 	\left\{\omega_{(k'_j)}^{j'}:\discatp_{j\in J}\fc_{k'_j}^j\longrightarrow\discats_{k\in K_{j'}}\fc_k^{j'}\right\}_{j'\in J,\, (k'_j)\in \bigsqcap K_j}
\end{equation}
	uniquely defines the map $\omega$ in \eqref{eq:constructVarOmega0}, and we have equalities $\omega\circ i_{(k'_j)} = \omega_{(k'_j)}$ and $p^{j'}\circ \omega_{(k'_j)} = \omega_{(k'_j)}^{j'}$. 
	
	Now we verify that $\omega = \Omega$. We use the following diagram as reference: \begin{equation}
		\begin{tikzcd}[column sep = huge, row sep = large]
			\discats_{(k_j)\in \bigsqcap K_j}\discatp_{j\in J} \fc_{k_j}^j \arrow[d,shift right, swap,"\omega"]\arrow[d,shift left, "\Omega"] & \discatp_{j\in J}\fc_{k'_j}^j\arrow[r,"p_{(k'_{j})}^{j'}"]\arrow[l,hook ,swap,"i_{(k'_j)}"] \arrow[d, shift right, swap, "\omega_{(k'_j)}^{j'}"]\arrow[d,shift left, "\Omega_{(k'_j)}^{j'}"] & \fc_{k'_{j'}}^{j'}\arrow[dl,hook,"i_{k'_{j'}}^{j'}"] \\
			\discatp_{j\in J}\discats_{k\in K_j}\fc_k^j \arrow[r,"p^{j'}"] & \discats_{k\in K_{j'}}\fc_k^{j'}. & 
		\end{tikzcd}
	\end{equation} Since $\sF{cod}(\omega) = \sF{cod}(\Omega)$ are both products, it suffices to show that $p^{j'}\circ \omega = p^{j'}\circ \Omega$ for every $j\in J$, where $p^{j'}:\discatp_{j\in J}\discats_{k\in K_j}\fc_k^j\rightarrow\discats_{k\in K_{j'}}\fc_k^{j'}$ is the canonical projection of product. And since $\sF{dom}(p^{j'}\circ \omega)=\sF{dom}(p^{j'}\circ\Omega)$ are both coproducts, it suffices to show that $(p^{j'}\circ\omega)\circ i_{(k'_j)} = (p^{j'}\circ\Omega)\circ i_{(k'_j)}$ where $i_{(k'_j)}:\discatp_{j\in J}\fc_{k'_j}^j\hookrightarrow\discats_{(k_j)\in \bigsqcap K_j}\discatp_{j\in J}\fc_{k_j}^j$ is the canonical inclusion of the coproduct.  But by construction  of $\omega$ and $\Omega$ (c.f.\ \eqref{eq:constructOmega4} and \eqref{eq:constructVarOmega3}), $$p^{j'}\circ\omega\circ i_{(k'_j)}  = \omega_{(k'_j)}^{j'} = \Omega_{(k'_j)}^{j'} = p^{j'}\circ\Omega\circ i_{(k'_j)},$$  for all $j'\in J$ and $(k'_j)_{j\in J}\in \discatp_{j\in J} K_j$, proving that $\omega = \Omega$. 
	\end{proof}

	\begin{prop}\label{prop:coproductProductCommuteInSet}
	If $\fC$ is $\sF{Set}$, then the map $\Omega$ in \eqref{eq:canonicalMapCoproduct2Product} is a bijection. 
\end{prop}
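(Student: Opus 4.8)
The plan is to unwind both sides of \eqref{eq:canonicalMapCoproduct2Product} into explicit sets of tuples and then exhibit a two-sided inverse to $\Omega$ by hand. In $\sF{Set}$ the product $\discatp$ is the cartesian product and the coproduct $\discats$ is disjoint union, so an element of the domain $\discats_{(k_j)\in\bigsqcap K_j}\discatp_{j\in J}\fc_{k_j}^j$ is a pair $\big((k_j)_{j\in J},(x_j)_{j\in J}\big)$ consisting of a choice of index tuple $(k_j)\in\discatp_{j\in J}K_j$ (naming the summand) together with a tuple $(x_j)$ where $x_j\in\fc_{k_j}^j$. An element of the codomain $\discatp_{j\in J}\discats_{k\in K_j}\fc_k^j$ is a tuple $(y_j)_{j\in J}$ in which each $y_j\in\discats_{k\in K_j}\fc_k^j$ lies in exactly one summand, hence carries a well-defined label $k_j\in K_j$ and an underlying element $x_j\in\fc_{k_j}^j$ with $y_j=i^j_{k_j}(x_j)$. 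By the component formula \eqref{eq:constructOmega4}, the map acts on elements by $\Omega\big((k_j),(x_j)\big)=\big(i^j_{k_j}(x_j)\big)_{j\in J}$.

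First I would define the candidate inverse $\Psi:\discatp_{j\in J}\discats_{k\in K_j}\fc_k^j\to\discats_{(k_j)\in\bigsqcap K_j}\discatp_{j\in J}\fc_{k_j}^j$ by reading off labels: given $(y_j)_{j\in J}$, let $k_j$ be the unique index with $y_j\in i^j_{k_j}(\fc_{k_j}^j)$ and let $x_j$ be its unique preimage, and set $\Psi\big((y_j)\big)\defeq\big((k_j),(x_j)\big)$. This is well defined precisely because in $\sF{Set}$ the canonical inclusions into a disjoint union have pairwise disjoint images and each is injective, so both the label $k_j$ and the element $x_j$ are uniquely recovered from $y_j$.

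Then I would verify $\Omega\circ\Psi=id$ and $\Psi\circ\Omega=id$ directly on elements, both of which are immediate from $\Omega\big((k_j),(x_j)\big)=\big(i^j_{k_j}(x_j)\big)_{j\in J}$ together with the defining property $y_j=i^j_{k_j}(x_j)$. Equivalently, one checks injectivity and surjectivity of $\Omega$: for injectivity, $i^j_{k_j}(x_j)=i^j_{k'_j}(x'_j)$ forces $k_j=k'_j$ and $x_j=x'_j$ for every $j$ (disjoint images plus injectivity of each inclusion), whence the two domain elements agree; for surjectivity, every $y_j$ lies in the image of some inclusion, so each $(y_j)$ is hit.

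The entire content — and the only place anything category-specific enters — is the claim that disjoint unions in $\sF{Set}$ are genuinely disjoint: the images of $i^j_k$ for distinct $k$ do not meet, and each $i^j_k$ is injective. This is exactly what fails in a general category with products and coproducts, and is what makes $\Omega$ merely canonical there rather than invertible (cf. the discussion preceding \cref{prop:canonicalMapCoproduct2Product}). So the main, and essentially only, obstacle is to state this disjointness cleanly; everything else is bookkeeping with the index tuples $(k_j)\in\discatp_{j\in J}K_j$. As an alternative I could instead invoke that $\sF{Set}$ is cartesian closed, so each functor $\fc\times(-)$ preserves coproducts and finite products distribute over coproducts, yielding the bijection abstractly; but since the statement is so elementary I would prefer to write down the explicit inverse $\Psi$ above.
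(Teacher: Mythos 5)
Your proof is correct and is essentially the paper's argument: the paper's Lemma \ref{lemma:sourceSet4Coproduct} constructs exactly your label-reading map (the source function $\fs$), and the inverse $\aleph$ it builds from $\fs$ is your $\Psi$; even your closing remark about cartesian closedness mirrors the paper's own follow-up remark. You are somewhat more explicit in checking the two composites are identities, which the paper leaves as "formal by construction."
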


First a lemma, which says essentially that each element in a coproduct of indexed sets in fact belongs to a particular set in the collection.

\begin{lemma}\label{lemma:sourceSet4Coproduct}
	Let $\{\fc_k\}_{k\in K}$ be an indexed collection of sets.  Then there is a well defined map $$\fs:\discats_{k\in K}\fc_k\xrightarrow{} K$$ satisfying $\fs\circ i_{k'} \equiv k'$, where $i_{k'}:\fc_{k'}\hookrightarrow\discats_{k\in K} \fc_k$ is the canonical inclusion of coproduct. 
\end{lemma}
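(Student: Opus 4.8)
The plan is to invoke the universal property of the coproduct directly (\cref{def:coproducts}). The key observation is that the index set $K$ is itself an object of $\sF{Set}$, so it may play the role of the test object $\fz$ in that universal property. Everything then follows from assembling the right family of maps out of the summands.

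First I would observe that for each index $k' \in K$ there is a \emph{constant} map $c_{k'} \colon \fc_{k'} \to K$ defined by $x \mapsto k'$ for every $x \in \fc_{k'}$. Crucially, all of these maps have the \emph{same} codomain $K$, so the family $\{c_{k'} \colon \fc_{k'} \to K\}_{k' \in K}$ is precisely a collection of morphisms out of the summands into a single object, which is the data the universal property consumes.

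Next, applying the universal property of the coproduct to this collection, there is a \emph{unique} map $\fs \colon \discats_{k \in K} \fc_k \to K$ through which each $c_{k'}$ factors along the canonical inclusion, i.e.\ satisfying $\fs \circ i_{k'} = c_{k'}$ for every $k' \in K$. Since $c_{k'}$ is by construction the constant map at $k'$, this equality reads $\fs \circ i_{k'} \equiv k'$, which is exactly the asserted identity, and uniqueness gives the ``well defined'' in the statement.

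I expect no real obstacle here; the only point worth flagging is the bookkeeping between the abstract and concrete descriptions. Under the standard model of the coproduct in $\sF{Set}$, namely $\discats_{k \in K} \fc_k = \{(x,k) \colon k \in K,\ x \in \fc_k\}$ with $i_{k'}(x) = (x,k')$, the induced map $\fs$ is simply projection onto the second coordinate, $\fs(x,k) = k$. I would mention this explicitly, since it makes well-definedness transparent and is the form in which the lemma is most convenient to apply later (e.g.\ in identifying which summand an element of the coproduct came from).
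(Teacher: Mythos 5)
Your proof is correct. The argument is the cleanest possible one: the family of constant maps $c_{k'}\colon \fc_{k'}\to K$, $x\mapsto k'$, all land in the single object $K$, so the universal property of the coproduct (\cref{def:coproducts}) hands you a unique $\fs$ with $\fs\circ i_{k'}=c_{k'}\equiv k'$, which is the statement verbatim.

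The paper takes a slightly different, more hands-on route: it first fixes an explicit model of the coproduct via an isomorphism $\eta\colon \discats_{k\in K}\fc_k\xrightarrow{\sim}\disu_{k\in K}\fc_k\times\{k\}$, then builds a map $\tilde{p}_2\colon \disu_{k\in K}\fc_k\times\{k\}\to K$, $(x,k)\mapsto k$, as a composite of an inclusion into a product with the second projection, and sets $\fs\defeq \tilde{p}_2\circ\eta$. Both constructions produce the same map (your closing remark that $\fs$ is ``projection onto the index coordinate'' in the standard model is exactly the paper's $\tilde{p}_2$), but your version gets uniqueness of $\fs$ for free from the universal property and avoids committing to a particular realization of the coproduct, whereas the paper's version makes the concrete formula $\fs(x,k)=k$ the primary object, which is the form it actually uses downstream (e.g.\ in \cref{prop:coproductProductCommuteInSet}, where one needs $i_{\fs(t)}^{-1}(t)\in\fc_{\fs(t)}$). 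Since you record the concrete description as well, your proof covers everything the paper's does. No gaps.
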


\begin{proof} 
	An explicit construction of coproduct in $\sF{Set}$ gives a unique isomorphism $$\discats_{k\in K}\fc_k \cong \disu_{k\in K} \fc_k\times \{k\}. $$ Call this map $\eta:\discats_{k\in K }\fc_k\xrightarrow{\sim} \disu_{k\in K} \fc_k\times \{k\}$.  There  is a map $\tilde{p}_2: \disu_{k\in K} \fc_k\times\{k\} \rightarrow \disu_{k\in K} \{k\} = K$ sending $(x,k)\mapsto k$, formally defined by composition $p_2\circ \i$, where $p_2:\left(\disu_{k\in K}\fc_k\right)\times \left(\disu_{k\in K}\{k\}\right)\rightarrow K$ is the canonical projection onto the second factor and $\i:\disu_{k\in K}\fc_k\times\{k\}\hookrightarrow \left(\disu_{k\in K}\fc_k\right)\times \left(\disu_{k\in K}\{k\}\right)$ is  inclusion.  There are canonical  inclusions $i_{k'}:\fc_{k'}\hookrightarrow\discats_{k\in K}\fc_k$ and $\tilde{i}_{k'}:\fc_{k'}\hookrightarrow\disu_{k\in K} \fc_k\times\{k\}$ such  that $\eta\circ i_{k'} = \tilde{i}_{k'}$ for each $k'\in K$.  
	
	We  define $\fs\defeq \tilde{p}_2 \circ \eta$ 	and immediately conclude   that $\fs \circ i_{k'} = (\tilde{p}_2\circ \eta)\circ i_{k'} =  \tilde{p}_2\circ \tilde{i}_{k'} \equiv k'$. \end{proof}
	
	We draw a few  observations from \cref{lemma:sourceSet4Coproduct}. The map \begin{equation}\label{eq:defP1ForSourceMap} \tilde{p}_1:\disu_{k\in K}\fc_k\times\{k\}\rightarrow\disu_{k\in K} \fc_k \end{equation} defined by sending  $\big((x,k)\in \fc_k\times\{k\}\big)\mapsto \big(x\in \fc_k\big)$ is a left inverse of $\tilde{i}_{k'}$, i.e.\ $\tilde{p}_1\circ \tilde{i}_{k'}=id_{\fc_{k'}}$.  Thus, for $t\in \discats_{k\in K}\fc_k$, we  have that \begin{equation} \tilde{p}_1\circ \eta(t) \in \fc_{\fs(t)},\end{equation} which implies further that \begin{equation}\label{eq:XYZrst} i_{\fs(t)}^{-1}(t) \in \fc_{\fs(t)}.\end{equation}

 \begin{proof}[Proof of \cref{prop:coproductProductCommuteInSet}] 
 We define a map $\aleph:\discatp_{j\in J}\discats_{k\in K_j}\fc_k^j\rightarrow \discats_{(k_{j})\in \bigsqcap K_j}\discatp_{j\in J}\fc_{k_{j}}^j$ as follows.  Let $(x_j)_{j\in J}\in \discatp_{j\in J} \discats_{k\in K_j}\fc_k^j$ be an arbitrary element. Then $x_{j'}=p^{j'}\big((x_j)_{j\in J}\big) \in \discats_{k\in K_{j'}}\fc_{k}^{j'}$ where 
 $p^{j'}:\discatp_{j\in J}\discats_{k\in K_j} \fc_{k}^{j}\rightarrow \discats_{k\in K_{j'}}\fc_{k}^{j'} $ is the canonical projection.

  Set $s_{j'}\defeq \fs(x_{j'})$ (\cref{lemma:sourceSet4Coproduct}).  Then $x_{j'} = i_{s_{j'}}^{-1}(x_{j'}) \in \fc_{s_{j'}}^{j'}$ (c.f.\ \eqref{eq:XYZrst}),  where $i_{s_{j'}}:\fc_{s_{j'}}^{j'}\hookrightarrow \discats_{k\in K_{j'}}\fc_{k}^{j'}$ is the canonical injection.\footnote{We implicitly identify element $x\in \fc_{\fs(x)}$ with its image $i_{\fs(x)}(x)\in \discats_{k\in K}\fc_k$.}  Thus we have    $(x_j)_{j\in J} \in \discatp_{j\in J}\fc_{s_j}^j$, and we set $\aleph\big((x_j)_{j\in J}\big)\defeq i_{(s_j)}\big((x_j)_{j\in J}\big)$, where    $i_{(s_{j})}: \discatp_{j\in J}\fc_{s_{j}}^{j}\hookrightarrow \discats_{(k_j)\in \bigsqcap K_j} \discatp_{j\in J} \fc_{k_j}^j$ is the canonical inclusion.

  Having defined the map $\aleph$, the verification that $\aleph$ and $\Omega$ are mutually inverse follows formally by construction (and we omit the details).  We conclude that $\Omega$ is a bijection. 
 \end{proof}
 \begin{remark}
 	We observe that constructing the map in the other direction was not canonical, and did not use universal properties, but did use ``structural'' properties of the ambient category, $\sF{Set}$.  Specifically, we needed to take elements to define the map $\aleph$ (compare with remark at the end of \cite[\S 3.8]{riehlb}). A more categorical proof of the same result uses the Yoneda embedding (\cref{prop:YonedaEmbedding}) and the structural fact that  $\sF{Set}$  is \textit{cartesian closed} (c.f. \cite[Proposition 8.6]{awodey}).  \end{remark}

 \begin{example}\label{ex:vectorSpacesProduct&Coproduct}
 Let $\{V_j\}_{j\in J}$ be a finite collection of finite dimension vector spaces. The category $\sF{Vect}$ of vector spaces has both coproducts $\disbos_{j\in J}V_j$  and products $\disp_{j\in J}V_j$. As sets, there is bijection $\Omega:\disbos_{j\in J}V_j \xrightarrow{\sim} \disp_{j\in J} V_j$ (\cref{prop:coproductProductCommuteInSet}).  In fact, $\Omega$ is linear since both projection $\disp_{j\in J}V_j\xrightarrow{p_{j'}}V_{j'}$ (mapping $(x_j)_{j\in J}\mapsto x_{j'}$) and inclusion $V_{j'}\xrightarrow{i_{j'}}\disbos_{j\in J}V_j$ (mapping $x_{j'}\mapsto \bigoplus_{j\in J}x_{j'}\d_{j,j'}$) are linear, where $$\left(\d_{j,k}\defeq\left\{\begin{array}{ll} 1 & \mbox{if}\; j = k \\ 0 & \mbox{else} \end{array}\right.\right)\in V_j.$$
 \end{example}

\subsubsection{Arrow Category}
We have seen that categories have objects and morphisms. Morphisms may themselves be considered objects, in another category.

\begin{definition}\label{def:arrowCategory}
	Let $\fC$ be a category. The \textit{arrow category} $\sF{Arrow(C)}$ has \begin{enumerate}
		\item objects $\fc\xrightarrow{f}\fc',$ which are  morphisms in $\fC$, and 
		\item morphisms $(\fc\xrightarrow{f}\fc')\xrightarrow{(\a,\a')}(\fd\xrightarrow{g}\fd')$  are pairs of morphisms $\fc\xrightarrow{\a}\fd,$ $\fc'\xrightarrow{\a'}\fd'$ in $\fC$ such that $\a'\circ f = g\circ \a$. 
	\end{enumerate}
	In other words, morphisms in $\sF{Arrow(C)}$ are commuting squares  \begin{equation}\label{eq:morphismInArrowCat} \begin{tikzcd}
	\fc\arrow[d,"f"] \arrow[r,"\a"] & \fd\arrow[d,"g"]\\ \fc'\arrow[r,"\a'"] & \fd'
\end{tikzcd}\end{equation} in $\fC$.  \end{definition}

\begin{remark}\label{remark:alternativeDefOfArrowCat}
We give an alternative equivalent definition of the arrow category which is useful in other contexts (e.g.\ the notion of a hybrid phase space in \cref{def:hybridPhaseSpace0}).

First we define a category $\two$ with objects $\two_0\defeq \{0,1\}$  and a unique morphism $0\xrightarrow{e_{1,0}}1$ between $0$ and $1$ (\cite[\S 5.1]{riehlb}) . We then define \begin{equation}\label{eq:defArrowCatSecondDef} \sF{Arrow(C)}\defeq \fC^{{\two}^{op}}\end{equation} as the functor category from $\two^{op}$ to $\fC$ (\cref{def:functorCategory}). An object $\fa\in \fC^{\two^{op}}$ in this category is a functor $\fa:\two^{op}\rightarrow\fC$---realized as morphism $\fa(0)\xleftarrow{\fa(e_{1,0})} \fa(1)$ in $\fC$---and a morphism is a natural transformation, encoded in the commutative diagram \eqref{eq:morphismInArrowCat}.  These definitions are readily checked to be equivalent.\footnote{Defining this functor category in terms of the opposite category $\two^{op}$ is  intended to signal that $\sF{Arrow(C)}$ is a presheaf. As we make no further use of this observation, we would have been just as well to define $\sF{Arrow(C)}$ as the functor category $\fC^\two$.} When we wish to emphasize that $\sF{Arrow(C)}$ is the \textit{arrow} category of $\fC$, we may denote objects as $\fc\xrightarrow{f}\fc'$. Other times, our focus on  $\fA\defeq \sF{Arrow(C)}$ is as a category in its own right, and objects of $\fA$ may be denoted simply as $\fa\defeq\big(\sF{dom}(\fa)\xrightarrow{\fa}\sF{cod}(\fa)\big)$.
\end{remark}

We note a few simple facts about the arrow category. 
\begin{lemma}\label{fact:terminalArrowObject}
 	A terminal object $\fc_t\in \fC$ (\cref{def:terminalObject}) defines a terminal object $\fc_t\xrightarrow{id_{\fc_t}}\fc_t$ in $\sF{Arrow(C)}$. 
\end{lemma}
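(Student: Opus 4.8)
The plan is to unwind the definition of a terminal object (\cref{def:terminalObject}) directly inside the arrow category and to observe that the commuting-square condition comes for free from terminality in $\fC$. First I would fix an arbitrary object $\fc\xrightarrow{f}\fc'$ of $\sF{Arrow(C)}$ and spell out what a morphism $(\fc\xrightarrow{f}\fc')\xrightarrow{(\a,\a')}(\fc_t\xrightarrow{id_{\fc_t}}\fc_t)$ is: by \cref{def:arrowCategory} it consists of a pair of morphisms $\a:\fc\to\fc_t$ and $\a':\fc'\to\fc_t$ in $\fC$ subject to the commutativity relation $\a'\circ f = id_{\fc_t}\circ\a = \a$.

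Next I would invoke terminality of $\fc_t$ in $\fC$. Since $\fc_t$ is terminal, there is exactly one morphism $\fc\to\fc_t$ and exactly one morphism $\fc'\to\fc_t$; hence the two candidate components $\a$ and $\a'$ are each forced to be the respective unique maps, with no freedom of choice. This simultaneously establishes existence of the two required components and uniqueness of the pair $(\a,\a')$.

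The key step, and the only point that might superficially look like an obstacle, is checking that this forced pair really is a morphism in $\sF{Arrow(C)}$, i.e.\ that the defining square actually commutes. But $\a'\circ f$ and $\a$ are both morphisms with domain $\fc$ and codomain $\fc_t$, and terminality of $\fc_t$ says there is only one such morphism; therefore $\a'\circ f = \a$ holds automatically. Thus the square-commutation requirement is not an extra constraint to verify but a direct consequence of terminality, so the candidate admits exactly one incoming morphism from every object.

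Combining these observations, every object of $\sF{Arrow(C)}$ admits a unique morphism to $\fc_t\xrightarrow{id_{\fc_t}}\fc_t$, which is precisely the assertion that this object is terminal (\cref{def:terminalObject}). I expect no genuine difficulty: the argument is essentially bookkeeping, and the one thing worth stating explicitly in the write-up is that the commutativity condition dissolves because any two parallel morphisms into a terminal object must coincide.
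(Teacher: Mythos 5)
Your proof is correct and follows essentially the same route as the paper's: both arguments take the unique maps $\fc\to\fc_t$ and $\fc'\to\fc_t$ supplied by terminality and observe that the square commutes automatically because $\a'\circ f$ and $\a$ are parallel morphisms into the terminal object. Nothing is missing.
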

\begin{proof}
	Let $\fc\xrightarrow{f}\fc'$ be a morphism in $\fC$.  Terminality of $\fc_t$ implies that there are morphisms  $\fc\xrightarrow{f_\fc}\fc_t$,  $\fc'\xrightarrow{f_{\fc'}}\fc_t$, and that they are unique. But  $f_{\fc'}\circ f$ is also a morphism from $\fc$  to $\fc_t$, and therefore is equal to $f_\fc= id_{\fc_t} \circ f_\fc$.  Thus these morphisms assemble to 
	 define a  unique morphism $(f_\fc,f_{\fc'}):f\rightarrow id_{\fc_t}$ in $\sF{Arrow(C)}$. 
\end{proof}

\begin{prop}\label{prop:catHasProductsImpliesArrowsDoesToo}
	Let $\fA\defeq \sF{Arrow(C)}$ be the arrow category of $\fC$, a category with products.  Then $\fA$ has products as well. 
\end{prop}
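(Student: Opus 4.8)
The plan is to build the product in $\fA\defeq\sF{Arrow(C)}$ out of the ambient products in $\fC$, exactly mirroring the fact that $\fA = \fC^{\two^{op}}$ has limits computed ``pointwise.'' Given two objects $f\colon\fc\to\fc'$ and $g\colon\fd\to\fd'$ of $\fA$, I first form the products $\fc\times\fd$ and $\fc'\times\fd'$ in $\fC$ and apply the product bifunctor (\cref{prop:productBifunctor}) to the pair $(f,g)$ to obtain a morphism $f\times g\colon\fc\times\fd\to\fc'\times\fd'$. My candidate for the product object in $\fA$ is precisely this arrow $f\times g$. The two projection morphisms in $\fA$ will be the pairs of canonical projections $(p_\fc,p_{\fc'})\colon (f\times g)\to f$ and $(p_\fd,p_{\fd'})\colon (f\times g)\to g$ (\cref{notation:projForProduct}). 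To see these are legitimate morphisms of $\sF{Arrow(C)}$ (\cref{def:arrowCategory}) I only need the defining squares to commute, i.e.\ $p_{\fc'}\circ(f\times g)=f\circ p_\fc$ and $p_{\fd'}\circ(f\times g)=g\circ p_\fd$; but these are exactly the relations characterizing the product morphism $f\times g$, so they hold by construction.

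Next I would verify the universal property. Let $h\colon\fz\to\fz'$ be an arbitrary object of $\fA$ together with morphisms $(\a,\a')\colon h\to f$ and $(\b,\b')\colon h\to g$ in $\fA$; unpacking, these are maps $\a\colon\fz\to\fc$, $\a'\colon\fz'\to\fc'$, $\b\colon\fz\to\fd$, $\b'\colon\fz'\to\fd'$ in $\fC$ satisfying $\a'\circ h=f\circ\a$ and $\b'\circ h=g\circ\b$. Applying the universal property of products in $\fC$ (\cref{def:products}) twice, I obtain a unique $\g\colon\fz\to\fc\times\fd$ with $p_\fc\circ\g=\a$, $p_\fd\circ\g=\b$, and a unique $\g'\colon\fz'\to\fc'\times\fd'$ with $p_{\fc'}\circ\g'=\a'$, $p_{\fd'}\circ\g'=\b'$. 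These are forced by the requirement that composing $(\g,\g')$ with the two projections recover $(\a,\a')$ and $(\b,\b')$, which already settles uniqueness of the mediating pair.

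The one genuine verification, and the step I expect to be the main obstacle, is that the pair $(\g,\g')$ is actually a morphism of $\fA$, i.e.\ that the square $(f\times g)\circ\g=\g'\circ h$ commutes in $\fC$. Both sides are maps $\fz\to\fc'\times\fd'$, so by the uniqueness clause of \cref{def:products} it suffices to check agreement after postcomposing with $p_{\fc'}$ and with $p_{\fd'}$. For $p_{\fc'}$ I compute $p_{\fc'}\circ(f\times g)\circ\g=f\circ p_\fc\circ\g=f\circ\a=\a'\circ h=p_{\fc'}\circ\g'\circ h$, where the middle equality uses that $(\a,\a')$ is an $\fA$-morphism; the symmetric computation with $p_{\fd'}$ uses that $(\b,\b')$ is an $\fA$-morphism. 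Hence $(f\times g)\circ\g=\g'\circ h$, so $(\g,\g')\colon h\to(f\times g)$ is a well-defined morphism of $\sF{Arrow(C)}$, and it is the unique one factoring $(\a,\a')$ and $(\b,\b')$ through the projections. This establishes the universal property and shows $f\times g$ is the product of $f$ and $g$ in $\fA$. (One could instead simply invoke that the functor category $\fC^{\two^{op}}$ inherits products pointwise from $\fC$, but the hands-on argument above keeps the construction explicit and self-contained.)
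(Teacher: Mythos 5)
Your proof is correct and follows essentially the same route as the paper: define $f\times g$ via functoriality of $\times$, take the canonical projections as the projection morphisms in $\sF{Arrow(C)}$, and induce the mediating morphism componentwise from the universal property in $\fC$. The only difference is that you explicitly verify the square $(f\times g)\circ\g=\g'\circ h$ by postcomposing with projections, a step the paper dismisses with ``it follows easily''; your version is a welcome filling-in of that detail.
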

\begin{proof}
	Let $\fc\xrightarrow{f}\fc'$ and $\fd\xrightarrow{g}\fd'$ be morphisms in $\fC$.  The product $f\times g\in \fA$ is defined by functoriality of $\times$ (\cref{prop:productBifunctor}). We recall from \eqref{diagram:productIsomorphism} the commuting diagram \begin{equation}
		\begin{tikzcd}
		\fc\times \fd\arrow[r,"p_\fd"]\arrow[d,"p_\fc"]\arrow[dr,dashed,"f\times g"] & \fd\arrow[dr,"g"]& \\
		\fc\arrow[dr,"f"] & \fc'\times \fd'\arrow[r,"p_{\fd'}"]\arrow[d,"p_{\fc'}"] & \fd'\\
		& \fc'. & 
		\end{tikzcd}
	\end{equation} The object $f\times g$ is terminal in $\fA$ with respect to maps to $f$ and $g$.  Indeed, a map $\a:h\rightarrow f$ consists of a pair of maps $\sF{dom}(h)\xrightarrow{\a_0} \fc$ and $\sF{cod}(h)\xrightarrow{\a_1}\fc'$  in $\fC$, and similarly a map $\b:h\rightarrow  g$ is pair $\sF{dom}(h)\xrightarrow{\b_0} \fd,$ and $\sF{cod}(h)\xrightarrow{\b_1}\fd'$.  The universal property of product  in $\fC$ implies there are  unique maps $\sF{dom}(h)\rightarrow \fc\times \fd$ and $\sF{cod}(h)\rightarrow \fc'\times \fd'$, and they factor through  projection.  It follows easily that  $h\rightarrow f\times g$ is a map in $\fA$.
\end{proof}
\begin{remark}
	\label{def:isoArrowCategory}	\label{remark:isoArrowCategory}
We interpret the notion of isomorphism in a category (\cref{def:isomorphism}) for the arrow category. 
	 Two objects $f,f'\in \sF{Arrow(C)}_0$ are isomorphic if there are isomorphisms $\a_0:\sF{dom}(f)\rightarrow\sF{dom}(f')$ and $\a_1:\sF{cod}(f)\rightarrow \sF{cod}(f')$ for which the diagram $$\begin{tikzcd}
	\sF{dom}(f)\arrow[d,"f"]\arrow[r,"\a_0"] & \sF{dom}(f')\arrow[d,"f'"] \\ 
	\sF{cod}(f)\arrow[r,"\a_1"] & \sF{cod}(f')
\end{tikzcd}$$ commutes. This  is  an isomorphism in $\sF{Arrow(C)}$ because the  inverse diagram $$\begin{tikzcd}
	\sF{dom}(f')\arrow[r,"\a_0^{-1}"]\arrow[d,"f'"]& \sF{dom}(f)\arrow[d,"f"] \\
	\sF{cod}(f')\arrow[r,"a_1^{-1}"] & \sF{cod}(f) 
\end{tikzcd}$$ also commutes. Indeed, $f'\circ \a_0 = \a_1\circ f$ implies that  $ f'\circ \a_0\circ \a_0^{-1} = \a_1\circ f \circ \a_0^{-1}$ which implies that  $\a_1^{-1} \circ f' = \a_1^{-1}\circ \a_1\circ f \circ \a_0^{-1}$.\end{remark}

\begin{definition}\label{def:productPreservingFunctor}
	We say that a functor $\Fc:\fC\rightarrow\fD$ is \textit{product preserving} if for each pair of objects $\fc,\,\fc'\in \fC$, there is isomorphism $\Fc \fc\times \Fc \fc' \cong \Fc(\fc\times \fc')$, natural in each factor $\fc, \fc'$. 
\end{definition}
\begin{lemma}\label{lemma:arrowProductPreservingToo}
	A product preserving functor $\Fc:\fC\rightarrow\fD$  extends to a  product preserving functor $\Fc_*:\sF{Arrow(C)\rightarrow Arrow(D)}$.  
\end{lemma}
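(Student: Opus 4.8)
The plan is to build $\Fc_*$ by applying $\Fc$ objectwise and morphismwise, check that this is a functor, and then extract product preservation directly from the product-preserving natural isomorphism of $\Fc$.

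First I would define $\Fc_*$ on objects: an object $\fc\xrightarrow{f}\fc'$ of $\sF{Arrow(C)}$ is a morphism of $\fC$, so I set $\Fc_* f\defeq\Fc f$, regarded as the object $\Fc\fc\xrightarrow{\Fc f}\Fc\fc'$ of $\sF{Arrow(D)}$. On a morphism $(\a,\a')\colon f\to g$, which by \cref{def:arrowCategory} is a commuting square $\a'\circ f=g\circ\a$, I set $\Fc_*(\a,\a')\defeq(\Fc\a,\Fc\a')$. This pair is a morphism of $\sF{Arrow(D)}$ because \emph{functoriality on composition} (\cref{remark:functoriality}) gives $\Fc\a'\circ\Fc f=\Fc(\a'\circ f)=\Fc(g\circ\a)=\Fc g\circ\Fc\a$, so the image square commutes. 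Functoriality of $\Fc_*$ is then immediate and componentwise: the identity on $f$ is $(id_{\sF{dom}(f)},id_{\sF{cod}(f)})$ and composition in $\sF{Arrow(C)}$ is componentwise, so $\Fc_*$ preserves identities and composition exactly because $\Fc$ does.

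For product preservation, recall from \cref{def:productPreservingFunctor} that $\Fc$ being product preserving supplies natural isomorphisms $\theta_{\fc,\fd}\colon\Fc\fc\times\Fc\fd\xrightarrow{\sim}\Fc(\fc\times\fd)$, natural in each factor. By \cref{prop:catHasProductsImpliesArrowsDoesToo} together with \cref{prop:productBifunctor}, the product in $\sF{Arrow(C)}$ of $f\colon\fc\to\fc'$ and $g\colon\fd\to\fd'$ is the product morphism $f\times g\colon\fc\times\fd\to\fc'\times\fd'$. Hence $\Fc_*(f\times g)=\Fc(f\times g)$ is the object $\Fc(\fc\times\fd)\to\Fc(\fc'\times\fd')$, while $\Fc_* f\times\Fc_* g=\Fc f\times\Fc g$ is the object $\Fc\fc\times\Fc\fd\to\Fc\fc'\times\Fc\fd'$. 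To exhibit these as isomorphic objects of $\sF{Arrow(D)}$ (via \cref{remark:isoArrowCategory}) using the pair $(\theta_{\fc,\fd},\theta_{\fc',\fd'})$, I must verify that the square
$$\begin{tikzcd}
\Fc\fc\times\Fc\fd \arrow[r,"\theta_{\fc,\fd}"]\arrow[d,swap,"\Fc f\times\Fc g"] & \Fc(\fc\times\fd)\arrow[d,"\Fc(f\times g)"]\\
\Fc\fc'\times\Fc\fd'\arrow[r,"\theta_{\fc',\fd'}"] & \Fc(\fc'\times\fd')
\end{tikzcd}$$
commutes. But this is precisely the naturality square of $\theta$ evaluated at the pair $(f,g)$, and since naturality in each factor separately is equivalent to joint naturality of the bifunctor transformation, the square commutes. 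As $\theta_{\fc,\fd}$ and $\theta_{\fc',\fd'}$ are isomorphisms, this gives $\Fc_* f\times\Fc_* g\cong\Fc_*(f\times g)$, and naturality of this arrow-category isomorphism in $f$ and $g$ follows again from naturality of $\theta$.

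The main obstacle is not conceptual but bookkeeping: correctly identifying the product object $f\times g$ in $\sF{Arrow(C)}$ through \cref{prop:catHasProductsImpliesArrowsDoesToo}, and recognizing that the commuting square demanded by an isomorphism in the arrow category is \emph{exactly} the naturality square of the product-preserving isomorphism $\theta$. Once that identification is made, no new construction is required beyond $\Fc$'s own product preservation.
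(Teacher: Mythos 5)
Your proposal is correct and follows essentially the same route as the paper: define $\Fc_*$ componentwise, verify the image squares commute by functoriality, and then observe that the commuting square required for an isomorphism in $\sF{Arrow(D)}$ is exactly the naturality square of the product-preserving isomorphism, with naturality of the resulting arrow-category isomorphism again inherited from naturality of that isomorphism. The only difference is cosmetic (your $\theta$ points in the opposite direction to the paper's $\g$, and the paper spells out the final naturality check with an explicit cube diagram where you compress it to one sentence), but the argument is the same.
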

\begin{proof}
First, set notation $\fA_\fC\defeq \sF{Arrow(C)}$ and $\fA_\fD\defeq\sF{Arrow(D)}$.  On objects of $\fA_\fC$,  $\Fc_*:\fA_\fC\rightarrow\fA_\fD$ is  defined by \begin{equation}\label{eq:extendingFunctorToFunctorOnArrows} \Fc_*\big(\fc\xrightarrow{f}\fc'\big) \defeq   \Fc \fc\xrightarrow{\Fc f}\Fc\fc'.\end{equation}  That $\Fc$ is a functor guarantees this assignment is well-defined. For given a  morphism $(\a,\a'):(\fc\xrightarrow{f}\fc')\rightarrow (\fd\xrightarrow{g}\fd')$  in $\fA_\fC$,  we have that $g\circ\a = \a'\circ f$, and therefore   $\Fc(\a')\circ \Fc(f) = \Fc(\a'\circ f) = \Fc(g\circ \a) = \Fc(g)\circ \Fc(\a)$, again since $\Fc$ is a functor.  In other words, $(\Fc\a,\Fc\a'): \Fc_* f\rightarrow\Fc_* g$ is a morphism in $\fA_\fD$. Alternatively, since $\fA_\fC$ is itself a functor category $\fC^{\two^{op}}$ (c.f.\ \eqref{eq:defArrowCatSecondDef}), we automatically obtain a functor category $\fD^{\two^{op}}$ by post composition with $\Fc$: for object $\fa:\two\rightarrow\fC$ in $\fA_\fC$, we obtain  $\Fc_* \fa :\two^{op}\rightarrow\fD$  by $\Fc_* \fa \defeq  \Fc\circ \fa$. Morphisms in $\fC^{\two^{op}}$ are natural transformations, and they are sent to natural transformations in $\fD^{\two^{op}}$ because functors preserve diagrams (\cref{lemma:functorsPreserveDiagrams}).

We  now show that $\Fc_*(\fa\times \fb) \cong \Fc_*\fa\times \Fc_*\fb$ for $\fa,\fb\in (\fA_\fC)_0$.  By assumption (\cref{def:productPreservingFunctor}), there are  isomorphisms $(\g_{\fa,\fb})_{0}:\Fc(\sF{dom}(\fa)\times \sF{dom}(\fb))\xrightarrow{\sim}\Fc(\sF{dom}(\fa))\times \Fc(\sF{dom}(\fb))$ and $(\g_{\fa,\fb})_{1}:\Fc(\sF{cod}(\fa)\times \sF{cod}(\fb))\xrightarrow{\sim} \Fc(\sF{cod}(a))\times \Fc(\sF{cod}(b))$. Naturality of $\g$  \textit{means} that $$\begin{tikzcd}
	\Fc(\sF{dom}(\fa)\times \sF{dom}(\fb))\arrow[d,"\Fc(\fa\times \fb)"] \arrow[r,"(\g_{\fa{,}\fb})_{0}"] & \Fc\sF{dom}(\fa)\times \Fc\sF{dom}(\fb)\arrow[d,swap,"\Fc\fa\times \Fc\fb"] \\
	\Fc(\sF{cod}(\fa)\times \sF{cod}(\fb))\arrow[r,"(\g_{\fa{,}\fb})_{1}"] & \Fc\sF{cod}(\fa)\times \Fc\sF{cod}(\fb)
\end{tikzcd}$$ commutes.  Thus $\g_{\fa,\fb}:\Fc_*(\fa\times\fb)\rightarrow\Fc_*\fa\times \Fc_*\fb$ defines isomorphism $\Fc_*(\fa\times \fb)\cong \Fc_*\fa\times \Fc_*\fb $ in $\fA_\fD$ (\cref{remark:isoArrowCategory}).

Now we argue for naturality of the isomorphism.  Let $$\begin{array}{lll} (h_0,h_1):& (\sF{dom}(\fa)\xrightarrow{\fa}\sF{cod}(\fa)) &\rightarrow (\sF{dom}(\fc)\xrightarrow{\fc }\sF{cod}(\fc))\\(k_0,k_1): & (\sF{dom}(\fb)\xrightarrow{\fb}\sF{cod}(\fb)) & \rightarrow (\sF{dom}(\fd)\xrightarrow{\fd}\sF{cod}(\fd))\end{array}$$ be morphisms in $\fA_\fC$.  Then\small $$\big(\sF{dom}(\fa)\times\sF{dom}(\fb)\xrightarrow{\fa\times \fb }\sF{cod}(\fa)\times \sF{cod}(\fb) \big)\xrightarrow{h_0\times k_0,h_1\times k_1} \big(\sF{dom}(\fc)\times \sF{dom}(\fd)\xrightarrow{\fc\times \fd}\sF{cod}(\fc)\times\sF{cod}(\fd)\big)$$ \normalsize is a morphism in $\fA_\fC$ (\cref{prop:catHasProductsImpliesArrowsDoesToo}), and the  diagram\small $$
\begin{tikzcd}[ row sep = large]
	\Fc(\sF{dom}(\fa)\times \sF{dom}(\fb))\arrow[rr,near start,"(\g_{\fa{,}\fb})_0"]\arrow[dd,"\Fc(h_0\times k_0)"]\arrow[dr,"\Fc(\fa\times \fb)"] & & \Fc\sF{dom}(\fa)\times \Fc\sF{dom}(\fb)\arrow[dd,near start,swap,"\Fc h_0 \times \Fc k_0 "] \arrow[dr,"\Fc\fa\times \Fc\fb"] & \\
	& \Fc(\sF{cod}(\fa)\times \sF{cod}(\fb))\arrow[dd,near start,"\Fc(h_1\times k_1)"]\arrow[rr,near start,swap,"(\g_{\fa{,}\fb })_1"] & & \Fc\sF{cod}(\fa)\times F\sF{cod}(\fb)\arrow[dd,"\Fc h_1 \times \Fc k_1 "]\\
	\Fc(\sF{dom}(\fc)\times \sF{dom}(\fd))\arrow[rr,near start,"(\g_{\fc{,}\fd})_0"]\arrow[dr,"\Fc(\fc\times \fd)"] & & \Fc\sF{dom}(\fc)\times \Fc\sF{dom}(\fd)\arrow[dr,near start,"\Fc\fc\times \Fc\fd"] & \\
	& \Fc(\sF{cod}(\fc)\times \sF{cod}(\fd))\arrow[rr,"(\g_{\fc{,}\fd})_1"] & & \Fc\sF{cod}(\fc)\times \Fc\sF{cod}(\fd)\end{tikzcd}$$\normalsize is easily seen to  commute.  For example, $\Fc(\fc\times \fd) \circ \Fc(h_0\times k_0) = \Fc(h_1\times k_1)\circ \Fc(\fa\times \fb)$ and $\big(\Fc\fc\times \Fc\fd\big)\circ \big(\Fc h_0\times \Fc k_0\big) = \big(\Fc h_1\times \Fc k_1\big)\circ\big( \Fc\fa\times \Fc\fb\big) $ since $\Fc$ is a functor (\cref{lemma:functorsPreserveDiagrams}). The other four faces commute by naturality of $\g$. In short, the diagram $$\begin{tikzcd}[column sep = large, row sep = large]
	\Fc_*(\fa\times \fb)\arrow[r,"(\g_{\fa{,}\fb})"]\arrow[d,swap,"\big(\Fc(h_0\times k_0){,}\Fc(h_1\times k_1)\big)"] & \Fc_*\fa\times \Fc_*\fb\arrow[d,"\big(\Fc(h_0)\times \Fc(k_0){,}\Fc(h_1)\times \Fc(k_1)\big)"] \\
	\Fc_*(\fc\times \fd)\arrow[r,"(\g_{\fc{,}\fd})"]& \Fc_*\fc\times \Fc_*\fd 
\end{tikzcd}$$  commutes, proving naturality in $\fA_\fD$. 
 \end{proof}

\subsubsection{Yoneda Lemma and the Category of Elements}

\begin{theorem}[Yoneda lemma]\label{theorem:Yoneda}\label{lemma:YonedaLemma}
	Let $\fC$ be locally small (\cref{def:locallySmall}) and $\Fc:\fC\rightarrow\sF{Set}$  a covariant functor (\cref{def:functor}).   For object $\fc\in \fC_0$, there is bijection $\big\{\a:\fC(\fc,\cdot)\Rightarrow \Fc\big\} \cong \Fc\fc$ between the set of natural transformations from represented functor (\cref{def:functorRepresentable}) $\fC(\fc,\cdot)$ to $\Fc$ and the set $\Fc\fc$. This bijection is natural in both $\fc$ and $\Fc$. 
\end{theorem}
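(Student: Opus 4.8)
The plan is to exhibit an explicit pair of mutually inverse functions between the set of natural transformations $\{\a:\fC(\fc,\cdot)\Rightarrow\Fc\}$ and the set $\Fc\fc$, and then to verify the naturality in $\fc$ and in $\Fc$ separately. First I would define the \emph{evaluation} map $\Phi$ sending a natural transformation $\a$ to the element $\a_\fc(id_\fc)\in\Fc\fc$; that is, take the component $\a_\fc:\fC(\fc,\fc)\to\Fc\fc$ and apply it to the distinguished element $id_\fc$. In the other direction I would define $\Psi$ sending $x\in\Fc\fc$ to the transformation $\Psi(x)$ whose component at an object $\fc'$ is the function $\fC(\fc,\fc')\to\Fc\fc'$, $f\mapsto(\Fc f)(x)$. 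The whole theorem then reduces to checking that $\Psi(x)$ is natural, that $\Phi$ and $\Psi$ are inverse, and that the resulting bijection is natural in both slots.

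The core verifications go as follows. To see $\Psi(x)$ is a genuine natural transformation, I would take a morphism $\fc'\xrightarrow{h}\fc''$ and check that the naturality square $(\Fc h)\circ\Psi(x)_{\fc'}=\Psi(x)_{\fc''}\circ\fC(\fc,h)$ commutes, where $\fC(\fc,h)$ is post-composition by $h$; evaluated on $f\in\fC(\fc,\fc')$ this is exactly $(\Fc h)\big((\Fc f)(x)\big)=\big(\Fc(h\circ f)\big)(x)$, which holds by functoriality of $\Fc$ on composition (\cref{remark:functoriality}). For $\Phi\circ\Psi=id_{\Fc\fc}$, given $x$ I compute $\Phi(\Psi(x))=\Psi(x)_\fc(id_\fc)=(\Fc(id_\fc))(x)=id_{\Fc\fc}(x)=x$, using functoriality on identity. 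For $\Psi\circ\Phi=id$, given $\a$ I set $x=\a_\fc(id_\fc)$ and must show $(\Fc f)(x)=\a_{\fc'}(f)$ for every $f:\fc\to\fc'$; here I apply the naturality square of $\a$ itself to the morphism $f$, obtaining $(\Fc f)\circ\a_\fc=\a_{\fc'}\circ\fC(\fc,f)$, and evaluate both sides at $id_\fc$ to get $(\Fc f)(\a_\fc(id_\fc))=\a_{\fc'}(f\circ id_\fc)=\a_{\fc'}(f)$.

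Finally I would treat naturality of the bijection in the two arguments. For naturality in $\Fc$, a natural transformation $\b:\Fc\Rightarrow\Gc$ acts on the left by vertical composition $\a\mapsto\b\circ\a$ and on the right by $\b_\fc:\Fc\fc\to\Gc\fc$; one checks $\b_\fc(\Phi(\a))=\Phi(\b\circ\a)$ directly from the formula $\Phi(\a)=\a_\fc(id_\fc)$. For naturality in $\fc$, a morphism $g:\fc\to\fd$ induces the precomposition transformation $\fC(g,\cdot):\fC(\fd,\cdot)\Rightarrow\fC(\fc,\cdot)$ on one side and $\Fc g:\Fc\fc\to\Fc\fd$ on the other, and compatibility of $\Phi$ with these is again a short diagram chase from the explicit formulas, the key identity being $\a_\fd(g)=(\Fc g)(\a_\fc(id_\fc))$, which is an instance of the computation already carried out in the third core step.

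The main obstacle is that third step, $\Psi\circ\Phi=id$: it is the only place the full naturality hypothesis on $\a$ is invoked, and it relies on the slightly counterintuitive device of substituting the arbitrary morphism $f$ into its \emph{own} naturality square and then evaluating at $id_\fc$. Everything else is bookkeeping with functoriality, and the naturality-in-both-variables statements of the last paragraph, while notationally heavy, are routine chases once the explicit forms of $\Phi$ and $\Psi$ are in hand.
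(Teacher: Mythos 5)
Your proof is correct and is the standard argument: the paper itself does not write out a proof but defers to Riehl (Theorem 2.2.4), noting only that the bijection sends $\a$ to $\a_\fc(id_\fc)$ --- which is exactly your evaluation map $\Phi$, and the cited proof proceeds precisely via your $\Psi$, the inverse-checks, and the two naturality chases. No gaps.
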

The isomorphism sends a natural transformation $\big(\a:\fC(\fc,\cdot)\Rightarrow \Fc\fc\big)$ to $\a_\fc(id_\fc)\in \Fc\fc$. 
\begin{proof}
	See \cite[Theorem 2.2.4]{riehlb}.
\end{proof}

We recall the category of elements (\cite[\S2.4]{riehlb}): 
\begin{definition}\label{def:categoryOfElements}
The category $\disg_\fC \Fc$ of elements has \begin{enumerate}
	\item objects:  pairs $(\fc,x)$ with $\fc\in \fC_0$ an object in $\fC$ and $x\in \Fc\fc$, 
	\item morphisms: $(\fc,x)\xrightarrow{f}(\fd,y)$ with $\fc\xrightarrow{f}\fd\in \fC_1$ a morphism in $\fC$ such that $\Fc f(x) = y$. 
\end{enumerate} 	
\end{definition}

\begin{remark}\label{remark:projectFromCategoryOfElements}
There is forgetful functor $\Pi:\disg_\fC \Fc\rightarrow\fC$ which sends $$\big((\fc,x)\xrightarrow{f}(\fd,y)\big)  \rightsquigarrow \big(\fc\xrightarrow{f}\fd\big).$$
\end{remark}

The category of elements has a close connection to representability (\cref{def:functorRepresentable}):
\begin{prop}\label{prop:functorRepresentableIFFcategoryOfElementsHasUniversal}
	Let $\Fc:\fC\rightarrow\sF{Set}$ be a set valued covariant functor. Then $\Fc$ is representable if and only if the category of elements $\disg_\fC \Fc$ has an initial element.
\end{prop}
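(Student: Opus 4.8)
The plan is to reduce both conditions to the same statement about a single natural transformation, using the Yoneda lemma (\cref{lemma:YonedaLemma}) as the bridge. The key preliminary observation I would record is this: for a fixed object $\fc\in\fC_0$ and element $x\in\Fc\fc$, the assignment $\gamma_\fd(g)\defeq\Fc g(x)$ for $g\in\fC(\fc,\fd)$ defines a natural transformation $\gamma:\fC(\fc,\cdot)\Rightarrow\Fc$, with naturality following automatically from functoriality of $\Fc$ (functoriality on composition). Moreover $\gamma_\fc(id_\fc)=\Fc(id_\fc)(x)=x$, so $\gamma$ is precisely the transformation that the Yoneda bijection identifies with $x$. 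This gives a dictionary: natural transformations $\fC(\fc,\cdot)\Rightarrow\Fc$ are the same as elements of $\Fc\fc$, and the component of such a transformation at $\fd$ is the evaluation map $g\mapsto\Fc g(x)$.

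For the forward implication I would assume $\Fc$ representable, so there is a natural isomorphism $\gamma:\fC(\fc,\cdot)\Rightarrow\Fc$ for some $\fc$. Setting $x\defeq\gamma_\fc(id_\fc)$, naturality of $\gamma$ applied to an arbitrary $g:\fc\to\fd$ (chasing $id_\fc$ around the naturality square) yields $\gamma_\fd(g)=\Fc g(x)$. I then claim $(\fc,x)$ is initial in $\disg_\fC\Fc$: a morphism $(\fc,x)\to(\fd,y)$ is by \cref{def:categoryOfElements} exactly a $g\in\fC(\fc,\fd)$ with $\Fc g(x)=y$, i.e.\ with $\gamma_\fd(g)=y$, and since $\gamma_\fd$ is an isomorphism in $\sF{Set}$ (hence a bijection) there is exactly one such $g$. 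Thus a unique morphism exists from $(\fc,x)$ to every object, so $(\fc,x)$ is initial.

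For the converse I would assume $\disg_\fC\Fc$ has an initial element $(\fc,x)$ and define $\gamma:\fC(\fc,\cdot)\Rightarrow\Fc$ by the same formula $\gamma_\fd(g)\defeq\Fc g(x)$, already known to be a bona fide natural transformation by the preliminary observation. Initiality of $(\fc,x)$ says that for each object $\fd$ and each $y\in\Fc\fd$ there is a unique $g\in\fC(\fc,\fd)$ with $\Fc g(x)=y$; reading existence as surjectivity and uniqueness as injectivity of $\gamma_\fd$, each component is a bijection, hence an isomorphism in $\sF{Set}$. Therefore $\gamma$ is a natural isomorphism (\cref{def:naturalIso}, with \cref{fact:inverseOfNaturalIsoIsNatural} supplying the inverse transformation), and $\Fc$ is representable by \cref{def:functorRepresentable}.

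The only genuine subtlety, and the one place I would slow down, is keeping straight the two roles of the single equation $\Fc g(x)=y$: on the category-of-elements side it is the defining condition for a morphism $(\fc,x)\to(\fd,y)$, whereas on the representability side it is the component formula $\gamma_\fd(g)=y$. Once the Yoneda formula $\gamma_\fd(g)=\Fc g(x)$ is established, the equivalence between ``$\gamma_\fd$ is a bijection for every $\fd$'' and ``$(\fc,x)$ is initial'' is essentially a tautology, and no computationally hard step remains; the whole proof is an exercise in matching up definitions through Yoneda.
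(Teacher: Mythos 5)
Your proof is correct and is precisely the standard Yoneda-based argument; the paper itself gives no proof here but simply cites \cite[\S2.4]{riehlb}, where this same argument (the dictionary $\gamma_\fd(g)=\Fc g(x)$ translating bijectivity of each component into initiality of $(\fc,x)$) appears. Nothing to add.
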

\begin{proof}
	See \cite[\S2.4]{riehlb}.
\end{proof}

A useful lifting property (c.f.\ \cite[2.4.viii]{riehlb}): 
\begin{fact}\label{fact:uniqueLiftingCategoryOfElements}
	Let $\Fc:\fC\rightarrow\sF{Set}$ be covariant functor from locally small category $\fC$.  Then for any morphism $\fc\xrightarrow{f}\fd$ in $\fC$ and object $(\fc,x)\in \disg_\fC \Fc$ in the category of elements, there is unique morphism $(\fc,x)\xrightarrow{f}(\fd,y)$ such that $$\Pi\big((\fc,x)\xrightarrow{f}(\fd,y)\big) = \big(\fc\xrightarrow{f}\fd\big).$$
\end{fact}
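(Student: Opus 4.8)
The plan is to observe that this lifting property is essentially a restatement of the definition of the category of elements (\cref{def:categoryOfElements}): the lift exists because $\Fc f$ is a genuine function of sets, and it is unique because the ``element'' part of the target is pinned down by the morphism condition $\Fc f(x) = y$. I would therefore treat existence and uniqueness separately, each by directly unwinding the definitions, and I expect no genuine obstacle.

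First, for existence, given $\fc\xrightarrow{f}\fd$ in $\fC$ and $(\fc,x)\in \disg_\fC\Fc$, I would simply set $y\defeq \Fc f(x)$. Since $\Fc f:\Fc\fc\rightarrow\Fc\fd$ is a map of sets and $x\in \Fc\fc$, this defines an element $y\in \Fc\fd$, hence an object $(\fd,y)\in \disg_\fC\Fc$. By construction the pair consisting of the underlying arrow $f$ and this target satisfies the defining condition $\Fc f(x)=y$ for a morphism in the category of elements, so $(\fc,x)\xrightarrow{f}(\fd,y)$ is indeed a morphism. Applying the forgetful functor $\Pi$ (\cref{remark:projectFromCategoryOfElements}) then recovers $\fc\xrightarrow{f}\fd$ immediately, as required.

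For uniqueness, I would argue that any lift of $f$ with source $(\fc,x)$ is constrained on both ends: its underlying $\fC$-arrow is fixed to be $f$, and the underlying $\fC$-object of its target must be $\sF{cod}(f)=\fd$, so any such lift has the form $(\fc,x)\xrightarrow{f}(\fd,y')$ for some $y'\in \Fc\fd$. The only remaining freedom is the element $y'$, but the morphism condition of \cref{def:categoryOfElements} forces $y'=\Fc f(x)=y$; hence the lift coincides with the one produced above. The hard part is essentially nonexistent here, since the content is purely definitional --- the single point worth isolating is that the target element is determined by $\Fc f(x)$, which is exactly the requirement that the square ``commutes'' in $\sF{Set}$. (Conceptually this says that $\Pi$ is a discrete opfibration; cf.\ \cite[2.4.viii]{riehlb}.)
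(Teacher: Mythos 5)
Your proof is correct and follows essentially the same route as the paper: both arguments simply set $y\defeq \Fc f(x)$ and unwind \cref{def:categoryOfElements}, with existence coming from $\Fc f$ being a map of sets and uniqueness from the target element being forced by the morphism condition. Your treatment of uniqueness is slightly more explicit than the paper's, but there is no substantive difference.
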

\begin{proof} Morphisms  $(\fc,x)\xrightarrow{f} (\fd,y)$ in $\disg_\fC \Fc$ are \textit{defined} as morphisms $\fc\xrightarrow{f}\fd$ in $\fC$ with the additional condition that $\Fc f (x) = y$, so a morphism in $\fC$ determines a morphism in $\disg_\fC\Fc$.  Since $\Fc:\fC\rightarrow\sF{Set}$ is a functor, $\Fc f:\Fc\fc\rightarrow\Fc\fd$ is a map of sets, and  $\Fc f(x) \in \Fc \fd$.  Therefore, for $y\defeq \Fc f(x)$, $(\fc, x)\xrightarrow{f}(\fd,y)$ is  a morphism in $\disg_\fC \Fc$. \end{proof}

In \cref{prop:existenceAndUniquenessRepresentable}, we will see a concrete application of the category of elements  applied  to  dynamical systems. 

We state one last relevant Yoneda concept, the \textit{Yoneda embedding}:
\begin{prop}\label{prop:YonedaEmbedding}
	Let $\fC$ be a locally small category and define functor, the Yoneda embedding,  $$\i:\fC^{op}\hookrightarrow\sF{Set^C}$$  by $$\big(\fc\xrightarrow{f}\fc'\big)\rightsquigarrow \big(\fC(\fc,\cdot)\xleftarrow{f^*}\fC(\fc',\cdot)\big).$$ The functor $\i$ is \textit{fully faithful} (\cref{def:fullFunctor}).
\end{prop}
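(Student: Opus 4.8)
The plan is to deduce fully faithfulness directly from the Yoneda lemma (\cref{lemma:YonedaLemma}), which already packages the essential computation. By \cref{def:fullFunctor} I must show that for each pair of objects $\fc, \fc' \in \fC_0$ the action of $\i$ on morphisms
$$\i: \fC^{op}(\fc, \fc') \longrightarrow \sF{Set}^\fC\big(\i\fc, \i\fc'\big) = \big\{\a: \fC(\fc, \cdot) \Rightarrow \fC(\fc', \cdot)\big\}$$
is a bijection. The first step is merely bookkeeping: unwind the domain as $\fC^{op}(\fc, \fc') = \fC(\fc', \fc)$, so the claim becomes that $\i$ restricts to a bijection between $\fC(\fc', \fc)$ and the set of natural transformations $\fC(\fc, \cdot) \Rightarrow \fC(\fc', \cdot)$.

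Next I would invoke the Yoneda lemma with the covariant functor $\Fc \defeq \fC(\fc', \cdot): \fC \to \sF{Set}$ and the object $\fc$. It furnishes a bijection
$$\big\{\a: \fC(\fc, \cdot) \Rightarrow \fC(\fc', \cdot)\big\} \;\cong\; \Fc\fc = \fC(\fc', \fc),$$
sending a natural transformation $\a$ to the element $\a_\fc(id_\fc)$. This is precisely a bijection between the two sets identified above, running in the reverse direction to $\i$.

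The key step is to verify that this Yoneda bijection is inverse to $\i$, i.e.\ that $\i$ is nothing but the Yoneda correspondence read backwards. Given $g \in \fC(\fc', \fc)$ (a morphism $\fc \to \fc'$ in $\fC^{op}$), its image $\i(g) = g^*$ is the precomposition transformation whose component at an object $\fd$ sends $h \in \fC(\fc, \fd)$ to $h \circ g \in \fC(\fc', \fd)$. Evaluating the Yoneda map on $g^*$ then gives $(g^*)_\fc(id_\fc) = id_\fc \circ g = g$. Hence the composite of $\i$ with the Yoneda bijection is the identity on $\fC(\fc', \fc)$; since the latter map is already a bijection, $\i$ must be its two-sided inverse, and is therefore itself a bijection. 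As this holds for every pair $\fc, \fc'$, the functor $\i$ is fully faithful.

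The only genuine obstacle is bookkeeping the variance. Because $\i$ has source $\fC^{op}$ and produces precomposition (hence contravariant-looking) natural transformations, one must apply the \emph{covariant} Yoneda lemma to the correct represented functor $\fC(\fc', \cdot)$ and track the composition convention carefully so that $(g^*)_\fc(id_\fc)$ lands on $g$ rather than some twist of it. Once the directions are lined up, the explicit formula $\a \mapsto \a_\fc(id_\fc)$ does all the work, and no separate injectivity/surjectivity argument or further naturality check is required.
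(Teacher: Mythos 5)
Your proof is correct and follows essentially the same route as the paper's: both reduce full faithfulness to the Yoneda lemma (\cref{lemma:YonedaLemma}) applied to a represented functor. You go one useful step further than the paper by explicitly checking that the Yoneda bijection $\a\mapsto\a_\fc(id_\fc)$ is inverse to the action of $\i$ on morphisms via the computation $(g^*)_\fc(id_\fc)=g$ — the paper only asserts that the two hom-sets are in bijective correspondence, which strictly speaking leaves open whether $\i$ itself realizes that bijection.
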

\begin{proof}
	See \cite[Corollary 2.2.8]{riehlb}. Define $\Fc:\fC\rightarrow\sF{Set}$ by $\Fc\defeq \fC(\fc,\cdot)$, so $\Fc(\fc')= \fC(\fc,\fc')$. By Yoneda lemma (\cref{lemma:YonedaLemma}), there is bijection $$\fC(\fc,\fc')\cong \big\{\mbox{natural transformations}\;\a:\fC(\fc',\cdot)\Rightarrow \Fc(\cdot)\big\},$$ which says exactly that $\fC(\fc,\fc')$ is in bijective correspondence with the set $\big\{\fC(\fc',\cdot)\Rightarrow \fC(\fc,\cdot)\big\}$ of morphisms in the functor category $\sF{Set^C}$ (\cref{def:functorCategory}).
\end{proof}

\subsection{Double Categories}\label{subsection:doubleCat}

\subsubsection{Internal Category}
We start by defining pullback. 
\begin{definition}\label{def:pullback}
Let $\fc\xrightarrow{f}\fa$, $\fc'\xrightarrow{f'}\fa$ be two morphisms in category $\fC$.  The \textit{pullback}  is defined to be an object $\fc\times_\fa\fc' $ of $\fC$ \textit{terminal} with respect to pairs of maps $z\xrightarrow{g}\fc$, $z\xrightarrow{g'}\fc'$ such that $f'\circ g' = f\circ g$. In other words, for any object $\fz\in \fC$, the commuting of solid line diagram $\begin{tikzcd}
	\fz\arrow[dr,dashed]\arrow[drr, bend left,swap,"g'"]\arrow[ddr,bend right, "g"] & & \\
	& \fc\times_\fa\fc'\arrow[r,"p'"]\arrow[d,"p"] & \fc'\arrow[d,"f'"]\\
	& \fc\arrow[r,"f"] & \fa, 
\end{tikzcd}$ implies that there is a unique map $\fz\dashrightarrow \fc\times_\fa\fc$ through which $g$ and $g'$ factor. \end{definition}
The pull back $\fc\times_\fa\fc$ may be depicted in diagram form  by  $$\begin{tikzcd}[column sep = small]\fc\times_\fa\fc'\arrow[r,swap,pos = -1/5, "\big\lrcorner"]\arrow[d] & \fc'\arrow[d,"f'"] \\ \fc\arrow[r,"f"] & \fa.
\end{tikzcd}$$

If $\fC$ is concrete (\cref{remark:concreteCategory}),  the pullback looks like the set $$\fc\times_\fa\fc'= \big\{(x,x')\in \fc\times \fc':\, f(x) = f'(x')\big\}.$$

\begin{definition}\label{def:internalCat}
	Let $\fC$ be a category.  A category $\Ab$ \textit{internal to} $\fC$ consists of objects $\Ab_0, \Ab_1\in \fC_0$ (called the \textit{object of objects} and \textit{object of morphisms}, respectively) with source and target morphisms  $\begin{tikzcd}
		\Ab_1\arrow[r,shift left,"\scS"]\arrow[r,shift right,swap,"\scT"] & \Ab_0
	\end{tikzcd}$, unit morphism $\scU:\Ab_0\rightarrow\Ab_1$,  and composition morphism \newline $\scC:\Ab_1\times_{\Ab_0}\Ab_1\rightarrow\Ab_1,$ where the pullback (\cref{def:pullback}) arises from diagram 
	$$\begin{tikzcd}
	\Ab_1\times_{\Ab_0} \Ab_1\arrow[r,swap,pos = -1/5, "\big\lrcorner"]\arrow[r,"p_2"]\arrow[d,"p_1"] & \Ab_1\arrow[d,"\scT"] \\ \Ab_1\arrow[r,"\scS"] & \Ab_0.
\end{tikzcd}$$ 

 We require morphisms $\scS$, $\scT$, $\scU$, and $\scC$ to make the following diagrams commute $$\begin{array}{lclc}
	1. & \begin{tikzcd}
	\Ab_0\arrow[r,"\scU"]\arrow[d,"\scU"]\arrow[dr,"id_{\Ab_0}"]  & \Ab_1\arrow[d,"\scT"]\\
	\Ab_1\arrow[r,"\scS"] & \Ab_0
\end{tikzcd} & 
2. & \begin{tikzcd}[column sep = large]
\Ab_1\times_{\Ab_0}\Ab_1\times_{\Ab_0}\Ab_1\arrow[r,"\scC\times_{\Ab_0}id_{\Ab_1}"] \arrow[d,"id_{\Ab_1}\times \scC"] & \Ab_1\times_{\Ab_0}\Ab_1\arrow[d,"\scC"]\\
\Ab_1\times_{\Ab_0} \Ab_1 \arrow[r,"\scC"] & \Ab_1
\end{tikzcd} 
\\ 
3. & \begin{tikzcd}
\Ab_1\times_{\Ab_0}\Ab_1\arrow[dr,"\scC"]\arrow[drr,bend left,"p_2"]\arrow[ddr,bend right,"p_1"] & & \\
& \Ab_1\arrow[dr,shift left,"\scT"]\arrow[dr,shift right,swap,"\scS"] &\Ab_1\arrow[d,"\scT"] \\
& \Ab_1\arrow[r,swap,"\scS"]  & \Ab_0
\end{tikzcd}
& 4. & 
 \begin{tikzcd}[column sep = large]
\Ab_1\times_{\Ab_0}\Ab_1\arrow[dr,"\scC"] & \Ab_1\times_{\Ab_0}\Ab_0\arrow[l,swap,"id_{\Ab_1}\times_{\Ab_0} \scU"] \arrow[d,"p_1"]\\
\Ab_0\times_{\Ab_0}\Ab_1\arrow[u,"\scU\times_{\Ab_0}id_{\Ab_1}"] \arrow[r,"p_2"] & \Ab_1.
\end{tikzcd}\end{array}
$$
\end{definition}
\begin{remark}
	Commuting of diagram 1 specifies source and target for the unit; similarly 3 specifies source and target for composition.  Condition 2 says that composition of morphisms is associative, and condition 4 specifies that the unit morphism behaves like an identity. (The other pullbacks $\Ab_1\times_{\Ab_0}\Ab_0$ and $\Ab_0\times_{\Ab_0}\Ab_1$ are defined by pairs of maps $\begin{tikzcd} \Ab_1\times_{\Ab_0}\Ab_0\arrow[r,shift left, "id_{\Ab_0}\circ p_2"]\arrow[r,shift right,swap,"\scS\circ p_1"] & \Ab_0 \end{tikzcd}$ and $\begin{tikzcd} \Ab_0\times_{\Ab_0}\Ab_1\arrow[r,shift right,swap, "id_{\Ab_0}\circ p_1"]\arrow[r,shift left,"\scT\circ p_2"] & \Ab_0.)\end{tikzcd}$  In fact, these are the conditions which axiomatize categories (\cref{def:category}). A small category $\fC$ (\cref{def:smallCategory}), for example, is a category internal to $\sF{Set}$. 
\end{remark}

 \subsubsection{Double Categories}
\begin{definition}\label{def:doubleCat}
	A \textit{(strict) double category} $\Ab$ is a category internal to the category $\sF{CAT}$ of categories (\cref{def:internalCat}). \end{definition}
	\begin{remark}

Morphisms in $\sF{CAT}$ are functors (\cref{ex:examplesOfCategories}), so in \cref{def:internalCat}, each of $\scS$, $\scT$, $\scU$, and $\scC$ is a functor between categories. 

\label{terminology:DoubleCat}
		We call $\Ab_0$ the \textit{object category} and $\Ab_1$ the \textit{arrow category} (c.f.\ terminology for $\Ab_i$ in \cref{def:internalCat}).  Objects of $\Ab_i$ we call $i$-objects, and morphisms of $\Ab_i$ we call $i$-morphisms, for $i=0,1$.  We note that this terminology diverges from others in the literature (e.g.\ \cite{baezPetri}) which calls morphisms in $\Ab_0$ \textit{1-morphisms} and objects of $\Ab_1$ \textit{1-cells}.   We will never use the phrase \textit{1-cell}.  
	\end{remark}

	A modified version of the next example will arise  in our development of networks of systems (\cref{subsection:fiberedCategories}, \cref{def:doubleCategoryASquare}). 
	\begin{example}\label{ex:arrowCatAsDoubleCat}\label{ex:subArrowsCatAsDoubleCat} 
		The arrow category $\fA = \sF{Arrow(C)}$ of category $\fC$ may be interpreted as a double category $\Ab$. The object category is $\Ab_0=\fC$, the original category. $\Ab_1$ is $\fA$: $1$-objects are morphisms $\fc\xrightarrow{f}\fd$ in $\fC$ and $1$-morphisms $f\xrightarrow{(\a,\b)}g$ are  commuting squares: $$\begin{tikzcd} \fc\arrow[d,"\a"]\arrow[r,"f"{name = foo1, below}] & \fd\arrow[d,swap,"\b"] \\
\fc'\arrow[r,"g"{name = foo2,above}] & \fd'.\ar[shorten <= 2pt, shorten >= 2pt,Rightarrow, from = foo1, to = foo2]	
\end{tikzcd}$$ In this diagram, $\fc$ is a $0$-object, and $\fc\xrightarrow{\a}\fc'$ a $0$-morphism, $f$ is a 1-object and $(\a,\b):f\Rightarrow f'$ a 1-morphism.  Also, $\scS(f) = \fc$ and $\scS(\a,\b) = \a$.  From diagram $$\begin{tikzcd} \fc\arrow[d,"\a"]\arrow[r,"f"{name = foo1, below}] & \fd\arrow[d,swap,"\b"]\arrow[r,"g"{name = foo3,below}] & e\arrow[d,"\g"] \\
\fc'\arrow[r,"f'"{name = foo2,above}] & \fd'\ar[shorten <= 2pt, shorten >= 2pt,Rightarrow, from = foo1, to = foo2]\arrow[r,"g'"{name = foo4, above}] & \fe'\ar[shorten <= 2pt, shorten >= 2pt, Rightarrow, from = foo3, to = foo4]	
\end{tikzcd}$$ we have composition $\scC(g,f) = g\circ f$ and $\scC\big((\b,\g),(\a,\b)\big) = (\a,\g)$. There is also a \textit{vertical composition} (composition in $\Ab_1$, as $\Ab_1$ is a category): composition of $$\begin{tikzcd} \fc\arrow[d,"\a"]\arrow[r,"f"{name = foo1, below}] & \fd\arrow[d,swap,"\b"] \\
\fc'\arrow[d,"\a'"] \arrow[r,"g"{name = foo2,above}] & \fd'\arrow[d,swap,"\b'"]\ar[shorten <= 2pt, shorten >= 2pt,Rightarrow, from = foo1, to = foo2]\\
\fc''\arrow[r,"h"{name = foo3, above}] & \fd'' \ar[shorten <= 10pt, shorten >= 2pt, Rightarrow, from = foo2, to = foo3]
\end{tikzcd}$$  results in 1-morphism $$\begin{tikzcd}[column sep = large]\fc\arrow[d,"\a'\circ\a"]\arrow[r,"f"{name = foo1, below}] & \fd\arrow[d,swap,"\b'\circ\b"] \\
\fc''\arrow[r,"h"{name = foo2,above}] & \fd''.\ar[shorten <= 2pt, shorten >= 2pt,Rightarrow, from = foo1, to = foo2]	
\end{tikzcd}$$ Finally, for any 0-object $\fc\in \Ab_0$, $\scU(\fc) = id_\fc\in \Ab_1$ (a $1$-object) and for 0-morphism $\fc\xrightarrow{\a}\fc'$, $\scU(\a)$ is the $1$-morphism $$\begin{tikzcd} \fc\arrow[d,"\a"]\arrow[r,"id_\fc"{name = foo1, below}] & \fc\arrow[d,swap,"\a"] \\
\fc'\arrow[r,"id_{\fc'}"{name = foo2,above}] & \fc.\ar[shorten <= 0pt, shorten >= 0pt,Rightarrow, from = foo1, to = foo2]	
\end{tikzcd}$$  Observe that for $1$-object $\fc\xrightarrow{f}\fc$, the identity $$\begin{tikzcd} \fc\arrow[d,"id_{\fc}"]\arrow[r,"f"{name = foo1, below}] & \fd\arrow[d,swap,"id_\fd"] \\
\fc\arrow[r,"f"{name = foo2,above}] & \fd\ar[shorten <= 2pt, shorten >= 2pt,Rightarrow, from = foo1, to = foo2]	
\end{tikzcd}$$
 in $\Ab_1$ is not the same  as the unit $\scU(f)$ for $0$-morphism $\fc\xrightarrow{f}\fc$. While both are $1$-morphisms,   the latter  acts as identity on vertical composition in the category $\Ab_1$, and the first  acts as identity on horizontal or $\scC$-composition in the double category $\Ab$. 
	\end{example}
	
In our study of hybrid systems, we will define hybrid phase spaces (\cref{def:hybridPhaseSpace0}) with \textit{discrete} double categories.
\begin{definition}\label{def:discreteDoubleCat}
	Let $\Ab$ be a double category (\cref{def:doubleCat}).  We say that $\Ab$ is \textit{discrete} if the only $0$-morphisms and $1$-morphisms are the identity.  \end{definition}

\begin{remark}\label{remark:discreteCategory} Having defined discrete double categories, we may now implicitly define  discrete (ordinary) categories  by saying that a double category is discrete if both categories $\Ab_0$ and $\Ab_1$ are discrete. In other words,  a discrete  category $\fC$ is one for which the only morphisms are identity. 
\end{remark}
\begin{remark}\label{def:ordinaryCatasDoubleCat}
	Let $\fC$ be a category.  We may realize $\fC$ as a discrete double category $\C$
as follows: the objects of the object category $\C_0$ are the objects $\fC_0$ of $\fC$, and the objects of the arrow category $\C_1$ are the morphisms $\fC_1$ of $\fC$.  This contrived construction will make sense of functors to $\sF{Rel}$ (\cref{ex:functorToRel}), which we turn now to defining. 
\end{remark}
 
We  make the category of relations (\cref{ex:(single)CategoryOfRelations}) into a  double category.
\begin{definition}
\label{def:categoryRelations}\label{def:categoryOfRelations}\label{def:Rel}
	We define the (double) category $\sF{Rel}$ of relations: $\sF{Rel}_0$ is the discrete category (\cref{remark:discreteCategory}) whose objects are sets, while $\sF{Rel}_1$ has relations as $1$-objects and \textit{inclusions} of relations as $1$-morphisms. For example, $$\begin{tikzcd} \fX\arrow[d,"id_{\fX}"]\arrow[r,"R"{name = foo1, below}] & \fY\arrow[d,swap,"id_\fY"] \\
\fX\arrow[r,"S"{name = foo2,above}] & \fY\ar[shorten <= 2pt, shorten >= 2pt,Rightarrow, from = foo1, to = foo2]	
\end{tikzcd}$$ is a $1$-morphism with both $\fX$, $\fY$ sets, and $R,\, S\subseteq \fX\times \fY$. The 1-morphism  $R\Rightarrow S$ means that $R\subseteq S$. 
\end{definition}

There is a similar double category $\sF{RelSet}$ whose $0$-morphisms are functions: 
\begin{definition}\label{def:categoryRelSet}
	We define double category $\sF{RelSet}$ by the following: 
	\begin{enumerate}
		\item The object category $\sF{RelSet}_0 = \sF{Set}$ is the category of sets and maps of sets.
		\item The arrow category $\sF{RelSet}_1$, like $\sF{Rel}_1$, has relations for $1$-objects and inclusions for $1$-morphisms: a $1$-morphism $$\begin{tikzcd} \fX\arrow[d,"f"]\arrow[r,"R"{name = foo1, below}] & \fY\arrow[d,swap,"g"] \\
\fX'\arrow[r,"R'"{name = foo2,above}] & \fY'\ar[shorten <= 2pt, shorten >= 2pt,Rightarrow, from = foo1, to = foo2]	
\end{tikzcd}$$ is an inclusion \begin{equation}\label{eq:inc123} \big(f\times g\big)( R)\subseteq R'.\end{equation} 
	\end{enumerate} In other words, for any pair $(x,y)\in R$ of $R$-related elements, $(f(x),g(y))\in R'$.
\end{definition}

The double category $\sF{RelSet}$ generalizes for any concrete category. 
\begin{example}\label{def:relC}
	Let $\fC$ be a \textit{concrete} category (\cref{remark:concreteCategory}).  We define double category $\sF{RelC}$ by: \begin{enumerate}
 \item The object category $\sF{RelC}_0= \fC$ is the original category $\fC$
 \item  The arrow category $\sF{RelC}_1$ has relations (between objects of $\fC$) as $1$-objects and inclusions like that in \eqref{eq:inc123}.   Precisely, a $1$-morphism $$\begin{tikzcd} \fc\arrow[d,"f"]\arrow[r,"R"{name = foo1, below}] & \fd\arrow[d,swap,"g"] \\
\fc'\arrow[r,"R'"{name = foo2,above}] & \fd'\ar[shorten <= 2pt, shorten >= 2pt,Rightarrow, from = foo1, to = foo2]	
\end{tikzcd}$$ in $\sF{RelC}$ is inclusion $(f\times g)(R)\subseteq R'$.  Here $\fc\xrightarrow{f}\fc'$ and $\fd\xrightarrow{g}\fd'$ are morphisms in $\fC$. 
 \end{enumerate}
	\end{example}
\begin{remark}
	We require that $\fC$ is concrete to make sense of relations (set membership and inclusion) in \cref{def:relC}. It should be noted---though we do not belabor the formlism---that secretly relations live in the underlying sets, and the 1-morphism $f\times g$ is the underlying set map.  
\end{remark}
\begin{example}\label{def:relMan}\label{ex:relMan}\label{def:relManDouble}
 Associated to the category $\sF{Man}$ of manifolds with corners and smooth maps  (\cref{ex:examplesOfCategories}) is the double category $\sF{RelMan}$.  We will encounter this category in the definition of hybrid phase space  (\cref{def:hybridPhaseSpace0}) as the target of a  functor of double categories. We now define functors of double categories. 
\end{example}

\begin{definition}\label{def:functorDoubleCat}\label{remark:varianceForDoubleFunctors}\label{def:arbitrary12349}
	A \textit{strict functor} $\scF:\Ab\rightarrow\Bb$ of double categories (or: \textit{strict double functor}) is a pair of functors $\scF_0:\Ab_0\rightarrow\Bb_0$ and $\scF_1:\Ab_1\rightarrow \Bb_1$ such that \begin{enumerate}
		\item  $\scS \circ \scF_1 = \scF_0\circ \scS$
		\item $\scT \circ \scF_1 = \scF_0\circ \scT$
		\item $\scC(\scF_1(\cdot),\scF_1(\cdot)) = \scF_1\circ \scC$ 
		\item $\scU_{\scF_0(\cdot)} = \scF_1\circ \scU$. 
	\end{enumerate}
	
A double functor $\scF$ is said to be \textit{lax} (instead of strict) if there is (not necessarily invertible) morphism \begin{enumerate}
\item[3'.] $\scC(\scF_1(\cdot),\scF_1(\cdot))\Rightarrow \scF_1\circ \scC$ or 
\item[4'.] $\scU_{\scF_0(\cdot)}\Rightarrow\scF_1\circ \scU$.
\end{enumerate}
replacing either (or both) conditions 3 or 4 above. 

We say that functor $\scF$ is \textit{covariant}  if $\scF$ sends $1$-morphism 
	 $$\begin{tikzcd}
	a\arrow[r,"f"{name = U, below}]\arrow[d,"\a"] & b\arrow[d,swap,"\b"]\\
	a'\arrow[r,swap,"g"{name = D, above}] & b' \ar[shorten <= 2pt, shorten >= 2pt,Rightarrow, from = U, to = D]
\end{tikzcd}   \;\; \rightsquigarrow \;\; \begin{tikzcd}
	\scF(a)\arrow[r,"\scF(f)"{name = U, below}]\arrow[d,"\scF(\a)"] & \scF(b)\arrow[d,swap,"\scF(\b)"]\\
	\scF(a')\arrow[r,swap,"\scF(g)"{name = D, above}] & \scF(b'), \arrow[Rightarrow, from = U, to = D]
\end{tikzcd}$$  and otherwise call $\scF$ \textit{contravariant} if $\scF$ maps the  first $1$-morphism to $$\begin{tikzcd}
	\scF(a)\arrow[r,"\scF(f)"{name = U, below}] & \scF(b)\\\
	\scF(a')\arrow[u,swap,"\scF(\a)"]\arrow[r,swap,"\scF(g)"{name = D, above}] & \scF(b').\arrow[u,"\scF(\b)"] \arrow[Rightarrow, from = D, to = U]
\end{tikzcd}$$
\end{definition}

\begin{example}\label{ex:functorToRel}
	It will be useful for us to consider functors $\Fc:\fC\rightarrow\sF{Rel}$ from an ordinary category $\fC$ to $\sF{Rel}$ (\cref{def:Rel}).  Recall that $\fC$ may be thought of as a discrete double category (\cref{def:ordinaryCatasDoubleCat}).  A (double) functor (\cref{def:functorDoubleCat}) $\Fc$ from $\fC$ to $\sF{Rel}$ assigns a set $\Fc(\fc)$ to each object of $\fc\in \fC_0$ and a relation $\Fc(f)\subseteq \Fc(\fc)\times \Fc(\fc')$ to each morphism $\fc\xrightarrow{f}\fc'$ of $\fC_1$.  Usually these functors will be lax, as there will be inclusion $\Fc(g)\circ \Fc(f)\subseteq \Fc(g\circ f)$ (a 1-morphism in $\sF{Rel}$) for composition of morphisms $\fc\xrightarrow{f}\fc'\xrightarrow{g}\fc''$.

	 Indeed, the composition (c.f.\ \eqref{eq:definingCompositionOfRelations}) \small $$\Fc(g)\circ \Fc(f)\defeq \big\{(x,z)\in \Fc(\sF{dom}(f))\times \Fc(\sF{cod}(g)):\, (x,y)\in \Fc(f),\,(y,z)\in \Fc(g)\,\mbox{for some }y\in \Fc(\sF{cod}(f))\big\},$$\normalsize  and simply $\Fc(g\circ f)\subseteq \Fc(\sF{dom}(f))\times \Fc(\sF{cod}(g))$.  There is no apriori reason why $\Fc(g)\circ\Fc(f) = \Fc(g\circ f)$.  In fact, there is no reason why there should be any inclusion relation whatsoever in either direction.  Still, in our examples (e.g.\ \cref{def:deterministicControlLax}), we will always see the inclusion $\Fc(g)\circ \Fc(f)\subseteq \Fc(g\circ f)$. 
\end{example}

\begin{definition}\label{def:naturalTransformDoubleCat}
	Let $\scF,\scG:\Ab\rightarrow\Bb$ be two functors of double categories $\Ab$ and $\Bb$.  A (strict) \textit{natural transformation} $\nat{\Ab}{\Bb}{\scF}{\scG}{\g}$ is a pair of (ordinary) natural transformations (\cref{def:naturalTrans}) $\big(\g_0:\scF_0\Rightarrow\scG_0, \g_1:\scF_1\Rightarrow\scG_1\big)$ compatible with structure functors (``$\chi\circ \g = \g\circ \chi''$ for $\chi =\scS, \scT, \scU, \scC$) of $\Bb$. 
	 Precisely, for $\fx,\fy\in \Ab$: 
	 \begin{enumerate}
	 	\item $\scS(\g_\fx) = \g_{\scS(\fx)}$, 
	 	\item $\scT(\g_\fx) = \g_{\scS(\fx)}$, 
	 	\item $\scU(\g_\fx) = \g_{\scU(\fx)}$,
	 	\item $\scC(\g_\fx,\g_\fy) = \g_{\scC(\fx,\fy)}$. 
	 \end{enumerate}
\end{definition}

We parse this definition.  Let $\begin{tikzcd} \fc\arrow[d,"\a"]\arrow[r,"f"{name = foo1, below}] & \fd\arrow[d,swap,"\b"] \\
\fc'\arrow[r,"g"{name = foo2,above}] & \fd'\ar[shorten <= 2pt, shorten >= 2pt,Rightarrow, from = foo1, to = foo2]	
\end{tikzcd}$ be  a $1$-morphism in $\Ab$ and $\nat{\Ab}{\Bb}{\scF}{\scG}{\g}$ a double natural transformation.

 Consider   diagram $$\small \begin{tikzcd}[column sep = large, row sep = large] 
\scF \fc\arrow[rr,"\scF f"]\arrow[dr,"\g_\fc"]\arrow[dd,"\scF\a"] & & \scF \fd \arrow[dd,near start,"\scF\b"]\arrow[dr,"\g_\fd"] & \\
 & \scG \fc\arrow[dd,near start,"\scG \a"]\arrow[rr,near start,"\scG f"] & & \scG \fd\arrow[dd,"\scG \b"] \\
 \scF \fc'\arrow[rr,near start,"\scF g"]\arrow[dr,"\g_{\fc'}"] & & \scF \fd'\arrow[rd,"\g_{\fd'}"] & \\
 & \scG \fc'\arrow[rr,"\scG g"] & & \scG \fd'
\end{tikzcd}$$ \normalsize in $\Bb$. Equalities $\scG\a \circ \g_\fc = \g_{\fc'} \circ \scF \a$ and $\scG\b\circ \g_\fd = \g_{\fd'} \circ \scF \b$ hold by naturality of $\g_0:\scF_0\Rightarrow\scG_0$. On the other hand, $(\g_\fc,\g_\fd):\scF f\Rightarrow \scG f$ and $(\g_{\fc'},\g_{\fd'}):\scF g\Rightarrow\scG g$ are $1$-morphisms, and the diagram \small $$\begin{tikzcd}[column sep = large, row sep = large]
	\scF f \arrow[r,Rightarrow,"(\g_\fc{,}\g_\fd)"]\arrow[d,Rightarrow,swap,"(\scF \a{,}\scF \b)"] & \scG f\arrow[d,Rightarrow,"(\scG\a{,}\scG\b)"] \\
	\scF g\arrow[r,Rightarrow,swap,"(\g_{\fc'}{,}\g_{\fd'})"] & \scG g 
\end{tikzcd}$$\normalsize commutes by naturality of $\g_1:\scF_1\Rightarrow \scG_1$.

\subsubsection{Category $\sF{Rel}$ and Category of Elements in $\sF{Rel}$}

\begin{definition}\label{def:categoryOfRelatedElements}
	Let $\Fc:\fC\rightarrow\sF{Rel}$ be a lax functor, we define the \textit{category of related elements} by $$\disg_\fC \Fc\defeq\big\{(\fc,x):\, \fc\in \fC_0,\, x\in \Fc\fc\big\}.$$ Morphisms are relations: $(\fc,x)\xrightarrow{f}(\fc',x')$ is a morphism in $\disg_\fC \Fc$ if $\fc\xrightarrow{f}\fc'$ is a morphism in $\fC$ and $(x,x')\in \Fc f$. 
\end{definition}

\subsection{Monoidal Categories}\label{subsection:MonoidalCat}
\begin{definition}\label{def:monoidalCat}[c.f.\ \cite[\S VII.1]{maclane}]
	A \textit{monoidal category}  $(\fC,\otimes_\fC,1_\fC)$ is a category $\fC$ equipped with bifunctor $\otimes_\fC:\fC\times \fC\rightarrow\fC$ and object $1_\fC\in \fC_0$, together with three natural isomorphisms  $\nat{\fC}{\fC,}{1_\fC\otimes_\fC(\cdot)}{id_\fC}{\l }$  $\nat{\fC}{\fC,}{(\cdot)\otimes 1_\fC}{id_\fC}{\r  }$ and $\nat{\fC\times \fC\times\fC}{\fC,}{(\cdot)\otimes_\fC((\cdot)\otimes_\fC(\cdot))}{((\cdot)\otimes_\fC(\cdot))\otimes_\fC(\cdot)}{\a}$ satisfying coherence conditions \begin{enumerate}
		\item $\a\circ\a = (\a\otimes id_\fC)\circ \a\circ (id_\fC\otimes \a)$.
		\item $(\r\otimes id_\fC)\circ \a = id_\fC\otimes \lambda$. 
	\end{enumerate}
	Pictorially, condition 1 says that for every  collection of objects $\fa, \fb,\fc, \fd \in \fC$, we have commuting pentagonal diagram $$\begin{tikzcd}
	& (\fa\otimes \fb)\otimes(\fc\otimes \fd)\arrow[r,"\a"]&  ((\fa\otimes \fb)\otimes\fc)\otimes\fd \\
	\fa\otimes(\fb\otimes(\fc\otimes\fd))\arrow[rd,"id_\fC\otimes\a"]\arrow[ru,"\a"] & & \\
	& \fa\otimes ((\fb\otimes\fc)\otimes \fd)\arrow[r,"\a"] & (\fa\otimes(\fb\otimes\fc))\otimes \fd\arrow[uu,"\a\otimes id_\fC"] 
\end{tikzcd}$$

	When $\a, \r,$ and $\l$ are all identity, we say that $\fC$ is \textit{strict monoidal}. 
\end{definition}
Let  $\fA\defeq \sF{Arrow(C)}$ be the arrow category of some category $\fC$.  We saw $\fA$ has products whenever $\fC$ has products (\cref{prop:catHasProductsImpliesArrowsDoesToo}). There is a similar statement for monoidal products.
\begin{prop}\label{prop:MonoidalProductInducesMonoidalOnArrows}
	Let $\fA=\sF{Arrow(C)}$ be arrow category of category $\fC$, which is also monoidal $(\fC,\otimes_\fC,1_\fC)$.  Then $\fA$ is monoidal category $(\fA,\otimes_\fA,1_\fA)$.  
\end{prop}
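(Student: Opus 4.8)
The plan is to transport all the monoidal data of $\fC$ to $\fA$ componentwise, on domains and codomains, mimicking the proofs of \cref{prop:catHasProductsImpliesArrowsDoesToo} and \cref{lemma:arrowProductPreservingToo}. The guiding conceptual point is that $\fA = \fC^{\two^{op}}$ is a functor category (\cref{remark:alternativeDefOfArrowCat}), and a functor category into a monoidal category inherits a \emph{pointwise} monoidal structure; the explicit componentwise formulas below are just a concrete description of that inherited structure.

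First I would build the bifunctor $\otimes_\fA:\fA\times \fA\rightarrow\fA$. On $1$-objects $f,g$ I set
$$f\otimes_\fA g\defeq \big(\sF{dom}(f)\otimes_\fC \sF{dom}(g)\xrightarrow{f\otimes_\fC g}\sF{cod}(f)\otimes_\fC \sF{cod}(g)\big),$$
applying the bifunctor $\otimes_\fC$ to the underlying arrows; and on a pair of $\fA$-morphisms $(\a_0,\a_1)\colon f\rightarrow f'$, $(\b_0,\b_1)\colon g\rightarrow g'$ I set $(\a_0\otimes_\fC\b_0,\ \a_1\otimes_\fC\b_1)$. That this pair is a genuine $\fA$-morphism (the required square commutes) and that $\otimes_\fA$ respects identities and composition follow directly from the functoriality of $\otimes_\fC$, exactly as in the first paragraph of the proof of \cref{lemma:arrowProductPreservingToo}, with $\otimes_\fC$ playing the role that post-composition by $\Fc$ plays there. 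Abstractly, $\otimes_\fC$ induces by post-composition a functor $(\fC\times\fC)^{\two^{op}}\rightarrow\fC^{\two^{op}}=\fA$, and $(\fC\times\fC)^{\two^{op}}\cong\fA\times\fA$. I take the unit to be $1_\fA\defeq id_{1_\fC}$.

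Next I would produce the three structure isomorphisms componentwise. For the associator I define, on the $1$-object $f\otimes_\fA(g\otimes_\fA h)$, the pair
$$\big(\a_\fA\big)_{f,g,h}\defeq \big(\a_{\sF{dom}(f),\sF{dom}(g),\sF{dom}(h)},\ \a_{\sF{cod}(f),\sF{cod}(g),\sF{cod}(h)}\big),$$
and analogously define $\l_\fA$ and $\r_\fA$ from $\l,\r$ of $\fC$ on domains and codomains. The crucial point is that each such pair really is an isomorphism in $\fA$ in the sense of \cref{remark:isoArrowCategory}: the commuting square one must check is \emph{precisely} the naturality square of the natural transformation $\a$ (resp.\ $\l,\r$) of $\fC$, evaluated at the morphism $(f,g,h)$ of $\fC\times\fC\times\fC$. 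Since $\a,\l,\r$ are natural isomorphisms in $\fC$, their components are invertible, and \cref{remark:isoArrowCategory} then supplies the inverse $\fA$-morphism. Naturality of $\a_\fA,\l_\fA,\r_\fA$ as transformations of functors $\fA\times\fA\times\fA\rightarrow\fA$ (resp.\ $\fA\rightarrow\fA$) reduces, via the componentwise description of $\fA$-morphisms, to naturality of $\a,\l,\r$ in $\fC$ applied to the domain and codomain legs of each $\fA$-morphism.

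Finally the coherence conditions 1 and 2 of \cref{def:monoidalCat} hold in $\fA$ because an equality of $\fA$-morphisms is checked componentwise on domains and codomains (\cref{remark:isoArrowCategory}), and on each leg these are exactly the pentagon and triangle identities already holding in the monoidal category $\fC$. The step I expect to be the main, though still routine, obstacle is the bookkeeping in verifying that the componentwise structure maps land in $\fA$ — that is, correctly identifying the square one needs with the naturality square of the corresponding transformation in $\fC$ while keeping domains and codomains straight; everything after that is a mechanical transfer of the coherence diagrams from $\fC$ to $\fA$.
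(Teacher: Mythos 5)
Your proposal is correct and follows essentially the same route as the paper's proof sketch: define $\otimes_\fA$ and $1_\fA$ componentwise via the bifunctor $\otimes_\fC$, observe that the structure isomorphisms are $\fA$-morphisms precisely because their defining squares are the naturality squares of $\a,\l,\r$ in $\fC$, and reduce coherence to coherence in $\fC$ on each leg. The functor-category gloss via $\fA=\fC^{\two^{op}}$ is a nice conceptual addition but does not change the argument.
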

\begin{proof}[Proof Sketch] First, we define the monoidal product $\otimes_\fA$: for $\fa, \fa'\in \fA$, we set $$\fa\otimes_\fA\fa'\defeq \begin{tikzcd}\sF{dom(a)}\otimes_\fC \sF{dom(a')}\arrow[d,"\fa\otimes_\fC\fa'"] \\
\sF{cod(a)}\otimes_\fC \sF{cod(a')},\end{tikzcd}
$$ which is well defined since $\otimes_\fC:\fC\times \fC\rightarrow \fC$ is a bifunctor. 
Let $\fe\defeq 1_\fC$ be the monoidal unit of $\fC$, and set $1_\fA\defeq \fe\xrightarrow{id_\fe}\fe$.  We claim that $1_\fA$ is monoidal unit of $\fA$.  For example, $$\begin{tikzcd}
	\sF{dom(a)}\otimes_\fC\fe\arrow[d,"\fa\otimes_\fC id_\fe"]\arrow[r,"\r_{\sF{dom(a)}}"] & \sF{dom(a)}\arrow[d,"\fa"] \\
	\sF{cod(a)}\otimes_\fC\fe \arrow[r,"\r_\sF{cod(a)}"] & \sF{cod(a)}
\end{tikzcd}$$ is an isomorphism in $\fA$ because each $\rho_{\sF{dom(a)}}$ and $\r_{\sF{cod(a)}}$ is isomorphism in $\fC$, and the diagram commutes since $\r$ is a natural transformation (\cref{remark:isoArrowCategory}). This defines $\r_\fA$ as a morphism (and therefore isomorphism)  in $\fA$.  Transformations $\l_\fA$ and $\a_\fA$ are defined similarly.  Coherence is a formal consequence of coherence in $\fC$ and naturality (in $\fC$) of $\r$, $\l$, and $\a$. 
\end{proof}
\begin{remark}\label{remark:notGonnaDenoteArrowMonoidalProduct}
	At the risk of abusing notation, we may  denote the monoidal product $\otimes_\fA$ of $\fA$ by the monoidal product $\otimes_\fC$ from which it is induced (if, e.g., we wish to emphasize that $\otimes_\fC$ is a functor).
\end{remark}

\begin{definition}\label{def:strongMonoidalFunctor}\label{def:monoidalFunctor}
	A \textit{monoidal functor} $\Fc:(\fC,\otimes_\fC,1_\fC)\rightarrow(\fD,\otimes_\fD,1_\fD)$ is a functor $\Fc:\fC\rightarrow\fD$ together with natural transformations $\nat{\fC\times\fC}{\fD}{\Fc(\cdot)\otimes_\fD \Fc(\cdot)}{\Fc\big((\cdot)\otimes_\fC(\cdot)\big)}{\eta}$ and morphism $1_\fD\xrightarrow{e} \Fc(1_\fC)$.  Naturality of $\eta$ implies, for example, that diagram $\begin{tikzcd}
	\Fc\fc\otimes_\fD\Fc\fd\arrow[r,"\eta_{\fc{,}\fd}"]\arrow[d,"\Fc f\otimes_\fD\Fc g"] & \Fc(\fc\otimes_\fC\fd)\arrow[d,"\Fc(f\otimes_\fC g)"]\\
	\Fc\fc'\otimes_\fD\Fc\fd'\arrow[r,"\eta_{\fc'{,}\fd'}"]& \Fc(\fc'\otimes_\fC\fd')
\end{tikzcd}$ commutes for every pair of morphisms $\fc\xrightarrow{f}\fc'$, $\fd\xrightarrow{g}\fd'$. z

	 When both $\eta:F(\cdot)_\fD \Fc(\cdot)\Rightarrow \Fc( \cdot \otimes_\fC \cdot)$    and $e:1_\fD\xrightarrow{\sim} \Fc(1_\fC)$ are natural  isomorphisms, we say that $\Fc$ is a \textit{strong monoidal functor}  (c.f.\ \cite[\S XI.2]{maclane}).
\end{definition}

\begin{definition}\label{def:monoidalTransformation}
	Let $\nat{\fC}{\fD}{\Fc}{\Gc}{\a}$ be a natural transformation (\cref{def:naturalTrans}) and $\Fc,\Gc:(\fC,\otimes_\fC,1_\fC)\rightarrow(\fD,\otimes_\fD,1_\fD)$ monoidal functors (\cref{def:monoidalFunctor}). We say that  $\a$ is a \textit{monoidal transformation} if for each $\fc, \fc'\in \fC$, $$\begin{tikzcd}[column sep = large]
	\Fc(\fc)\otimes_\fD\Fc(\fc')\arrow[r,"\a_\fc\otimes_\fD\a_{\fc'}"] \arrow[d,"\eta^\Fc_{\fc{,}\fc'}"] & \Gc(\fc)\otimes_\fD\Gc(\fc')\arrow[d,"\eta^\Gc_{\fc{,}\fc'}"]\\
	\Fc(\fc\otimes_\fC\fc')\arrow[r,"\a_{\fc\otimes_\fC\fc'}"] & \Gc(\fc\otimes_\fC\fc')
\end{tikzcd}
$$ commutes. 
\end{definition}

Since $\Fc$ and $\Gc$ are monoidal (functors),   for every pair of morphisms $\fc\xrightarrow{f}\fd$, $\fc'\xrightarrow{f'}\fd$ in $\fC$, we have commuting diagram   $$\begin{tikzcd}
	\Fc\fc\otimes_\fD\Fc\fc'\arrow[rr,"\a_\fc\otimes_\fD\a_{\fc'}"]\arrow[dr,"\eta_{\fc{,}\fc'}^\Fc"] \arrow[dd,"\Fc f\otimes_\fD\Fc f'"] & & \Gc \fc\otimes_\fD\Gc \fc'\arrow[dr,"\eta_{\fc{,}\fc'}^\Gc"] \arrow[dd,near start,swap,"\Gc f\otimes_\fD\Gc f'"] & \\
	& \Fc(\fc\otimes_\fC \fc')\arrow[rr,near start,swap,"\a_{\fc\otimes_\fC\fc'}"]\arrow[dd,near start,"\Fc(f\otimes_\fC f')"] & & \Gc(\fc\otimes_\fC\fc')\arrow[dd,"\Gc(f\otimes_\fC\fc')"] \\
	\Fc\fd\otimes_\fD\Fc\fd'\arrow[rr,near start,"\a_\fd\otimes_\fD\a_{\fd'}"]\arrow[dr,"\eta_{\fd{,}\fd'}^\Fc"] & & \Gc\fd\otimes_\fD\Gc\fd'\arrow[dr,"\eta_{\fd{,}\fd'}^\Gc"] & \\
	& \Fc(\fd\otimes_\fC\fd')\arrow[rr,near start,"\a_{\fd\otimes_\fC\fd'}"] & & \Gc(\fd\otimes_\fC\fd')
\end{tikzcd}$$
where $\eta^\Fc:\Fc(\cdot)\otimes_\fD\Fc(\cdot)\Rightarrow\Fc((\cdot)\otimes_\fC (\cdot))$ and $\eta^\Gc:\Gc(\cdot)\otimes_\fD\Gc(\cdot)\Rightarrow\Gc((\cdot)\otimes_\fC(\cdot))$ are the natural transformations of \cref{def:monoidalFunctor}.  This diagram displays various monoidal properties: the left and right face commute because $\Fc$ and $\Gc$ are monoidal, the top  and bottom face commute because $\a$ is monoidal (transformation), and the front and back face commute by naturality of $\eta$. 

\begin{definition}\label{def:monoidalProductCartesian} 
	Let $(\fC,\otimes_\fC,1_\fC)$ be a monoidal category (\cref{def:monoidalCat}).  We say that monoidal product $\otimes_\fC$ is \textit{induced-cartesian} if $\fC$ is a subcategory of a category $\fC'$ with finite products (\cref{def:categoryWithProducts}) in which $\fc\otimes_\fC \fc' = \fc \times \fc'$.  If $\fC' = \fC$, we may also say that $\fC$ is \textit{cartesian} monoidal. 
\end{definition}

\begin{remark}\label{fact:terminalObjectInCartesianIsMonoidalUnit}
	A terminal object $1\in \fC'$ defines a monoidal unit in $\fC$, which we denote as $1_\fC$.  \textit{We will assume that the monoidal unit of any induced-cartesian monoidal category is a terminal object in the supercategory}, but not necessarily terminal in the submonoidal category.  
	
	As an example of where this assumption may fail, consider the cartesian category $(\sF{Set},\times,1)$ and a subcategory  all of whose objects (sets) have cardinality $\aleph_0$, countable infinity. Since $\aleph_0\times \aleph_0\cong \aleph_0$, a monoidal unit---even in induced cartesian category---need not be terminal in the supercategory.\footnote{To explicitly construct this example, suppose all objects are of the form $\aleph_0\times \cdots \times \aleph_0$, and the only morphism $f:\aleph_0\rightarrow\aleph_0$ is $f = id_{\aleph_0}$. In this case, the only nontrivial morphisms are monoidal products of $id_{\aleph_0}$, $\a$, $\r$, and $\l$.}  
\end{remark}
\begin{prop}\label{prop:inducedCartesianCatImpliesArrowIsInducedCartesian}
	Suppose that $(\fC,\otimes_\fC,1_\fC)$ is induced-cartesian monoidal category.  Then $\fA = \sF{Arrow(C)}$ has monoidal structure and is also induced-cartesian. 
\end{prop}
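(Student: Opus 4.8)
The plan is to produce the required supercategory-with-products as the arrow category of the supercategory for $\fC$, and to recognize the induced monoidal product on $\fA$ as the restriction of the categorical product there. First I would invoke \cref{prop:MonoidalProductInducesMonoidalOnArrows}: since $(\fC,\otimes_\fC,1_\fC)$ is monoidal, $\fA = \sF{Arrow(C)}$ carries the monoidal structure $(\fA,\otimes_\fA,1_\fA)$, where $\fa\otimes_\fA\fa'$ is formed componentwise ($\sF{dom}$ and $\sF{cod}$ are $\otimes_\fC$-ed, and the middle arrow is $\fa\otimes_\fC\fa'$) and $1_\fA = (1_\fC\xrightarrow{id_{1_\fC}} 1_\fC)$. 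This already supplies the ``has monoidal structure'' half of the statement.

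For the induced-cartesian half, let $\fC'$ be the category with finite products witnessing that $\otimes_\fC$ is induced-cartesian (\cref{def:monoidalProductCartesian}), so that $\fC$ is a subcategory of $\fC'$ and $\fc\otimes_\fC\fc' = \fc\times\fc'$, the product taken in $\fC'$. Set $\fA' \defeq \sF{Arrow(C')}$. By \cref{prop:catHasProductsImpliesArrowsDoesToo}, $\fA'$ has finite products, and from the proof there the product $f\times g$ in $\fA'$ is again formed componentwise from the product bifunctor of $\fC'$. Moreover $\fA$ is a subcategory of $\fA'$: an object of $\fA$ is a morphism of $\fC$, hence a morphism of $\fC'$, hence an object of $\fA'$, and a morphism of $\fA$ is a commuting square in $\fC$, hence one in $\fC'$, hence a morphism of $\fA'$.

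It then remains to check that $\otimes_\fA$ restricted to $\fA$ coincides with the product $\times$ of $\fA'$. For $\fa,\fa'\in \fA_0$ both products are computed componentwise: the domain of $\fa\otimes_\fA\fa'$ is $\sF{dom}(\fa)\otimes_\fC\sF{dom}(\fa') = \sF{dom}(\fa)\times\sF{dom}(\fa')$, which is the domain of $\fa\times\fa'$ in $\fA'$, and likewise for codomains; the middle arrow of $\fa\otimes_\fA\fa'$ is $\fa\otimes_\fC\fa'$ while that of $\fa\times\fa'$ is $\fa\times\fa'$. The crux, and the one step I expect to require genuine care, is verifying that these middle arrows agree---that is, that the bifunctor $\otimes_\fC$ agrees with the product bifunctor of $\fC'$ not merely on objects (which is the literal content of \cref{def:monoidalProductCartesian}) but also on morphisms. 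This holds precisely because the induced-cartesian hypothesis means $\otimes_\fC$ is the restriction of $\times$ from $\fC'$, so $\fa\otimes_\fC\fa' = \fa\times\fa'$ as morphisms of $\fC'$; I would make this restriction-of-bifunctors statement explicit (appealing to \cref{prop:productBifunctor} for the functoriality of $\times$ in $\fC'$) before concluding.

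Finally I would record the unit convention: by \cref{fact:terminalObjectInCartesianIsMonoidalUnit}, $1_\fC$ is terminal in $\fC'$, so by \cref{fact:terminalArrowObject} the object $1_\fA = id_{1_\fC}$ is terminal in $\fA'$, confirming that $\fA'$ is a legitimate cartesian supercategory for $\fA$ with the monoidal unit sitting as a terminal object there. Together these show $\fA$ is a subcategory of the finite-product category $\fA'$ with $\fa\otimes_\fA\fa' = \fa\times_{\fA'}\fa'$, i.e.\ $\fA$ is induced-cartesian.
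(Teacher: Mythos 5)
Your proof is correct and follows essentially the same route as the paper's: cite \cref{prop:MonoidalProductInducesMonoidalOnArrows} for the monoidal structure, then take $\fA' = \sF{Arrow(C')}$ as the witnessing supercategory with products via \cref{prop:catHasProductsImpliesArrowsDoesToo}. The only difference is that you explicitly verify $\otimes_\fA$ agrees with the product of $\fA'$ on objects and morphisms (a point the paper's terser proof leaves implicit), which is a reasonable bit of added care rather than a divergence.
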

\begin{proof}
	We have already shown that $(\fA, \otimes_\fA,1_\fA)$ is monoidal (\cref{prop:MonoidalProductInducesMonoidalOnArrows}). Now let $\fC'\supseteq \fC$ be the supercategory with products whose products define the monoidal product in $\fC$ (\cref{def:monoidalProductCartesian}).  Then $\fA'\defeq \sF{Arrow(C')}$ has products (\cref{prop:catHasProductsImpliesArrowsDoesToo}) and is supercategory of $\fA$.  Therefore, $\fA$ is induced-cartesian (\cref{def:monoidalProductCartesian}). 
\end{proof}

The analog of product preservation (\cref{def:productPreservingFunctor}) for monoidal functors \textit{strong} monoidal functoriality (\cref{def:monoidalFunctor}).  In \cref{lemma:arrowProductPreservingToo}, we showed that product preserving functors extend naturally to product preserving functors on the arrow categories. However,  the universal property of product was nowhere used in the proof: we only needed functoriality and naturality. We therefore state without proof the following analogous statement for strong monoidal functors, which can be readily obtained by replacing instances of `$\times$' in the proof of \cref{lemma:arrowProductPreservingToo} with `$\otimes$' where appropriate. 
\begin{prop}\label{prop:arrowMonoidalProductPreserving}
	Suppose that $\Fc:(\fC,\otimes_\fC,1_\fC)\rightarrow (\fD,\otimes_\fD,1_\fD)$ is a strong monoidal functor (\cref{def:monoidalFunctor}), and let $\fA_\fC\subseteq \sF{Arrow(C)}$ be a subcategory of the arrow category of $\fC$, which is monoidal.  Then $\Fc$ extends to a strong monoidal functor $$\Fc_*:(\fA_\fC,\otimes_{\fA_\fC},1_{\fA_\fC})\rightarrow(\fA_\fD,\otimes_{\fA_\fD},1_{\fA_\fd})$$ as well, where $\fA_\fD\defeq \sF{Arrow(D)}$. 
\end{prop}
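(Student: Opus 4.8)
The plan is to run the proof of \cref{lemma:arrowProductPreservingToo} line for line, substituting the monoidal product $\otimes$ for the cartesian product $\times$ and the structure isomorphism $\eta$ of the strong monoidal functor $\Fc$ for the product-preservation isomorphism; since that proof used only functoriality and naturality—never the universal property of $\times$—the substitution should go through untouched. First I would define $\Fc_*$ on objects by $\Fc_*\big(\sF{dom}(\fa)\xrightarrow{\fa}\sF{cod}(\fa)\big)\defeq\Fc\sF{dom}(\fa)\xrightarrow{\Fc\fa}\Fc\sF{cod}(\fa)$ and on a $1$-morphism $(\a,\a')\colon\fa\to\fb$ by $(\Fc\a,\Fc\a')$. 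That this is well defined and functorial is exactly the content of the first paragraph of the proof of \cref{lemma:arrowProductPreservingToo} and needs only that $\Fc$ sends commuting squares to commuting squares; equivalently, since $\fA_\fC\subseteq\fC^{\two^{op}}$, one gets $\Fc_*$ by postcomposition $\fa\mapsto\Fc\circ\fa$, with $1$-morphisms preserved by \cref{lemma:functorsPreserveDiagrams}. The only restriction point to note is that, $\fA_\fC$ being a monoidal subcategory, its product is inherited from $\sF{Arrow(C)}$, so $\Fc_*(\fa\otimes_{\fA_\fC}\fb)$ is literally $\Fc$ applied to $\fa\otimes_\fC\fb$, while $\Fc_*$ lands in the full arrow category $\fA_\fD=\sF{Arrow(D)}$.

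Next I would produce the two pieces of strong monoidal structure. For the coherence isomorphism, given $\fa,\fb\in(\fA_\fC)_0$ the structure natural isomorphism $\eta$ of $\Fc$ supplies isomorphisms $\eta_{\sF{dom}(\fa),\sF{dom}(\fb)}$ and $\eta_{\sF{cod}(\fa),\sF{cod}(\fb)}$, and naturality of $\eta$ at the pair $(\fa,\fb)$ says exactly that
$$\begin{tikzcd}[column sep = large]
\Fc\sF{dom}(\fa)\otimes_\fD\Fc\sF{dom}(\fb)\arrow[r,"\eta_{\sF{dom}(\fa){,}\sF{dom}(\fb)}"]\arrow[d,swap,"\Fc\fa\otimes_\fD\Fc\fb"] & \Fc(\sF{dom}(\fa)\otimes_\fC\sF{dom}(\fb))\arrow[d,"\Fc(\fa\otimes_\fC\fb)"] \\
\Fc\sF{cod}(\fa)\otimes_\fD\Fc\sF{cod}(\fb)\arrow[r,"\eta_{\sF{cod}(\fa){,}\sF{cod}(\fb)}"] & \Fc(\sF{cod}(\fa)\otimes_\fC\sF{cod}(\fb))
\end{tikzcd}$$
commutes. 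This square is precisely a $1$-morphism $(\eta_*)_{\fa,\fb}\colon\Fc_*\fa\otimes_{\fA_\fD}\Fc_*\fb\to\Fc_*(\fa\otimes_{\fA_\fC}\fb)$ in $\fA_\fD$, and since both horizontal legs are isomorphisms in $\fD$ it is an isomorphism in $\fA_\fD$ by \cref{remark:isoArrowCategory}. For the unit, writing $1_{\fA_\fC}=\big(1_\fC\xrightarrow{id_{1_\fC}}1_\fC\big)$ and $1_{\fA_\fD}=\big(1_\fD\xrightarrow{id_{1_\fD}}1_\fD\big)$, the unit isomorphism $e\colon1_\fD\xrightarrow{\sim}\Fc1_\fC$ assembles into the pair $e_*\defeq(e,e)\colon1_{\fA_\fD}\to\Fc_*(1_{\fA_\fC})$, whose defining square commutes trivially and which is an isomorphism in $\fA_\fD$ because $e$ is.

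It then remains to check that $\eta_*$ is natural and that the monoidal coherence conditions hold. Naturality of $\eta_*$ is verified by the cube diagram at the end of the proof of \cref{lemma:arrowProductPreservingToo}, now with $\otimes$ in place of $\times$: its top and bottom faces commute because $\Fc$ is a functor (\cref{lemma:functorsPreserveDiagrams}) and its four side faces commute by naturality of $\eta$ for $\Fc$, whence the outer square expressing naturality of $\eta_*$ commutes. The associativity and unit coherence of $(\Fc_*,\eta_*,e_*)$ then reduce componentwise (domain and codomain separately) to those of $(\Fc,\eta,e)$, exactly as coherence for the induced monoidal structure on $\fA$ in \cref{prop:MonoidalProductInducesMonoidalOnArrows} reduced to coherence in the base. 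I expect the only genuinely delicate point to be bookkeeping: carefully tracking which $\eta$-component—the domain one or the codomain one—decorates each edge of the squares and of the cube, so that each face is seen to be an instance of $\Fc$'s functoriality or of $\eta$'s naturality. Because no universal property is invoked anywhere, the passage $\times\rightsquigarrow\otimes$ meets no real obstruction.
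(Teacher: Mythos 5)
Your proposal is correct and follows exactly the route the paper intends: the paper states this proposition without proof, remarking only that it is obtained by replacing $\times$ with $\otimes$ in the proof of \cref{lemma:arrowProductPreservingToo}, which is precisely the substitution you carry out. You in fact supply more detail than the paper does (the unit isomorphism $e_*$ and the componentwise reduction of coherence), and those additions are accurate.
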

\begin{lemma}\label{lemma:productMonoidalIsMonoidal}
Let $(\fA,\otimes_\fA,1_\fA)$ and $(\fB,\otimes_\fB,1_\fB)$ be two monoidal categories.  Then there is monoidal category $(\fA\times \fB, \otimes_{\fA\times \fB},1_{\fA\times \fB})$ (c.f.\ \cite[\S 7.1]{maclane}).  	
\end{lemma}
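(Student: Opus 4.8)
The plan is to equip $\fA\times\fB$ with a monoidal structure defined componentwise, so that every axiom reduces to the corresponding axiom in $\fA$ and in $\fB$ separately. First I would define the monoidal product on objects by $(\fa,\fb)\otimes_{\fA\times\fB}(\fa',\fb')\defeq(\fa\otimes_\fA\fa',\fb\otimes_\fB\fb')$ and on morphisms by $(f,g)\otimes_{\fA\times\fB}(f',g')\defeq(f\otimes_\fA f',g\otimes_\fB g')$, and declare the monoidal unit to be $1_{\fA\times\fB}\defeq(1_\fA,1_\fB)$.

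To see that $\otimes_{\fA\times\fB}$ is a bifunctor, I would exhibit the canonical shuffle isomorphism $\s\colon(\fA\times\fB)\times(\fA\times\fB)\xrightarrow{\sim}(\fA\times\fA)\times(\fB\times\fB)$ and observe that $\otimes_{\fA\times\fB}=(\otimes_\fA\times\otimes_\fB)\circ\s$, where on objects $\s$ sends $((\fa,\fb),(\fa',\fb'))\mapsto((\fa,\fa'),(\fb,\fb'))$. Since $\otimes_\fA$ and $\otimes_\fB$ are functors, their product $\otimes_\fA\times\otimes_\fB$ is again a functor by \cref{prop:productBifunctor} applied with the ambient category taken to be $\sF{Cat}$, and precomposition with the functor $\s$ yields a functor. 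Alternatively, functoriality on identity and on composition may be checked directly, each equation holding because composition and identities in $\fA\times\fB$ are computed componentwise.

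Next I would define the three structure isomorphisms componentwise, writing superscripts to indicate the ambient monoidal category:
\[
\l_{(\fa,\fb)}\defeq(\l^\fA_\fa,\l^\fB_\fb),\qquad \r_{(\fa,\fb)}\defeq(\r^\fA_\fa,\r^\fB_\fb),\qquad \a_{(\fa,\fb),(\fa',\fb'),(\fa'',\fb'')}\defeq(\a^\fA_{\fa,\fa',\fa''},\a^\fB_{\fb,\fb',\fb''}).
\]
Each of these is an isomorphism in $\fA\times\fB$ because its two components are isomorphisms in $\fA$ and $\fB$ respectively (\cref{prop:productIsosIsIso}, or directly since isomorphisms in a product category are exactly the componentwise ones). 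Naturality of $\l,\r,\a$ follows from the naturality of $\l^\fA,\r^\fA,\a^\fA$ together with that of $\l^\fB,\r^\fB,\a^\fB$, since a naturality square in $\fA\times\fB$ commutes if and only if its two component squares commute in $\fA$ and $\fB$.

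Finally I would verify the two coherence conditions of \cref{def:monoidalCat}. Both the pentagon (condition 1) and the triangle (condition 2) are equalities between composites of the structure morphisms, and two morphisms of $\fA\times\fB$ agree precisely when their components agree; hence each coherence equation in $\fA\times\fB$ is equivalent to the conjunction of the corresponding equation in $\fA$ and in $\fB$, both of which hold by hypothesis. I do not expect a genuine obstacle here: the only point requiring care is bookkeeping of the source and target objects of the associator and unitors, so that the componentwise equations line up correctly, which they do because $\otimes_{\fA\times\fB}$, composition, and identities are all defined componentwise. This assembles the data $(\fA\times\fB,\otimes_{\fA\times\fB},1_{\fA\times\fB})$ into a monoidal category and completes the proof.
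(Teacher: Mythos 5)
Your proposal is correct and follows essentially the same route as the paper, which also defines the product, unit, and structure isomorphisms componentwise and notes that coherence is a formal componentwise verification. Your write-up is simply more explicit (e.g.\ the shuffle-functor argument for bifunctoriality) than the paper's brief sketch.
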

	Given objects $(\fa, \fb),\, (\fa',\fb') \in \fA\times \fB$,  define the monoidal product \begin{equation}\label{eq:defMoniodalProductOfProduct}(\fa,\fb)\otimes_{\fA\times \fB}(\fa',\fb')\defeq (\fa\otimes_\fA \fa',\fb\otimes_\fB \fb'),\end{equation}  and monoidal unit $1_\fA\times 1_\fB\defeq (1_\fA,1_\fB)$.  The definition of product \eqref{eq:defMoniodalProductOfProduct} ensures that $1_{\fA\times \fB}$ is indeed a monoidal unit. Natural isomorphisms $\a_{\fA\otimes \fB}$, $\l_{\fA\otimes \fB}$, and $\r_{\fA\otimes \fB}$ are defined similarly, and coherence is a formal and straightforward verification. 

\subsubsection{Category of Lists} 
Let $\fA$ be a category. We  introduce the category of lists of $\fA$-objects.   The category of lists  is a categorical way  taking an $\fX$-indexed set $\{\fa_\fx\}_{\fx\in \fX}$  of $\fA$ objects.  In other words, the category of lists includes a notion of morphisms between indexed collections. Such morphisms are maps of the index sets together with collection morphisms in the ambient category. For now $\fA$ is any monoidal category, but we use this notation (instead of $\fC$) because in \cref{ch4} we will work with an arrow (sub)category. 
\begin{definition}\label{def:categoryOfLists}
 Let $(\fA,\otimes_\fA,1_\fA)$ be a monoidal category (\cref{def:monoidalProductCartesian}).  We define the \textit{category of lists of $\fA$-objects} $\sF{FinSet/A}^\Leftarrow$ by the following. \begin{enumerate}

\item An object $\Ac_\fX:\fX\rightarrow \fA$ is a functor from finite discrete category $\fX$ (\cref{remark:discreteCategory}). We often just write $\big\{\Ac_{\fX,\fx}\big\}_{\fx\in \fX}$.
\item A morphism $(\ph,\Phi):\big(\Ac_\fX:\fX\rightarrow\fA\big)\rightarrow\big(\Ac_\fY:\fY\rightarrow\fA\big)$   is a pair where $\ph:\fX\rightarrow \fY$ is a functor and $\Phi:\Ac_\fY\circ \ph\Rightarrow \Ac_\fX$ is a natural transformation, as indicated in diagram  $\begin{tikzcd}[column sep = large]\fX\arrow[r,"\ph"]\arrow[dr,swap,"\Ac_\fX",""{name = foo, above}] & \fY\arrow[d,"\Ac_\fY."]\arrow[Rightarrow, to = foo,"\Phi"] \\
  & \fA
\end{tikzcd}$	
 \end{enumerate}
\end{definition}

\begin{remark}\label{remark:altDefinitionOfFinSetCat}
In other words,  $\fX$ is a finite set  and $\Ac$ assigns an $\fA$-object $\Ac_\fX(\fx)$---also denoted by $\Ac_\fx$---to each $\fx\in \fX$.  For morphisms, there is a map of sets $\ph:\fX\rightarrow\fY$ and for each $\fx\in \fX$, a morphism $\Phi_\fx:\Ac_{\fY,\ph(\fx)}\rightarrow\Ac_{\fX,\fx}$ in $\fA$. 
Because $\fX$ is discrete (the only morphisms in $\fX$ are $\fx\xrightarrow{id_\fx}\fx$),  the naturality condition  $\Phi:\Ac_\fY\circ \ph\Rightarrow \Ac_\fX$ is trivial. 
\end{remark}

Since $(\fA,\otimes_\fA,1_\fA)$ is monoidal, from the assignment $\Ac_\fX:\fX\rightarrow\fA$, there is object \begin{equation}\label{eq:productFinSet} \Pi(\Ac_\fX)\defeq \disbop_{\fx\in \fX}\Ac(\fx)\in \fA.\end{equation}

We are interested in when the map $\Pi$ defined in \eqref{eq:productFinSet} extends to a functor.  Here is one case. 
\begin{prop}\label{prop:NietzscheProp}\label{prop:otimesACartesianImpliesProductIsFunctor}\label{prop:productIsFunctorialForCartesianMonoidal}
	Suppose that monoidal category $(\fA,\otimes_\fA,1_\fA)$ is cartesian (\cref{def:monoidalProductCartesian}). Then there is contravariant functor $\Pi:\left(\sF{FinSet/A}^\Leftarrow\right)^{op}\rightarrow \fA$. 
\end{prop}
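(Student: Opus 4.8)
The plan is to define $\Pi$ on morphisms using the universal property of the (cartesian) product, and then to verify the two functoriality axioms of a contravariant functor (\cref{def:contravariantFunctor}). Since $(\fA,\otimes_\fA,1_\fA)$ is cartesian (\cref{def:monoidalProductCartesian}), the monoidal product is the categorical product, so $\Pi(\Ac_\fX)=\disbop_{\fx\in\fX}\Ac_\fX(\fx)$ comes equipped with canonical projections $p_\fx\colon\Pi(\Ac_\fX)\to\Ac_\fX(\fx)$ (\cref{notation:projForProduct}) and enjoys the universal property of \cref{def:products}. This is the crucial input: in a general monoidal category there would be no projections, and the construction below would have nothing to feed on.

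Given a morphism $(\ph,\Phi)\colon\Ac_\fX\to\Ac_\fY$ in $\sF{FinSet/A}^\Leftarrow$ (\cref{def:categoryOfLists}), recall that $\ph\colon\fX\to\fY$ is a map of finite sets and $\Phi$ supplies, for each $\fx\in\fX$, a morphism $\Phi_\fx\colon\Ac_\fY(\ph(\fx))\to\Ac_\fX(\fx)$ in $\fA$. Because $\Pi$ is to be contravariant, I would make $\Pi(\ph,\Phi)$ a morphism $\Pi(\Ac_\fY)\to\Pi(\Ac_\fX)$. To build it, for each index $\fx\in\fX$ I first form the composite
\[
\Pi(\Ac_\fY)\xrightarrow{\;p_{\ph(\fx)}\;}\Ac_\fY(\ph(\fx))\xrightarrow{\;\Phi_\fx\;}\Ac_\fX(\fx),
\]
and then invoke the universal property of the product $\Pi(\Ac_\fX)$ to obtain a unique map $\Pi(\ph,\Phi)\colon\Pi(\Ac_\fY)\to\Pi(\Ac_\fX)$ characterized by $p_\fx\circ\Pi(\ph,\Phi)=\Phi_\fx\circ p_{\ph(\fx)}$ for every $\fx\in\fX$. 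This defining relation on projections is what makes all subsequent checks mechanical.

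Functoriality on identities is immediate: the identity on $\Ac_\fX$ is $(id_\fX,id)$, whose components $\Phi_\fx$ are identities, so the defining relation reads $p_\fx\circ\Pi(id_\fX,id)=p_\fx$, and uniqueness in \cref{def:products} forces $\Pi(id_\fX,id)=id_{\Pi(\Ac_\fX)}$. For composition, take $(\ph,\Phi)\colon\Ac_\fX\to\Ac_\fY$ and $(\ps,\Psi)\colon\Ac_\fY\to\Ac_\fZ$; their composite in $\sF{FinSet/A}^\Leftarrow$ is $(\ps\circ\ph,\,\Theta)$, where $\Theta$ is the whiskered composite $\Phi\circ(\Psi\ph)$ of natural transformations, with component $\Theta_\fx=\Phi_\fx\circ\Psi_{\ph(\fx)}$. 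I must verify the contravariant law $\Pi\big((\ps,\Psi)\circ(\ph,\Phi)\big)=\Pi(\ph,\Phi)\circ\Pi(\ps,\Psi)$, both sides being maps $\Pi(\Ac_\fZ)\to\Pi(\Ac_\fX)$. By the universal property it suffices to compare projections: on the left, $p_\fx\circ\Pi(\ps\circ\ph,\Theta)=\Theta_\fx\circ p_{\ps(\ph(\fx))}=\Phi_\fx\circ\Psi_{\ph(\fx)}\circ p_{\ps(\ph(\fx))}$, while on the right, $p_\fx\circ\Pi(\ph,\Phi)\circ\Pi(\ps,\Psi)=\Phi_\fx\circ p_{\ph(\fx)}\circ\Pi(\ps,\Psi)=\Phi_\fx\circ\Psi_{\ph(\fx)}\circ p_{\ps(\ph(\fx))}$, applying the defining relation twice. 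The two agree for all $\fx$, so the maps coincide.

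The genuine work, and the only place where care is needed, is bookkeeping the variance together with the composition rule of $\sF{FinSet/A}^\Leftarrow$: one must correctly identify the composite natural transformation $\Theta$, whose component at $\fx$ involves $\Psi$ evaluated at $\ph(\fx)$ rather than at $\fx$, and keep straight which product each projection $p$ is taken from. Once the defining relation $p_\fx\circ\Pi(\ph,\Phi)=\Phi_\fx\circ p_{\ph(\fx)}$ is in hand, every verification reduces to a one-line comparison of projections via \cref{def:products}. I would also note that empty lists and singletons require no separate treatment, since the universal property of finite products handles them uniformly.
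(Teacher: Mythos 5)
Your proposal is correct and follows essentially the same route as the paper: define $\Pi(\ph,\Phi)$ by the universal property of the cartesian product via the relation $p_\fx\circ\Pi(\ph,\Phi)=\Phi_\fx\circ p_{\ph(\fx)}$, then verify the contravariant functor laws by comparing projections. The paper merely abbreviates the functoriality checks by citing that they are formally identical to the proof that $\times$ is a bifunctor, whereas you write them out explicitly (and correctly, including the whiskered component $\Theta_\fx=\Phi_\fx\circ\Psi_{\ph(\fx)}$).
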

\begin{proof}
	The assignment on objects is given in \eqref{eq:productFinSet}. We now define the assignment on morphisms. Let $(\ph,\Phi):\Ac_\fX\rightarrow \Ac_\fY$ be a morphism in $\sF{FinSet/A}^\Leftarrow$.  Since $\otimes_\fA$ is cartesian,  $\disbop_{\fx\in \fX}\Ac_\fX(\fx) = \discatp_{\fx\in \fX}\Ac_\fX(\fx)$ and  $\disbop_{\fy\in \fY}\Ac_\fY(\fy) = \discatp_{\fy\in \fY}\Ac_\fY(\fy)$.  Then $\Pi(\ph,\Phi):\Pi(\Ac_\fY)\rightarrow\Pi(\Ac_\fX)$ is   uniquely induced  by the collection of maps $\Phi_{\fx'}:\Ac_\fY(\ph(\fx'))\rightarrow \Ac_\fX(\fx')$ (\cref{def:products}). This is illustrated in the following commuting diagram: \begin{equation}\label{eq:NietzscheEq}\begin{tikzcd}[column sep = large]
	\discatp_{\fx\in \fX}\Ac_\fX(\fx)\arrow[d,"p_{\fx'}"] & \discatp_{\fy\in \fY}\Ac_\fY(\fy)\arrow[l,dashed,"\Pi(\ph{,}\Phi)"]\arrow[d,"p_{\ph(\fx')}"] \\
	\Ac_\fX(\fx') & \Ac_\fY(\ph(\fx')).\arrow[l,"\Phi_{\fx'}"] 
\end{tikzcd}\end{equation}

Functoriality of $\Pi$ is a formally identical check as the proof of \cref{prop:productBifunctor}, which says that $\times$ is a bifunctor. \end{proof}



\begin{remark}\label{remark:catListsCoproduct}
There is also a category $\big(\sF{FinSet}/\fA\big)^{\Rightarrow}$, whose objects are the same as objects of $\big(\sF{FinSet}/\fA\big)^\Leftarrow$, but whose morphisms   are pairs $(\ph,\Phi):\Ac_\fX\rightarrow\Ac_\fY$ where still $\ph:\fX\rightarrow\fY$ is functor of discrete categories, and now $\Phi:\Ac_\fX\Rightarrow\Ac_\fY\circ\ph$ is morphism in $\fA^\fX$ which goes in the other direction (\cref{remark:altDefinitionOfFinSetCat}). If $\fA$ has coproducts and  monoidal product $\otimes_\fA$ is a coproduct  then the assignment $\Ub:\big(\sF{Set/A}\big)^\Rightarrow\rightarrow A$ sending $(\Ac_\fX:\fX\rightarrow \fA\big) \rightsquigarrow  \Ub \big(\Ac_\fX\big) \defeq  \discats_{\fx\in \fX}\Ac_\fX(\fx)$ extends to a \textit{covariant} functor.  The argument is nearly identical to the proof of \cref{prop:productIsFunctorialForCartesianMonoidal}, using the properties of coproduct instead. 	
\end{remark}

\section{Review of Geometry and Dynamical Systems}
\subsection{Differential Geometry}
In this section we  recall notions from geometry, and provide categorical proofs for elementary facts, as a first step toward setting the tone for our categorical investigation into networks of systems. 

First of all, for us \textit{manifold} will always mean \textit{manifold with corner}.  A manifold in the traditional sense is a manifold-with-corners which has no corners. Manifolds with corners are much like manifolds with boundaries.  Instead of charts in half space $\big\{(x_1,\ldots,x_n)\in \R^n:\, x_1\geq 0\big\}$, charts for manifolds with corners live in $\R^n_+\defeq \big\{(x_1,\ldots,x_n)\in \R^n:\, x_i\geq 0:\, \forall \, i = 1,\ldots, n\big\}$. Just as charts of manifolds with boundary need not be at the boundary $\big\{x_1\geq0\big\}$, charts for corners need not have corners.  Compatibility of charts will be defined after we clarify the notion of smooth maps on arbitrary sets.

\begin{definition}\label{def:smoothMapArbitrarySubset}
Let $V\subseteq \R^n$ be an arbitrary subset.  A function  $f:V\rightarrow\R$ is said to be \textit{smooth at} $x\in V$ if there is an open neighborhood $U\subseteq \R^n$ of $x$ and smooth function $\tilde{f}:U\rightarrow\R$ for which $\tilde{f}|_{U\cap V} = f|_{U\cap V}$.  The collection of smooth functions $f:V\rightarrow \R$ is denoted by $\Cc^\infty(V)$. 
\end{definition}

\begin{definition}\label{def:manifoldsWithCorners}
	A second countable Hausdorff  space $M$ is said to be a \textit{(smooth) manifold with corners} if $M$ is equipped with a maximal atlas: a collection of homeomorphisms $\Ac= \big\{\ph_\a:U_\a\hookrightarrow \R^n_+\big\}_{\a\in A}$ from open subsets $U_\a\subseteq M$ of $M$ to open subsets of $\R^n_+$ such that whenever $U_\a\cap U_\b\neq \varnothing$, the map $\ph_\b\circ \ph_\a^{-1}|_{\ph_\a(U_\a\cap U_\b)}:\ph_\a\big(U_\a\cap U_\b)\rightarrow \ph_\b(U_\a\cap U_\b)$ is a diffeomorphism.  Maximality of $\Ac$ means that if $\ph:U\hookrightarrow R^n_+$   is a diffeomorphism from open $U\subseteq M$ onto an open subset of $\R^n_+$ compatible with each $(\ph_\a,U_\a)\in \Ac$, then$(\ph,U)\in \Ac$. 
\end{definition}
\begin{remark}
The notion of sameness for two manifolds with corners is \textit{diffeomorphism},  a smooth invertible map whose inverse is also smooth.  Smoothness of maps for manifolds with corners is always smoothness in the sense of \cref{def:smoothMapOfManifolds}.
\end{remark}
\begin{definition}\label{def:smoothMapOfManifolds}
	Let $M$ and $N$ be manifolds with corners (\cref{def:manifoldsWithCorners}).  We say that a map $f:M\rightarrow N$ is \textit{smooth}  if for every smooth function $\ph\in \Cc^\infty(N)$ (c.f.\ \cref{def:smoothMapArbitrarySubset}), $f^*\ph\in \Cc^\infty(M)$, in other words, if $f^*\big(\Cc^\infty(N)\big)\subseteq \Cc^\infty(M)$. 
\end{definition}

\begin{remark}\label{remark:categoryManifolds}
	 Manifolds with corners and smooth maps between them form a category, which we denote by $\sF{Man}$. Most of the standard notions from the theory of manifolds ``without corners'' applies with minimal modification to this category, e.g.\ the tangent space $T_xM$ is still a vector space of all point derivations of germs of functions passing through $x$, even if $x$ is a corner or boundary point, and the tangent bundle $TM$ has the smooth structure of a  manifold with corners. 
\end{remark}

\begin{fact}\label{fact:productManifolds}\label{def:productManifolds}
	A product $M\times N$ of smooth manifolds $M$ and $N$ is a smooth manifold. 
\end{fact}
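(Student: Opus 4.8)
The plan is to build an atlas on the cartesian product from atlases on the two factors, the essential point being that the corner half-spaces are closed under products: $\R^m_+ \times \R^n_+ = \R^{m+n}_+$. This identity is exactly why manifolds-with-corners (\cref{def:manifoldsWithCorners}), rather than manifolds-with-boundary, are the right category here, since boundary charts would produce genuine corners under products and take us outside the manifold-with-boundary setting.

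First I would equip the underlying set $M \times N$ with the product topology and observe that it is second countable and Hausdorff because $M$ and $N$ are, both properties being preserved under finite products of topological spaces. Next, fixing maximal atlases $\{\varphi_\alpha : U_\alpha \hookrightarrow \R^m_+\}_{\alpha}$ and $\{\psi_\beta : V_\beta \hookrightarrow \R^n_+\}_{\beta}$ on $M$ and $N$, I would form the product charts
$$\varphi_\alpha \times \psi_\beta : U_\alpha \times V_\beta \hookrightarrow \R^m_+ \times \R^n_+ = \R^{m+n}_+.$$
The sets $U_\alpha \times V_\beta$ form a basis for the product topology and cover $M \times N$, and each $\varphi_\alpha \times \psi_\beta$ is a homeomorphism onto the open set $\varphi_\alpha(U_\alpha) \times \psi_\beta(V_\beta) \subseteq \R^{m+n}_+$, being a product of homeomorphisms.

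The core step is compatibility of charts, i.e.\ that transition maps are diffeomorphisms. On an overlap the transition map factors as a product
$$(\varphi_{\alpha'} \times \psi_{\beta'}) \circ (\varphi_\alpha \times \psi_\beta)^{-1} = (\varphi_{\alpha'} \circ \varphi_\alpha^{-1}) \times (\psi_{\beta'} \circ \psi_\beta^{-1}),$$
each factor being a diffeomorphism between open subsets of a corner space by the compatibility hypotheses on $M$ and $N$. So the remaining task is to show that a product of two diffeomorphisms of corner domains is again a diffeomorphism in the sense of \cref{def:smoothMapOfManifolds}. Since smoothness of a real-valued map on an arbitrary subset is defined by local extension to a smooth map on an open neighborhood (\cref{def:smoothMapArbitrarySubset}), I would take local smooth extensions of each factor and form their product to produce a local smooth extension of the product map; the inverse is treated identically. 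Finally, by maximality every atlas is contained in a unique maximal one, so the product atlas upgrades $M \times N$ to a manifold with corners.

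The main obstacle is that last verification: smoothness in this category is defined pointwise-by-local-extension rather than through a global formula, so I cannot simply invoke a chain rule to differentiate the product map. The real work lies in confirming that products of local extensions assemble into an extension of the product map and that the pullback condition $f^*\big(\Cc^\infty(\cdot)\big) \subseteq \Cc^\infty(\cdot)$ of \cref{def:smoothMapOfManifolds} is preserved under taking products. Once this product-of-smooth-maps lemma is secured, everything else reduces to routine bookkeeping about the product topology and atlas maximality.
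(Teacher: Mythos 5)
Your proof is correct, but it takes a different route from the paper: the paper does not prove this fact at all, it simply cites \cite[Proposition 5.18]{tu}. Your argument is the standard construction that such a citation encapsulates, and it is worth noting that your version is actually better adapted to the setting at hand. The cited proposition in \cite{tu} is stated for manifolds without boundary or corners, whereas the objects of $\sF{Man}$ here are manifolds with corners (\cref{def:manifoldsWithCorners}); your proof handles this case explicitly through the identity $\R^m_+\times\R^n_+=\R^{m+n}_+$, which is precisely the point where the corners category behaves better than the boundary category (a product of boundary charts is a corner chart, so manifolds-with-boundary are not closed under products, while manifolds-with-corners are). Your reduction of chart compatibility to the lemma that a product of maps, each smooth in the local-extension sense of \cref{def:smoothMapArbitrarySubset}, is again smooth is also sound: given extensions $\tilde f$ on $U$ and $\tilde g$ on $V$, the product $\tilde f\times\tilde g$ on $U\times V$ is a smooth extension of $f\times g$, and applying the same observation to the inverses shows the transition maps are diffeomorphisms. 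What your approach buys is a self-contained verification inside the category the paper actually works in; what the paper's citation buys is brevity, at the cost of leaning on a reference whose hypotheses do not literally match.
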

\begin{proof}
See \cite[Proposition 5.18]{tu}.	
\end{proof}

\begin{prop}\label{prop:productManifoldsCategorical}
Let $M, N$ be smooth manifolds.  The product  $M\times N$  of manifolds (\cref{def:productManifolds}) satisfies the universal property of product (\cref{def:products}). 
\end{prop}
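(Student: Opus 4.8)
The plan is to construct the required mediating map at the level of underlying sets first, and then to promote it to a smooth map using the pullback-of-functions criterion for smoothness (\cref{def:smoothMapOfManifolds}). Equip the product manifold $M\times N$ (\cref{fact:productManifolds}) with its two canonical projections $p_M\colon M\times N\to M$ and $p_N\colon M\times N\to N$; these are smooth because in any product chart they are literally coordinate projections (\cref{remark:categoryManifolds}). Given a test manifold $Z$ together with smooth maps $f_M\colon Z\to M$ and $f_N\colon Z\to N$, the universal property of the cartesian product at the level of sets forces the unique candidate $f\colon Z\to M\times N$, $z\mapsto\big(f_M(z),f_N(z)\big)$, and by construction $p_M\circ f=f_M$ and $p_N\circ f=f_N$. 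The only remaining points are smoothness of $f$ and uniqueness.

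The substance of the argument, and the step I expect to be the main obstacle, is verifying that this set map $f$ is in fact smooth. I would argue locally. Fix $z_0\in Z$ and choose charts $(U,\varphi)$ about $f_M(z_0)$ in $M$ and $(V,\psi)$ about $f_N(z_0)$ in $N$, with $\varphi\colon U\hookrightarrow\R^m_+$ and $\psi\colon V\hookrightarrow\R^n_+$, so that $(U\times V,\varphi\times\psi)$ is a chart about $f(z_0)$ in the product smooth structure; here I use that products of model domains $\R^m_+\times\R^n_+=\R^{m+n}_+$ are again model domains, so the corners cause no difficulty. Each component $\varphi^i\in\Cc^\infty(U)$ and $\psi^j\in\Cc^\infty(V)$, hence $\varphi^i\circ f_M$ and $\psi^j\circ f_N$ are smooth since $f_M$ and $f_N$ are; consequently the map $z\mapsto\big(\varphi(f_M(z)),\psi(f_N(z))\big)$ into $\R^{m+n}_+$ is smooth near $z_0$. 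To check the criterion of \cref{def:smoothMapOfManifolds}, take any $h\in\Cc^\infty(M\times N)$; near $z_0$ the composite $h\circ f$ is the smooth local representative of $h$ (in the coordinates $\varphi\times\psi$) precomposed with the smooth map above, hence $h\circ f\in\Cc^\infty(Z)$ near $z_0$. As $z_0\in Z$ was arbitrary, $f$ is smooth.

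Finally, uniqueness is immediate: any smooth map $g\colon Z\to M\times N$ with $p_M\circ g=f_M$ and $p_N\circ g=f_N$ has the same underlying function on sets as $f$, since the set-theoretic factorization through the cartesian product is unique; and a smooth map is determined by its underlying function, so $g=f$ in $\sF{Man}$. Together these verify that $\big(M\times N,p_M,p_N\big)$ enjoys the universal property of \cref{def:products}. Conceptually, the whole proof rests on the observation that the product smooth structure is assembled from product charts, so that smoothness of a map \emph{into} $M\times N$ decouples into smoothness of its two component maps; the pullback formulation of smoothness and the stability of corner-model domains under products are exactly what make this decoupling go through in $\sF{Man}$.
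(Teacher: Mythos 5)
Your proof is correct and follows essentially the same route as the paper: establish the set-level factorization, note the projections are smooth, and reduce everything to the fact that a map into $M\times N$ is smooth iff its components are. The only difference is that the paper simply cites Tu for smoothness of the projections and for the smoothness-on-components criterion, whereas you prove the latter directly with product charts and the pullback characterization of smoothness (correctly handling the corner models via $\R^m_+\times\R^n_+=\R^{m+n}_+$).
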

\begin{proof}
	First of all, the projections $M\times N\xrightarrow{\p_M}M$ and $M\times N\xrightarrow{\p_N}N$ are smooth maps (\cite[Example 6.17]{tu}).  As \textit{sets}, $M\times N$ satisfy the universal property of product, so  pair of smooth maps $f_M:P\rightarrow M$ and $f_N:P\rightarrow N$ induces a unique map $f:P\rightarrow M\times N$. What is left to show is that the map $f$ is smooth. This follows by \cite[Ex.\ 6.18]{tu}, smoothness on components, together with $\pi_M \circ f = f_M$, $\pi_N\circ f = f_N$. 
\end{proof}

Now we examine the product of tangent spaces.  It is a well-known fact that the product of tangent spaces is canonically isomorphic to the tangent space of products of manifolds.  We prove this to illustrate the universal property of product in a specific  category, the category of manifolds. 
\begin{prop}\label{prop:isoProductTangentSpaces}
	Let $M$, $N$ be manifolds of dimension $m$ and $n$, respectively.  Then for any $x\in M$ and $y\in N$, there is canonical  isomorphism of tangent spaces $T_xM\times T_yN \cong T_{(x,y)}M\times N$, natural in $M$ and $N$. 
\end{prop}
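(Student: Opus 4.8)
The plan is to exhibit the canonical comparison map explicitly and then recognize it as an isomorphism via the universal property of products. First I would note that the projections $M\times N\xrightarrow{\pi_M}M$ and $M\times N\xrightarrow{\pi_N}N$ are smooth (\cref{prop:productManifoldsCategorical}), so their differentials at $(x,y)$ are linear maps $d(\pi_M)_{(x,y)}\colon T_{(x,y)}M\times N\to T_xM$ and $d(\pi_N)_{(x,y)}\colon T_{(x,y)}M\times N\to T_yN$. Together these assemble into the candidate map
\[
\Phi\defeq\big(d(\pi_M)_{(x,y)},\,d(\pi_N)_{(x,y)}\big)\colon T_{(x,y)}M\times N\longrightarrow T_xM\times T_yN,
\]
which is manifestly canonical, being built only from the structural projections. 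My goal is to show that $T_{(x,y)}M\times N$ equipped with these two differentials satisfies the universal property of the product $T_xM\times T_yN$ in $\sF{Vect}$; by the uniqueness-up-to-unique-isomorphism argument (the proposition following \cref{def:categoryWithProducts}), this identifies $\Phi$ as the canonical isomorphism, and naturality will follow separately.

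To produce the inverse, I would use the inclusions $\iota_y\colon M\to M\times N$, $p\mapsto(p,y)$ and $\iota_x\colon N\to M\times N$, $q\mapsto(x,q)$, which are smooth, and define
\[
\Psi\colon T_xM\times T_yN\longrightarrow T_{(x,y)}M\times N,\qquad (v,w)\mapsto d(\iota_y)_x(v)+d(\iota_x)_y(w).
\]
The identity $\Phi\circ\Psi=\mathrm{id}$ is then a formal consequence of functoriality of the tangent construction (the chain rule, i.e.\ \cref{lemma:functorsPreserveDiagrams} applied to $T$) together with the relations $\pi_M\circ\iota_y=\mathrm{id}_M$ and $\pi_N\circ\iota_x=\mathrm{id}_N$, and the fact that $\pi_M\circ\iota_x$ and $\pi_N\circ\iota_y$ are constant maps whose differentials vanish. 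This already shows $\Phi$ is surjective and supplies the existence half of the universal property: given any $W$ with maps $g_M\colon W\to T_xM$ and $g_N\colon W\to T_yN$, the composite $\Psi\circ(g_M,g_N)$ factors them through $\Phi$.

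The crux---and the step I expect to be the main obstacle---is the reverse identity $\Psi\circ\Phi=\mathrm{id}$, equivalently the injectivity of $\Phi$, which expresses that every tangent vector at $(x,y)$ splits as the sum of an $M$-direction and an $N$-direction. Here I would pass to the derivation description of tangent vectors (\cref{remark:categoryManifolds}) and invoke a Hadamard-type decomposition: for a germ $h$ at $(x,y)$ one writes
\[
h(p,q)=h(x,y)+\big(h(p,y)-h(x,y)\big)+\big(h(x,q)-h(x,y)\big)+R(p,q),
\]
where the remainder $R$ lies in the square of the maximal ideal of germs vanishing at $(x,y)$ and is therefore annihilated by any derivation via the Leibniz rule. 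Evaluating an arbitrary derivation $u$ on this decomposition yields $u(h)=d(\iota_y)_x(d(\pi_M)u)(h)+d(\iota_x)_y(d(\pi_N)u)(h)$, that is, $u=\Psi(\Phi(u))$; concretely, in corner charts this is just the statement that $\partial/\partial x^1,\dots,\partial/\partial x^m,\partial/\partial y^1,\dots,\partial/\partial y^n$ span $T_{(x,y)}M\times N$. With both composites equal to identities, $\Phi$ is the desired isomorphism.

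Finally, for naturality in $M$ and $N$, given smooth maps $F\colon M\to M'$ and $G\colon N\to N'$ I would chase the square relating $\Phi$ for the pair $(M,N)$ to $\Phi$ for $(M',N')$; it commutes because $T$ is a functor (\cref{fact:functorsPreserveIsos}, \cref{lemma:functorsPreserveDiagrams}) and because $\pi_{M'}\circ(F\times G)=F\circ\pi_M$ and $\pi_{N'}\circ(F\times G)=G\circ\pi_N$, so that the differentials $dF\times dG$ and $d(F\times G)$ intertwine $\Phi$ exactly as required.
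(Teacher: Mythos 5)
Your proof is correct and follows the same overall skeleton as the paper's: both construct the comparison map out of the differentials of the two projections, both build the candidate inverse out of the differentials of the slice inclusions $\iota_y\colon p\mapsto(p,y)$ and $\iota_x\colon q\mapsto(x,q)$, and both dispatch the composite $\Phi\circ\Psi=\mathrm{id}$ by functoriality of $T$ (\cref{prop:differentialIsFunctorial}) together with $\pi_M\circ\iota_y=\mathrm{id}_M$ and $\pi_N\circ\iota_x=\mathrm{id}_N$. Where you genuinely diverge is the hard direction. The paper never proves $\Psi\circ\Phi=\mathrm{id}$ directly: having shown $P\circ I=\mathrm{id}$ (so $P$ is a linear surjection and $I$ a linear injection), it closes the argument with a dimension count, $\dim T_{(x,y)}(M\times N)=m+n=\dim\bigl(T_xM\times T_yN\bigr)$, which forces both maps to be isomorphisms. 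You instead establish the missing composite by hand, passing to the derivation description of tangent vectors and splitting a germ modulo $\mathfrak{m}^2$ via a Hadamard-type decomposition, so that Leibniz kills the remainder and every derivation at $(x,y)$ is exhibited as $\Psi(\Phi(u))$. Your route is more self-contained --- it does not lean on knowing the dimension of the tangent space of a product of manifolds with corners, and it pins down $\Psi$ as the two-sided inverse rather than inferring invertibility from a rank argument --- at the cost of importing the Hadamard lemma, which does remain valid near corners because smooth functions there are by definition local restrictions of smooth functions on open subsets of $\R^{m+n}$ (\cref{def:smoothMapArbitrarySubset}). The paper's route is shorter but is specific to finite-dimensional linear algebra. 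The naturality arguments are essentially identical in the two proofs.
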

First an elementary fact about the differential of a map. 
\begin{lemma}\label{prop:differentialIsFunctorial}
The differential which assigns the tangent bundle $TM$ to manifold $M$ is functorial. 	On maps $M\xrightarrow{f}N$, we have $TM\xrightarrow{Tf}TN$. 
\end{lemma}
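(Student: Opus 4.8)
The plan is to verify the two functor axioms of Definition~\ref{def:functor} for the assignment $T$, which I must first pin down on objects and morphisms. On objects, $T$ sends a manifold $M$ to its tangent bundle $TM=\bigsqcup_{x\in M}T_xM$, which carries the structure of a manifold with corners (Remark~\ref{remark:categoryManifolds}); a point of $TM$ is a pair $(x,v)$ with $x\in M$ and $v\in T_xM$ a point derivation of germs at $x$. On a smooth map $f\colon M\to N$ I define $Tf\colon TM\to TN$ by $Tf(x,v)\defeq\big(f(x),\,df_x(v)\big)$, where $df_x(v)$ is the derivation acting by $df_x(v)(h)\defeq v(h\circ f)$ for germs $h\in\Cc^\infty(N)$ at $f(x)$. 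The first thing to check is that this is well defined, i.e.\ that $df_x(v)$ really is a point derivation at $f(x)$: $\R$-linearity is immediate, and the Leibniz rule follows from $v\big((h_1h_2)\circ f\big)=v\big((h_1\circ f)(h_2\circ f)\big)$ together with the Leibniz rule for $v$.

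Next I would establish that $Tf$ is itself a smooth map, so that it is a morphism of $\sF{Man}$. Passing to charts, if $f$ has local coordinate expression $y=f(x)$, then $Tf$ is given in the induced bundle coordinates by $(x,v)\mapsto\big(f(x),\,Df(x)\,v\big)$, where $Df(x)$ is the Jacobian matrix. Its entries are the partial derivatives of the (smooth) components of $f$, hence smooth, so $Tf$ is smooth; I would cite the explicit computation in \cite{tu} rather than reproduce it. I expect this smoothness verification to be the main technical point, since everything else is formal---one genuinely has to pass to charts and invoke smoothness of the coordinate expression, and in the manifold-with-corners setting one checks that the half-space model of Definition~\ref{def:manifoldsWithCorners} is respected.

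With $T$ defined and landing in $\sF{Man}$, functoriality is a short computation. For \emph{functoriality on identity} (Remark~\ref{remark:functoriality}), $T(id_M)(x,v)=\big(id_M(x),\,d(id_M)_x(v)\big)$, and since $d(id_M)_x(v)(h)=v(h\circ id_M)=v(h)$ we get $d(id_M)_x=id_{T_xM}$, whence $T(id_M)=id_{TM}$. For \emph{functoriality on composition}, given $M\xrightarrow{f}N\xrightarrow{g}P$ I would verify the chain rule $d(g\circ f)_x=dg_{f(x)}\circ df_x$ directly from the definition: for a germ $h$ at $g(f(x))$, $d(g\circ f)_x(v)(h)=v\big(h\circ(g\circ f)\big)=v\big((h\circ g)\circ f\big)=df_x(v)(h\circ g)=dg_{f(x)}\big(df_x(v)\big)(h)$. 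Combining this with the equality $g\circ f$ on base points gives $T(g\circ f)(x,v)=\big(g(f(x)),\,dg_{f(x)}(df_x(v))\big)=Tg\big(Tf(x,v)\big)$, i.e.\ $T(g\circ f)=Tg\circ Tf$, completing the verification of both axioms.
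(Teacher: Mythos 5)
Your proposal is correct and follows the same route as the paper, which simply cites \cite[\S 10.2]{tu} and remarks that functoriality on composition is the chain rule in categorical dress; you have written out in full the derivation-level computation that the paper delegates to the reference. The identification of the composition axiom with the chain rule, and of the identity axiom with $d(id_M)_x = id_{T_xM}$, matches the paper's intent exactly.
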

\begin{proof}
	See \cite[\S10.2]{tu}.  Functoriality on composition (\cref{remark:functoriality}) is the chain rule, in categorical dress. 
\end{proof}
\begin{proof}[Proof of \cref{prop:isoProductTangentSpaces}.]
	There are canonical projections $p_M:M\times N\rightarrow M$ and $p_N:M\times N\rightarrow N$ (\cref{prop:productManifoldsCategorical}).  Applying the differential to each map at point $(x,y)\in M\times N$, there are   maps $Tp_{M,(x,y)}:T_{(x,y)}M\times N\rightarrow T_xM$ and $Tp_{N,(x,y)}:T_{(x,y)}M\times N\rightarrow T_yN$ of  tangent spaces.  These maps induces a unique map $P\defeq Tp_{M,(x,y)}\times Tp_{N,(x,y)}  : T_{(x,y)}M\times N\rightarrow T_xM\times T_yN$ (\cref{def:products}, \cref{ex:vectorSpacesProduct&Coproduct}).  
	
	We define inverse map $I:T_xM\times T_yN\rightarrow T_{(x,y)}M\times N$ as follows.  There are maps of manifolds $i^y:M\rightarrow M\times N$ sending $x\mapsto (x,y)\in M\times N$ and $i^x:N\rightarrow M\times N$ sending $y\mapsto (x,y)$, which induce maps on the tangent spaces: \begin{equation}\begin{array}{lrlllrl}
		Ti^y_x:  & T_xM & \rightarrow T_{(x,y)}M\times N  &\mbox{and} &  Ti^x_y: & T_yN &\rightarrow T_{(x,y)}M\times N\\
		 & v & \mapsto  (Ti^y)_x(v) & & & w & \mapsto (Ti^x)_y(w) \end{array}
	\end{equation}Since $p_M\circ i^y =id_M$ and $p_N\circ i^x = id_N$, we observe that  \begin{equation}\label{eq:3421} T(p_M\circ i^y)_x = T p_{M,(x,y)} \circ T i^y_x = id_{T_xM}\end{equation} and similarly  $T(p_N\circ i^x)_y = id_{T_yN}$.  The first equality in \eqref{eq:3421} follows by \cref{prop:differentialIsFunctorial}, that $T$ is a functor (more directly $T_x(id_M) = id_{T_xM}$.)   Thus, the composition of maps $$T_xM\times T_yN\xrightarrow{I} T_{(x,y)}M\times N\xrightarrow{P} T_xM\times T_yN$$ is the identity (\cref{prop:productIsosIsIso}).  Since $\dim(T_xM\times T_yN) = m+n = \dim(T_xM)+\dim(T_yN)$, and each $T_xM\times T_yN$ and $T_{(x,y)}M\times N$ are linear spaces, we conclude that  $T_xM\times T_yN \cong T_{(x,y)}M\times N$. 
	
	Naturality is a formal consequence of functoriality (\cref{prop:differentialIsFunctorial}): diagrams $$\begin{array}{ccc} \begin{tikzcd}[column sep = large]
M\times N\arrow[r,"p_M"]\arrow[d,"f\times g"] & M \arrow[d,"f"] \\
M'\times N'\arrow[r,"p_{M'}"] & M'	
\end{tikzcd}  &\mbox{and} & \begin{tikzcd}[column sep = large]
 	M\times N\arrow[r,"p_N"] \arrow[d,"f\times g"] & N \arrow[d,"g"] \\
 	M'\times N'\arrow[r,"p_{N'}"] & N'
 \end{tikzcd}\end{array}$$ commute. Let $(x,y)\in M\times N$ and set $x'\defeq f(x)$, $y'\defeq g(y)$. Applying $T_{(\cdot)}$ to each diagram, we have commuting diagrams 
 $$\begin{array}{ccc} \begin{tikzcd}[column sep = huge]
T_{(x,y)}\big(M\times N\big)\arrow[r,"T_{(x{,}y)}p_M"]\arrow[d,"T_{(x{,}y)}f\times g"] & T_xM \arrow[d,"T_xf"] \\
T_{(x',y')}\big(M'\times N'\big)\arrow[r,"T_{(x'{,}y')}p_{M'}"] & T_{x'} M'	
\end{tikzcd}  &\mbox{and} & \begin{tikzcd} [column sep = huge]
 	T_{(x,y)}\big(M\times N\big)\arrow[r,"T_{(x{,}y)}p_N"] \arrow[d,"T_{(x{,}y)}f\times g"] & T_yN \arrow[d,"T_yg"] \\
 	T_{(x',y')}\big(M'\times N'\big)\arrow[r,"T_{(x'{,}y')}p_{N'}"] & T_{y'}N',
 \end{tikzcd}\end{array}$$ which by the universal property implies that diagram $$\begin{tikzcd}[column sep = huge, row sep = huge]
	T_{(x,y)}M\times N\arrow[rr,"T_{(x{,}y)}p_M\times T_{(x{,}y)}p_N"]\arrow[d,"T_{(x{,}y)}f\times g"]& &  T_xM\times T_yN\arrow[d,"T_xf\times T_yg"] \\
	T_{(x',y')}M'\times N'\arrow[rr,"T_{(x'{,}y')}p_{M'}\times T_{(x'{,}y')} p_{N'}"] & &  T_{x'}M'\times T_{y'}N'
\end{tikzcd}$$ also commutes.
\end{proof}
Since we have isomorphism of tangent spaces $T_{(x,y)} M\times N \cong T_xM\times T_yN$, and $T:\sF{Man}\rightarrow\sF{Man}$ is a functor  (\cref{prop:differentialIsFunctorial}), the bijection as sets $T p_M\times Tp_N:T(M\times N)\rightarrow TM\times TN$ is smooth and therefore the tangent bundles are diffeomorphic, as manifolds:
\begin{corollary}\label{corollary:TProductDiffeoProductT}
Let $M$, $N$ be smooth manifolds.  Then there is canonical diffeomorphism $T (M\times N)\xrightarrow{Tp_M\times Tp_N} TM\times TN$. 	
\end{corollary}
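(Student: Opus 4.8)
The plan is to assemble the pointwise isomorphisms of \cref{prop:isoProductTangentSpaces} into a single smooth map and then argue that this map is a diffeomorphism. First I would recall from \cref{prop:productManifoldsCategorical} that the projections $p_M:M\times N\to M$ and $p_N:M\times N\to N$ are smooth. Applying the tangent functor $T$ (\cref{prop:differentialIsFunctorial}) yields smooth maps $Tp_M:T(M\times N)\to TM$ and $Tp_N:T(M\times N)\to TN$. Since $\sF{Man}$ has products whose projections are smooth, the universal property of product (\cref{def:products}, \cref{prop:productManifoldsCategorical}) produces a unique smooth map $Tp_M\times Tp_N:T(M\times N)\to TM\times TN$ with $p_{TM}\circ(Tp_M\times Tp_N)=Tp_M$ and $p_{TN}\circ(Tp_M\times Tp_N)=Tp_N$.

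Next I would check that this smooth map is a bijection by examining it fiber by fiber. A point of $T(M\times N)$ lying over $(x,y)\in M\times N$ is a tangent vector in $T_{(x,y)}(M\times N)$, and the restriction of $Tp_M\times Tp_N$ to this fiber is exactly the map $P=Tp_{M,(x,y)}\times Tp_{N,(x,y)}$ shown to be an isomorphism $T_{(x,y)}(M\times N)\xrightarrow{\sim}T_xM\times T_yN$ in the proof of \cref{prop:isoProductTangentSpaces}. As $TM\times TN$ lies over $M\times N$ with fiber $T_xM\times T_yN$ over $(x,y)$, and the map $Tp_M\times Tp_N$ covers the identity on the base $M\times N$, it is a linear isomorphism on every fiber covering $id_{M\times N}$, hence a bijection on total spaces.

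Finally, to promote this smooth bijection to a diffeomorphism, I would invoke that a smooth bundle morphism covering the identity which is a linear isomorphism on each fiber is an isomorphism of vector bundles, so its set-theoretic inverse is automatically smooth. Concretely, $T(M\times N)$ and $TM\times TN$ are both smooth manifolds of dimension $2(m+n)$, and $Tp_M\times Tp_N$ is a smooth bijection whose fiberwise differential is an isomorphism; by the inverse function theorem it is a local diffeomorphism, and being bijective it is a global one. The naturality clause of \cref{prop:isoProductTangentSpaces} guarantees the construction is independent of auxiliary choices, so the resulting diffeomorphism is canonical, as claimed.

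The main obstacle is this last step, namely passing from pointwise linear isomorphisms to smoothness of the global inverse. The pointwise inverse built in \cref{prop:isoProductTangentSpaces} uses the inclusions $i^y$ and $i^x$, which depend on the base point, so one cannot simply read off a globally smooth inverse from that formula. The cleanest remedy is the vector-bundle (equivalently, inverse function theorem) argument above rather than an explicit global formula for the inverse.
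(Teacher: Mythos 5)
Your proposal is correct and follows essentially the same route as the paper, which likewise deduces the corollary from the pointwise isomorphisms of \cref{prop:isoProductTangentSpaces} together with functoriality of $T$. In fact you are more careful than the paper's one-sentence argument, which asserts that the smooth bijection $Tp_M\times Tp_N$ is therefore a diffeomorphism without addressing smoothness of the inverse; your fiberwise-linear-isomorphism/inverse-function-theorem step fills that gap correctly.
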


\begin{prop}[Proposition 3.42 (b) \cite{leeTopologicalManifolds}] \label{prop:coproductCanonicalInjectionOpen}
	Let $\{M_k\}_{k\in K}$ be a finite collection of smooth manifolds.  Then the coproduct $M\defeq \discats_{k\in K} M_k$ is a smooth manifolds and the canonical injections $i_k:M_k\hookrightarrow M$ are open embeddings.
\end{prop}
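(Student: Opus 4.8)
The plan is to realize the categorical coproduct concretely as the disjoint union and then verify, in order, that it carries the structure of a manifold with corners, that the injections are open embeddings, and that it satisfies the universal property of \cref{def:coproducts} (so that it genuinely \emph{is} the coproduct in $\sF{Man}$). For the underlying set I would take $M\defeq \disu_{k\in K} M_k\times\{k\}$, the explicit model of the coproduct in $\sF{Set}$ used in \cref{lemma:sourceSet4Coproduct}, with canonical injections $i_k\colon M_k\hookrightarrow M$, $x\mapsto (x,k)$. I topologize $M$ with the disjoint-union (final) topology: $U\subseteq M$ is open precisely when $i_k^{-1}(U)$ is open in $M_k$ for every $k$. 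By construction each $i_k$ is then a continuous injection onto the clopen set $i_k(M_k)$ which is also an open map, hence a homeomorphism onto its (open) image.

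Next I would verify the two point-set hypotheses of \cref{def:manifoldsWithCorners}. Because $K$ is finite and each $M_k$ is second countable, the finite union of the $i_k$-images of countable bases for the $M_k$ is a countable basis for $M$, so $M$ is second countable. For the Hausdorff property, two points lying in a common image $i_k(M_k)$ are separated by the $i_k$-images of disjoint neighborhoods supplied by the Hausdorffness of $M_k$, while two points in distinct images $i_k(M_k)$ and $i_{k'}(M_{k'})$ are separated by those images themselves, which are disjoint and open.

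I then transport the smooth structure. For each $k$ let $\Ac_k=\{\varphi_\a\colon U_\a\hookrightarrow \R^{n_k}_+\}$ be the maximal atlas of $M_k$; the collection $\{\varphi_\a\circ i_k^{-1}\colon i_k(U_\a)\hookrightarrow\R^{n_k}_+\}_{k,\a}$ is an atlas for $M$, where I note that the pieces may have different dimensions $n_k$, which \cref{def:manifoldsWithCorners} permits. Its transition maps are either transition maps of a single $M_k$, hence diffeomorphisms by hypothesis, or maps between charts supported on the disjoint images $i_k(M_k)$ and $i_{k'}(M_{k'})$, which have empty domain and are vacuously smooth; completing this atlas to a maximal one endows $M$ with the structure of a manifold with corners. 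With respect to this structure each $\varphi_\a\circ i_k^{-1}$ is a chart, so $i_k$ restricts to a diffeomorphism onto the open set $i_k(M_k)$, i.e.\ $i_k$ is an open embedding.

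Finally I would confirm the universal property, which is the one place genuinely requiring the smooth (rather than merely topological) hypotheses, and which I expect to be the main—though still routine—obstacle. Given a manifold $Z$ and smooth maps $f_k\colon M_k\to Z$, the set-level universal property already furnishes a unique map $f\colon M\to Z$ with $f\circ i_k=f_k$; the content is that $f$ is smooth in the sense of \cref{def:smoothMapOfManifolds}. The key point is that smoothness is detected locally: for $\psi\in\Cc^\infty(Z)$ and a point $p=i_k(x)$, the open neighborhood $i_k(M_k)$ of $p$ carries $f^*\psi=(f_k^*\psi)\circ i_k^{-1}$, which is smooth because $f_k$ is smooth and $i_k^{-1}$ is a diffeomorphism onto $M_k$; since smoothness of a function on a subset is tested on neighborhoods (\cref{def:smoothMapArbitrarySubset}), it follows that $f^*\psi\in\Cc^\infty(M)$. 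Thus $M$ together with the injections $i_k$ is the coproduct in $\sF{Man}$, and the injections are open embeddings, as claimed.
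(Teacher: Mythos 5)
Your proof is correct, but it takes a genuinely different route from the paper's. You construct the coproduct from scratch: explicit disjoint-union topology, verification of second countability and Hausdorffness, transport of the atlases, and a check of the universal property in $\sF{Man}$ via locality of smoothness. The paper instead takes the smooth structure on $\discats_{k\in K}M_k$ as given (citing Lee, Prop.\ 3.42(b)) and proves only the open-embedding claim, by a purely categorical retraction argument: using the universal property it builds a smooth map $f\colon M\to M_k$ that is the identity on the $k$th summand and constant on the others, so $f\circ i_k=id_{M_k}$ exhibits $i_k$ as a split monomorphism, hence an embedding, and $f$ restricts to an inverse of $i_k$ on its image, giving openness. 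Your version buys self-containedness --- in particular it actually establishes that the disjoint union \emph{is} the categorical coproduct in $\sF{Man}$, which the paper's proof presupposes --- and your direct topological argument for openness is cleaner than the paper's rather terse ``$f=i_k^{-1}$ on the appropriate restriction.'' The paper's version buys brevity and showcases the universal-property technique the author is deliberately rehearsing in this background chapter. One small caveat on your side: \cref{def:manifoldsWithCorners} as written fixes a single dimension $n$ for all charts, so your remark that it ``permits'' summands of different dimensions is not literally supported by the text; this is a standard convention issue rather than a mathematical gap, but you should either restrict to equidimensional summands or note explicitly that you are adopting the non-constant-dimension convention.
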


\begin{proof}
Fix $k\in K$.  We show that $i_k:M_k\hookrightarrow M$ is an open embedding. First we define a map $f:M\rightarrow M_k$, which by the universal property is defined by collection of maps $\left\{M_{k'}\xrightarrow{f_{k'}}M_k\right\}_{k\in K}$.  For any distinguished point $x_0\in M_k$, and $k'\neq k$ we define $f_{k'}(x) = x_0$.  Otherwise, for $k'=k$, we set $f_{k'}(x) = x$.  Clearly, each map $f_{k'}:M_{k'}\rightarrow M_k$ is smooth. This defines $f$ uniquely so that $f\circ i_k = f_k=id_{M_k}$, which proves that $i_k$ is an embedding. Also a consequence is that $f = i_k^{-1}$ or $f^{-1} = i_k$ (on the appropriate restriction) which proves that $i_k$ is open as well. 
\end{proof}

\begin{fact}\label{fact:tangentProjSplitEpi}\label{fact:naturalityOfProjTangentBundle}
The canonical projection $\t_M:TM\rightarrow M$ of the tangent bundle is a split epimorphism (\cref{def:splitEpi}) and natural. 
\end{fact}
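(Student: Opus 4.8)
The plan is to prove the two assertions separately, and both reduce to standard facts about the tangent bundle repackaged in categorical language.

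For the split epimorphism claim, I would exhibit an explicit section, namely the \emph{zero section} $s_0 : M \to TM$ sending each point $x \in M$ to the zero tangent vector $0_x \in T_xM$. First I would note that $s_0$ is smooth: in any chart $(U,\ph_\a)$ with induced chart coordinates on $TM$, the zero section reads $x \mapsto (x,0)$, which is manifestly smooth in the sense of \cref{def:smoothMapOfManifolds}. Then, since $0_x$ lies in the fiber $T_xM = \t_M^{-1}(x)$ by construction, we have $\t_M \circ s_0(x) = \t_M(0_x) = x$ for every $x$, so $\t_M \circ s_0 = id_M$. By \cref{def:splitEpi}, this makes $s_0$ a section of $\t_M$ and hence $\t_M$ a split epimorphism (with $s_0 \in \G(\t_M)$).

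For naturality, I would interpret the statement as saying that $\t : T \Rightarrow id_{\sF{Man}}$ is a natural transformation (\cref{def:naturalTrans}) between the tangent endofunctor $T : \sF{Man} \to \sF{Man}$ (which is a functor by \cref{prop:differentialIsFunctorial}) and the identity functor. Concretely, for every smooth map $f : M \to N$, I must check that the square
$$\begin{tikzcd}
	TM \arrow[r,"Tf"]\arrow[d,swap,"\t_M"] & TN \arrow[d,"\t_N"] \\
	M \arrow[r,"f"] & N
\end{tikzcd}$$
commutes. This follows directly from the fiberwise description of the differential: for $v \in T_xM$, the vector $Tf(v)$ lies in $T_{f(x)}N$, so $\t_N(Tf(v)) = f(x) = f(\t_M(v))$. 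Thus $\t_N \circ Tf = f \circ \t_M$, which is precisely the naturality condition.

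There is no genuine obstacle here; the result is essentially a tautology once the tangent functor is in hand. The only point requiring a word of care is the smoothness of the zero section, which I would dispatch by the local coordinate computation above rather than any abstract argument. The naturality square is nothing more than the assertion that $Tf$ carries the fiber over $x$ into the fiber over $f(x)$, which is built into the definition of the differential (\cref{prop:differentialIsFunctorial}); functoriality of $T$ is invoked only to guarantee that $\t$ has a well-defined source functor to be natural with respect to.
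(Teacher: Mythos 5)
Your proof is correct and follows the same route as the paper: the paper's (much terser) proof also exhibits the zero section as the splitting and observes that naturality amounts to $Tf$ carrying $T_xM$ into $T_{f(x)}N$. You simply supply the coordinate check of smoothness and the explicit naturality square that the paper leaves implicit.
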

\begin{proof}
	There is canonical zero section $s:M\rightarrow TM$ sending $x\in M$ to the zero vector $0\in T_xM$.  Naturality restates that $Tf_x:T_xM\rightarrow T_{f(x)}N$ for map of manifolds $f:M\rightarrow N$.\end{proof}

 \begin{prop}\label{prop:productOpenEmbeddings}
 Let $i_M:M\hookrightarrow M'$ and $i_N: N\hookrightarrow N'$ be open embeddings.  Then the induced map $i_M\times i_N:M\times N\hookrightarrow M'\times N'$ is an open embedding. 
 \end{prop}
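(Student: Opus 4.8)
The plan is to factor each open embedding as a diffeomorphism onto an open submanifold and then assemble the two pieces using the functoriality of the categorical product. Recall that an open embedding $i_M:M\hookrightarrow M'$ means precisely that the image $U_M\defeq i_M(M)$ is open in $M'$ and that the corestriction $M\to U_M$ (with $U_M$ carrying the open-submanifold structure) is a diffeomorphism; write $U_N\defeq i_N(N)$ similarly. So each $i_M$ is the composite of an isomorphism in $\sF{Man}$ followed by an open inclusion, and the goal is to show the product $i_M\times i_N$ admits the same factorization.

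First I would establish the point-set ingredient: $U_M\times U_N$ is open in $M'\times N'$. This is the standard fact that products of open sets are open in the product topology (open rectangles form a basis), together with the observation that the open-submanifold structure on $U_M\times U_N$ inside $M'\times N'$ agrees with the product of the open-submanifold structures on $U_M$ and $U_N$ --- both are obtained by restricting the product charts of $M'\times N'$. Consequently the inclusion $U_M\times U_N\hookrightarrow M'\times N'$ is itself an open embedding.

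Next I would invoke the categorical machinery. Since the corestrictions $M\xrightarrow{\sim}U_M$ and $N\xrightarrow{\sim}U_N$ are isomorphisms in $\sF{Man}$ (diffeomorphisms), \cref{prop:productIsosIsIso} gives that the product map $M\times N\to U_M\times U_N$ is again an isomorphism in $\sF{Man}$, i.e.\ a diffeomorphism. Composing this diffeomorphism with the open inclusion $U_M\times U_N\hookrightarrow M'\times N'$ recovers $i_M\times i_N$ (by functoriality of $\times$, \cref{prop:productBifunctor}), exhibiting $i_M\times i_N$ as a diffeomorphism onto the open subset $U_M\times U_N$ --- exactly the data of an open embedding.

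The main obstacle is the first step: the purely categorical argument handles the diffeomorphism-onto-image part for free, but openness is a topological condition that the product bifunctor does not see, so one must verify by hand that $U_M\times U_N$ is open and that its product smooth structure coincides with the subspace smooth structure inherited from $M'\times N'$. This compatibility of product and subspace structures is routine for manifolds with corners --- it is checked chart-by-chart, using $\R^n_+\times\R^m_+\cong\R^{n+m}_+$ --- but it is where the genuine content of the statement resides.
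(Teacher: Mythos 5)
Your proof is correct and follows essentially the same route as the paper's: both use \cref{prop:productIsosIsIso} to get that $i_M\times i_N$ is a diffeomorphism onto its image and then appeal to the product topology for openness (your version merely spells out the chart-level compatibility of product and subspace structures, which the paper leaves implicit).
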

\begin{proof}
By definition, $i_M:M\hookrightarrow M'$ and $i_N:N\hookrightarrow N'$ are diffeomorphisms to their images. Therefore, $i_M\times i_N:M\times N\hookrightarrow M'\times N'$ is diffeomorphism to its image (\cref{prop:productIsosIsIso}). That $i_M\times i_N$ is open follows immediately from the definition of product topology. \end{proof}

\subsection{Continuous-Time Dynamical Systems}

\begin{definition}\label{def:ctDySys}
	For us, a  continuous-time dynamical system is a pair $(M,X)$ where $M$ is a smooth manifold and $X\in \Xf(M)$ a smooth vector field on $M$. 
\end{definition}

\begin{definition}\label{def:relatedVectorFields} Let $f:M\rightarrow N$ be a map of manifolds.  We say that vector fields $X\in \Xf(M)$ and $Y\in \Xf(N)$ are $f$-\textit{related} if $Tf\circ X = Y\circ f$.	
\end{definition}

\begin{definition}\label{def:ctDySysMorphism}
	Let $(M,X)$ and $(N,Y)$ be two continuous-time dynamical systems.  A morphism $(M,X)\xrightarrow{f}(N,Y)$ of systems is a map $M\xrightarrow{f}N$ of manifolds such that $(X,Y)$ are $f$-related (\cref{def:relatedVectorFields}). 
\end{definition}

\begin{definition}\label{def:solutionToDynamicalSystem}
Let $(M,X)$ be a continuous-time dynamical system.  A  \textit{solution} $\ph_{X}$ of $(M,X)$---also called \textit{integral curve}---is a map $\ph_{X}:(-\e,\e)\rightarrow M$, for some $\e>0$, such that $\frac{d}{dt}\ph_{X}(t) = X(\ph_{X}(t))$ for all $t\in (-\e,\e)$. 

A solution may have non-symmetric domain $(-\d,\e)$,  and we say that $\ph_X$ is maximal if the domain may not be extended, i.e.\ if there is no $(-\d',\e')\supsetneq (-\d,\e)$ for which $\psi_X:(-\d',\e')\rightarrow M$ is an integral curve. \end{definition}

\begin{remark}
	There are systems  whose maximal solution has domain  all of  $\R$.  Such solutions are said to be \textit{complete}. \end{remark}
	
	 Every dynamical system $(M,X)$ has  solutions.  Moreover, solutions are usually said to be unique, with the specification of initial condition $\ph_X(0) = x_0\in M$.

\begin{theorem}\label{theorem:E&U}
	Let $(M,X)$ be a dynamical system, and $x_0\in M$. Then there is $\e>0$ for which a smooth map $\ph_{X,x_0}:(-\e,\e)\rightarrow M$ is the solution to $(M,X)$ with initial condition $x_0$.   In other words,  $\ph_{X,x_0}(0) = 0$ and $\frac{d}{dt} \ph_{X,x_0}(t) = X(\ph_{X,x_0}(t))$.  We say ``the solution'' because this map is unique: if smooth map  $\psi:(-\e,\e)\rightarrow M$ satisfies $$
		\begin{array}{lll}\psi(0) = x_0 & \mbox{and} & \frac{d}{dt} \psi(t) = X(\psi(t))\;\,\forall \, t\in(-\e,\e),\end{array}$$
	then $\psi = \ph_{X,x_0}$. 
\end{theorem}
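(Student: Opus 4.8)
The plan is to reduce the statement to the classical Euclidean existence--uniqueness theorem for first-order ODEs and then transport the conclusion back to $M$. First I would choose a chart $(\varphi_\alpha, U_\alpha)$ of $M$ about $x_0$ (\cref{def:manifoldsWithCorners}), so that $\varphi_\alpha: U_\alpha \xrightarrow{\sim} V$ is a diffeomorphism onto an open subset $V \subseteq \R^n_+$ with $\varphi_\alpha(x_0) = y_0$. Using the induced trivialization of the tangent bundle furnished by the differential $T\varphi_\alpha$ (\cref{prop:differentialIsFunctorial}), the vector field $X \in \Xf(M)$ becomes, in these coordinates, a smooth map $F: V \to \R^n$, and the vector field equation $\frac{d}{dt}\varphi_{X,x_0}(t) = X(\varphi_{X,x_0}(t))$ of \cref{def:solutionToDynamicalSystem} becomes the initial value problem $\dot y = F(y)$, $y(0) = y_0$. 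It thus suffices to solve this problem on a small interval and then set $\varphi_{X,x_0} \defeq \varphi_\alpha^{-1} \circ y$.

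For the Euclidean problem I would run the standard Picard--Lindel\"of argument: rewrite the initial value problem as the integral equation $y(t) = y_0 + \int_0^t F(y(s))\,ds$ and observe that the operator sending a continuous curve to the right-hand side is a contraction on the complete metric space of continuous maps $[-\varepsilon,\varepsilon] \to \overline{B}(y_0, r)$, for $\varepsilon$ sufficiently small, with respect to the sup-norm. The contraction constant is controlled by a local Lipschitz bound for $F$, which exists because $F$ is smooth, hence $C^1$, near $y_0$. The Banach fixed point theorem then yields a unique continuous fixed point, which is automatically $C^1$ since its derivative equals $F \circ y$; bootstrapping through $\dot y = F(y)$ shows $y$ is in fact smooth, as required. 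Uniqueness of the fixed point gives uniqueness of $y$, and hence, via the diffeomorphism $\varphi_\alpha$, of $\varphi_{X,x_0}$; alternatively uniqueness follows from a Gr\"onwall estimate on the difference of two solutions.

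The one point requiring care, and the main obstacle specific to our setting, is that $M$ is a manifold with corners (\cref{remark:categoryManifolds}): when $x_0$ lies on a boundary or corner stratum, the value $X(x_0) \in T_{x_0}M$ may point out of $\R^n_+$, so the Euclidean solution $y$ need not remain in $V$ and the putative curve $\varphi_\alpha^{-1}\circ y$ may fail to land in $M$ on a symmetric interval $(-\varepsilon,\varepsilon)$. To handle this I would either restrict to interior initial conditions, or adopt the standard convention that admissible vector fields on a manifold with corners are tangent to each boundary face at boundary points, which guarantees the coordinate solution stays in $\R^n_+$. With that caveat the interior case is entirely classical, and for the fixed-point estimates I would simply cite the smooth ODE theory of, e.g., \cite{tu} rather than reproduce them in full.
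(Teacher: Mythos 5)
Your proposal is correct and is precisely the standard argument behind the paper's proof, which consists solely of the citation ``See \cite[\S 14.3]{tu}'': reduce to the Euclidean initial value problem in a chart and apply Picard--Lindel\"of via the contraction mapping principle. Your caveat about boundary and corner points is a genuine subtlety the paper passes over silently (indeed its own bouncing-ball field $Z(h,v)=v\pdiv{}{h}-\pdiv{}{v}$ is outward-pointing on $\{h=0\}$, where no solution on a symmetric interval exists), so flagging the need for an interiority or boundary-tangency hypothesis only strengthens the argument rather than signaling a gap.
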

\begin{proof}
	See \cite[\S 14.3]{tu}.
\end{proof}

An equivalent defintion of integral curves: 
\begin{definition}\label{def:integralCurveAsMap}
	Let $(M,X)$ be a continuous-time dynamical system.  A \textit{solution}  (or \textit{integral curve}) \textit{of system} $(M,X)$  is a map $\ph_{X,x_0}:((-\e,\e),\frac{d}{dt})\rightarrow (M,X)$ of dynamical systems from the dynamical system $((-\e,\e),\frac{d}{dt})$ with constant vector field $\frac{d}{dt}\in \Xf(\R)$ sending $t\mapsto 1\in T_t\R$. 
\end{definition}
Equivalence of \cref{def:solutionToDynamicalSystem} and \cref{def:integralCurveAsMap} follows from \cref{def:ctDySysMorphism}, since $X\circ \ph_{X,x_0} = T\ph_{X,x_0} \left(\frac{d}{dt}\right) = \frac{d}{dt} \ph_{X,x_0}$.

\begin{definition}\label{def:SSub}
	A smooth map $p:M'\rightarrow M$ of manifolds is said to be a \textit{surjective submersion} if $p:M'\twoheadrightarrow M$ is surjective, as a map of sets, and the differential $Tp_x:T_xM'\twoheadrightarrow T_{p(x)}M$ is surjective at each point $x\in M'$. 
\end{definition}

\begin{definition}\label{def:openDySys}
   We define an \textit{open system} $(M'\xrightarrow{p}M ,X)$ as a pair where $p:M'\rightarrow M$ is a surjective submersion  of manifolds (\cref{def:SSub}) and $X:M'\rightarrow TM$ is a smooth map of manifolds such that $\t_M\circ X = p$, where $\t_M:TM\rightarrow M$ is the canonical projection of the tangent bundle.
\end{definition}

\begin{definition}\label{def:openDySysMorphism}
	Let $(M'\xrightarrow{p_M}M, X)$ and $(N'\xrightarrow{p_N}N,Y)$ be two open systems (\cref{def:openDySys}).  A \textit{map} $(p_M,X)\xrightarrow{f} (p_N,Y)$ \textit{of open systems} is a pair of maps $(f':M'\rightarrow N', f:M\rightarrow N)$ such that \begin{equation}\label{eq:openRelatedness} Y\circ f' = Tf\circ X. 	
 \end{equation}
 Whenever two open systems $X:M'\rightarrow TM$ and $Y:N'\rightarrow TN$ satisfy the equality \eqref{eq:openRelatedness}, we say that $(X,Y)$ are $f$-\textit{related} (compare with relatedness of vector fields \cref{def:relatedVectorFields}). 
\end{definition}
We will consolidate this definition.  First a few more. 

\begin{definition}\label{def:SSubMorphism}
	Let $M'\xrightarrow{p_M}M$ and $N'\xrightarrow{p_N}N$ be two surjective submersions (\cref{def:SSub}).  We define a \textit{morphism} $f:p_M\rightarrow p_N$ \textit{of surjective submersions} to be a pair $f':M'\rightarrow N'$ and $f:M\rightarrow N$ of manifolds such that $p_N \circ f' = f \circ p_M$. 
\end{definition}
\begin{remark}\label{remark:SSubIsACategory}
	It is easy to see that surjective submersions and morphisms of surjective submersions form a category $\sF{SSub}$, which is in fact a full subcategory of $\sF{Arrow(Man)}$.  We will henceforth denote surjective submersions by $M\defeq \big(M_{tot}\xrightarrow{p_M}M_{st}\big)$.  The domain $M_{tot}$ is called the \textit{total} space and codomain $M_{st}$ is the \textit{state}.  A morphism $f:M\rightarrow N$ consists of maps $f_{tot}:M_{tot}\rightarrow N_{tot}$ and $f_{st}:M_{st}\rightarrow N_{st}$. 
\end{remark}
\begin{definition}\label{def:relatedControl}
	Let $(M,X)$ and $(N,Y)$ be open systems (\cref{def:openDySys}).  We say that $(X,Y)$ are $f$-\textit{related} if $Tf_{st}\circ X = Y \circ f_{tot}$. 
\end{definition}
\begin{remark}
	In particular, $f:M\rightarrow N$ defines a morphism of surjective submersions. 
\end{remark}
We now redefine morphisms of surjective submersions (\cref{def:openDySysMorphism}):
\begin{definition}\label{def:openDySysMorphism2}
	Let $(M,X),\; (N,Y)$ be two open systems (\cref{def:openDySys}, \cref{remark:SSubIsACategory}).  We define a \textit{morphism} $(M,X)\xrightarrow{f}(N,Y)$ \textit{of open systems} to be a morphism $f:M\rightarrow N$ of surjective submersions (\cref{def:SSubMorphism}) such that $(X,Y)$ are $f$-related (\cref{def:relatedControl}). 
\end{definition}

\subsubsection{Existence and Uniqueness in Category of Elements}
Recall existence and uniqueness, which says that every continuous-time dynamical system $(M,X)$ and choice of initial condition $x_0\in M$ determines a  unique integral curve passing through $x_0$ at time $0$.   
\begin{definition}\label{def:completeDySys}
	A \textit{complete (continuous-time)  dynamical system} $(M,X)$ is a pair where $M$ is a manifold and $X\in \Xf(M)$ is a smooth vector field on $M$ such that for each initial condition $x_0\in M$, there is a \textit{complete} integral curve, i.e.\ a map $\ph_{X,x_0}:\R\rightarrow M$ satisfying $\frac{d}{dt} \ph_{X,x_0}(t) = X(\ph_{X,x_0}(t))$ for all $t\in R$. 
	
	\textit{Morphisms of complete dynamical systems} are the same as maps of dynamical systems: $(M,X)\xrightarrow{f} (N,Y)$ is a \textit{morphism} if $f:M\rightarrow N$ is a smooth map of manifolds and $(X,Y)$ are $f$-related, i.e.\ $Tf\circ X = Y \circ f$. 
\end{definition}
Complete dynamical systems and their morphisms form a category, which we here denote by $\sF{DySys}$.  It is a full subcategory of the category of dynamical systems whose objects may not have complete integral  curves. 

In the category $\sF{DySys}$, existence and uniqueness can be formulated in Yoneda categorical dress: 

\begin{prop}\label{prop:existenceAndUniquenessRepresentable} 
	The forgetful functor $\upsilon:\sF{DySys}\rightarrow\sF{Set}$---sending continuous-time dynamical system $(M,X)\mapsto \{x\in M\}$ to the underlying set---is representable. 
\end{prop}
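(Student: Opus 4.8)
The plan is to use the dictionary between representability and initial objects in the category of elements. By \cref{prop:functorRepresentableIFFcategoryOfElementsHasUniversal}, the functor $\upsilon$ is representable if and only if the category of elements $\disg_{\sF{DySys}}\upsilon$ (\cref{def:categoryOfElements}) has an initial object. So I would first unwind the data: an object of $\disg_{\sF{DySys}}\upsilon$ is a pair $\big((M,X),x_0\big)$ consisting of a complete dynamical system (\cref{def:completeDySys}) together with a point $x_0\in M$, and a morphism $\big((M,X),x_0\big)\to\big((N,Y),y_0\big)$ is a morphism $f\colon(M,X)\to(N,Y)$ of complete systems with $\upsilon(f)(x_0)=f(x_0)=y_0$.

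The candidate initial object is $\big((\R,\tfrac{d}{dt}),0\big)$, where $\tfrac{d}{dt}\in\Xf(\R)$ is the constant unit vector field (\cref{def:integralCurveAsMap}) and $0\in\R$ is the chosen element. To show it is initial I would fix an arbitrary object $\big((M,X),x_0\big)$ and analyze the morphisms from the candidate into it. A morphism in $\disg_{\sF{DySys}}\upsilon$ is a map $f\colon(\R,\tfrac{d}{dt})\to(M,X)$ of complete systems with $f(0)=x_0$; by \cref{def:ctDySysMorphism} this is a smooth $f\colon\R\to M$ satisfying $Tf\circ\tfrac{d}{dt}=X\circ f$, i.e.\ $\tfrac{d}{dt}f(t)=X(f(t))$. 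Via the equivalence recorded after \cref{def:integralCurveAsMap}, such an $f$ is exactly a \emph{complete integral curve} of $(M,X)$ with initial condition $x_0$. Initiality therefore amounts to the existence and uniqueness of this curve.

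Existence is immediate: by the defining property of $\sF{DySys}$ (\cref{def:completeDySys}), every complete system admits a complete integral curve $\ph_{X,x_0}\colon\R\to M$ through each initial condition. For uniqueness I would appeal to \cref{theorem:E&U}: if $f,g\colon\R\to M$ are two such curves with $f(0)=g(0)=x_0$, consider $A\defeq\{t\in\R:f(t)=g(t)\}$. It is nonempty since $0\in A$, and closed because $M$ is Hausdorff (\cref{def:manifoldsWithCorners}); it is open because around any $t_0\in A$ both $f$ and $g$ restrict to solutions with the same value at $t_0$, hence agree on a neighborhood by the local uniqueness in \cref{theorem:E&U}. Since $\R$ is connected, $A=\R$ and $f=g$. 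Thus there is a unique morphism $\big((\R,\tfrac{d}{dt}),0\big)\to\big((M,X),x_0\big)$, so the pair is initial and $\upsilon$ is representable. (One may further note, via \cref{theorem:Yoneda}, that the resulting natural isomorphism $\sF{DySys}\big((\R,\tfrac{d}{dt}),\cdot\big)\Rightarrow\upsilon$ sends $f\mapsto f(0)$, the element of $\upsilon(M,X)=M$ it picks out.)

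I expect the only genuinely non-formal step to be the upgrade from the \emph{local} existence-and-uniqueness of \cref{theorem:E&U} to the \emph{global} uniqueness used above; the connectedness argument handling this is standard but is the crux of the matter, everything else being an unwinding of the definitions of the category of elements and of morphisms of dynamical systems.
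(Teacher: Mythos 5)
Your proposal is correct and follows essentially the same route as the paper: both reduce representability to initiality of $\big((\R,\tfrac{d}{dt}),0\big)$ in $\disg_{\sF{DySys}}\upsilon$ via \cref{prop:functorRepresentableIFFcategoryOfElementsHasUniversal}, with existence supplied by completeness and uniqueness by \cref{theorem:E&U}. The only difference is that you spell out the local-to-global uniqueness upgrade via connectedness of $\R$, a detail the paper leaves implicit in its citation of \cref{theorem:E&U}.
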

\begin{proof}
	We argue that the element $\big((\R,\frac{d}{dt}),0\big) \in \disg_{\sF{DySys}}\upsilon$ is initial in the category of elements. By assumption, given dynamical system $(M,X)$ and element $x_0\in \upsilon (M)$, there is morphism $f:(\R,\frac{d}{dt})\rightarrow (M,X)$ with $f(0) = x_0$ (existence).  In fact, there is only one (uniqueness) (\cref{theorem:E&U}).  This proves that $ \big((\R,\frac{d}{dt}),0\big)$ is initial in the category of elements, and hence that $\upsilon:\sF{DySys}\rightarrow\sF{Set}$ is representable (\cref{prop:functorRepresentableIFFcategoryOfElementsHasUniversal}). 
\end{proof}

\subsection{Networks of Open Systems}
This section is review of \cite{lermanopennetworks}.

\begin{definition}\label{def:SSubInterconnection}
We say that morphism $M\xrightarrow{f}N$ of surjective submersions  (\cref{def:SSubMorphism}, \cref{remark:SSubIsACategory})	is an \textit{interconnection} if $f_{st}:M_{st}\xrightarrow{\sim} N_{st}$ is a diffeomorphism of manifolds.
\end{definition}

We now discuss networks of systems.  We introduced the category of lists in \cref{subsection:MonoidalCat}.  Now we assign content to this category as well as interpret in the context of dynamical systems. 

Let $\{M_\fx\}_{\fx\in \fX}$ be a collection of surjective submersions.  We define the product $M\defeq \discatp_{\fx\in \fX}M_\fx$ as follows: $M_{tot} \defeq \discatp_{\fx\in \fX}M_{\fx,tot}$, and similarly $M_{st} \defeq \discatp_{\fx\in \fX}M_{\fx,st}$.  The submersion  $M_{tot}\xrightarrow{p_M}M_{st}$ is uniquely induced by the universal property (alternatively, since $\times$ is a functor (\cref{prop:productBifunctor})).

Let $\fX$ be a finite set and $\Sc_\fX:\fX\rightarrow\sF{SSub}$ assign a surjective submersion $\Sc_\fX(\fx)$ to each $\fx\in \fX$.  
\begin{definition}\label{def:networkOpenSystems}
	A (concrete) \textit{network of open systems} $\left(\Sc_\fX:\fX\rightarrow\sF{SSub}, \i:M\rightarrow\discatp_{\fx\in \fX} \Sc_\fX(\fx)\right)$ is a pair, where $\Sc_\fX:\fX\rightarrow\sF{SSub}$ assigns to each object $\fx\in \fX$ a surjective submersion $\Sc_\fX(\fx)$ and $\i:M\rightarrow\discatp_{\fx\in \fX}\Sc_\fX(\fx)$  is an interconnection of surjective submersions (\cref{def:SSubInterconnection}).
\end{definition}

\begin{definition}\label{def:networkOpenSystemsMorphism}
	Let $\left(\Sc_\fX:\fX\rightarrow\sF{SSub}, \i_\fX:M\rightarrow\discatp_{\fx\in \fX} \Sc_\fX(\fx)\right)$ and $\left(\Sc_\fY:\fY\rightarrow\sF{SSub}, \i_\fY:N\rightarrow\discatp_{\fy\in \fY} \Sc_\fY(\fy)\right)$ be two networks of open systems.  A \textit{morphism} $$((\ph,\Phi),f):\left(\Sc_\fX:\fX\rightarrow\sF{SSub}, \i_\fX:a\rightarrow\discatp_{\fx\in \fX} \Sc_\fX(\fx)\right)\rightarrow \left(\Sc_\fY:\fY\rightarrow\sF{SSub}, \i_\fY:b\rightarrow\discatp_{\fy\in \fY} \Sc_\fY(\fy)\right)$$\textit{of networks of open systems} is a pair where $(\ph,\Phi):(\fX\rightarrow\sF{SSub})\rightarrow(\fY\rightarrow\sF{SSub})$ is a morphism of lists of surjective submersions (a map $\ph:\fX\rightarrow\fY$ of finite sets with smooth map $\Phi_\fx:\Sc_\fY(\ph(\fx))\rightarrow\Sc_\fX(\fx)$ of surjective submersions for each $\fx\in \fX$ \cref{def:categoryOfLists}) compatible with each interconnection, i.e.\ $\i_\fX\circ f = \Pi(\ph,\Phi) \circ \i_\fY$. 
\end{definition}
Recall the definition of control (\cite[\S2]{lermanopennetworks}):
\begin{definition}\label{def:controlOnSubmersions}
	Let $M_{tot}\xrightarrow{p_M}M_{st}$ be a surjective submersion.  We define $$\sF{Crl}(p_M)\defeq\big\{X:M_{tot}\rightarrow TM_{st}:\, p_M = \t_{M_{st}}\circ X\big\},$$ where $\t_M:TM\rightarrow M$ is the canonical projection of the tangent bundle. 
\end{definition}
The main result \cite[Theorem 9.3]{lermanopennetworks}: 
\begin{theorem}\label{theorem:mainTheoremLermanOpenNetworks}
	A morphism \small $$\big((\ph,\Phi),f\big):\left(\Sc_\fX:\fX\rightarrow \sF{SSub}, \i_\fX:a\rightarrow\discatp_{\fx\in \fX}\Sc_\fX(\fx)\right)\rightarrow\left(\Sc_\fY:\fY\rightarrow\sF{SSub},\i_\fY:b\rightarrow\discatp_{\fy\in \fY}\Sc_\fY(\fy)\right)$$ induces a 1-morphism $$\begin{tikzcd}[column sep = large, row sep = large]
	\discatp_{\fy\in \fY}\sF{Crl}(\Sc_\fY(\fy))\arrow[r,swap,""{name = foo1, below}]\arrow[d] & \discatp_{\fx\in \fX}\sF{Crl}(\Sc_\fX(\fx))\arrow[d]\\
	\sF{Crl}(b)\arrow[r,""{name = foo2,above}] & \sF{Crl}(a)
	\ar[shorten <= 15pt, shorten >= 15pt, Rightarrow, from = foo1, to = foo2]
\end{tikzcd}
$$ in $\sF{Set}^\square$. 
\end{theorem}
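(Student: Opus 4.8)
The plan is to produce the claimed $1$-morphism by exhibiting its four arrows explicitly and then verifying the single inclusion that constitutes the $2$-cell. I read the target double category $\sF{Set}^\square$ as having sets for $0$-objects, functions for $0$-morphisms (the vertical arrows), relations for horizontal $1$-objects, and inclusion-squares for $1$-morphisms, so that the content of the theorem is the ``relatedness is preserved'' statement advertised in the abstract. Accordingly, I would take the bottom horizontal $1$-object to be the relation on $\sF{Crl}(b)\times\sF{Crl}(a)$ consisting of the $f$-related pairs $(Y_b,X_a)$ (\cref{def:relatedControl}), and the top horizontal $1$-object to be the relation on $\discatp_{\fy}\sF{Crl}(\Sc_\fY(\fy))\times\discatp_{\fx}\sF{Crl}(\Sc_\fX(\fx))$ consisting of the \emph{pairwise} related tuples, namely those $\big((Y_\fy),(X_\fx)\big)$ for which $(Y_{\ph(\fx)},X_\fx)$ is $\Phi_\fx$-related for every $\fx\in\fX$.

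Three preliminary tools feed the two vertical arrows and set up the computation. First, pullback along interconnection: for an interconnection $\i\colon a\to M$ (so $\i_{st}$ is a diffeomorphism, \cref{def:SSubInterconnection}) I would define $\i^\ast X\defeq (T\i_{st})^{-1}\circ X\circ\i_{tot}$ for $X\in\sF{Crl}(M)$, and check $\i^\ast X\in\sF{Crl}(a)$ via naturality of the bundle projection (\cref{fact:naturalityOfProjTangentBundle}), since $\t_{a_{st}}\circ(T\i_{st})^{-1}=\i_{st}^{-1}\circ\t_{M_{st}}$ forces $\t_{a_{st}}\circ\i^\ast X=\i_{st}^{-1}\circ p_M\circ\i_{tot}=p_a$; a one-line substitution then shows $(\i^\ast X,X)$ is $\i$-related. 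Second, $\sF{Crl}$ carries products to products: using $T(\discatp_\fx M_{\fx,st})\cong\discatp_\fx TM_{\fx,st}$ (\cref{corollary:TProductDiffeoProductT}) and $(\discatp_\fx M_\fx)_{tot}=\discatp_\fx M_{\fx,tot}$, I would build a bijection $\Theta\colon\discatp_\fx\sF{Crl}(M_\fx)\xrightarrow{\sim}\sF{Crl}(\discatp_\fx M_\fx)$ sending $(X_\fx)$ to the control whose $\fx$-th tangent component is $X_\fx$ composed with the projection onto $M_{\fx,tot}$, the section condition being inherited componentwise. The two verticals of the square are then $\i_\fY^\ast\circ\Theta_\fY$ and $\i_\fX^\ast\circ\Theta_\fX$. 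Third, I would prove the compatibility lemma that if $\big((Y_\fy),(X_\fx)\big)$ is pairwise related then $\big(\Theta_\fY(Y_\fy),\Theta_\fX(X_\fx)\big)$ is $\Pi(\ph,\Phi)$-related, where $\Pi(\ph,\Phi)\colon\discatp_\fy\Sc_\fY(\fy)\to\discatp_\fx\Sc_\fX(\fx)$ is the morphism of surjective submersions from \cref{prop:productIsFunctorialForCartesianMonoidal}; this is a bookkeeping check that the defining equation of $\Pi(\ph,\Phi)$-relatedness, read on the $\fx$-th factor through the projections and the tangent-of-product isomorphism, is exactly $\Phi_\fx$-relatedness of $(Y_{\ph(\fx)},X_\fx)$.

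The $2$-cell then falls out of a short chase. Fix $\big((Y_\fy),(X_\fx)\big)$ in the top relation and write $\bar Y=\Theta_\fY(Y_\fy)$, $\bar X=\Theta_\fX(X_\fx)$, $Y_b=\i_\fY^\ast\bar Y$, $X_a=\i_\fX^\ast\bar X$; the goal is that $(Y_b,X_a)$ is $f$-related. Relatedness composes along composition of morphisms of submersions (immediate from functoriality of $T$, \cref{prop:differentialIsFunctorial}), so from ``$(Y_b,\bar Y)$ is $\i_\fY$-related'' and ``$(\bar Y,\bar X)$ is $\Pi(\ph,\Phi)$-related'' I obtain that $(Y_b,\bar X)$ is $\big(\Pi(\ph,\Phi)\circ\i_\fY\big)$-related, which by the compatibility $\i_\fX\circ f=\Pi(\ph,\Phi)\circ\i_\fY$ (\cref{def:networkOpenSystemsMorphism}) is $(\i_\fX\circ f)$-relatedness. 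Expanding the latter as $T\i_{\fX,st}\circ Tf_{st}\circ Y_b=\bar X\circ\i_{\fX,tot}\circ f_{tot}$ and substituting ``$(X_a,\bar X)$ is $\i_\fX$-related'' (i.e.\ $\bar X\circ\i_{\fX,tot}=T\i_{\fX,st}\circ X_a$) gives $T\i_{\fX,st}\circ Tf_{st}\circ Y_b=T\i_{\fX,st}\circ X_a\circ f_{tot}$; since $\i_{\fX,st}$ is a diffeomorphism, $T\i_{\fX,st}$ is injective and cancels, leaving $Tf_{st}\circ Y_b=X_a\circ f_{tot}$, which is precisely $f$-relatedness. This is the inclusion making the square a $1$-morphism in $\sF{Set}^\square$.

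The main obstacle will be the third preliminary, reconciling ``pairwise $\Phi_\fx$-relatedness'' with ``$\Pi(\ph,\Phi)$-relatedness of the assembled products.'' Everything else is either a naturality identity for $\t$, a cancellation once $T\i_{\fX,st}$ is known invertible, or the composability of relatedness; but this step requires tracking the reindexing $\ph\colon\fX\to\fY$ through the monoidal product, unwinding the state and total parts of $\Pi(\ph,\Phi)$ as built in \cref{prop:productIsFunctorialForCartesianMonoidal}, and invoking the identification $T(\discatp_\fx M_{\fx,st})\cong\discatp_\fx TM_{\fx,st}$ simultaneously, so it carries essentially all of the combinatorial weight of the theorem.
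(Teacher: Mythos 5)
Your proposal is correct, and its skeleton coincides with the paper's: the two vertical arrows are interconnection-pullback composed with product-assembly, the horizontal relations are pairwise relatedness on top and $f$-relatedness on the bottom, and the $2$-cell is obtained by composing two squares — one for the product step, one for the interconnection step. Your $\Theta$ is the paper's $\scP_\fX$ (\cref{lemma:strongMonoidalInducesMonoidalFunctor}), your compatibility lemma is \cref{prop:openSectionsProductRelatedAreRelated}, and your final chase (compose relatedness along $\Pi(\ph,\Phi)\circ\i_\fY=\i_\fX\circ f$, then cancel the invertible $T\i_{\fX,st}$) is the same computation as in \cref{prop:doubleFunctorASquare2SetSquare}. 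The one genuine difference is packaging: the paper does not prove this theorem directly in $\sF{Man}$ at all, but instead verifies that $(\sF{Man},\sF{SSub},T,\mathrm{id},\t)$ satisfies the hypotheses of the abstract \cref{theorem:mainTheoremAbstract} (cartesianness, strong monoidality of $T$, monoidality of $\t$) and cites that theorem, whereas you re-derive the content concretely. Your route is more elementary and self-contained; the paper's buys reusability, since the same abstract theorem is then instantiated again for hybrid, deterministic hybrid, and discrete networks. You are also right about where the weight sits — the paper's proof of \cref{prop:openSectionsProductRelatedAreRelated} is exactly the bookkeeping you flag, tracking the reindexing $\ph$ through the product and the isomorphism $T\bigl(\prod_\fx M_{\fx,st}\bigr)\cong\prod_\fx TM_{\fx,st}$, and it is the only step that uses cartesianness of the target category in an essential way.
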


We will reproduce this statement in a much more general setting in \cref{ch4}. We interpret it now as saying the following: a pair of collections of open systems which are pairwise related induce a pair of related open systems.  We refer the reader to \cite{lermanopennetworks} for more details in the context of continuous-time systems.  At this point, we state the fact for reference, but we will explain the intuition and details of (a version of) its proof throughout this thesis.

\chapter{Hybrid Systems}\label{ch3}

\section{Introduction} 
We develop a categorical study of hybrid systems.  The reason for doing so is twofold.  First, working in the categorical setting allows us to isolate  which concepts are specifically hybrid from those which are not.  For example, many formulations of hybrid systems loosely consider them to be dynamical systems which exhibit both continuous and discrete (discontinuous) behavior.  As we saw previously, a continuous-time system $(M,X)$ consists of a space and  a vector field which specifies the behavior of dynamics in said space.  We will similarly define a hybrid version of space, in or over which it will make sense to speak of a dynamics-governing object. 

Here is a concrete way the category theory arises. After constructing hybrid phase spaces, we observe that there is additionally a notion of morphism or map between hybrid phase spaces. The collection of hybrid phase spaces and morphisms forms a category, and there is a functor from the category of hybrid phase spaces to the category of manifolds.  One way of defining a hybrid system is as a pair $(a,X)$ where $a$ is a hybrid phase space and $X$ is a vector field on the underlying manifold.   According to this definition, the only exclusively ``hybrid'' aspect of hybrid systems is the underlying space!  At first glance, this framing seems counterintuitive, but it does not tell the whole story.  For maps of hybrid systems are first and foremost maps of hybrid phase spaces, satisfying other conditions, conditions which specify coherence of dynamics.   For example, in the continuous-time case,  integral curves---trajectories of continuous-time systems---are simply  special  maps of dynamical systems (namely, ones from an interval $(-\e,\e)$ with constant vector field $\frac{d}{dt}$), or maps of manifolds which preserve the dynamics.  We will define an analogous  class of hybrid phase spaces representing hybrid time, and then executions---the hybrid version of integral curve---as a map from a hybrid phase space in this class, together with some underlying dynamics representing the  passage of time. In this way, we mirror the theory of dynamical systems, while minimizing the formal modifications to make this theory hybrid.  Hybrid phase space is the principle distinguishing feature of hybrid concepts, but its consequences are far reaching.

 The second benefit of the categorical approach is that once we suitably formalize each hybrid notion, we may import results on networks of continuous-time systems to the hybrid setting without extra ad hoc maneuvering. The value of this approach will be especially apparent when we develop the abstract theory of systems in \cref{ch4}.  To recapitulate, the first benefit of category theory is that it helps us clearly define concepts at an  appropriate  level of generality and abstractness. Measures of ``appropriate'' include both how well resulting definitions capture intuition and how much extra work is needed to translate similar results from similar domains.  Which leads to the second benefit: the theory of networks of systems we use is fundamentally categorical. As such, we expect a category-theoretic version of hybrid systems to fit in with this theory of networks, without requiring us to ``reinvent the wheel.'' 
 
  We enumerate the ideas we need; some are needed merely to make sense of the statement of \cref{theorem:mainTheoremFordeterministicHybridSystems}, and some are required to prove  this theorem, a task we undertake in a general setting in \cref{ch4}.
 \begin{enumerate}
  \item We start by defining  non-deterministic hybrid notions.  Taking our cue from the theory of continuous-time dynamical systems, we first work out a notion of hybrid phase space as the basic building block of other hybrid  constructions, which are ``hybrid spaces with other data.'' In the non-deterministic setting, ``other data'' will generally be defined---using the functorial approach---in the category of manifolds. 
 \begin{enumerate}
 	\item We develop the notion of hybrid phase space as collection of manifolds and collection of relations. These data are indexed by  nodes and edges of a directed graph, respectively.  To nodes we associate manifolds and to edges we associate relations between the source and target node manifolds. An element of a relation is a pair, whose first member we think of as a point ``before'' a jump and whose second member is the point ``after'' jump. Our formalism is actually carried out in terms of (double) categories (which may be realized as path categories of directed graphs).  The abstractness is used both for packaging (to circumvent enumerating an unwieldy list of data and conditions) as well as to represent phenomena we intuitively expect hybrid behavior to have; we will elaborate on these properties momentarily. 
 	 	\item We develop a notion of map of hybrid phase space, and show that hybrid phase spaces with their morphisms form a category.  Moreover, there is a way to recover an underlying manifold from a hybrid phase space. We take the coproduct of manifolds indexing over nodes, and show that this operation---denoted by $\Ub$---is functorial. 
 	\item We use the functor $\Ub$ from (the category of) hybrid phase spaces to the category of manifolds to define a hybrid system $(a,X)$ as a pair where $a$ is a hybrid phase space and $X$ is a vector field on the underlying manifold $\Ub(a)$.  We extend the notion of morphism of hybrid phase spaces to that of hybrid systems, by importing the analogous notion from continuous-time dynamical systems: a map of hybrid systems is a morphism of hybrid phase spaces for which the vector fields on underlying manifolds are map-related. 
 	\item We define a notion of surjective submersion in the hybrid setting. Functoriality of $\Ub:\sF{HyPh}\rightarrow\sF{Man}$ makes another appearance: we define a hybrid surjective submersion as a morphism $f:a\rightarrow b$ of hybrid phase spaces such that $\Ub(f):\Ub a\rightarrow\Ub b$ is a surjective submersion in the category of manifolds. We also define maps of hybrid surjective submersions and observe that these morphisms with their objects form a category.  
 	\item We show that the category of hybrid phase spaces has products, and use this in two ways: (1) to show that along with the terminal hybrid phase space, the category $\sF{HyPh}$ is cartesian monoidal and (2) to provide nontrivial examples of hybrid surjective submersion:  the projection maps $p_a:a\times b\rightarrow a$ and $p_b:a\times b\rightarrow b$ are both hybrid surjective submersions. We will see in products a nontrivial consequence of having defined hybrid phase spaces categorically.  Unit edges at nodes correspond to diagonal relations, a consequence of which  for products corresponds to a decoupling of relations. Concretely, two systems considered together need not have simultaneous (discrete) state transitions.  
 	\item We extend the notion of open system to that of hybrid open system: a pair $(a,X)$ such that $a$ is a hybrid surjective submersion (which itself is a map $\sF{dom}(p_a)\xrightarrow{p_a}\sF{cod}(a)$ for which $\Ub(p_a)$ is a surjective submersion) such that  $(\Ub(a),X)$ is an open system (in the category of manifolds). 
 	 \end{enumerate}
 	 \item We next turn to determinism in hybrid  systems. As before, hybrid phase spaces appear in each notion, along with other data.  In the non-deterministic setting, we defined a hybrid category $\sF{HyC}$ as some object or morphism in the category of hybrid phase spaces together with some map---or otherwise satisfying some condition---in the category of manifolds.  In the deterministic setting, we work instead in the category of sets.  
 	 \begin{enumerate}
 	 	\item The key ingredient which allows us to make sense of determinism is the  \textit{continuous-discrete} (or \textit{c.d.}) bundle, defined over hybrid phase space $a$ as $\Tb a\defeq T\Ub a\times \Ub a$, the product of the tangent bundle on the underlying manifold and the  underlying manifold itself.  On its own, this construction is vacuous. We extract usefulness  from it by taking sections: maps $(X,\r):\Ub a\rightarrow\Tb a$ sending a point $x\in \Ub a$ to a pair of points $X(x)\in T_x\Ub a$ and $\r(x)\in \Ub(a)$.  We require that the first component $X$ of this section varies smoothly with $x\in \Ub a$ and that $(x,\rho(x))$ is an element of relation $a(\g_x)$ for some edge $\sF{dom}(\g_x)\xrightarrow{\g_x}\sF{cod}(\g_x)$. As before, $X$ represents continuous-time dynamics; now  $\r$ indicates discrete behavior.  The requirement that $(x,\r(x))\in a(\g_x)$ expresses a constraint that relations of the hybrid phase space impose on  possible jumps. That $\r:\Ub a\rightarrow\Ub a$ is a \textit{function} illustrates where determinism arises: each point of $\Ub a$ jumps to a specified point, even if that point is to itself.  The (everywhere) possibility of sending a point to itself is another consequence of our categorical definition of hybrid phase space (that each object has identity morphism).
 	 	\item We use determinism to define a hybrid analog of integral curve, \textit{executions}. We first demarcate a special class of deterministic  hybrid systems, as follows.  We start by  fixing an increasing set of points $t_0< t_1< t_2< \cdots $ in $\R$; these will be transition times.  We consider  the disjoint union $M\defeq \disu_{i\in \N}[t_i,t_{i+1}]\times\{i\}$ of closed intervals defined by this sequence as the underlying manifold of the hybrid phase space. Essentially we are cutting  $\R$ into countably many snippets.  On $[t_i,t_{i+1}]\times\{i\}$, we attach constant vector field $\frac{d}{dt}$, and we define jump map $\r:M\rightarrow M$ by $$\r(t,i) = \left\{\begin{array}{ll} (t,i) & \mbox{if}\; t\in [t_i,t_{i+1})\times \{i\}\\
(t,i+1) & \mbox{if}\; t = t_{i+1}\end{array}\right..$$	 At each moment (time), $t$ both flows in the positive direction at constant rate, while \textit{also} ``jumping'' to itself, unless we have hit the endpoint of the interval, at which point time jumps to the next interval.   Having thus defined this special class of deterministic  hybrid systems, we define an execution of  deterministic  hybrid system $(a,X,\r)$ as a map from one system in the special class to this one. This extension of  the notion of integral curve as map $(\R,\frac{d}{dt})\rightarrow (M,X)$ provides a concrete rationale for our notion of map of deterministic  hybrid system.  

 	 	\item Much of what follows is variation on a theme.  Using hybrid surjective submersions, we may define a deterministic  hybrid \textit{open} system as  a map $(X,\r):\Ub a_{tot}\rightarrow \Tb a_{st}$ where   
 	 	$p_a:a_{tot}\rightarrow a_{st}$ is a hybrid surjective submersion.  We impose compatibility conditions on $X$ and $\r$. For $X$, we ask that $\t_{\Ub a_{st}} \circ X = \Ub(p_a)$, where $\t_M:TM\rightarrow M$ is the canonical projection of the tangent bundle.  For the jump map $\r$, we ask  for each $x\in \Ub a_{tot}$ that there is some edge $\g_x\in \Sb^{a_{st}}$ such that $\left(\Ub(p_a)(x), \r(x)\right)\in a(\g_x)$. This somewhat opaque condition expresses both the jump constraint from relations of the phase space together with compatibility of the hybrid surjective submersion.  
 	 	\item We next define a deterministic-control sections functor $\dCrl:\sF{HySSub}\rightarrow \sF{Set}$ sending a hybrid surjective submersion to its collection of deterministic  hybrid open systems. There are two ways to define this functor on morphisms.  In one direction,  for morphism $a\xrightarrow{f} b$ of hybrid surjective submersions, we define $\dCrl(f)$ as a relation, pairs of deterministic  control on $a$ and on $b$ which are $f$-related. Relatedness of deterministic  control is analogous to relatedness of vector fields. This functor is lax and  maps into the category $\sF{Rel}$.  In another direction, we consider the sub-collection of morphisms $a\xrightarrow{f}b$ of hybrid surjective submersions  which are isomorphisms on state ($f_{st}:a_{st}\xrightarrow{\sim}b_{st}$), and  define $\dCrl(f):\dCrl(b)\rightarrow \dCrl(a)$ as a pullback.  This assignment is (strictly)  functorial. 
 	 	\item We present a slew of examples demonstrating how deterministic  control interacts with interconnection.  These examples are networks, in a sense similar to that defined for the continuous-time open system case: an indexed collection of hybrid surjective submersions together with an interconnection map to the product. 
 	 	\item This construction is justified in part by the result of applying deterministic  control: a collection of control on each hybrid surjective submersion induces a deterministic  control on the product.  
 	 	\item We also show that the functor $\Ub:\sF{HyPh}\rightarrow\sF{Man}$ is compatible with taking products.  In other words, $\Ub(a\times b) \cong \Ub a\times \Ub$, natural in $a$ and $b$.  This is a technical fact required for proving \cref{theorem:mainTheoremFordeterministicHybridSystems}, which we prove in \cref{ch4} as a corollary from a more abstract version of the same theorem. 
 	 	\end{enumerate}
 	 	\item 
These are the main ingredients we need for our main theorem (\cref{theorem:mainTheoremFordeterministicHybridSystems}), which says that a morphism of networks of deterministic hybrid open systems induces 1-morphism in $\sF{Set}^\square$.  This theorem encodes the idea that a collection of morphisms of subsystems induces a morphism of the interconnected (networked) systems. 
\end{enumerate}

\section{Hybrid Phase Spaces and Hybrid Systems}

We start with hybrid phase spaces.  They will arise in each version of hybrid system we use. 

As we develop these notions, it may be worthwhile to keep in mind a few classic  examples of hybrid systems.  We will discuss the example of a room whose temperature is regulated by a thermostat, which is driven up by a heater when on and falls due to a lower ambient temperature when off.  We will also consider a bouncing ball, whose position and velocity are continuous, except at the moment of impact, at which point the height remains the same, but velocity jumps discontinuously.  We will discuss these examples, and isolate which aspects from the standard theory of hybrid systems correspond with the notions we are abstractly formalizing. 
\subsection{Hybrid Phase Spaces}\label{subsection:hybridPhaseSpaces}
\begin{definition}\label{def:hybridPhaseSpace0}
	We define a \textit{hybrid phase space} $a:\Sb^a\rightarrow\sF{RelMan}$ to be a functor from discrete double category $\Sb^a$  (\cref{def:doubleCat}) to double category $\sF{RelMan}$ (\cref{def:relManDouble}). We call $\Sb^a$ the \textit{source category} of $a$. 
\end{definition}

Thus,  for each $0$-object $\fs\in \Sb^a_0$, $a(\fs)$ is a manifold with corners, and for each $1$-object $\fs\xrightarrow{\g}\fs' \in \Sb^a_1$, $a(\g) \subseteq a(\fs)\times a(\fs')$ is a relation. Recall that we think of elements of relations as jump points before and after. 

For all intents and purposes, we may think of a hybrid phase space as a directed graph, to whose nodes we assign  manifolds, and to whose edges we assign relations between source and target manifolds.  We will need some additional properties, such as unit relations at each node, which follows automatically when we define hybrid phase spaces as functors.  We may refer to objects of the object category $\Sb^a_0$ as \textit{$0$-objects} or \textit{nodes} and objects of the $1$-category $\Sb^a_1$ as \textit{$1$-objects} or \textit{edges}.

\begin{example}\label{ex:phaseSpaceForBall}
	Imagine a bouncing ball, whose state is represented by height $h$ and velocity $v$.  Velocity is unconstrained, but height is constrained to be above the ground, i.e.\ $h\geq 0$.  We will describe the dynamics portion of  this hybrid system in \cref{ex:bouncingBallVanilla}, but for now we describe the phase space.  Let $c:\Sb^c\rightarrow\sF{RelMan}$ have source category $\Sb^c$ given by $\begin{tikzcd}
	0 \arrow[loop left,"e"]
\end{tikzcd}$ assigning $c(0) \defeq \R^{\geq0 }\times \R$ (with coordinates $(h,v)$) and relation $c(e)\defeq\big\{(h,v,h',v')\in c(0)^2:\, h'= h = 0,\; v'\cdot v < 0\big\}$. Following the idea that relations constrain jumps, we see that at the ground, the jump velocity $v'$ and velocity $v$ have opposite signs (excluding the unit jump $(0,v)\mapsto (0,v)$). Everywhere else, the only possible jump is $(h,v)\mapsto (h,v)$.  
\end{example} 

\begin{remark}
	We say that relations \textit{constrain jumps} and not that they \textit{determine them}. Thus in \cref{ex:phaseSpaceForBall}, we did not specify what $v'$ is, except that it may not have the same sign as $v$.  In the context of determinism (\cref{def:detHySys}, \cref{def:deterministicHyOS}), we will see clearly where jumps arise and how relations play a role in  constraining them.
\end{remark}

\begin{example}\label{ex:phaseSpaceForThermostat}
	We consider another common example, a room whose temperature is regulated by a thermostat which discretely turns a heater on and off. Let hybrid phase space $c:\Sb^c\rightarrow\sF{RelMan}$ have source $\Sb^c$ given by $\begin{tikzcd}
0\arrow[r,bend left,"e_{1,0}"] & 1\arrow[l, bend left, "e_{0,1}"] 	
\end{tikzcd}$ to whose nodes $i$ we assign manifold $c(i)\defeq \R\times\{i\}$ and to whose edges $e_{i,1-i}$ we assign relation $c(e_{i,i-1})= \big\{(t,i-1,t,i):\, (-1)^{i-1}t\geq 1\big\}$. We will interpret this relation in \cref{example:ThermostatVanilla} as saying ``turn heater on or off'' when the temperature is beyond a threshold, which thresholds here are set at $-1$ and $1$, respectively.
\end{example}

There are also maps between hybrid phase spaces.
\begin{definition}\label{def:hybridPhaseSpaceMorphism}
	We define a \textit{morphism} $(\ph,\ff):a\rightarrow b$ \textit{of hybrid phase spaces} with the following data:
	\begin{enumerate}
		\item a functor $\ph:\Sb^a\rightarrow \Sb^b$, as defined in \cref{def:functorDoubleCat}.
		\item a natural transformation $\ff:\Sb^a\Rightarrow \Sb^b\circ \ph$, as defined in \cref{def:naturalTransformDoubleCat}. 
	\end{enumerate}
\end{definition}
We may write such a morphism as  $$\begin{tikzcd}[row sep=3em]
\Sb^{a} \arrow{r}{\ph } \arrow[""{name=foo}]{dr}[swap]{a} & \Sb^{b}\arrow{d}{b} \arrow[Rightarrow, from=foo, swap, "\ff"] \\ & \sF{RelMan}\end{tikzcd}$$\normalsize  or condense the functor and natural transformation into one letter as $f= (\ph,\ff)$ when we do not explicitly work with both pieces of data.  Expanding on condition 2 in \cref{def:hybridPhaseSpaceMorphism}, we require a smooth map of manifolds $\ff_\fs:a(\fs)\rightarrow b(\ph(\fs))$ for each node $\fs\in \Sb^a_0$ such that there is inclusion of relations $$\big(\ff_{\fs}\times \ff_{\fs'}\big) \big(a(\g)\big) \subseteq b(\ph(\g))$$ for each edge $\fs\xrightarrow{\g} \fs'\in \Sb^a_1$. 

Having defined a objects (\cref{def:hybridPhaseSpace0}) and morphisms (\cref{def:hybridPhaseSpaceMorphism}), we observe the \textit{category of hybrid phase spaces}.
\begin{lemma}\label{lemma:HyPhisaCategory}\label{def:categoryHyPh}
Hybrid phase spaces and their morphisms form a category  $\sF{HyPh}$. 
\end{lemma}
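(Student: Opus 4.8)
The plan is to exhibit identities and a composition rule that make $\sF{HyPh}$ a category, and then to verify the two axioms of \cref{def:category}. Since a hybrid phase space $a\colon \Sb^a\to\sF{RelMan}$ is a (double) functor and a morphism $(\ph,\ff)\colon a\to b$ consists of a functor $\ph\colon\Sb^a\to\Sb^b$ together with a natural transformation $\ff\colon a\Rightarrow b\circ\ph$, this data is exactly that of a morphism in the generalized functor category $(\sF{Cat}/\sF{RelMan})^\Rightarrow$ of \cref{def:generalizedFunctorCategory}, restricted to objects whose source is a discrete double category (cf.\ \cref{remark:generalizedFunctorCategoryMod}). Hence identities and composition are available formally; what genuinely requires checking---and the reason this verification is singled out---is that composites continue to satisfy the relation-inclusion condition built into \cref{def:hybridPhaseSpaceMorphism}.

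First I would set the identity on $a$ to be $(id_{\Sb^a},id_a)$, the identity functor paired with the identity natural transformation $a\Rightarrow a\circ id_{\Sb^a}=a$; its node components are identity maps of manifolds, so the inclusion condition holds trivially. Given morphisms $(\ph,\ff)\colon a\to b$ and $(\psi,\gf)\colon b\to c$, I would define their composite by
$$
(\psi,\gf)\circ(\ph,\ff)\defeq\bigl(\psi\circ\ph,\;(\gf\ast\ph)\circ\ff\bigr),
$$
where $\gf\ast\ph\colon b\circ\ph\Rightarrow c\circ\psi\circ\ph$ is the whiskering of $\gf$ by $\ph$ and the outer composite is vertical composition of natural transformations; concretely the node component at $\fs\in\Sb^a_0$ is $\gf_{\ph(\fs)}\circ\ff_\fs\colon a(\fs)\to c(\psi(\ph(\fs)))$.

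The crux is verifying that this composite is again a morphism of hybrid phase spaces. Fixing an edge $\fs\xrightarrow{\g}\fs'$ in $\Sb^a_1$, the morphism condition for $(\ph,\ff)$ gives $(\ff_\fs\times\ff_{\fs'})\bigl(a(\g)\bigr)\subseteq b(\ph(\g))$, and applying the morphism condition for $(\psi,\gf)$ to the edge $\ph(\g)\colon\ph(\fs)\to\ph(\fs')$ of $\Sb^b_1$ gives $(\gf_{\ph(\fs)}\times\gf_{\ph(\fs')})\bigl(b(\ph(\g))\bigr)\subseteq c(\psi(\ph(\g)))$. Using functoriality of the product (\cref{prop:productBifunctor}) to factor the product map and monotonicity of direct image, I would conclude
$$
\bigl((\gf_{\ph(\fs)}\circ\ff_\fs)\times(\gf_{\ph(\fs')}\circ\ff_{\fs'})\bigr)\bigl(a(\g)\bigr)
=(\gf_{\ph(\fs)}\times\gf_{\ph(\fs')})\bigl((\ff_\fs\times\ff_{\fs'})(a(\g))\bigr)
\subseteq c(\psi(\ph(\g))),
$$
which is precisely the required inclusion for $\g$ under the composite. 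This is the one step specific to the hybrid setting, and I expect it to be the main (though short) obstacle.

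Finally, the category axioms reduce to standard facts: associativity of $\circ$ follows from associativity of functor composition on the $\Sb$-components together with the interchange law relating vertical composition and whiskering of natural transformations, while the unit laws follow because whiskering by an identity functor and vertical composition with an identity transformation both act as identities. I anticipate no difficulty here beyond bookkeeping; the only subtlety is that $\Sb^a,\Sb^b$ are double categories, so $\ph,\ff$ are a double functor and double natural transformation (\cref{def:functorDoubleCat}, \cref{def:naturalTransformDoubleCat}) and the identities must be read componentwise in both the $0$- and $1$-layers---but discreteness of the source categories makes the $1$-layer verification automatic.
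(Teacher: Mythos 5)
Your proof is correct and takes essentially the same route as the paper's: the same identity $(id_{\Sb^a},\one_a)$, the same nodewise composite $\gf_{\ph(\fs)}\circ\ff_\fs$, and associativity/unit laws reduced to those of functors and of maps of manifolds. You are in fact slightly more thorough than the paper in one respect---you explicitly check that the composite satisfies the relation-inclusion condition $\bigl((\gf_{\ph(\fs)}\circ\ff_\fs)\times(\gf_{\ph(\fs')}\circ\ff_{\fs'})\bigr)(a(\g))\subseteq c(\psi(\ph(\g)))$, a well-definedness step the paper's proof leaves implicit.
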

\begin{remark} Though the formalism is straightforward, we outline the details for \cref{lemma:HyPhisaCategory}.  For future concepts where we introduce a notion of object and morphism, we will state without explicit argument  that they form a category (e.g.\ hybrid systems (\cref{def:hybridSys}), hybrid surjective submersions (\cref{def:hybridSurjectiveSubmersion}), etc.). 
\end{remark}
\begin{proof}
  We must show that  each object has an identity morphism  and that composition of morphisms is associative.  For hybrid phase space $a:\Sb^a\rightarrow\sF{RelMan}$, the identity morphism $id_a\defeq (id_{\Sb^a}, \one_a)$ consists of the identity functor $id_{\Sb^a}:\Sb^a\rightarrow\Sb^a$  and  natural transformation   $\one_a:a\Rightarrow a$,  defined for each node $\fs\in \Sb^a_0$ by $\one_{a,\fs}\defeq id_{a(\fs)}$. It is easy to verify that $(\a,\af) = (\a,\af)\circ id_a=id_b\circ (\a,\af)$ for every morphism $(\a,\af):a\rightarrow b$.  
  
We now check  that composition is associative.  For a sequence of morphisms $$a\xrightarrow{(\a,\af)}b\xrightarrow{(\b,\bff)}c\xrightarrow{(\g,\gf)}d,$$ since composition of functors is associative $$\begin{tikzcd}
	\Sb^a\arrow[rr,bend left, "\b\circ\a"]\arrow[r,"\a"] & \Sb^b\arrow[r,"\b"]\arrow[rr,bend right,"\g\circ\b"] & \Sb^c\arrow[r,"\g"] & \Sb^d, 
\end{tikzcd}$$ we see that $(\g\circ \b)\circ\a = \g\circ(\b\circ \a)$. 

Composition of maps of manifolds is also associative, so for each node $\fs\in \Sb^a_0$,  $$\gf_{\b(\a(\fs))}\circ\left(\bff_{\a(\fs)}\circ \af_{\fs}\right) = \left(\gf_{\b(\a(\fs))}\circ \bff_{\a(\fs)}\right) \circ \af_\fs. $$ This shows that $$(\g,\gf) \circ \big((\b,\bff)\circ (\a,\af)\big) = \big((\g,\gf)\circ (\b,\bff)\big) \circ (\a,\af),$$ and hence that $\sF{HyPh}$  is  a category. \end{proof}

\subsection{Hybrid Closed Systems}\label{subsection:hybridClosedSystems}
Now we turn to dynamics on hybrid systems.  Here is where  category theory begins to make  an operative appearance.  We have constructed the category of hybrid phase spaces in \cref{subsection:hybridPhaseSpaces}. We will now construct a functor to the category of manifolds and interpret dynamics on a hybrid phase space as dynamics on the underlying manifold.

We start with the functor. 
\begin{prop}\label{prop:ForgetfulFunctor} 
	There is forgetful functor $\Ub:\sF{HyPh}\rightarrow \sF{Man}$ defined on objects by $\Ub a\defeq \discats_{\fs \in \Sb^a_0}a(\fs).$  
\end{prop}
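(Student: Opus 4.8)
The plan is to treat $\Ub$ as the coproduct functor packaged through the universal property of \cref{def:coproducts}, so that both the action on morphisms and functoriality fall out formally. First I would record the object assignment: for a hybrid phase space $a\colon\Sb^a\to\sF{RelMan}$, the node set $\Sb^a_0$ indexes a (finite) collection of manifolds $\{a(\fs)\}_{\fs\in\Sb^a_0}$, whose coproduct $\Ub a=\discats_{\fs\in\Sb^a_0}a(\fs)$ is again a manifold with corners by \cref{prop:coproductCanonicalInjectionOpen}; that same result supplies the canonical open embeddings $i^a_\fs\colon a(\fs)\hookrightarrow\Ub a$, which I would fix as the data witnessing the universal property of the coproduct.

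Next I would define $\Ub$ on a morphism $(\ph,\ff)\colon a\to b$. For each node $\fs\in\Sb^a_0$ the natural transformation $\ff$ supplies a smooth map $\ff_\fs\colon a(\fs)\to b(\ph(\fs))$ (this is the content of \cref{def:hybridPhaseSpaceMorphism}), and post-composing with the inclusion $i^b_{\ph(\fs)}$ yields a smooth map $i^b_{\ph(\fs)}\circ\ff_\fs\colon a(\fs)\to\Ub b$. This family is a cocone under the coproduct diagram, so the universal property produces a unique smooth map
$$\Ub(\ph,\ff)\colon\Ub a\longrightarrow\Ub b,\qquad \Ub(\ph,\ff)\circ i^a_\fs=i^b_{\ph(\fs)}\circ\ff_\fs.$$
Smoothness is automatic, since the map induced out of a coproduct in $\sF{Man}$ is smooth. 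I would emphasize here that $\Ub$ genuinely forgets the hybrid structure: the relation-compatibility inclusions $(\ff_\fs\times\ff_{\fs'})(a(\g))\subseteq b(\ph(\g))$ play no role whatsoever, as only the node manifolds enter the coproduct.

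Functoriality I would verify by the standard uniqueness argument. For the identity $id_a=(id_{\Sb^a},\one_a)$ with $\one_{a,\fs}=id_{a(\fs)}$, the defining relation gives $\Ub(id_a)\circ i^a_\fs=i^a_\fs=id_{\Ub a}\circ i^a_\fs$ for every node, so $\Ub(id_a)=id_{\Ub a}$ by uniqueness. For a composite $a\xrightarrow{(\ph,\ff)}b\xrightarrow{(\psi,\gf)}c$, recall from the proof of \cref{lemma:HyPhisaCategory} that the composite morphism has functor part $\psi\circ\ph$ and component $\gf_{\ph(\fs)}\circ\ff_\fs$ at $\fs$. Precomposing each candidate with $i^a_\fs$ then gives
$$\Ub\big((\psi,\gf)\circ(\ph,\ff)\big)\circ i^a_\fs=i^c_{\psi(\ph(\fs))}\circ\gf_{\ph(\fs)}\circ\ff_\fs=\Ub(\psi,\gf)\circ\Ub(\ph,\ff)\circ i^a_\fs,$$
and the two maps out of the coproduct therefore coincide by uniqueness.

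I do not expect a genuine obstacle: the entire statement is a formal consequence of the universal property of coproducts in $\sF{Man}$ (\cref{def:coproducts}, \cref{prop:coproductCanonicalInjectionOpen}). The one point requiring care is bookkeeping of indices — the target manifold of $\ff_\fs$ is $b(\ph(\fs))$ rather than $b(\fs)$, so the reindexing along the functor $\ph$ must be tracked consistently through each inclusion $i^b_{\ph(\fs)}$ and $i^c_{\psi(\ph(\fs))}$; getting this shift right is exactly what makes the composition check go through.
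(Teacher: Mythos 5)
Your proposal is correct and follows essentially the same route as the paper: the action on a morphism $(\ph,\ff)$ is the unique map out of the coproduct induced by the cocone $\{in_{b(\ph(\fs))}\circ\ff_\fs\}_{\fs\in\Sb^a_0}$, and both functoriality checks reduce to uniqueness of maps out of a coproduct, exactly as in the paper's proof. The only extra care you take (noting smoothness of the induced map and that the relation data is discarded) is consistent with, though not spelled out in, the paper's argument.
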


\begin{remark}\label{remark:canonicalInclusionForU:HyPh2Man}
	Let $a$ be a hybrid phase space. Since $\Ub a$ is a coproduct,   there are canonical inclusions $in_{a(\fs')}:a(\fs')\hookrightarrow \Ub a$ (\cref{def:coproducts}).  
\end{remark}
\begin{proof}[Proof of \cref{prop:ForgetfulFunctor}]
We must define $\Ub$ on morphisms, and show that $\Ub$ preserves identity and composition. 	Let $f = (\ph,\ff): a\rightarrow b$ be a morphism of hybrid phase spaces. A map $\hat{f}:\Ub a\rightarrow \Ub b$ \textit{from} a coproduct $\Ub a = \discats_{\fs \in \Sb^a_0}a(\fs)$ is uniquely defined by collection of maps $\left\{a(\fs)\xrightarrow{f_\fs} \Ub b\right\}_{\fs\in \Sb^a_0}$, which satisfy $f_\fs = \hat{f}\circ in_{a(\fs)}$.  Since $\Ub b$ is itself a coproduct, it is equipped with canonical inclusions $in_{b(\ft)}: b(\ft) \hookrightarrow \Ub b$ (\cref{remark:canonicalInclusionForU:HyPh2Man}). For $\fs\in \Sb^a_0$, we   define $f_\fs\defeq in_{b(\ph(\fs))}\circ \ff_\fs: a(\fs)\rightarrow\Ub b$, where $\ff_\fs:a(\fs)\rightarrow b(\ph(\fs))$ (\cref{def:hybridPhaseSpaceMorphism}); this uniquely defines $\Ub f\defeq \big(\hat{f}:\Ub a\rightarrow \Ub b\big)$.

Functoriality on the identity (\cref{remark:functoriality})---namely  $\Ub(id_a) = id_{\Ub a}$---follows from the universal property of coproduct (\cref{def:coproducts}), since both maps $\Ub(id_a)$ and $id_{\Ub a}$  make the  diagram  $$\small\begin{tikzcd}[column sep = large]
	\discats_{\fs\in \Sb^a_0} a(\fs) \arrow[r,shift left,dashed,"\Ub(id_a)"]\arrow[r,shift right,swap,"id_{\Ub a}"] & 	\discats_{\fs\in \Sb^a_0} a(\fs)\\
	a(\fs')\arrow[u,hookrightarrow,"in_{a(\fs')}"] \arrow[ur,hookrightarrow,swap,"in_{a(\fs')}"] & 
\end{tikzcd}$$ \normalsize commute for every $\fs\in \Sb^a_0$. 

Functoriality on composition $\Ub(g \circ f) = \Ub(g)\circ\Ub(f)$ follows similarly, for morphisms $a\xrightarrow{f}b\xrightarrow{g}c$ of hybrid phase spaces.  The diagram $$\begin{tikzcd}
\Ub a\arrow[r,dashed,"\Ub f"]\arrow[rr,bend left,dashed,"\Ub\big(g\circ f\big)"] & \Ub b \arrow[r,dashed,"\Ub g"] & \Ub c\\
	a(\fs')\arrow[r,"\af_{\fs'}"]\arrow[u,hookrightarrow,swap,"in_{a(\fs')}"] \arrow[rr,bend right,"\bff_{\a(\fs')}\circ\af_{\fs'}"] & b(\a(\fs'))\arrow[r,"\bff_{\a(\fs')}"]\arrow[u,hookrightarrow,swap,"in_{b(\a(\fs'))}"] & c(\b(\a(\fs')))\arrow[u,hookrightarrow, swap,"in_{c(\b(\a(\fs')))}"]
\end{tikzcd}$$
is easily seen to commute. Since there is unique map $\Ub a\xrightarrow{\chi} \Ub c$ satisfying $\chi\circ in_{a(\fs')} = in_{c(\b(\a(\fs')))}\circ \bff_{\a(\fs')}\circ \af_{\fs'}$ for each $\fs'\in \Sb^a_0$, we conclude that $\chi = \Ub\big(g\circ f\big) = \Ub g\circ \Ub f$. 
\end{proof}

\begin{remark}
	The definition of functor $\Ub$ is  secretly  subsuming the composition of two functors. On the one hand, we forget edges and relations $\sF{HyPh}\rightarrow \big(\sF{Set/Man}\big)^\Rightarrow$ sending $(a:\Sb^a\rightarrow \sF{RelMan})\mapsto (a\restriction_{\Sb^a_0}:\Sb^a_0\rightarrow \sF{Man})$, and then we take the coproduct in $\sF{Man}$ over set $\Sb^a_0$, both of which operations are functorial  (\cref{remark:catListsCoproduct}). 
\end{remark}

\begin{remark}\label{fact:isoHyPhForgetsToDiffeo}
	Since $\Ub:\sF{HyPh}\rightarrow\sF{Man}$ is a functor (\cref{prop:ForgetfulFunctor}), an isomorphism $i:a\xrightarrow{\sim}b$ of hybrid phase spaces becomes a diffeomorphism $\Ub i:\Ub a\xrightarrow{\sim}\Ub b$ in $\sF{Man}$.  
\end{remark}

As we previously foreshadowed, the functor $\Ub$ may be used to define various hybrid concepts as a hybrid phase space with some data in the category of manifolds.  Our first instance of this idea is the notion of hybrid dynamical system.  

\begin{definition}\label{def:HySys}\label{def:hybridSys}
	We define a \textit{hybrid dynamical system} $(a,X)$   to be a pair where $a$ is a hybrid phase space and $X\in \Xf(\Ub a)$ is a vector field on the underlying manifold of hybrid phase space $a$. 
\end{definition}
\begin{example}\label{ex:bbAsNormalHybridSystem}
Recall the bouncing ball phase space from \cref{ex:phaseSpaceForBall}, with $c:\Sb^c\rightarrow\sF{RelMan}$. The hybrid system $(c,Z)$ is standard in literature (c.f.\ \cite{liberzonswitch}, \cite[\S1.2]{goebelhybrd}).  Dynamics  $Z\in \Xf\big(\Ub c\big) = \Xf\big(\R^{\geq 0}\times \R)$ are defined by  $Z(h,v) = v\pdiv{}{h} - \pdiv{}{v}$, where  we normalized the acceleration of gravity (coefficient of $\pdiv{}{v}$).   Notice that at boundary $\{h=0\}$, $Z(h,v)$ is an outward pointing vector.  
\end{example}
We remark  that jumps do not explicitly appear in the notion of hybrid dynamical systems. We have not yet said what happens when $(h,v) = (0,v)$, other than having defined a relation in the phase space $c(e)$.  We will address this point after we have acquired  the notion of execution (\cref{def:deterministicExecution}, \cref{ex:bouncingBallExecution}).

\begin{example}\label{ex:thermoAsNormalHybridSystem}
	Recall the room-with-thermostat from \cref{ex:phaseSpaceForThermostat}, with hybrid phase space $c:\Sb^c\rightarrow\sF{RelMan}$ which assigns space $\R\times\{i\}$  to node $i$.  We define $Z\in \Xf(\Ub c)$ by $Z(x,i)\defeq (-1)^{1-i}$. We interpret $i=0$ as ``heater off,'' and $i=1$ as  ``heater on.''  There is  supposed to be some intuition lurking in the background that the heater turns off when $x\geq 1$, but hybrid dynamical systems do not specify the mechanism for enacting this switch.  Again, we introduce the fix  in \cref{subsec:determinismHySys}.
\end{example}

We turn to  maps of hybrid systems. 
\begin{definition}\label{def:hybridSysMorphism}
	We define a \textit{map} $(a,X)\xrightarrow{f} (b,Y)$ \textit{of hybrid systems} to be a map $a\xrightarrow{f} b$ of hybrid phase spaces (\cref{def:hybridPhaseSpaceMorphism}) such that $\left(\Ub a,X\right) \xrightarrow{\Ub f} \left(\Ub b, Y\right)$ is a map of dynamical systems (\cref{def:ctDySysMorphism}).
\end{definition}

\begin{remark}\label{remark:relatednessInHyPhandinMan}
	Recall  that a map $f:(M,X)\rightarrow (N,Y)$ of continuous-time dynamical systems  is a map $f:M\rightarrow N$ of manifolds such that the respective vector fields $(X,Y)$ are $f$-related.  Thus according to \cref{def:hybridSysMorphism}, $(X,Y)$ are $\Ub f$-related. However, in the context of \textit{hybrid dynamical systems}, we will simply  say  that $(X,Y)$ are  $f$-related,  which we take to \textit{mean} that $(X,Y)$ (as vector fields on $\Ub a$ and $\Ub b$) are $\Ub f$-related. 
	\end{remark}

\begin{remark}\label{def:categoryHySys}
	It is easy to verify that hybrid systems (\cref{def:hybridSys}) and their morphisms (\cref{def:hybridSysMorphism}) form a category $\sF{HySys}$.
\end{remark}

\begin{remark}\label{remark:executionsForNondeterministic} As previously hinted, the concept of hybrid system seems suspiciously non-hybrid since discrete behavior makes no overt appearance in the definition. 	 Jumps for hybrid systems arise in executions.  We define executions in \cref{def:deterministicExecution} for \textit{deterministic} hybrid systems as a map from a (deterministic) hybrid version of time system. Determinism is not essential for defining executions, but our primary focus is determinism, so we delay development of this idea until \cref{subsec:determinismHySys}.  \end{remark}


\subsection{Hybrid Open Systems}
We now discuss hybrid open  systems.  Open systems are like ordinary systems which can take external input.  In control theory, external input is often user-defined.  External input could also be an external disturbance or noise.  In the context of networks, external input may be interpreted as  states of other (sub)systems.  

Recall that a continuous-time open system is a pair $(a_{tot}\xrightarrow{p_a}a_{st}, X)$, where $p_a:a_{tot}\rightarrow a_{st}$ is a surjective submersion (\cref{def:SSub}), and $X:a_{tot}\rightarrow Ta_{st}$ is a smooth map compatible with $p_a$---namely,  $p_a = \t_{a_{st}}\circ X$, where $\t_{a_{st}}:T_{a_{st}}\rightarrow a_{st}$ is the  canonical projection of the tangent bundle. To define hybrid systems (\cref{def:hybridSys}), we needed a hybrid space $a$ and dynamics-governing vector field $X$ on the underlying  space $\Ub a$. To define hybrid open systems, we need a hybrid version of surjective submersion $p_a$ and dynamics-governing object $X$ on the underlying surjective submersion, compatible with the surjective submersion $p_a$.  We start with hybrid surjective submersions. 

\begin{definition}\label{def:hybridSurjectiveSubmersion}
	We define a \textit{hybrid surjective submersion} $p_a:a_{tot}\rightarrow a_{st}$  to be a morphism  of hybrid phase spaces (\cref{def:hybridPhaseSpaceMorphism}) such that $\Ub p_a:\Ub a_{tot}\rightarrow\Ub a_{st}$ is a surjective submersion (\cref{def:SSub}).  We call the domain $a_{tot}$ the \textit{total (hybrid phase) space} and $a_{st}$ the \textit{state space}.  
\end{definition}

\begin{notation}\label{notation:consolidatingNotationForHySSub}
	We will generally notate a hybrid surjective submersion $a_{tot}\xrightarrow{p_a} a_{st}$ by $a$ or by $p_a$ when we want to emphasize the map.  When needed, we will usually let  $\p_a:\Sb^{a_{tot}}\rightarrow \Sb^{a_{st}}$ denote the functor on source categories and  $\pf_a:a_{tot}\Rightarrow a_{st}\circ\pi $ denote the natural transformation. When we have fixed the hybrid surjective submersion $a$, and there is no ambiguity, we may drop subscripts  and simply write $p = (\p,\pf)$. 
\end{notation}

\begin{remark}\label{remark:unwrapHySSub}
We unwrap   \cref{def:hybridSurjectiveSubmersion}. For each node $\ft\in \Sb^{a_{tot}}_0$ in the source category of the total phase space, there is a map of manifolds $\pf_\sF{t}:a_{tot}(\sF{t})\rightarrow a_{st}(\p(\sF{t}))$ (\cref{notation:consolidatingNotationForHySSub}).  For each fixed node $\fs\in \Sb^{a_{st}}_0$ in the source category of the state phase space, the collection of maps $\left\{a_{tot}(\sF{t})\xrightarrow{\pf_\sF{t}}a_{st}(\sF{s})\right\}_{\sF{t}\in \pi^{-1}(\sF{s})}$ uniquely induces a map $$\pf_{\pi^{-1}(\sF{s})}:\discats_{\sF{t}\in \pi^{-1}(\sF{s})} a_{tot}(\sF{t})\rightarrow   a_{st}(\sF{s}) $$  so that the  diagram $$\begin{tikzcd}
	\discats_{\sF{t}\in \pi^{-1}(\sF{s})} a_{tot}(\sF{t})\arrow[r,dashed,"\pf_{\pi^{-1}(\sF{s})}"] & a_{st}(\sF{s}) \\ a_{tot}(\sF{t'}) \arrow[u,hookrightarrow,"in_{(\ft')}"] \arrow[ur, "\pf_{\sF{t'}}"] &  
\end{tikzcd}$$ commutes for every $\sF{t'}\in \pi^{-1}(\sF{s})$. Surjectivity of $\Ub p_a$ implies both surjectivity of each  $$\pf_{\pi^{-1}(\sF{s})}:\discats_{\sF{t}\in \pi^{-1}(\sF{s})} a_{tot}(\sF{t})\rightarrow   a_{st}(\sF{s}) $$  as a map of manifolds (indexing over  $\fs\in \Sb^{a_{st}}_0$),  and surjectivity on objects of functor $\pi: \Sb^{a_{tot}}_{0}\rightarrow \Sb^{a_{st}}_{0}$. These conditions are equivalent:   surjectivity of each $\pf_{\pi^{-1}(\sF{s})}$ and of $\pi$ imply that $\Ub p_a$ is surjective as well.  This observation provides an operational way to check that a map $p:a'\rightarrow a$ of hybrid phase spaces is a hybrid surjective submersion.  
\end{remark}

\begin{example}\label{ex:hySSubTrivial}
	Let $a:\Sb^a\rightarrow\sF{RelMan}$ be an arbitrary hybrid phase space, and $id_a:a\rightarrow a$ the identity morphism. Since $id_{\Sb^a}:\Sb^a\rightarrow\Sb^a$ is surjective and $id_{a(\fs)}:a(\fs)\rightarrow a(\fs)$ is surjective for each $\fs\in \Sb^a_0$, we readily observe that the identity map is a hybrid surjective submersion. 
\end{example}

\begin{example}
	Let $a:\Sb^a\rightarrow\sF{RelMan}$ be a hybrid phase space and suppose for each node $\fs\in \Sb^a_0$, we have a surjective submersion $a'(\fs)\xrightarrow{\pf_\fs}a(\fs)$ for some indexed collection of manifolds $\{a'(\fs)\}_{\fs\in \Sb^a_0}$. We define hybrid phase space $a':\Sb^a\rightarrow\sF{RelMan}$ (with the same source category as $a$) by the following.  The assignment on nodes is given already by $\fs\mapsto a'(\fs)$.  For edge $\fs\xrightarrow{\g}\fs'\in \Sb^a_1$, we set $$a'(\g)\defeq \big(\pf_{\fs}^{-1}\times \pf_{\fs'}^{-1}\big) (a(\g))\subseteq a'(\fs)\times a'(\fs').$$  It follows that $\big(\pf_\fs\times \pf_{\fs'}\big)(a'(\g))\subseteq a(\g)$ for every edge, so $p_a = (id_{\Sb^a}, \pf):a'\rightarrow a$ is a map of hybrid phase spaces. Therefore, $p_a$ is also a hybrid surjective submersion (\cref{remark:unwrapHySSub}). 
\end{example}

Recall that the projection of a product of manifolds $M\times N\xrightarrow{p_M} M$ is a surjective submersion. We present a similar fact for  the category of hybrid phase spaces, which will gives us a method for generating hybrid surjective submersions from hybrid phase spaces. 

\begin{prop}\label{prop:BinProductHyPh} The category $\sF{HyPh}$ of hybrid phase spaces has binary products, and the projection maps $p_a:a\times b\rightarrow a$, $p_b:a\times b\rightarrow b$ are  hybrid surjective submersions. 
\end{prop}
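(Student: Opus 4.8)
The plan is to build the product one layer at a time, mirroring the two-tier structure (a source double category together with an $\sF{RelMan}$-valued functor) of a hybrid phase space. First I would take the source category of $a\times b$ to be the product $\Sb^a\times\Sb^b$ of the discrete source double categories, so that its nodes are pairs $(\fs,\ft)\in\Sb^a_0\times\Sb^b_0$ and its edges are pairs $(\g,\d)$. I would then define the functor $a\times b:\Sb^a\times\Sb^b\rightarrow\sF{RelMan}$ on nodes by $(a\times b)(\fs,\ft)\defeq a(\fs)\times b(\ft)$ (a product of manifolds, \cref{prop:productManifoldsCategorical}) and on an edge $(\g,\d):(\fs,\ft)\rightarrow(\fs',\ft')$ by the ``interchanged'' relation
\[
(a\times b)(\g,\d)\defeq\big\{\big((x,y),(x',y')\big):\,(x,x')\in a(\g),\ (y,y')\in b(\d)\big\}\subseteq\big(a(\fs)\times b(\ft)\big)\times\big(a(\fs')\times b(\ft')\big).
\]
The point to check is that this is a (lax) double functor into $\sF{RelMan}$ (\cref{def:functorDoubleCat}); the only nonformal verification is that units map to diagonals, and this is exactly where the design choice of \cref{def:hybridPhaseSpace0} is used: since $a(u(\fs))=\Delta(a(\fs))$ and $b(u(\ft))=\Delta(b(\ft))$, the interchanged relation over the unit edge $(u(\fs),u(\ft))$ collapses to $\Delta\big(a(\fs)\times b(\ft)\big)$, so no simultaneous-jump constraint is imposed.

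Next I would define the projections and verify the universal property of \cref{def:products}. The projection $p_a:a\times b\rightarrow a$ consists of the projection functor $\pi^a:\Sb^a\times\Sb^b\rightarrow\Sb^a$ together with the natural transformation whose component at $(\fs,\ft)$ is the manifold projection $a(\fs)\times b(\ft)\rightarrow a(\fs)$; the relation-inclusion condition of \cref{def:hybridPhaseSpaceMorphism} holds because projecting $(a\times b)(\g,\d)$ onto its $a$-coordinates lands inside $a(\g)$ by construction (and symmetrically for $p_b$). Given a third hybrid phase space $c$ with morphisms $f:c\rightarrow a$ and $g:c\rightarrow b$, the mediating morphism $h:c\rightarrow a\times b$ is assembled from two universal properties: its functor part $\langle\ph^f,\ph^g\rangle:\Sb^c\rightarrow\Sb^a\times\Sb^b$ comes from the universal property of the product of the source categories, and each component $c(\fr)\rightarrow a(\ph^f(\fr))\times b(\ph^g(\fr))$ comes from the universal property of products in $\sF{Man}$ applied to $f_\fr$ and $g_\fr$. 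That $h$ is a legitimate morphism of hybrid phase spaces (the relation-inclusion condition on each edge $\epsilon$) follows immediately from the corresponding conditions for $f$ and $g$, and uniqueness of $h$ follows from uniqueness in each of the two universal properties. This establishes binary products.

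Finally I would check that $p_a$ (and symmetrically $p_b$) is a hybrid surjective submersion, i.e.\ that $\Ub p_a:\Ub(a\times b)\rightarrow\Ub a$ is a surjective submersion (\cref{def:hybridSurjectiveSubmersion}, \cref{def:SSub}). By \cref{prop:ForgetfulFunctor}, $\Ub(a\times b)=\discats_{(\fs,\ft)}a(\fs)\times b(\ft)$, and by \cref{prop:coproductCanonicalInjectionOpen} each canonical injection $a(\fs)\times b(\ft)\hookrightarrow\Ub(a\times b)$ is an open embedding, so these pieces form an open cover. On the piece indexed by $(\fs,\ft)$ the map $\Ub p_a$ agrees (after transport along the embedding) with $in_{a(\fs)}\circ p_{a(\fs)}$, namely the product projection $a(\fs)\times b(\ft)\rightarrow a(\fs)$ followed by the open embedding $a(\fs)\hookrightarrow\Ub a$; since a product projection is a submersion and an open embedding is a local diffeomorphism, $\Ub p_a$ is a submersion. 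Surjectivity then follows because the images $a(\fs)$ cover $\Ub a$ and each projection $a(\fs)\times b(\ft)\twoheadrightarrow a(\fs)$ is onto — equivalently, this is precisely the criterion of \cref{remark:unwrapHySSub}, that $\pi^a$ is surjective on objects and each fiber map $\discats_{\ft}a(\fs)\times b(\ft)\rightarrow a(\fs)$ is onto.

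The step I expect to be the genuine obstacle is the first: pinning down the edge assignment and confirming that it defines a double functor into $\sF{RelMan}$ compatible with composition of edges (only laxly, as in \cref{ex:functorToRel}) as well as with units. The universal-property argument and the submersion argument are then essentially formal, reducing to the corresponding facts about products in $\sF{Cat}$ and in $\sF{Man}$ that have already been established.
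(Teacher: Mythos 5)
Your proposal is correct and follows essentially the same route as the paper's proof: product source category $\Sb^a\times\Sb^b$, product manifolds on nodes, the interchanged relation on edges (which the paper encodes via the shuffle isomorphism $\m_\eta$), projections built from the two layers of universal properties, and the surjectivity criterion of \cref{remark:unwrapHySSub}. Your extra check that unit edges land on diagonal relations is a worthwhile explicit verification of the double-functor axioms that the paper leaves implicit, but it does not change the argument.
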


\begin{notation}\label{notation:consolidateNotationForProductHyPh}
Recall from \cref{notation:projForProduct} that $p_\fc:\fc\times \fc'\rightarrow \fc$ denotes the canonical projection of product in category $\fC$. So for $\fC = \sF{HyPh}$, $p_a:a\times b\rightarrow a$ denotes the projection.  Since---as we will show---the projection is  a morphism in $\sF{HyPh}$, $p_a = (\p_a,\pf_a)$ consists of two maps: a functor $\p_a:\Sb^{a\times b}\rightarrow\Sb^a$ and a natural transformation $\pf_a:a\times b \Rightarrow a\circ \p_a$ (c.f.\ \cref{notation:consolidatingNotationForHySSub}). 
	\end{notation}

\begin{proof}
	Let $a,b$ be two hybrid phase spaces.  To define the product $a\times b:\Sb^{a\times b} \rightarrow\sF{RelMan}$, we must define a source category $\Sb^{a\times b}$, a functor $a\times b:\Sb^{a\times b}\rightarrow\sF{RelMan}$, projection morphisms $p_a= (\p,\pf)_a:a\times b\rightarrow a$ and $p_b= (\p,\pf)_b:a\times b\rightarrow b$,\footnote{In this proof, we explicitly only construct projection to $a$, as the construction for $b$ is identical.}  and finally show that $a\times b$ is terminal with respect to pairs of maps of hybrid phase spaces to $a$ and to $b$.
	
We define the  source category by $\Sb^{a\times b}\defeq \Sb^a\times\Sb^b$  as a product in $\sF{Cat}$ and for each node $\sF{t} = (\sF{s},\sF{s'})\in \Sb^{a\times b}_0$, we define manifold by $\big(a\times b\big) (\sF{t})\defeq a(\sF{s})\times b(\sF{s'})$ as  product in $\sF{Man}$. For each $\eta = (\g,\g')\in \Sb^{a\times b}_1$,   there is canonical isomorphism of manifolds \footnotesize\begin{equation}\label{eq:mapIsoForRelations} a(\sF{dom} (\g)) \times a(\sF{cod}(\g))\times b(\sF{dom}(\g'))\times b(\sF{cod}(\g'))\xrightarrow{\m_\eta} a(\sF{dom}(\g))\times b(\sF{dom}(\g'))\times a(\sF{cod}(\g)) \times b(\sF{cod}(\g'))\end{equation}\normalsize sending $(x,x',y,y')\mapsto (x,y,x',y')$, from which  we define \begin{equation}\label{eq:defRelForProd} \big(a\times b\big)(\eta)\defeq \m_\eta(a(\g)\times b(\g'))\subseteq (a\times b)\big(\sF{dom}(\eta)\big)\times(a\times b)\big(\sF{cod}(\eta)\big).\end{equation}

Now we construct projection morphism $p_a:a\times b\rightarrow a$.  The functor $\pi_a:\Sb^{a\times b}\rightarrow\Sb^a$ is the projection of product in $\sF{Cat}$ mapping $\ft = (\fs,\fs')\mapsto \fs$. Similarly for each node $\ft = (\fs,\fs') \in \Sb^{a\times b}$, the projection $\pf_{a,\sF{t}}:\big(a\times b\big)(\ft)\rightarrow a(\pi_a(\ft))$ is the projection $(x,y)\mapsto x$   in $\sF{Man}$, where $(x,y) \in a(\fs)\times b(\fs')$.   For edge $\eta = (\g,\g') \in \Sb^{a\times b}_1$, it is readily apparent that $$\left(\pf_{a,\sF{dom}(\eta)}\times \pf_{a,\sF{cod}(\eta)}\right)\left(\big(a\times b\big)(\eta)\right)  = a(\g)\subseteq a\big(\p_a(\eta)\big), $$
so these data indeed define   morphism in $\sF{HyPh}$. We observe both that $\p_a:\Sb^{a\times b}\rightarrow \Sb^a$ is surjective on objects and  that  $\pf_{a,\pi_a^{-1}(\sF{s})}:\discats_{\ft\in \pi^{-1}(\fs)}\big(a\times b\big)(\ft) \rightarrow a(\fs)$ is surjective for each $\fs\in \Sb^a_0$.  Therefore    $p_a\defeq (\pi,\pf)_a: a\times b\rightarrow a$ is a hybrid surjective submersion (\cref{remark:unwrapHySSub}).

It remains to check  that $a\times b$ satisfies the universal property of product in $\sF{HyPh}$, i.e.\ that given any pair of morphisms of hybrid phase spaces $z_a:c\rightarrow a$ and $z_b:c\rightarrow b$, there is unique morphism $z:c\dashrightarrow a\times b$ so that the diagram $\begin{tikzcd} c\arrow[drr,"z_a"]\arrow[dr,dashed]\arrow[ddr,swap,"z_b"] & & \\ 
& a\times b\arrow[d,"p_b"] \arrow[r,swap,"p_a"] & 	a\\
& b & 
\end{tikzcd}$ commutes.  Let $(\z,\zf)_\bullet$ denote the functor and natural transformation components  of hybrid phase space morphism $z_\bullet$, for $\bullet = a, b$.  Since  $\Sb^{a\times b}$ is a categorical product, there is unique functor $\z:\Sb^c\rightarrow\Sb^{a\times b}$.  Similarly, since each manifold $\big(a\times b\big)(\ft) = a(\pi_a(\ft)) \times b(\pi_b(\ft))$ is a categorical product, for each  $\fs\in \Sb^c_0$ there is unique map of manifolds $c(\fs)\xrightarrow{\zf_\fs = \zf_{a,\fs}\times \zf_{b,\fs}}\big(a\times b\big) (\z(\fs))$ defined by pair   $(\zf_{a,\fs},\zf_{b,\fs})$ of manifold maps.  We verify that $(\z,\zf):c\rightarrow a\times b$ is a morphism of hybrid phase spaces, namely that $\big(\zf_{\sF{dom}(\k)}\times \zf_{\sF{cod}(\k)} \big)\left(c(\k)\right) \subseteq \big(a\times b\big) (\z(\k))$ for every edge $\k\in \Sb^c_1$. By assumption that $z_a$ and $z_b$ are maps of hybrid phase spaces, $$\begin{array}
	{lll} \big(\zf_{a,\sF{dom}(\k)}\times \zf_{a,\sF{cod}(\k)}\big)(c(\k))  \subseteq & a(\z_a(\k)) & \mbox{and} \\
	\big(\zf_{b,\sF{dom}(\k)}\times \zf_{b,\sF{cod}(\k)} \big)(c(\k)) \subseteq & b(\z_b(\k)). & 
\end{array}$$ Therefore  $$\left(\big(\zf_{a,\sF{dom}(\k)}\times \zf_{a,\sF{cod}(\k)}\big)(c(\k)) \times \big(\zf_{b,\sF{dom}(\k)}\times \zf_{b,\sF{cod}(\k)} \big)(c(\k))\right) \subseteq a(\z_a(\k))\times b(\z_b(\k)).$$  Applying the the canonical isomorphism $\m_\k$ (c.f.\  \eqref{eq:mapIsoForRelations}, eq.\eqref{eq:defRelForProd}) to both sides produces inclusion $$\big(\zf_{a,\sF{dom}(\k)}\times \zf_{b,\sF{dom}(\k)}\big)(c(\k)) \times \big(\zf_{a,\sF{cod}(\k)}\times \zf_{b,\sF{cod}(\k)} \big)(c(\k)) \subseteq \left(a\times b\right) (\z(\k)), $$ the left-hand side of which is  $(z_a,z_b) = \left(\zf_{\sF{dom}(\k)}\times \zf_{\sF{cod}(\k)}\right)(\k)$.  Hence  $z:c\rightarrow a\times b$ is a map of hybrid phase spaces, uniquely defined by the pair $\big((\z,\zf)_a,(\z,\zf)_b\big)$.  We conclude that the product $a\times b$ is terminal with respect to pairs of maps of hybrid phase spaces to $a$ and to $b$, proving that $a\times b$ is categorical product.\end{proof}

\begin{remark}
\label{remark:ProductIngredients}
	We collect ingredients used in the proof of \cref{prop:BinProductHyPh} for future reference.  For two hybrid phase spaces $a$ and $b$, the product $a\times b$ is defined with source category \begin{equation}\label{eq:BinProductSource} 
	\Sb^{a\times b}\defeq \Sb^a\times \Sb^b,
	\end{equation}
	and for each node $(\fc,\fd)\in \Sb^{a\times b}_0$, the assignment of manifolds is \begin{equation}
\label{eq:BinProductManifoldAssignment}
(a\times b)(\fc,\fd) \defeq a(\fc)\times b(\fd).	
 \end{equation}
 For edge $\eta\defeq (\g,\g')\in \Sb^{a\times b}_1$, the assignment of relations is
 \begin{equation}\label{eq:BinProductRelationAssignment}
 	(a\times b)(\eta) \defeq \m_\eta(a(\g),b(\g')), 
 \end{equation}
 where  $\m_\eta$ is canonical isomorphism  in \eqref{eq:mapIsoForRelations}. 
 
 The projection maps  \begin{equation}\label{eq:projForProductHyPh}
 	\begin{array}{lll} p_a:a\times b\rightarrow a & \mbox{and} & p_b:a\times b\rightarrow b\end{array} 
 \end{equation} are $p_a\defeq (\p_a,\pf_a)$ and $p_b\defeq (\p_b,\pf_b)$.
\end{remark}
\begin{remark}\label{remark:hyssubFromHyPh}
 We observe again that the projections $p_a$ and $p_b$ in \eqref{eq:projForProductHyPh} are hybrid surjective submersions.  We thus have a way of generating hybrid surjective submersions from hybrid phase spaces. \end{remark}

\begin{example}
	Consider hybrid phase space $a:\Sb^a\rightarrow\sF{RelMan}$ with source category $\Sb^a$ given by $\begin{tikzcd} 0 \arrow[r,bend left,"e_{1,0}"] & 1.\arrow[l,bend left,"e_{0,1}"] \end{tikzcd}$ We describe the product $a\times a$.   The source category    $\Sb^{a\times a}$ is given (\cref{remark:ProductIngredients}, \eqref{eq:BinProductSource}) by \begin{equation}\label{diagram:productOfA} \begin{tikzcd}
	(0,1)\arrow[r,shift left]\arrow[d,shift left] \arrow[dr, shift left] & (1,1)\arrow[d, shift left]\arrow[l, shift left] \arrow[dl,shift left]\\
	(0,0) \arrow[u,shift left] \arrow[r,shift left]\arrow[ur,shift left] & (1,0),\arrow[u,shift left]\arrow[ul, shift left]\arrow[l, shift left]
	\end{tikzcd}\end{equation}  and has manifold assignments $\big(a\times a\big)(i,j) = a(i)\times a(j)$ for $i, j = 0, 1$ (c.f.\ \eqref{eq:BinProductManifoldAssignment}).  Horizontal and vertical arrows appear in this diagram because $\Sb^a$ is a category, so there are unit arrows $id_i$ for $i=0,\,1$. For example, along the bottom of diagram \eqref{diagram:productOfA}, the  left-to-right arrow is $(0,0)\xrightarrow{(e_{1,0},id_0)}(1,0)$.
	\end{example}
	We present a more concrete example which illustrates why we wanted $\sF{Man}$ to have manifolds with corners as objects (\cref{remark:categoryManifolds}). 
	\begin{example}\label{ex:productsInduceCorners}
		Recall phase space $c:\Sb^c\rightarrow\sF{RelMan}$ from \cref{ex:phaseSpaceForBall}, assigning $c(0)= \R^{\geq 0}\times \R$. This manifold is a manifold with boundary, or a manifold with corners that has no corners. Suppose, now,  that we take two  bouncing balls, which we represent  in the product of phase space $c\times c$,  with source category $\Sb^{c\times c}$: $$\begin{tikzcd}
(0,0)\arrow[out=240,in=300,loop,swap, "(e{,}e)"] \arrow[out=60,in=120,loop, swap,"(id_0{,}id_0)"] 
	\arrow[out=150,in=210,loop,swap, "(e{,}id_0)"] \arrow[out=-30,in=30,loop,swap, "(id_0{,}e)."] 
\end{tikzcd}$$ Observe that $\big(c\times c\big)(0,0) = \big(\R^{\geq 0 }\times \R\big)\times \big(\R^{\geq0}\times \R\big) \cong \big(\R^{\geq0}\big)^2\times \R^2$, is a manifold with corners.  A product of manifolds may have corners, even if the component manifolds do not. 
	\end{example}

\begin{remark}\label{remark:productHySSub}
	Let $a\defeq\big(a_{tot}\xrightarrow{p_a} a_{st}\big) $ and $b\defeq\big( b_{tot}\xrightarrow{p_b} b_{st}\big)$ be hybrid surjective submersions.  Then the product $a\times b\defeq \big(a_{tot}\times b_{tot}\xrightarrow{p_a\times p_b} a_{st}\times b_{st}\big)$ is a hybrid surjective submersion.
\end{remark}
\begin{remark}\label{remark:clearingNotation}
	Notice that we denote a hybrid surjective submersion $a = \big(a_{tot}\xrightarrow{p_a}a_{st}\big)$ by  $p_a$ (\cref{notation:consolidatingNotationForHySSub}) \textit{and} projection of hybrid surjective submersions $a\times b\xrightarrow{p_a}a$ by $p_a$. Since the projection of products of hybrid phase spaces is a surjective submersion, this notation is not inconsistent. On the other hand, not every surjective submersion  arises from the projection of product. For the most part, our examples will be projection of products, and there will be little confusion caused by conflating these notations. 
	\end{remark}

Now we define maps of hybrid surjective submersions. 
\begin{definition}\label{def:hybridSSubMorphism}
	Let $p_a:a_{tot}\rightarrow a_{st}$ and $p_b:b_{tot}\rightarrow b_{st}$ be two hybrid surjective submersions.  We define a \textit{morphism} $f:a\rightarrow b$ \textit{of hybrid surjective submersion} to be  a pair of morphisms $f_{tot}:a_{tot}\rightarrow b_{tot}$, $f_{st}:a_{st}\rightarrow b_{st}$ of hybrid phase spaces (\cref{def:hybridPhaseSpaceMorphism}) such that  $p_b\circ f_{tot} = f_{st}\circ p_a$.\end{definition} 
	
\begin{remark}\label{remark:unpackingHySSubMorphism}
Definition \ref{def:hybridPhaseSpaceMorphism} unpacks as follows. Let $f_\bullet = (\ph,\ff)_\bullet$ (for $\bullet = tot,\, st$) and recall our convention that $p_\bullet = (\p,\pf)_\bullet$ (for $\bullet = a,\, b$). First, there is prism diagram \begin{equation}\label{diagram:prismForHySSubMorphism} \begin{tikzcd}
	\Sb^{a_{tot}}\arrow[rr,"\ph_{tot}"]\arrow[d,swap,"\pi_a"] \arrow[ddr,near start,"a_{tot}"] &  & \Sb^{b_{tot}}\arrow[d,"\pi_b"]\arrow[ddl,swap, near start, "b_{tot}"] \\
	 \Sb^{a_{st}}\arrow[dr,swap,"a_{st}"] \arrow[rr,"\ph_{st}"] & &  \Sb^{b_{st}}\arrow[dl,"b_{st}"]\\
	 & \sF{RelMan}, & 
\end{tikzcd}\end{equation} where the back square face is a commuting  diagram of  functors, and each triangle face is morphism of hybrid phase spaces: $$
	\begin{array}{llll} \begin{tikzcd}[row sep=3em]
\Sb^{a_{tot}} \arrow{r}{\p_a } \arrow[""{name=foo}]{dr}[swap]{a_{tot}} & \Sb^{a_{st}} \arrow{d}{a_{st}} \arrow[Rightarrow, from=foo, swap, "\pf_a"] \\ & \sF{RelMan}\end{tikzcd} & \begin{tikzcd}[row sep=3em]
\Sb^{b_{tot}} \arrow{r}{\p_b } \arrow[""{name=foo}]{dr}[swap]{b_{tot}} & \Sb^{b_{st}}\arrow{d}{b_{st}} \arrow[Rightarrow, from=foo, swap, "\pf_b"] \\ & \sF{RelMan}\end{tikzcd} & 
\begin{tikzcd}[row sep=3em]
\Sb^{a_{tot}} \arrow{r}{\ph_{tot} } \arrow[""{name=foo}]{dr}[swap]{a_{tot}} & \Sb^{b_{st}} \arrow{d}{b_{tot}} \arrow[Rightarrow, from=foo, swap, "\ff_{tot}"] \\ & \sF{RelMan}\end{tikzcd} & \begin{tikzcd}[row sep=3em]
\Sb^{a_{st}} \arrow{r}{\ph_{st}} \arrow[""{name=foo}]{dr}[swap]{a_{st}} & \Sb^{b_{st}}\arrow{d}{b_{st}} \arrow[Rightarrow, from=foo, swap, "\ff_{st}"] \\ & \sF{RelMan}.\end{tikzcd}
\end{array}
$$ 
Requiring that $p_b\circ f_{tot} = f_{st}\circ p_a$ amounts to requiring both that   $\ph_{st}\circ \p_a = \p_b\circ \ph_{tot}$  (an equality of functors), and for each node $\ft\in \Sb^{a_{tot}}_0$, an equality of maps of manifolds \begin{equation}\label{eq:equalityOfHySSubMorph} \ff_{st,\p_a(\ft)}\circ \pf_{a,\ft} = \pf_{b,\ph_{tot}(\ft)}\circ \ff_{tot,\ft}.\end{equation} 

In summary, we require that  \begin{equation}\label{eq:hySSubMorphInManifolds}\Ub f_{st} \circ \Ub p_a = \Ub\big(f_{st}\circ p_a\big) = \Ub\big(p_b\circ f_{tot}\big) = \Ub p_b \circ \Ub f_{tot}.\end{equation}
\end{remark}

\begin{definition}\label{def:HySSubCat}\label{def:categoryHySSub} We define the category $\sF{HySSub}$  whose objects are hybrid surjective submersions (\cref{def:hybridSurjectiveSubmersion})  and whose morphisms are morphisms of  hybrid surjective submersions (\cref{def:hybridSSubMorphism}). \end{definition}

\begin{remark}\label{remark:hyssubIsSubcategoryOfArrow}
	We see that $\sF{HySSub}$ is a subcategory of $\sF{Arrow(HyPh)}$ (\cref{def:arrowCategory}).  In fact, $\sF{HySSub}$ is a full subcategory since the only condition defining morphisms of hybrid surjective submersions is that a square diagram commutes, same as the arrow category of $\sF{HyPh}$. We thus could have alternatively  defined the category $\sF{HyPh}$ as the full subcategory of $\sF{Arrow(HyPh)}$ whose objects are hybrid surjective submersions (\cref{def:hybridSurjectiveSubmersion}). 
\end{remark}

	We showed that $\sF{HyPh}$ has binary products in order to provide nontrivial examples of hybrid surjective submersions.  There is an alternative  reason: product is one piece of the mechanism we use in networks to piece a bunch of spaces together into one.   Formally, we would like $\sF{HyPh}$ to be a monoidal category. 
	
	To this end, we observe that the product  $\times:\sF{HyPh}\times \sF{HyPh}\rightarrow \sF{HyPh}$ defined in \cref{prop:BinProductHyPh} will be the monoidal product (\cref{prop:productBifunctor}).  We now observe the monoidal unit.  The category $\sF{HyPh}$ of hybrid phase spaces has a terminal object.
	\begin{fact}\label{fact:terminalHyPh}
		There is terminal hybrid phase space  $1_\sF{HyPh}$ defined by \begin{enumerate}
			\item $\Sb^{1_\sF{HyPh}}$ is a one object discrete category (a terminal object in the category $\sF{Cat}$).
			\item $1_\sF{HyPh}:\Sb^{1_\sF{HyPh}}\rightarrow\sF{RelMan}$ assigns the one point discrete manifold (a terminal object in the category $\sF{Man}$) to the unique node in $\Sb^{1_\sF{HyPh}}$. 
		\end{enumerate}	It is easy to verify that this data defines a terminal object in $\sF{HyPh}$. \end{fact}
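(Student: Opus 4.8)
The plan is to verify directly that $1_\sF{HyPh}$ satisfies the universal property of a terminal object (\cref{def:terminalObject}): for every hybrid phase space $a:\Sb^a\to\sF{RelMan}$ there is exactly one morphism $a\to 1_\sF{HyPh}$. Since a morphism of hybrid phase spaces is a pair $(\ph,\ff)$ (\cref{def:hybridPhaseSpaceMorphism}), I would construct each component separately, using terminality of the relevant pieces in $\sF{Cat}$ and in $\sF{Man}$, and then check both that the resulting pair is a legitimate morphism and that it is forced.

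First I would handle the double-functor component $\ph:\Sb^a\to\Sb^{1_\sF{HyPh}}$. As $\Sb^{1_\sF{HyPh}}$ is the one-object discrete double category, its only $0$- and $1$-objects are the unique node and its unit edge, and its only $0$- and $1$-morphisms are identities. Hence any double functor (\cref{def:functorDoubleCat}) out of $\Sb^a$ is compelled to send every node and every edge of $\Sb^a$ to this unique node and unit edge, and compatibility with the structure functors $\scS,\scT,\scU,\scC$ is then matched automatically. This both defines $\ph$ and shows it is the unique such functor; equivalently, $\Sb^{1_\sF{HyPh}}$ is terminal among the source categories, so the functor is forced. Next, for the natural transformation $\ff$ (\cref{def:naturalTransformDoubleCat}), for each node $\fs\in\Sb^a_0$ I need a smooth map $\ff_\fs:a(\fs)\to 1_\sF{HyPh}(\ph(\fs))$; but $1_\sF{HyPh}(\ph(\fs))$ is the one-point manifold, terminal in $\sF{Man}$, so there is exactly one such map. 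The relation-inclusion condition $(\ff_\fs\times\ff_{\fs'})\big(a(\g)\big)\subseteq 1_\sF{HyPh}(\ph(\g))$ required of a morphism is then automatic, since the codomain is the (unit) relation on the one-point manifold, i.e.\ all of $*\times*=*$, so every image lands inside it. Thus $\ff$ exists, and being componentwise unique it is the only natural transformation lying over $\ph$.

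Assembling these, $(\ph,\ff):a\to 1_\sF{HyPh}$ is a morphism of hybrid phase spaces, and it is unique because each of its two components is forced. I do not expect a genuine obstacle here: the only bookkeeping demanding care is confirming that the forced data actually satisfies the double-functor and naturality axioms, and each such check is vacuous because the target has a single object, a single unit edge, and a one-point manifold. The real content is simply the observation that the defining data of a morphism in $\sF{HyPh}$ decouples into a functor part and a componentwise manifold-map part, so that terminality in $\sF{Cat}$ and in $\sF{Man}$ lifts to terminality in $\sF{HyPh}$.
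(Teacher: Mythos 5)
Your proof is correct and is exactly the routine verification the paper omits (the paper merely asserts "it is easy to verify"): the functor component is forced by terminality of the one-object discrete category in $\sF{Cat}$, the componentwise maps are forced by terminality of the point in $\sF{Man}$, and the relation inclusion is vacuous since the only relation on the one-point manifold is all of $*\times *$. Nothing is missing.
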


 We collect results to conclude that $\sF{HyPh}$ is a monoidal category. The monoidal product is cartesian, and the monoidal unit is a terminal object (\cref{def:monoidalProductCartesian}):
 
\begin{definition}\label{def:monoidalCategoryHyPh}
   Hybrid phase spaces $(\sF{HyPh}, \otimes_{\sF{HyPh}},1_{\sF{HyPh}})$ form a cartesian monoidal category, with $\otimes_{\sF{HyPh}}=\times$ (\cref{prop:BinProductHyPh}) and  $1_{\sF{HyPh}}$ the monoidal unit (\cref{fact:terminalHyPh}).
\end{definition}

Monoidality of category $\sF{HyPh}$ extends naturally to $\sF{HySSub}$. 
\begin{fact}\label{fact:HySSubIsMonoidal}\label{fact:HySSubIsCartesian}
The category $\big(\sF{HySSub},\otimes_{\sF{HySSub}}, 1_{\sF{HySSub}})$  is cartesian monoidal, with monoidal product $\otimes_{\sF{HySSub}} = \times$ (\cref{remark:productHySSub}) and monoidal unit the terminal object $1_{\sF{HySSub}}$ (coming from $1_{\sF{HyPh}}$ in \cref{fact:terminalHyPh},  \cref{fact:terminalArrowObject}). 
\end{fact}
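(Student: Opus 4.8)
The plan is to exhibit $\sF{HySSub}$ as a full monoidal subcategory of $\sF{Arrow(HyPh)}$ which is closed under the relevant product and contains the monoidal unit, and then to transport the monoidal structure down by restriction. Since $(\sF{HyPh},\times,1_{\sF{HyPh}})$ is cartesian monoidal (\cref{def:monoidalCategoryHyPh}), \cref{prop:catHasProductsImpliesArrowsDoesToo} shows that $\sF{Arrow(HyPh)}$ has products, and \cref{prop:MonoidalProductInducesMonoidalOnArrows} together with \cref{prop:inducedCartesianCatImpliesArrowIsInducedCartesian} equips $\sF{Arrow(HyPh)}$ with an induced-cartesian monoidal structure whose product is the componentwise product and whose associator and unitors are induced componentwise from those of $\sF{HyPh}$. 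Recall from \cref{remark:hyssubIsSubcategoryOfArrow} that $\sF{HySSub}$ is a \emph{full} subcategory of $\sF{Arrow(HyPh)}$, which is the feature that will make the restriction automatic.

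First I would check closure under the binary product. For hybrid surjective submersions $a=(a_{tot}\xrightarrow{p_a}a_{st})$ and $b=(b_{tot}\xrightarrow{p_b}b_{st})$, their product in $\sF{Arrow(HyPh)}$ is $(a_{tot}\times b_{tot}\xrightarrow{p_a\times p_b}a_{st}\times b_{st})$, which is again a hybrid surjective submersion by \cref{remark:productHySSub}; hence $\otimes_{\sF{HySSub}}\defeq\times$ is well defined and agrees with the ambient product. Next I would verify that the unit lands in the subcategory: the terminal object of $\sF{Arrow(HyPh)}$ is $1_{\sF{HyPh}}\xrightarrow{id}1_{\sF{HyPh}}$ (\cref{fact:terminalArrowObject}, \cref{fact:terminalHyPh}), and the identity on any hybrid phase space is a hybrid surjective submersion (\cref{ex:hySSubTrivial}), so $1_{\sF{HySSub}}$ is indeed an object of $\sF{HySSub}$.

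With closure in hand, the restriction is routine. Every object occurring in the associator and unitor isomorphisms of $\sF{Arrow(HyPh)}$---the various iterated products of objects of $\sF{HySSub}$ and products with $1_{\sF{HySSub}}$---lies in $\sF{HySSub}$ by the closure just established, and every morphism between such objects lies in $\sF{HySSub}$ by fullness (\cref{remark:hyssubIsSubcategoryOfArrow}). Thus $\a_{\sF{HySSub}}$, $\l_{\sF{HySSub}}$, $\r_{\sF{HySSub}}$ are simply the restrictions of the corresponding transformations of $\sF{Arrow(HyPh)}$, and their naturality together with the pentagon and triangle coherence identities are inherited verbatim from the ambient monoidal category, so no new computation is needed. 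Since the supercategory $\sF{Arrow(HyPh)}$ has products agreeing with $\otimes_{\sF{HySSub}}$, the structure is induced-cartesian in the sense of \cref{def:monoidalProductCartesian}; moreover the inherited projections and universally induced comparison maps stay in $\sF{HySSub}$ by fullness, so $\sF{HySSub}$ is in fact cartesian monoidal with unit $1_{\sF{HySSub}}$.

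The main obstacle is genuinely just the single verification that the subcategory is closed under the monoidal product and contains the unit, i.e.\ \cref{remark:productHySSub} and \cref{ex:hySSubTrivial}; once those are available, everything else is forced by fullness and by the coherence already holding upstairs in $\sF{Arrow(HyPh)}$.
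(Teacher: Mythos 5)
Your proposal is correct and follows essentially the same route the paper intends: the Fact is stated with exactly the ingredients you use (closure under the product via \cref{remark:productHySSub}, the unit via \cref{fact:terminalHyPh} and \cref{fact:terminalArrowObject}, and fullness of $\sF{HySSub}$ in $\sF{Arrow(HyPh)}$ from \cref{remark:hyssubIsSubcategoryOfArrow}), with the coherence data inherited by restriction. Your explicit observation that fullness is what keeps the projections and universally induced comparison maps inside the subcategory — so that the structure is genuinely cartesian rather than merely induced-cartesian — is the one point the paper leaves implicit, and you handle it correctly.
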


\begin{remark}
	Since $\fC$ is cartesian, $\fC$ is trivially \textit{induced}-cartesian and  $\fA$ is therefore induced-cartesian as well (\cref{prop:inducedCartesianCatImpliesArrowIsInducedCartesian}).  An arbitrary  subcategory of the arrow category of a cartesian category is not necessarily cartesian. For example, let $\fC$ be cartesian and $\fA\subset \sF{Arrow(C)}$ have as objects all morphisms $\fc\xrightarrow{f}\fc'$ of $\fC$ for which morphisms $f\xrightarrow{(\a_{\sF{dom}},\a_{\sF{cod}})}g$ are  isomorphisms $\a_{\sF{dom}}:\sF{dom}(f)\xrightarrow{\sim}\sF{dom}(g)$, $\a_{\sF{cod}}:\sF{cod}(f)\xrightarrow{\sim}\sF{cod}(g)$ on domain and codomain. Then $\fA$ is not cartesian because  projection and the induced unique map to the product are not  in $\fA$.
\end{remark}

We now define hybrid open systems and their morphisms. 
\begin{definition}\label{def:hybridOS}
	We define a \textit{hybrid open system}  to be a pair $(a,X)$  where  $a$ is a hybrid surjective submersion $p_a:a_{tot}\rightarrow a_{st}$    and $X:\Ub a_{tot}\rightarrow T\Ub a_{st}$ is a smooth map such that \begin{equation}\label{eq:projConditionForOpenSystemsHybrid} \Ub p_a = \t_{a_{st}}\circ X,\end{equation} where $\t_{a_{st}}:T\Ub a_{st}\rightarrow \Ub a_{st}$ is the canonical projection of the tangent bundle.
\end{definition}
A more compact definition of hybrid open system is: a pair  $(a,X)$ where $a\in \sF{HySSub}$ and $(\Ub a, X)$ is an open system (\cref{def:openDySys}). There are also maps of hybrid open systems.

\begin{definition}\label{def:morphismHybridOS}\label{def:hybridOpenSystemMorphism}
	We define a \textit{map} $(a,X)\xrightarrow{f}(b,Y)$  \textit{of hybrid open systems} as a map $a\xrightarrow{f} b$ of hybrid surjective submersions such that $(X,Y)$ are $\Ub f$-\textit{related} (\cref{def:relatedVectorFields}), namely  $(\Ub a,X)\xrightarrow{\Ub f}(\Ub b,Y)$ is a map of continuous-time open systems (\cref{def:openDySysMorphism}).  For hybrid open systems, we also say that $(X,Y)$ are $f$-\textit{related} (dropping `$\Ub$'). 
\end{definition}

While we may develop a theory of networks for hybrid open systems---and indeed we actually do  so in \cite{lermanSchmidt1}---we prefer to modify this notion, in a way that makes sense to speak of unique executions (or unique hybrid integral curves). To this end, we now turn.

\section{Determinism in Hybrid Systems}\label{subsec:determinismHySys}
We now build a theory of  deterministic hybrid systems.  What makes systems deterministic, for us, is that we specify a jump point to  each point of the underlying manifold. By way of comparison, consider that  a vector field $X\in \Xf(M)$ on a manifold $M$ assigns to each point $x\in M$ a tangent vector $X(x)\in T_xM$, or---loosely speaking---a direction to flow. We have already incorporated vector fields into our hybrid apparatus with the notion of a hybrid system (\cref{def:hybridSys}). The vector field of a hybrid system is supposed to depict  continuous behavior.  We vaguely alluded to the hybrid aspect or discrete-jump behavior by pointing to the relations which a hybrid phase space assigns to edges of the source manifold.  Now we repackage this idea as follows: for each point $x\in \Ub a$ in the underlying manifold of a hybrid phase space, we will specify both a tangent vector $X(x)\in T_x\Ub a$ and a jump point $\r(x)\in \Ub a$.  The only constraint we place on $\r$ is that for each $x\in \Ub a$, there is some edge $\g_x\in \Sb^a_1$ for which the pair of points $(x,\r(x))\in a(\g(x))$. 

Ultimately, this construction is used to interpret an analogous notion of integral curve for hybrid systems, which also has some uniqueness property (\cref{theorem:E&U}).   Without a jump map $\r$, a hybrid system (in the sense of \cref{def:hybridSys}) really is like an ordinary continuous-time dynamical system.  On the other hand, a notion of executions for such systems allows discrete jumps, but without determinism those jumps may ``occur anywhere'' there is a relation (c.f.\ \cite[\S 4]{lermanSchmidt1}). By including a jump map, we enforce that a hybrid integral curve jumps everywhere, even if that point is to itself (a self-jump, as we will see in \cref{def:deterministicExecution}, allows for continuous flow).  More importantly, by specifying the jumps, we eliminate the indeterminism of executions inherent in the  non-deterministic version of hybrid system (\cref{def:hybridSys}).  While initially counterintuitive to assign \textit{both} a jump and a tangent vector, we do so for two reasons: (1) the possibility of self-jumps does not impose irregular or erratic ``everywhere discontinuous'' behavior which the notion may otherwise suggest, and (2) having globally defined maps makes the formalism work cleanly.  An alternative is to choose a vector \textit{or} a jump, but this turns out to be problematic when taking products.

We now reinterpret  two examples from \cref{subsection:hybridClosedSystems}---the thermostat and the bouncing ball---to concretize these ideas, before diving deep into the formalism.    In the bouncing ball example (\cref{ex:bbAsNormalHybridSystem}), we have relation $\big\{(h,v,h',v')\in (\R^{\geq0}\times \R)^2:\, h= h'= 0,\, v'\cdot v< 0 \big\}$.  This represents the possibility of a  discontinuous change in velocity at the moment of impact with the ground.  On the other hand, there is \textit{also} identity relation, so even at the ground the state $(0,v)$ may ``jump'' to $(0,v)$ instead of $(0,v')$. While this is  mathematically permissible, any hybrid interpretation of  integral curve would end here, because the state is not defined for $h<0$.  By contrast, jumping to $(0,v')$ both makes physical sense and allows us to have a piecewise continuous curve defined for all time.

Again, imagine the thermostat (\cref{ex:thermoAsNormalHybridSystem}) with relations $\big\{(x,i,x,1-i):\, x\cdot (-1)^{1-i}\geq 1\big\}$.  When $x\geq 1$, say, we may turn the heater off by sending $i=1\mapsto i=0$.  Unlike the bouncing ball, it is permissible (if not ecologically sound) to let the heater run indefinitely.  The relation by itself does not require the jump to happen as soon as temperature enters the region $\{x\geq 1\}$. To enforce said transition, we define the following map $\r:\R\times \{0,1\}\rightarrow \R\times \{0,1\}$ by $$\r(x,i)= (x,i) \cdot \one_{x\cdot (-1)^{1-i}<1} + (x,1-i)\cdot \one_{x\cdot(-1)^{1-i} \geq 1}.$$ Roughly speaking, this translates as: temperature always jumps to itself and the heater remains on or off unless it enters into or is initialized beyond a threshold region.  Now we discuss the details. 

\subsection{Deterministic  Hybrid Closed Systems}\label{subsection:deterministicHybridSystems}
We introduce determinism first for closed systems and then move to open systems.  We need a technical fact which we use to connect the constraints imposed by relations. 

\begin{remark}\label{remark:inclusionForDeterminism}
Recall that a hybrid phase $a:\Sb^a\rightarrow\sF{RelMan}$ space assigns a relation $a(\g) \subseteq a(\fs)\times a(\fs')$ to edge $\fs\xrightarrow{\g}\fs'\in \Sb^a_1$ (\cref{def:hybridPhaseSpace0}). Alternatively, we view this inclusion as a map  $$\i_\g:a(\g)\hookrightarrow a(\sF{dom}(\g))\times a(\sF{cod}(\g))$$ of sets.  Additionally, there are canonical inclusions   $$\begin{array}{lll} i_\g: a(\g)\hookrightarrow\discats_{\g'\in \Sb^a_1}a(\g') &\mbox{and} & i_{(\fs_0,\fs_0')}:a(\fs_0)\times a(\fs_0')\hookrightarrow\discats_{(\fs,\fs')\in \Sb^{a\times a}_0} a(\fs)\times a(\fs') \end{array}$$ into the coproducts. Altogether, these inclusions define a unique map  $\lambda_a$ in commuting diagram $$\begin{tikzcd}[column sep = large, row sep = large]
\discats_{\g'\in \Sb^a_1}a(\g') \arrow[r,dashed,"\lambda_a"] & \discats_{(\fs,\fs')\in \Sb^{a\times a}_0}a(\fs)\times a(\fs') \\
a(\g)\arrow[ru]\arrow[u,hookrightarrow,"i_\g"]\arrow[r,hookrightarrow,"\i_\g"] & a(\sF{dom}(\g))\times a(\sF{cod}(\g)). \arrow[u,swap,hookrightarrow,"i_{\sF{dom}(\g)\times \sF{cod}(\g)}"]
\end{tikzcd}$$ There is also a canonical map (\cref{prop:canonicalMapCoproduct2Product}) \begin{equation}\label{eq:mapOmega} \Omega_a: \discats_{(\fs,\fs')\in \Sb^{a \times a}_0} a(\fs)\times a(\fs') \dashrightarrow \left(\discats_{\fs\in \Sb^a_0}a(\fs)\right)\times  \left(\discats_{\fs\in \Sb^a_0}a(\fs)\right) = \Ub a\times \Ub a. \end{equation}  Composing the dashed-line maps, we obtain a unique map \begin{equation}\label{eq:coproductRelations2Product} \Lambda_a:\discats_{\g\in \Sb^a_1}a(\g)\dashrightarrow \Ub a\times \Ub a,\end{equation}  defined by $\Lambda_a\defeq \Omega_a\circ \lambda_a$.
\end{remark}

We are ready now to formally define deterministic hybrid systems.  We recall and piggyback on material from \cref{subsection:hybridClosedSystems}, in order to minimize redundant enumeration of extra data.  Instead of template ``hybrid phase space with extra data,'' our template will now be   ``hybrid system with extra data.'' Later, we will define the continuous-discrete bundle $\Tb a$ (\cref{def:cdBundle}), at which point we will revert to the template of ``hybrid phase space with data.''

\begin{definition}\label{def:detHySys}
	We define a \textit{deterministic  hybrid system} to be a triple  $(a,X,\r)$ where $(a,X)$ is hybrid system (\cref{def:HySys}) and $\r:\Ub a\rightarrow\Ub a$ is a set map (a morphism in the category $\sF{Set}$)---which we call the \textit{jump map}---satisfying \begin{equation}\label{eq:conditionFordeterministicJumps} \mbox{graph}(\r) \subseteq \Lambda_a\left(\discats_{\g\in \Sb^a_1} a(\g)\right),\end{equation} where $\Lambda_a:\discats_{\g\in \Sb^a_1}a(\g)\dashrightarrow \Ub a\times \Ub a$ (\cref{remark:inclusionForDeterminism}, eq.\ \eqref{eq:coproductRelations2Product}).  
	
\end{definition}
\begin{remark}\label{remark:checkingJumpMap}
Condition \eqref{eq:conditionFordeterministicJumps} 
parses as  saying: for each $x\in \Ub a$, there is an edge $\g_x\in \Sb_1^a$ for which the ordered pair $(x,\r(x)) \in a(\g_x)$.  In fact, we verify condition \eqref{eq:conditionFordeterministicJumps} in practice by finding such $\g_x$. 
\end{remark}
\begin{remark}\label{remark:jumpSet}
	Because $\Sb^a$ is a category, there is identity arrow $id_\fs\in \Sb^a_1$ for each node $\fs\in \Sb^a_0$, and therefore identity relation $a(id_\fs)\defeq \Delta(a(\fs))=\big\{(x,y)\in a(\fs)^2:\, x = y\big\}$. Thus, for  $x\in \Ub a$, it is possible that $\r(x) = x$.  We refer to the points $\scJ_a \defeq \big\{x\in \Ub a :\, \r(x) \neq x\big\}$ as the \textit{jump set}.  We have imposed no smoothness or continuity condition  on the jump map $\r:\Ub a \rightarrow \Ub a$.\footnote{If we were to be pedantic, we would write $\r:\{x\in \Ub a\}\rightarrow \{x\in \Ub\}$, where $\{x\in (\cdot)\}$ denotes the forgetful functor to the category of sets. Because we understand maps to be maps-in-a-category, it is important to draw this distinction: $\r$ is not a map of manifolds and therefore not smooth or even continuous.  However, we prefer a possible ambiguity in notation to over-specification and unnecessary convolutedness.}
\end{remark}
We review in detail and reinterpret the two running examples of thermostat and bouncing ball as deterministic  hybrid systems.  
\begin{example}\label{example:ThermostatVanilla} Recall the thermostat hybrid system $(c,Z)$ from \cref{ex:thermoAsNormalHybridSystem}.  We turn this system into deterministic hybrid system $(c:\Sb^c\rightarrow \sF{RelMan}, Z, \nu)$.  The source category  $\Sb^c$ is generated by graph  $\begin{tikzcd} 0\arrow[r,bend left,"e_{1,0}"] & 1\arrow[l,bend left,"e_{0,1}"] \end{tikzcd}$ and the phase space assigns manifolds $c(i) \defeq \R\times \{i\}$ to $i=0,1$.  The relations are $$c(e_{i,1-i}) \defeq  \left\{(x,1-i,x',i)\in \big(\R\times\{0,1\}\big)^2:\; x = x'\right\}.$$  We define the vector field $Z$ and jump map $\nu$ by  \small \begin{equation}\label{eq:vanillaThermostat}\begin{array}{lrlclrl}
 Z : & \R\times \{0,1\} & \rightarrow T(\R\times \{0,1\}) &\mbox{and} & \nu : & \R\times \{0,1\} & \rightarrow \R\times \{0,1\}\\ 
	& (T,i) & \mapsto  \big((-1)^{1-i},0\big)&      &  & (T,i) &\mapsto \left\{\begin{array}{ll} (T,1-i) &\mbox{if}\; (-1)^{1-i}T\geq  1 \\ (T,i) & \mbox{else.}\end{array}\right.\end{array}\end{equation} \normalsize
	
To be a deterministic  hybrid system, we require the inclusion $$\mbox{graph}(\nu)\subseteq \Lambda_c\left(\discats_{\g\in \Sb^c_1}c(\g)\right).  $$ Either $\nu(T,i)= (T,i)$ in which case $((T,i),\nu(T,i)) \in \Delta\big(\R\times\{i\}\big)=c(id_i)$, or $\nu(T,i) = (T,1-i)$ and $((T,i),\nu(T,i))  \in c(e_{1-i,i})$.  Both cases establish that condition \eqref{eq:conditionFordeterministicJumps}  holds (\cref{remark:checkingJumpMap}) and  therefore that $(c,Z,\nu)$ is a deterministic  hybrid system. 
	 \end{example}

\begin{remark} We now may see how a  deterministic  hybrid system models the behavior of a thermostat.  As before, the first factor $\R$ of $\R\times \{0,1\}$ represents the current temperature, whereas the second factor $\{0,1\}$ represents whether a heater is on or off.  The vector field $X$ governs the continuous dynamics, with positive-direction vectors for ``heat  on'' and negative otherwise.   The jump map represents digital control (a switched system), which discretely turns heat on or off depending on whether some threshold in the temperature has been surpassed. 
\end{remark}

Now we reinterpret  the bouncing ball.  A  ball falls through space with state variables $h$ and $v$ representing position (height) and velocity.  Velocity is unconstrained, but height is nonnegative (initializing the ground as zero, and up as positive).   When the ball hits the ground, there is a sudden  change in state: the height remains the same, but velocity spikes from negative to positive. Supposing   loss of energy at impact,  velocity jumps from $v(t_0^-)$ to $v(t_0^+) = -r v(t_0^-)$ at $t_0$ the time of impact, where $r \in (0,1)$ denotes the coefficient of restitution.  We realize this example in terms of \cref{def:detHySys}.  
\begin{example}\label{ex:bouncingBallVanilla}
We define deterministic  hybrid system $(c:\Sb^c\rightarrow\sF{RelMan}, Z,\mu)$. Hybrid phase space $c$  has   source category $\Sb^c$ given by   $\begin{tikzcd}
	0\arrow[loop left,"e"] 
\end{tikzcd}$  (c.f.\ \cref{ex:phaseSpaceForBall}).
 This phase space assigns manifold $c(0) \defeq \R^{\geq0}\times \R$ and relation $c(e)\defeq \big\{(h,v,h,v'):\, h = h'= 0,\, v\cdot v'< 0\big\}$. 
 Fix $r\in (0,1)$.  We  define control and jump maps by:\begin{equation} \small \begin{array}{lrlclrl}\label{eq:vanillaThermostat}Z : & \R^{\geq 0} \times\R& \rightarrow T(\R^{\geq 0}\times \R) & \mbox{and} & \mu : & \R^{\geq0}\times \R & \rightarrow \R^{\geq 0}\times\R\\
	& (h,v) & \mapsto v\pdiv{}{h}-\pdiv{}{v} & & 
	& (h,v) & \mapsto  \left\{\begin{array}{ll} (0,-rv) &\mbox{if}\; h=0,\,\,v<0 \\ (h,v) & \mbox{else.}\end{array}\right.\end{array}\end{equation} \normalsize
It is clear that $\mbox{graph}(\mu)\subseteq \Lambda_c\left(\discats_{\g\in \Sb^c_1}c(\g)\right)$  since $(0,v,0,-rv)\in c(e)$ when $h =0$ and $v<0$, as $-rv^2<0$ (\cref{remark:checkingJumpMap}).	 \end{example}

There is also a notion of map of deterministic  hybrid systems: 
\begin{definition}\label{def:detHySysMorphism}
	Let $(a,X,\r)$ and $(b,Y,\s)$ be two deterministic  hybrid systems.  We define a \textit{map} $(a,X,\r)\xrightarrow{f} (b,Y,\s)$ \textit{of deterministic hybrid systems} to be a map $(a,X) \xrightarrow{f}(b,Y)$ of hybrid systems (\cref{def:hybridSysMorphism}) such that $(\r,\s)$ are $f$-\textit{related}, namely $\Ub f \circ \r = \s \circ \Ub f$ (compare with \cref{remark:relatednessInHyPhandinMan}). 
\end{definition}

		\begin{remark}\label{remark:compatibility/relatedness}
			Relatedness of jump maps is the hybrid analog of relatedness of vector fields and control.  Both conditions appear in the definition of maps of deterministic  hybrid system: \begin{equation}\label{eq:conditionsForMorphismOfdeterministic System} \begin{array}{lll} T\Ub f\circ X = Y \circ \Ub f& \mbox{and} & \Ub f\circ \r = \s \circ \Ub f.\end{array}\end{equation} 
			In practice, checking relatedness of jump maps amounts to  the same diagram chasing as checking relatedness of vector fields (consider diagrams in \eqref{diagram:Related&CompatibleAreTheSame}, for example).
		\end{remark}
		
		\begin{remark}\label{remark:deterministicHySysProducts}
	Given two deterministic  hybrid open systems $(a,X,\r)$ and $(b,Y,\s)$, there is product system $(a\times b, X\times Y, \r\times \s)$ defined as follows.  The hybrid phase space $a\times b$ is the product of hybrid phase spaces (\cref{prop:BinProductHyPh}).  Vector field $X\times Y$ and jump map $\r\times \s$  are defined by \begin{equation}\label{eq:defProductDetSystem} \begin{array}{lll} (X\times Y) (x,y) \defeq (X(x),Y(y)) \in T_x\Ub a\times T_y\Ub b & \mbox{and} & (\r\times \s)(x,y) \defeq (\r(x),\s(y)) \in \Ub a\times \Ub b.\end{array}\end{equation}
	 We will see in \cref{lemma:PUUPLite} that $\Ub a \times \Ub b \cong \Ub(a\times b)$. Thus $$\begin{array}{lll} T_x\Ub a \times T_y\Ub b  & \cong T_{(x,y)}\big( \Ub a\times \Ub b\big)  & \mbox{(\cref{prop:isoProductTangentSpaces})} \\ & \cong T_{(x,y)}\Ub (a\times b) &\mbox{(\cref{lemma:PUUPLite})}\end{array}$$ so $X\times Y \in \Xf(\Ub (a\times b))$. Similarly, let $x\in \Ub a$, $y\in \Ub b$,  and suppose edges $\g_x\in \Sb^a_1$, $\eta_y\in \Sb^b_1$ are such that $(x,\r(x))\in a(\g_x)$ and $(y,\s(y))\in b(\eta_y)$ (\cref{remark:checkingJumpMap}).  Then we conclude that  $\big((x,y),(\r(x),\s(y))\big)\in (a\times b)(\g_x,\eta_y)$ (c.f.\ eq.\ \eqref{eq:BinProductRelationAssignment}), and hence  that   $$\mbox{graph}(\r\times \s) \subseteq \Lambda_{a\times b} \left(\discats_{(\g,\eta)\in \Sb^{a\times b}_1} (a\times b) (\g,\eta)\right).$$
	 
	 It is an easy verification that the projection maps $p_a:a\times b\rightarrow a$ and $p_b:a\times b\rightarrow b$ (c.f.\ \eqref{eq:projForProductHyPh}) of hybrid phase spaces define maps of deterministic  hybrid systems.  Relatedness of vector fields $T\Ub p_a \circ X\times Y = X\circ \Ub p_a$ follows immediately from \eqref{eq:defProductDetSystem} and \eqref{eq:BinProductManifoldAssignment}.  Relatedness of jump maps follows similarly.  If $$\mbox{graph}(\r\times \s)\subseteq\Lambda_{a\times b}\left(\discats_{(\g,\eta)\in \Sb_1^{a\times b}}(a\times b)(\g,\eta)\right),$$ then  $\big((x,y),(\r(x),\s(y))\big)\in (a\times b)(\g_x,\g_y)$  for each $(x,y)\in \Ub(a\times b)$ (c.f.\ \eqref{eq:defProductDetSystem}),   some $\eta_{(x,y)}=(\g_x,\g_y)\in \Sb^{a\times b}_1$. In this case,  $\big((x,\r(x)),(y,\s(y))\big)\in a(\g_x)\times b(\g_y)$ (c.f.\ \eqref{eq:BinProductRelationAssignment}), and in particular $(x,\r(x))\in a(\g_x)$.  Moreover $$(\pf_{a,\sF{dom}(\eta_{(x,y)})}\times \pf_{a,\sF{cod}(\eta_{(x,y)})}((x,y),(\r(x),\s(y))) = (x,\r(x)) \in a(\g_x),$$ which shows that $\Ub p_a \circ (\r\times \s) = \r\circ \Ub p_a$  (recall notation $p_a = (\p_a,\pf_a)$).  We conclude that  $(a\times b, X\times Y,\r\times \s)$ is a deterministic  hybrid system, and that the projection maps induce maps of deterministic hybrid systems. 
\end{remark}


\begin{remark}\label{remark:functorialityT&U}
	A mantra we will repeatedly use is ``by functoriality of $T$ and $\Ub$.'' The differential of a map of manifolds is a functor (\cite[\S 10]{tu}) and  $\Ub:\sF{HyPh}\rightarrow \sF{Man}$ is as well (\cref{prop:ForgetfulFunctor}).  The  proof of \cref{lemma:detHySysCategory} illustrates application of this phrase, and in future such applications we may circumvent  detailed calculation by citing this phrase.
\end{remark}
\begin{lemma}\label{lemma:detHySysCategory}
Deterministic  hybrid systems (\cref{def:detHySys}) and their	morphisms (\cref{def:detHySysMorphism}) form a category, $\sF{dHySys}$.
\end{lemma}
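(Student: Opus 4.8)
The plan is to verify the two category axioms from \cref{def:category}: existence of identity morphisms and associativity of composition. Since a deterministic hybrid system is a hybrid system (\cref{def:HySys}) together with a jump map $\r$, and a morphism of deterministic hybrid systems (\cref{def:detHySysMorphism}) is a morphism of the underlying hybrid systems satisfying the additional relatedness condition $\Ub f\circ \r = \s\circ \Ub f$, the strategy is to leverage the fact that $\sF{HySys}$ is already a category (\cref{def:categoryHySys}). The only genuinely new content is checking that the identity and composite morphisms continue to respect the jump-map relatedness condition. This is where the mantra ``by functoriality of $\Ub$'' (\cref{remark:functorialityT&U}) does the work.

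First I would treat identities. Given a deterministic hybrid system $(a,X,\r)$, the candidate identity is $id_a=(id_{\Sb^a},\one_a)$, which is already the identity morphism of $(a,X)$ in $\sF{HySys}$. For this to be a morphism of deterministic hybrid systems I must check that $(\r,\r)$ are $id_a$-related, i.e.\ $\Ub(id_a)\circ \r = \r\circ \Ub(id_a)$. But by functoriality on identity, $\Ub(id_a)=id_{\Ub a}$, so both sides equal $\r$, and the condition holds trivially. That $id_a$ is a two-sided identity for composition follows immediately from the corresponding fact in $\sF{HySys}$, since composition of deterministic hybrid morphisms is composition of the underlying hybrid morphisms.

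Next I would treat composition and associativity. For a composable pair $(a,X,\r)\xrightarrow{f}(b,Y,\s)\xrightarrow{g}(c,Z,\t)$, I must first confirm that the composite $g\circ f$ is again a morphism of deterministic hybrid systems: the underlying composite $g\circ f$ is already a morphism in $\sF{HySys}$, so it remains to verify $\Ub(g\circ f)\circ \r = \t\circ \Ub(g\circ f)$. Using functoriality on composition, $\Ub(g\circ f)=\Ub g\circ \Ub f$, and then
\[
\Ub g\circ \Ub f\circ \r = \Ub g\circ \s\circ \Ub f = \t\circ \Ub g\circ \Ub f,
\]
where the first equality uses the relatedness condition for $f$ and the second that for $g$. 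This establishes closure under composition. Associativity then reduces to associativity in $\sF{HySys}$ (\cref{def:categoryHySys}): since the jump-map conditions are properties rather than extra data, the triple composite $(h\circ g)\circ f$ and $h\circ(g\circ f)$ agree because their underlying hybrid-system morphisms agree.

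The main obstacle, such as it is, is purely bookkeeping: ensuring that the relatedness condition on jump maps is preserved under composition, which hinges entirely on functoriality of $\Ub$ on composition. There is no hard geometric or analytic content here—unlike the product construction (\cref{prop:BinProductHyPh}) or the forthcoming \cref{lemma:PUUPLite}—so I expect the proof to be short, essentially a diagram chase identical in form to the verification for ordinary continuous-time dynamical systems. The only subtlety worth flagging is that $\r$ is merely a set map (\cref{remark:jumpSet}), so all compositions take place in $\sF{Set}$; since $\Ub$ lands in $\sF{Man}$ but we may postcompose with the forgetful functor to $\sF{Set}$, the equalities above are equalities of set maps, and no smoothness needs to be checked for the jump-map component.
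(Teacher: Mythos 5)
Your proposal is correct and follows essentially the same route as the paper: identity via functoriality of $\Ub$ on identities, closure of composition via the chase $\Ub g\circ \Ub f\circ \r = \Ub g\circ \s\circ \Ub f = \t\circ \Ub g\circ \Ub f$, and associativity inherited from the underlying category. The only cosmetic difference is that the paper reduces associativity to $\sF{HyPh}$ (\cref{lemma:HyPhisaCategory}) and therefore re-verifies the vector-field relatedness diagram as well, whereas you reduce to $\sF{HySys}$ (\cref{def:categoryHySys}) and only check the jump-map condition; both amount to the same functoriality argument.
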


\begin{proof}
	We must show that objects have an identity morphism and that composition of maps is associative.  Let $(a,X,\r)$ be a deterministic  hybrid system. The identity map is the identity $id_a$ of hybrid phase space $a$,  which  induces a map $(a,X,\r)\xrightarrow{id_a}(a,X,\r)$ of deterministic hybrid systems since the diagrams $$\begin{array}{lll} \begin{tikzcd} \Ub a \arrow[r,"\Ub(id_a)"]\arrow[d,"X"] & \Ub a\arrow[d,"X"]\\
	T\Ub a \arrow[r,"T\Ub(id_a)"] & T\Ub a\end{tikzcd} & \mbox{and} & \begin{tikzcd} \Ub a \arrow[r,"\Ub(id_a)"]\arrow[d,"\r"] & \Ub a\arrow[d,"\r"]\\
	\Ub a \arrow[r,"\Ub(id_a)"] & \Ub a\end{tikzcd} 
\end{array}$$ both commute, showing that $(X,X)$ and $(\r,\r)$ are both $id_a$-related.  The diagrams commute because $\Ub id_a  = id_{\Ub a}$ and $T\Ub id_a = Tid_{\Ub a} = id_{T \Ub a}$ \textit{by functoriality of  $\Ub$} (``functoriality on identity'' (\cref{remark:functoriality})).

For associativity of composition, let $(a,X,\r)\xrightarrow{f} (b,Y,\s) \xrightarrow{g} (c,Z,\t)\xrightarrow{h}(d,W,\upsilon) $ be a string of composable  morphisms of deterministic  hybrid systems. We must show that $\big(h \circ g\big) \circ f = h\circ \big( g\circ f\big)$. This equality holds \textit{as morphisms of hybrid phase spaces}  (\cref{lemma:HyPhisaCategory}). So it suffices to check that the composition of morphisms of deterministic  hybrid systems is itself a morphism of deterministic  hybrid systems.

Each subdiagram of diagram $$\begin{tikzcd}[column sep = large]
	\Ub a \arrow[r,"\Ub f"]\arrow[rr,bend left,"\Ub (g\circ f)"] \arrow[d,"X"] & \Ub b\arrow[r,"\Ub g"]\arrow[d,"Y"] & \Ub c \arrow[d,"Z"]\\
	T\Ub a \arrow[r,swap,"T\Ub f"]\arrow[rr,bend right,swap,"T\Ub(g\circ f)"]  & T\Ub b\arrow[r,swap,"T\Ub g"]& T\Ub c\end{tikzcd}$$ commutes by functoriality of $T$ and $\Ub$. Therefore, the outer diagram $$\begin{tikzcd}[column sep = large] \Ub a\arrow[d,"X"] \arrow[rr,"\Ub(g\circ f)"] & & \Ub c\arrow[d,"Z"]\\
T\Ub a \arrow[rr,swap,"T\Ub(g\circ f)"] & & T\Ub c	
\end{tikzcd}$$  commutes.  A similar commuting diagram $$\begin{tikzcd}[column sep = large]
	\Ub a \arrow[d,"\r"]\arrow[r,"\Ub f"]\arrow[rr,bend left,"\Ub(g\circ f)"] & \Ub\arrow[d,"\s"] \arrow[r,"\Ub g"] & \Ub c\arrow[d,"\t"]\\
	\Ub a \arrow[r,swap,"\Ub f"]\arrow[rr,bend right,swap,"\Ub( g\circ f)"] & \Ub b \arrow[r,swap,"\Ub g"] & \Ub c \end{tikzcd}$$ shows that $\Ub\big(g \circ f\big)\circ \r  = \t\circ \Ub( g\circ f)$, and 
 hence that $g \circ f$ is a morphism of deterministic  hybrid systems. \end{proof}

 We now realize determinism in hybrid systems through \textit{executions}, the deterministic hybrid version of integral curves. They are defined as  a special class of maps of deterministic  hybrid systems (compare with \cref{def:integralCurveAsMap}).  To define executions, we first need to separate a special class of deterministic  hybrid systems. 

\begin{definition}\label{def:TuniversalDetHySys}
	
Let $\Tc = \{t_0<t_1<\ldots \}\subset \R$ be an increasing sequence of real numbers;  we define a $\Tc$-\textit{universal deterministic  hybrid system} $(\omega:\Sb^\omega\rightarrow\sF{RelMan},T,\tau)_\Tc$ as follows: \begin{enumerate}
	\item Hybrid phase space $\omega$ with source category  $\Sb^\omega \cong \N$: $$ 0 \xrightarrow{e_{1,0}} 1 \xrightarrow{e_{2,1}}2\rightarrow \cdots \rightarrow j \xrightarrow{e_{j+1,j}}j+1 \cdots$$  To each node $j\in \Sb^\omega_0$ we assign manifold $\omega(j)\defeq [t_j,t_{j+1}] \times\{j\}$.  To edge $e_{j+1,j}\in \Sb^\omega_1$ we assign the one-element relation $\omega(e_{j+1,j})=\left\{\big((t_{j+1},j),(t_{j+1},j+1)\big)\right\}$. 
	\item Vector field $T\in \Xf\left(\disu_{j\in \N} [t_j,t_{j+1}]\times\{j\}\right)$ is defined by  the constant vector field $T(t_0,j)\defeq\left.\frac{d}{dt}\right|_{(t_0,j)}.$
	\item Jump map $\t:\Ub\omega\rightarrow \Ub\omega$ is defined by $\t(t,j)= \left\{ \begin{array}{lll} (t,j) & \mbox{if} & t\in [t_j,t_{j+1}) \\ (t_{j+1},j+1) & \mbox{if} & t= t_{j+1}.\end{array}\right.$
\end{enumerate}
Notice that $\big((t_{j+1},j),(t_{j+1},j+1)\big) \in \omega(e_{j+1,j})$ so $(\omega,T,\t)$ is indeed a deterministic  hybrid system (\cref{remark:checkingJumpMap}).  When the sequence $\Tc=\{t_0<t_1<\cdots\}$ is clear or fixed ahead of time, we drop $\Tc$ in the subscript and simply write $(\omega,T,\t)$. 
\end{definition}

\begin{remark}\label{remark:TuniversalUniquelyDefinedByPartition}
An increasing sequence $\Tc=\{t_0,t_1,\ldots\}$ of real numbers uniquely defines a $\Tc$-universal system. 

	When $\Tc=\{t_0<\ldots<t_{k+1}\}$ is a finite set, we write $n_k$ instead of $\omega$, $\Sb^{n_k}$ is the finite category $$0\xrightarrow{e_{1,0}}1\rightarrow \cdots \rightarrow k-1\xrightarrow{e_{k,k-1}} k,$$  and $\Ub n_k  \cong \disu_{j=0}^{k}[t_j,t_{j+1}]\times \{j\}$.  In the case that $t_{k+1}=\infty$, the last interval in this union is $[t_{k},\infty)\times \{k\}$.  We abuse notation and continue to write $[t_k,t_{k+1}]$ when $t_{k}=\infty$. 
	\end{remark}
 
 Having demarcated our special class of deterministic  hybrid systems, we may now define executions---by analogy with \cref{def:integralCurveAsMap}---as maps of deterministic  hybrid systems.
\begin{definition}\label{def:deterministicExecution}
	Let $\Tc = \{t_0<t_1<\ldots\}\subset \R$.  A $\Tc$-\textit{execution} $(\epsilon,\ef):(\omega,T,\t)_\Tc\rightarrow(a,X,\rho)$ \textit{of deterministic  hybrid system} $(a,X,\rho)$ is a map of deterministic  hybrid dynamical systems (\cref{def:detHySysMorphism}) from  $\Tc$-universal deterministic  hybrid system $(\omega,T,\t)_\Tc$ (\cref{def:TuniversalDetHySys}). We may notate an execution $(\epsilon,\ef)$ by $e:(\omega,T,\t)_\Tc\rightarrow (a,X,\r)$. 
\end{definition}

Let us see how executions are the hybrid version of integral curve. On each interval $[t_j,t_{j+1}]$, the map $\ef_j:\omega(j)\rightarrow a(\epsilon(j))$ \textit{is} an integral curve for $X$ (c.f.\ \cref{def:integralCurveAsMap}).  This is the  continuous-time part.  At endpoint $(t_{j+1},j)\in [t_j,t_{j+1}]\times \{j\}$, we send point $\ef_j(t_{j+1},j)\mapsto \ef_{j+1}(t_{j+1},j+1)$. The condition that $\r\circ \Ub e= \Ub  e\circ \t$ requires  at point $(t_{j+1},j)\in [t_j,t_{j+1}]\times\{j\}$,  that  $\ef_{j+1}(t_{j+1},j+1) = \ef_{j+1}(\t(t_{j+1},j)) = \r(\ef_j(t_{j+1},j)$. This is the hybrid jump. 


A motivation for calling these systems deterministic  comes from the following.  

\begin{prop}\label{prop:uniqueExecutions} 
	Let $(a,X,\rho)$ be a deterministic  hybrid system and let   $\r$ be idempotent, i.e.\ $\r^2 = \r$.   Suppose that for each $x_0\in \Ub a\setminus \scJ_a=\{x\in \Ub:\, \r(x) = x\}$ (\cref{remark:jumpSet})  the maximal solution $\ph_{X,x_0}(t)$ of continuous-time system $(\Ub a, X)$ starting at $x_0$ (\cref{def:solutionToDynamicalSystem}) is either \begin{enumerate}
		\item complete $\ph_{X,x_0}:[0,\infty)\rightarrow \Ub a \setminus \scJ_a$ or 
		\item  bounded  $\ph_{X,x_0}:[0,t_{x_0})\rightarrow \Ub a\setminus \scJ_a$ and  $\ph_{X,x_0}(t_{x_0}) \in \scJ_a$. 
	\end{enumerate}   Then   $(a,X,\rho)$ has unique  executions. 
\end{prop}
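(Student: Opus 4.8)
The plan is to build the execution one interval at a time, using existence and uniqueness of integral curves (\cref{theorem:E&U}) as the engine for the continuous flow and idempotency of $\r$ to control the jumps. Fix an initial time $t_0$ and an initial point $x_0$. As a preliminary observation, the interior condition worked out below forces $\ef_0(t_0,0)\in\Ub a\setminus\scJ_a$, so the relevant initial data is a point $x_0\in\Ub a\setminus\scJ_a$, which is exactly where the hypothesis on $\ph_{X,x_0}$ applies.

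First I would unwind the two relatedness conditions defining an execution $e=(\epsilon,\ef)$ (\cref{def:detHySysMorphism}, \cref{def:deterministicExecution}). Relatedness of the universal vector field with $X$ says that on each interval $\omega(j)=[t_j,t_{j+1}]\times\{j\}$ the curve $t\mapsto\ef_j(t,j)$ is an integral curve of $X$. Relatedness of the jump maps, $\Ub e\circ\t=\r\circ\Ub e$, unpacks via the formula for $\t$ in \cref{def:TuniversalDetHySys} into two statements: at an interior point $t\in[t_j,t_{j+1})$ one has $\t(t,j)=(t,j)$, hence $\r(\ef_j(t,j))=\ef_j(t,j)$, i.e.\ $\ef_j(t,j)\in\Ub a\setminus\scJ_a$; at the right endpoint $\t(t_{j+1},j)=(t_{j+1},j+1)$, hence $\ef_{j+1}(t_{j+1},j+1)=\r\big(\ef_j(t_{j+1},j)\big)$. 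Thus the continuous portion must stay in the non-jump set, and each transition is a single application of $\r$.

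Next I would run the inductive construction. On interval $0$, $\ef_0(\cdot,0)$ is the integral curve through $x_0$, uniquely determined by \cref{theorem:E&U}. The dichotomy in the hypothesis now pins down the first transition: either $\ph_{X,x_0}$ is complete and stays in $\Ub a\setminus\scJ_a$, so no nontrivial jump is possible and the execution is this complete integral curve; or $\ph_{X,x_0}$ is defined on $[0,t_{x_0})$ inside $\Ub a\setminus\scJ_a$ with $\ph_{X,x_0}(t_{x_0})\in\scJ_a$, which forces $t_1=t_0+t_{x_0}$. A genuinely earlier jump is impossible because the flow still lies in $\Ub a\setminus\scJ_a$ there (so $\r$ would fix the point), and a later one is impossible because the interior requirement $\ef_0(t,0)\in\Ub a\setminus\scJ_a$ would fail at $t=t_0+t_{x_0}$. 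At $t_1$ the state jumps to $x_1:=\r\big(\ef_0(t_1,0)\big)$, and here idempotency is essential: $\r(x_1)=\r^2\big(\ef_0(t_1,0)\big)=\r\big(\ef_0(t_1,0)\big)=x_1$, so $x_1\in\Ub a\setminus\scJ_a$ and the hypothesis applies again to $x_1$. Iterating yields a sequence of uniquely determined integral-curve segments with forced transition times, producing a single execution.

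Finally, for uniqueness I would compare two executions $e,e'$ with $\ef_0(t_0,0)=\ef'_0(t_0,0)=x_0$ and induct on $j$: on the overlap of their first intervals both equal the integral curve through $x_0$ by \cref{theorem:E&U}, the characterization of $t_1$ as the first hitting time of $\scJ_a$ forces their transition times to agree, and $x_1=\r\big(\ef_0(t_1,0)\big)$ is then common to both, closing the step. The main obstacle is precisely this bookkeeping of \emph{when} a jump occurs: one must show the relatedness conditions leave no freedom in the transition times, and this is exactly what the two-case hypothesis buys—it guarantees a well-defined first hitting time of $\scJ_a$ (ruling out grazing or indefinitely deferred jumps) while idempotency guarantees each jump lands back in the non-jump set so the recursion continues. (Trivial self-jumps at non-jump points must be excluded by taking the transition times to be the genuine jump times; otherwise executions are unique only up to such inessential subdivisions of the time axis.)
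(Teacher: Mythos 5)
Your proposal is correct and follows essentially the same route as the paper's proof: build the execution interval by interval, with \cref{theorem:E&U} giving the continuous segments, the two-case hypothesis forcing each transition time to be the first hitting time of $\scJ_a$, and idempotency of $\r$ guaranteeing the post-jump point lands back in $\Ub a\setminus\scJ_a$ so the recursion continues. You are in fact somewhat more explicit than the paper about the uniqueness half (the comparison of two executions and the caveat about inessential subdivisions of the time axis), which the paper leaves implicit in the construction.
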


\begin{proof}
	We construct the execution directly. If $\ph_{X,x_0}$ is complete, there is nothing to do: let $t_0=0$ and $t_1=\infty$.  An execution starting at $x_0$ is a solution of $(\Ub a,X)$ in the ordinary sense of continuous-time systems (\cref{def:solutionToDynamicalSystem}).   Otherwise, let   $\ph_{X,x_0}:[0,t_{x_0}]\rightarrow\Ub(a)$ be the  maximal integral curve starting at $x_0$.  We define $\Tc=\{t_i:\, i\in \N\}$ recursively:  set $t_0= 0$ and $t_1= t_{x_0}$. By assumption, $\ph_{X,x_0}(t_1)\in \scJ_a$. We set $x_1 \defeq \r(\ph_{X,x_0}(t_{x_1}))$ and $t_2 = t_1+ t_{x_1}$ where $\ph_{X,x_1}:[0,t_{x_1}]\rightarrow\Ub(a)$ is the maximal integral curve starting at $x_1$.  In general, we define $x_{j+1} \defeq  \r(\ph_{X,x_j}(t_{x_j}))$ where $t_{x_j}$ denotes the endpoint of the maximal integral curve $\ph_{X,x_j}:[0,t_{x_j}]\rightarrow\Ub(a)$ with initial condition $x_j$, and set $t_{j+1} \defeq t_j+t_{x_j}$. 
	
	Having thus defined set $\Tc=\{t_0,t_1,\ldots\}$, we obtain $
\Tc$-universal deterministic  hybrid system $(\omega,T,\tau)_\Tc$ (\cref{remark:TuniversalUniquelyDefinedByPartition}).  We define execution $(\epsilon, \ef):(\omega,T,\t)_\Tc\rightarrow (a,X,\r)$ as follows.   On nodes this map is defined $\epsilon(j)\defeq  \fs(x_j)$, where $\fs:\Ub a\rightarrow \Sb^a_0$  picks out the node $\fs(x)$ of which point $x$ is an element (\cref{lemma:sourceSet4Coproduct}) and $\ef_j(t,j):=\ph_{X,x_j}(t-t_j)$. By construction, $$  \r(\ef_k(t_{k+1},k))  =\r(\ph_{X,x_0}(t_{x_k})) =x_{j+1}= \ef_{k+1}(t_{k+1},k+1) = \ef_{k+1}(\t(t_{k+1},k)),$$ so  this is indeed a map of deterministic  hybrid systems (\cref{remark:checkingJumpMap}) and hence an execution.  \end{proof}
\begin{remark}\label{remark:uniqueExecutions}
	We will by default suppose that all deterministic hybrid systems $(a,X,\r)\in \sF{dHySys}$ have unique executions.  In other words, $\sF{dHySys}$ is the category whose objects are deterministic hybrid systems with unique executions. 
\end{remark}

\begin{example}\label{ex:bouncingBallExecution}
We return to the bouncing ball from \cref{ex:bouncingBallVanilla}, and consider trajectories in our formalism (c.f.\ \cite[\S1.2.2]{liberzonswitch}). 
It is not difficult to verify that the conditions of  \cref{prop:uniqueExecutions}  are here satisfied, so that this system has unique executions.  We build them explicitly.  At  $t_0=0$, suppose that $h(0) = 0$, $v(0) = 1/2$, and let $r\in (0,1)$ denote the coefficient of restitution (loss of energy at bounce).  Solving the differential equation $\left\{ \begin{array}{ll}\dh & = v\\
  \dv & = -1 \end{array}\right.$	
 gives $$v(t) = -(t-t_0) +v(t_0)=-t+1/2$$ and $$h(t) = -(t-t_0)^2/2+v(t_0)(t-t_0) = -t^2/2+1/2t.$$  To find the next bounce time, we set $h(t) = 0$ or $-t^2/2+1/2t = 0$ which occurs at $t=0$ and $t=1$.  Thus the first bounce time is  $t_1=1$, with downward velocity $v(t_1^-) = -1/2$, and after-bounce velocity $v(t_1^+) = -r\cdot (-1/2)= r/2$.  Letting $t_1$ play the role of $t_0$ above, for $t>t_1$, we have $v(t) = -(t-t_1) +r/2$ and $h(t) = -(t-t_1)^2/2 + r/2\cdot(t-t_1)$. Again solving for $h(t)=0$ gives $t=t_1$ and $t=1+r$.  We claim that the $k$-th bounce time is $$t_k=\diss_{j=0}^{k-1} r^j.$$ The zeroth jump time is $t_0=0$.  Arguing by  induction, we have $v(t_{k}^-) = -r^{k-1}/2$, $v(t_{k}^+) = r^{k}/2,$ $v(t) = -(t-t_k) + v(t_k^+)$, and $h(t) = -\frac{1}{2}(t-t_k)^2+ \frac{r^{k}}{2}(t-t_k)$. Setting $h(t) = 0$, we see that $h(t)=0$ at $t=t_k$ and at $$t=t_k+r^{k} = \diss_{j=0}^{k-1}r^j+r^{k} = \diss_{j=0}^{k} r^j,$$ the latter of which is the $(k+1)$th jump time.

Setting  $\Tc=\{t_0,t_1,\ldots\}\subset \R$ where $t_k\defeq\diss_{j=0}^kr^j$, this data entirely defines an execution  $$(\epsilon,\ef):(\omega,T,\t)_\Tc\rightarrow (c,Z,\mu),$$  where $(c,Z,\mu)$ is the deterministic hybrid system representing the bouncing ball in \cref{ex:bouncingBallVanilla}. The map $\epsilon:\Sb^n_0\rightarrow \Sb^c_0$ of source categories sends  each node $(j\in \Sb^n_0)\mapsto (0\in \Sb^c_0)$.   For $(t,k)\in [t_k,t_{k+1}]\times\{k\}$, we have $$\left\{\begin{array}{lll}v(t)&  = -(t-t_k)+v(t_k^+) & = -(t-t_k) +\frac{r^{k}}{2} \\ h(t) &  = -\frac{1}{2}(t-t_k)^2+\frac{r^{k}}{2}(t-t_k),& \end{array}\right.$$  and we set   $\ef_k(t,k) \defeq   (h(t),v(t))$.  Then $$\mu(\ef_k(t,k)) =\left\{\begin{array}{ll}  (0,-r \cdot v(t))& \mbox{if} \; h=0\;\mbox{and}\;v<0 \\ (h(t),v(t)) & \mbox{else.} \end{array}\right.$$ The condition that $h=0$ and $v<0$ is realized  at $t = t_{k+1}$.  On the other hand, $$\t(t,k) = \left\{\begin{array}{ll} (t,k) & \mbox{if}\; t\in [t_k,t_{k+1})\\ (t,k+1) &\mbox{if} \, t = t_{k+1},\end{array}\right.$$ from which  we see that $$\ef_k(\t(t,k)) =\ef_k(t,k)= (h(t),v(t))=\mu(h(t),v(t))= \mu(\ef_k(t,k)),$$ when $t\in [t_k,t_{k+1})\times\{k\}$  and  $$\ef_{k+1}(\t(t_{k+1},k)) = \ef_{k+1}(t_{k+1},k+1) = \left(0,\frac{1}{2}r^{k+1}\right) = \left(0,-r\cdot \frac{-r^k}{2}\right)  = \mu(0,v(t_{k+1}^-))= \mu(\ef_k(t_{k+1},k)),$$  when $t= t_{k+1}$. 
Therefore $\Ub(\epsilon,\ef)\circ \tau = \mu \circ \Ub(\epsilon,\ef)$, 
and we conclude that $(\epsilon,\ef):(\omega,T,\t)_\Tc\rightarrow (c,Z,\mu)$ is a deterministic  execution.
\end{example}

\subsection{Deterministic  Hybrid Open  Systems}

Now we discuss open deterministic  dynamics.  The extra piece of data we introduced in \cref{subsection:deterministicHybridSystems} was a jump map, somehow compatible with relations of the underlying phase space.  In the case of open systems, we also have a jump map this time from the underlying total space $\Ub a_{tot}$ to the state space $\Ub a_{st}$ (compare with the open control $X:\Ub a_{tot}\rightarrow T\Ub a_{st}$).  We simply need to reinterpret compatibility-with-relations, as now there are two spaces of constraints to consider. 

\begin{definition}\label{def:deterministicHyOS}
	We define a \textit{deterministic   hybrid open system} to be a triple $(a,X,\rho)$ where 
	 $(a_{tot}\xrightarrow{p_a}a_{st},X)$ is a hybrid open system  (\cref{def:hybridOS})
and   $\r:\Ub a_{tot}\rightarrow\Ub a_{st}$ is a set map (morphism in the category $\sF{Set}$), which we call the \textit{jump map}, satisfying the following inclusion:  
\begin{equation}\label{eq:ratEq} \left(  \Ub p_a \times id\right)\big(\mbox{graph}(\r)\big) \subseteq \Lambda_{a_{st}}\left(\discats_{\g\in \Sb^{a_{st}}_1} a_{st}(\g)\right),\end{equation} where $\Lambda_{a}:\discats_{\g\in \Sb^a_1}a(\g)\rightarrow \Ub a\times \Ub a$ is defined in \cref{remark:inclusionForDeterminism}, \eqref{eq:coproductRelations2Product}. 
\end{definition}

\begin{remark}\label{remark:checkJumpMapOpen}
 For future reference, we denote map \begin{equation} 
\Ub a_{tot}\times \Ub a_{st}\xrightarrow{  \Ub p_a\times id_{\Ub a_{st}}}\Ub a_{st}\times a_{st}.
 \end{equation}on the left-hand side of \eqref{eq:ratEq} by \begin{equation}
 \label{eq:defineTheta}	  \Theta_a\defeq    \Ub p_a\times   id_{\Ub a_{st}}.
 \end{equation}	
 Analogous to \cref{remark:checkingJumpMap}, we have an operational way of checking condition \eqref{eq:ratEq}: for each $x\in \Ub a_{tot}$, there is an edge $\g_x\in \Sb^{a_{st}}_1$ such that $\big(\Ub p_a (x), \r(x)\big)\in a_{st}(\g_x)$.
\end{remark}

\begin{definition}\label{def:mapdeterministicHybridOS}
	We define a \textit{map} $f:(a,X,\rho)\rightarrow (b,Y,\s)$ \textit{of deterministic  hybrid open systems} to be  a map $f:(a,X)\rightarrow (b,Y)$ of hybrid open systems (\cref{def:morphismHybridOS}) such that 
		$(\r,\s)$ are $f$-\textit{related}, namely $\Ub f_{st} \circ \r = \s \circ \Ub f_{tot}$. 
\end{definition}

\begin{remark}\label{remark:dHyOSaCategory}
	Deterministic  hybrid open systems and morphisms form a category, which we denote by $\sF{dHyOS}$. 
\end{remark}

\begin{remark}\label{remark:deterministicHyOSProducts}
	Deterministic  Hybrid Open Systems have products $(a,X,\r)$ and $(b,Y,\s)$: the product $(a\times b, X\times Y,\r\times \s)$ is also a deterministic  hybrid open system. Compare with  a similar fact   for deterministic  hybrid closed systems (\cref{remark:deterministicHySysProducts}).
\end{remark}

We introduce the notion of an augmented tangent bundle, which helps us package data for deterministic in hybrid systems. 
\begin{definition}\label{def:ctBundle}\label{def:cdBundle}
 Let $a:\Sb^a_1\rightarrow \sF{RelMan}$ be a hybrid phase space.  We define the \textit{continuous-discrete  bundle} $\Tb a$ (also: \textit{c.d.\ bundle}) by \begin{equation}\label{eq:ctBundle}
 	\Tb a\defeq T \Ub a\times \big\{x\in\Ub a\big\},
 \end{equation} the product of the tangent bundle of the underlying manifold $\Ub a$  (\cref{prop:ForgetfulFunctor}) and the underlying manifold as a set.  The appellation ``bundle'' is not accidental: $\Tb a$ comes equipped with a canonical projection $\varpi_a :\Tb a\rightarrow\Ub a$ defined by \begin{equation} \label{eq:definingCDBundleProj} \varpi_a\defeq \t_{\Ub a}\circ p_1,\end{equation} where $p_1:T\Ub a\times\big\{x\in  \Ub a\big\}\rightarrow T\Ub a$ is the canonical projection onto the first factor, and $\t_{\Ub a}:T\Ub a\rightarrow\Ub a$ is the canonical projection of the \textit{tangent} bundle of the underlying manifold $\Ub a$. 
\end{definition}

\begin{remark}\label{remark:deterministicHySysAsSection}
	A deterministic  hybrid system $\big(a,(X,\r)\big)$ is a pair where $a$ is a hybrid phase space and $(X,\r)$ is a section of the c.d.\ bundle satisfying some extra conditions, namely that $X$ is smooth  and $\r$ satisfies \eqref{eq:conditionFordeterministicJumps}. Compare this with continuous-time dynamical systems $(M,X)$, where $M$ is a manifold and $X\in \Xf(M)$ is a smooth section of the tangent bundle. We will encounter a general notion  of object and section of bundle in \cref{ch4}, \cref{subsection:abstractSections}. Similarly, a deterministic hybrid open system $\big(a,(X,\r)\big)$ is a pair where $a = a_{tot}\xrightarrow{p_a} a_{st}$ is a hybrid surjective submersion, and $(X,\r):\Ub a_{tot}\rightarrow \Tb a_{st}$ is a map to the c.d. bundle of the state space such that \begin{enumerate}
		\item $X:\Ub a_{tot} \rightarrow T\Ub a_{st} $ is smooth and $\Ub p_a = \t a_{st}\circ X$, and 
		\item $\Theta_{a}\big(\text{graph}(\r)\big) \subseteq \Lambda_{a_{st}}\left(\discats_{\g\in \Sb^{a_{st}}_1}a_{st}(g)\right),$ where $\Theta_a$ is as defined in \eqref{eq:defineTheta}. 
	\end{enumerate}
	 	\end{remark}

\begin{prop}\label{prop:cdBundleFunctorial}
	The assignment $\Tb a = T\Ub a\times \big\{x\in \Ub a\big\}$ in \eqref{eq:ctBundle} extends to a (covariant) functor $\Tb:\sF{HyPh}\rightarrow\sF{Set}$. 
\end{prop}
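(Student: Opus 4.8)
The plan is to define $\Tb$ on morphisms by the obvious product map and then read off functoriality from the functoriality of its three ingredients $\Ub$, $T$, and $\times$. Concretely, for a morphism $f\colon a\rightarrow b$ of hybrid phase spaces, \cref{prop:ForgetfulFunctor} supplies a smooth map $\Ub f\colon \Ub a\rightarrow\Ub b$, and \cref{prop:differentialIsFunctorial} supplies $T\Ub f\colon T\Ub a\rightarrow T\Ub b$. Since $\times$ is a bifunctor (\cref{prop:productBifunctor}), these assemble into a single map $T\Ub f\times \Ub f\colon T\Ub a\times \Ub a\rightarrow T\Ub b\times \Ub b$, i.e.\ a map $\Tb a\rightarrow \Tb b$ of the underlying sets. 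I would therefore set $\Tb f\defeq T\Ub f\times \Ub f$, regarded as a morphism in $\sF{Set}$ (we forget the smooth structure, which is exactly the point: later the jump component of a section need not be smooth).

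Next I would verify the two functoriality conditions by direct computation, invoking the mantra ``by functoriality of $T$ and $\Ub$'' (\cref{remark:functorialityT&U}). For identity, using functoriality-on-identity of $\Ub$ and of $T$ together with the fact (established in the proof of \cref{prop:productBifunctor}) that $\times$ preserves identities, one has
$$\Tb(id_a)=T\Ub(id_a)\times \Ub(id_a)=T(id_{\Ub a})\times id_{\Ub a}=id_{T\Ub a}\times id_{\Ub a}=id_{T\Ub a\times \Ub a}=id_{\Tb a}.$$
For composition, given $a\xrightarrow{f}b\xrightarrow{g}c$, functoriality-on-composition of $\Ub$ and of $T$ together with the identity $(h'\times k')\circ(h\times k)=(h'\circ h)\times(k'\circ k)$ from the proof of \cref{prop:productBifunctor} gives
$$\Tb(g\circ f)=T\Ub(g\circ f)\times \Ub(g\circ f)=(T\Ub g\circ T\Ub f)\times(\Ub g\circ \Ub f)=(T\Ub g\times \Ub g)\circ(T\Ub f\times \Ub f)=\Tb g\circ \Tb f.$$

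Conceptually the cleanest way to package this is to observe that $\Tb$ is a composite of functors,
$$\sF{HyPh}\xrightarrow{\ \Ub\ }\sF{Man}\xrightarrow{\ (\cdot,\cdot)\ }\sF{Man}\times \sF{Man}\xrightarrow{\ T\times id_{\sF{Man}}\ }\sF{Man}\times \sF{Man}\xrightarrow{\ \times\ }\sF{Man}\xrightarrow{\ \scU\ }\sF{Set},$$
where the second arrow is the diagonal $M\mapsto (M,M)$ and $\scU$ is the forgetful functor of \cref{remark:concreteCategory}; each arrow is a functor, so their composite is one. I do not expect any genuine obstacle here: the statement is a routine assembly of already-established functorialities, and the only points requiring care are bookkeeping ones, namely that the target is $\sF{Set}$ (so $\Tb f$ is the underlying set map of the smooth map $T\Ub f\times \Ub f$) and that every use of the product symbol refers to the cartesian product on which \cref{prop:productBifunctor} operates. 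Either the explicit two-line verification or the composite-of-functors observation suffices to conclude the proof.
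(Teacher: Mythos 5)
Your proposal is correct and follows essentially the same route as the paper: the paper likewise defines $\Tb f\defeq T\Ub f\times \{\Ub f\}$ (the forgetful functor applied to the second factor) and concludes functoriality from that of $T$, $\Ub$, $\{\cdot\}$, and $\times$. Your explicit identity/composition computations and the composite-of-functors packaging are just more detailed versions of the paper's one-line sketch.
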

\begin{proof}[Proof Sketch.]
First, we have defined $\Tb a$ on objects in \eqref{eq:ctBundle}. Observe that  forgetting the smooth and topological structure of a manifold $M\mapsto\big\{x\in M\big\}$ is functorial.  We let $\{ \cdot\}$ denote this functor.  

	Let $a\xrightarrow{f}b$ be a morphism of hybrid phase spaces. We define \begin{equation}\label{eq:cdBundleFunctorial} \Tb f\defeq T \Ub f \times \{\Ub f\}.\end{equation} This map is functorial since $T$, $\Ub$, $\{\cdot\}$, and $\times$  are functorial. 
\end{proof}
We now define deterministic  control.
\begin{definition}\label{def:deterministicControlLax}\label{def:deterministicControlOnObjects}
	Let $a\defeq \big(a_{tot}\xrightarrow{p_a}a_{st}\big)$ be a hybrid surjective submersion. We define \footnotesize $$\begin{array}{ll}\dCrl(a) & \defeq \left\{(X,\r):\Ub a_{tot}\rightarrow \Tb  a_{st} \left| \t_{a_{st}}\circ X = \Ub p_a \,\mbox{and}\, \Theta_a(\mbox{graph}(\r)) \subseteq \Lambda_{a_{st}}\left( \discats_{\g\in\Sb^{a_{st}}_1} a_{st}(\g) \right) \right\},\right.
\end{array}
$$\normalsize  as the collection of  pairs $(X,\r)$ for which $(a,X,\r)$ is a deterministic  hybrid open system. Here $\t_{a_{st}}:T\Ub a_{st}\rightarrow\Ub a_{st}$ is the canonical projection of the tangent bundle, $\Theta_a:\Ub a_{tot} \times \Ub a_{st} \rightarrow\Ub a_{st}\times \Ub a_{st}$ is the map in \eqref{eq:defineTheta}, and  $\Lambda_{a_{st}}:\discats_{\g\in \Sb^{a_{st}}_1}a_{st}(\g)\rightarrow \Ub a_{st}\times \Ub a_{st}$ is the canonical  map defined in \cref{remark:inclusionForDeterminism}. 
	
	Having thus defined $\dCrl$ on objects, we now define $\dCrl$ on morphisms as a relation. 	For morphism $a\xrightarrow{f}b$ we define \small \begin{equation}\label{eq:defDetControlOnMorphisms} \dCrl(f) = \left\{ \big((X,\r),(Y,\s)\big)\in \dCrl(a)\times \dCrl(b)\left|\; (X,Y)\;\mbox{and}\;(\r,\s)\;\mbox{are}\; f\mbox{-related}\right.\right\}. \end{equation} \normalsize Alternatively, \small  $$\dCrl(f)= \left\{ \big((X,\r),(Y,\s)\big)\in \dCrl(a)\times \dCrl(b): (a,X,\r)\xrightarrow{f}(b,Y,\s)\,\mbox{is map in $\sF{dHyOS}$ (\cref{def:mapdeterministicHybridOS})}\right\}.$$\normalsize 
\end{definition}

\begin{prop}\label{prop:deterministicControlIsLax}
Deterministic  control (\cref{def:deterministicControlLax}) extends to a \textit{lax functor} 
 $\dCrl:\sF{HySSub}\rightarrow \sF{Rel}$  (\cref{def:Rel}).  Specifically, let $a\xrightarrow{f}b\xrightarrow{g} c$ be morphisms of hybrid surjective submersions. Then \begin{equation}\label{eq:laxFunctorialityForDetControl} \dCrl(g)\circ \dCrl(f)\subseteq \dCrl\big(g\circ f\big).\end{equation} 
  \end{prop}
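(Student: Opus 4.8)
The plan is to verify the two defining conditions of a lax functor (\cref{def:functorDoubleCat}, condition 3' in the form of an inclusion), namely functoriality on identity and the lax inclusion \eqref{eq:laxFunctorialityForDetControl} on composition. Functoriality on identity is routine: for a hybrid surjective submersion $a$, the relation $\dCrl(id_a)$ consists of pairs $\big((X,\r),(Y,\s)\big)\in \dCrl(a)\times \dCrl(a)$ which are $id_a$-related, and since $\Ub(id_a) = id_{\Ub a}$ and $T\Ub(id_a) = id_{T\Ub a}$ by functoriality of $T$ and $\Ub$ (\cref{remark:functorialityT&U}), the relatedness conditions $T\Ub(id_a)_{st}\circ X = Y\circ \Ub(id_a)_{tot}$ and $\Ub(id_a)_{st}\circ \r = \s\circ \Ub(id_a)_{tot}$ force $(X,\r) = (Y,\s)$; hence $\dCrl(id_a) = \Delta(\dCrl(a)) = id_{\dCrl(a)}$, the identity relation in $\sF{Rel}$ (\cref{ex:(single)CategoryOfRelations}).

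The main content is establishing the inclusion \eqref{eq:laxFunctorialityForDetControl}. First I would unwind the left-hand side using the definition of composition of relations \eqref{eq:definingCompositionOfRelations}: an element $\big((X,\r),(Z,\t)\big)$ lies in $\dCrl(g)\circ \dCrl(f)$ precisely when there exists $(Y,\s)\in \dCrl(b)$ with $\big((X,\r),(Y,\s)\big)\in \dCrl(f)$ and $\big((Y,\s),(Z,\t)\big)\in \dCrl(g)$. By \cref{def:deterministicControlLax}, this means $(X,Y)$ and $(\r,\s)$ are $f$-related, while $(Y,Z)$ and $(\s,\t)$ are $g$-related. Spelling out relatedness of the vector-field components, I have
\begin{equation}
T\Ub f_{st}\circ X = Y\circ \Ub f_{tot} \quad\text{and}\quad T\Ub g_{st}\circ Y = Z\circ \Ub g_{tot},
\end{equation}
and analogously for the jump maps $\Ub f_{st}\circ \r = \s\circ \Ub f_{tot}$ and $\Ub g_{st}\circ \s = \t\circ \Ub g_{tot}$. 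The goal is to show $(X,Z)$ and $(\r,\t)$ are $(g\circ f)$-related, i.e.\ that $\big((X,\r),(Z,\t)\big)\in \dCrl(g\circ f)$.

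The key step is a diagram chase invoking functoriality of $T$ and $\Ub$ (\cref{remark:functorialityT&U}), exactly as in the proof of \cref{lemma:detHySysCategory}. For the vector fields, I compute
\begin{equation}
T\Ub(g\circ f)_{st}\circ X = T\Ub g_{st}\circ T\Ub f_{st}\circ X = T\Ub g_{st}\circ Y\circ \Ub f_{tot} = Z\circ \Ub g_{tot}\circ \Ub f_{tot} = Z\circ \Ub(g\circ f)_{tot},
\end{equation}
where the outer equalities use that $\Ub$ is a functor and $T\Ub$ preserves composition, and the inner two substitute the $f$- and $g$-relatedness hypotheses. A formally identical computation in $\sF{Set}$ gives $\Ub(g\circ f)_{st}\circ \r = \t\circ \Ub(g\circ f)_{tot}$, establishing $(g\circ f)$-relatedness of the jump maps. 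Since $(X,\r)\in \dCrl(a)$ and $(Z,\t)\in \dCrl(c)$ already, the pair $\big((X,\r),(Z,\t)\big)$ satisfies every condition in \eqref{eq:defDetControlOnMorphisms} for $g\circ f$, so it belongs to $\dCrl(g\circ f)$, proving \eqref{eq:laxFunctorialityForDetControl}.

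I do not expect a genuine obstacle here, since all the structure is inherited from functoriality; the only subtlety worth flagging is \emph{why} the inclusion is generally strict rather than an equality. The composite relation $\dCrl(g)\circ \dCrl(f)$ requires the existence of an intermediate witness $(Y,\s)\in \dCrl(b)$, whereas membership in $\dCrl(g\circ f)$ imposes no such existence requirement; a pair of sections related under the composite $g\circ f$ need not factor through any admissible deterministic control on the intermediate object $b$. This is the same phenomenon observed for lax functors into $\sF{Rel}$ in \cref{ex:functorToRel}, and it is the reason $\dCrl$ is lax rather than strict, so I would remark on it only briefly rather than attempt to prove equality.
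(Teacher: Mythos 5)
Your proposal is correct and follows essentially the same route as the paper: the paper's proof is exactly the two-square diagram chase you write out equationally, using $f$- and $g$-relatedness of the components plus functoriality of $T$ and $\Ub$ to conclude $(g\circ f)$-relatedness, and your closing remark on why the inclusion is generally strict matches the paper's own follow-up remark. The only addition is your explicit check of the identity condition, which the paper leaves implicit but which you handle correctly.
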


\begin{proof}
	We have defined $\dCrl$ on objects and morphisms (\cref{def:deterministicControlLax} and \eqref{eq:defDetControlOnMorphisms}). 

 Lax functoriality \eqref{eq:laxFunctorialityForDetControl} follows immediately from the following two commutative diagrams:\small\begin{equation}\label{diagram:Related&CompatibleAreTheSame} \begin{tikzcd}[column sep = large, row sep = large]
	\Ub a_{tot} \arrow[r,swap,"\Ub f_{tot}"]\arrow[rr,bend left,"\Ub\big(g\circ f\big)_{tot}"] \arrow[d,"X"] & \Ub b_{tot}\arrow[r,swap,"\Ub g_{tot}"] \arrow[d,"Y"] & \Ub c_{tot}\arrow[d,"Z"] \\ T\Ub a_{st}\arrow[r,"T\Ub f_{st}"]\arrow[rr,bend right,swap, "T\Ub\big(g\circ f\big)_{st}"] & T\Ub b_{st}\arrow[r,"T\Ub g_{st}"] & T\Ub c_{st}
\end{tikzcd} \;\; \mbox{and} \;\;\begin{tikzcd}[column sep = large, row sep = large]
\Ub a_{tot} \arrow[d,"\r"]\arrow[rr,bend left,"\Ub\big(g\circ f\big)_{tot}"] \arrow[r,swap,"\Ub f_{tot}"] & \Ub b_{tot}\arrow[d,"\s"] \arrow[r,swap,"\Ub g_{tot}"] & \Ub c_{tot}\arrow[d,"\t"]\\
 \Ub a_{st}\arrow[rr,bend right,swap,"\Ub\big(g\circ f\big)_{st}"] \arrow[r,"\Ub f_{st}"] & \Ub b_{st} \arrow[r,"\Ub g_{st}"] & \Ub c_{st}.
\end{tikzcd}\end{equation}\normalsize  Each square commutes by assumption, and the outer diagrams by functoriality. Thus,  if $(X,Y)$ are $f$-related and $(Y,Z)$ are $g$-related then $(X,Z)$ are $g\circ f$-related. 
   A similar implication shows that $(\r,\t)$ are $g \circ f$-related.  This proves that $\dCrl(g)\circ \dCrl(f)\subseteq\dCrl\big(g\circ f\big)$.  Thus, for $\big((Y,\s),(Z,\t)\big)\in \dCrl(g)$ and $\big((X,\r),(Y,\s)\big)\in \dCrl(f)$ we have that $\big((X,\r),(Z,\t)\big)\in \dCrl\big(g\circ f\big)$. 
    \end{proof}
    
    \begin{remark}\label{remark:deterministicControlIsLAX}
    Laxness comes from unidirectionality of this implication: that $(X,Z)$ are $g\circ f$-related does not imply that both $(X,Y)$ are $f$-related and $(Y,Z)$ are $g$-related.  Similarly $(\r,\t)$ may be $g\circ f$-related without both $(\r,\s)$ being $f$-related and $(\s,\t)$ being $g$-related.  In general, the commuting of outer diagram $$\begin{tikzcd}
	\fx_1\arrow[r]\arrow[d] & \fx_2 \arrow[r]\arrow[d] & \fx_3\arrow[d]\\
	\fy_1\arrow[r] & \fy_2\arrow[r] & \fy_3
\end{tikzcd}$$ does not imply that both of the inner diagrams commute. 
 \end{remark}
\begin{example}\label{ex:strictInclusionOfCompositionOfRelations}
Consider the string of inclusions $\begin{tikzcd} \R\arrow[r,hookrightarrow,"\i_1"] & \R^2\arrow[r,hookrightarrow,"\i_2"] & \R^3\end{tikzcd}$, where $\i_1$ maps $x\mapsto (x,0)$ and $\i_2$ maps $(x,y)\mapsto (x,y,0)$.  On the one hand, $(g\circ f)$-relatedness of $(\r,\t)$ means that  $\t(x,0,0) = (\r(x),0,0)$ for $x\in \R$. On the other hand, $g$-relatedness of $(\s,\t)$  requires that $\t(x,y,0) = (\s(x,y),0)=(\s_1(x,y),\s_2(x,y),0)$. Then $(g\circ f)$-relatedness of $(\r,\t)$  imposes no condition on the second factor of $\s(x,y)$, and hence fails to ensure the equality $\t(x,y,0) = (\s(x,y),0)$ if $y\neq 0$ (a similar example for related vector fields is given in \cite[example 2.25]{lermanopennetworks}). \end{example}

We consolidate terminology: 
\begin{definition}
	We say that $(X,\r)\in \dCrl(a)$ and $(Y,\s)\in \dCrl(b)$ are $f$-\textit{related} for morphism $a\xrightarrow{f} b$ of hybrid surjective submersions if both $(X,Y)$ are $f$-related (\cref{def:morphismHybridOS}) and $(\r,\s)$ are $f$-related (\cref{def:mapdeterministicHybridOS}). 
\end{definition}

\subsection{Hybrid Interconnection and Deterministic Control}
Recall   that a morphism of hybrid surjective submersions is a pair of morphisms of hybrid phase spaces making a certain diagram commute (\cref{def:hybridSSubMorphism}).

\begin{definition}\label{def:HybridInterconnection}
Let $a\defeq \big(a_{tot}\xrightarrow{p_a}a_{st}\big)$ ,  $b\defeq \big(b_{tot}\xrightarrow{p_b}b_{st}\big)$ be hybrid surjective submersions (\cref{def:hybridSurjectiveSubmersion}). We define a \textit{hybrid interconnection} $i:a\rightarrow b$  to be a morphism  of hybrid surjective submersions (\cref{def:hybridSSubMorphism}) for which the map  $i_{st}:a_{st}\rightarrow b_{st}$ on state is an isomorphism of hybrid phase spaces. 
\end{definition}

It is easy to verify that hybrid surjective submersions with interconnection morphisms form a category.

\begin{definition}\label{def:HySSub-intCat}\label{remark:hybridInterconnectionsAreCategory}
	We define the category $\sF{HySSub_{int}}$ whose objects are hybrid surjective submersions (\cref{def:hybridSurjectiveSubmersion}) and whose morphisms are hybrid interconnections (\cref{def:HybridInterconnection}).  $\sF{HySSub_{int}}$ is a subcategory of $\sF{HySSub}$ with  the same objects. 
\end{definition}

\begin{remark}\label{remark:HyPhIsomorphism}
It will be useful for us to spell out the definition of an isomorphism of hybrid phase spaces.  An isomorphism in any category is an invertible morphism.  In the category of hybrid phase spaces, specifically,  an isomorphism $i = (\i,\ifrak):a\rightarrow b$ consists of an isomorphism $\i:\Sb^a\xrightarrow{\sim} \Sb^b$ of categories and  a diffeomorphism $\ifrak_\fs:a(\fs)\xrightarrow{\sim}b(\i(\fs))$  for each node $\fs\in \Sb^a_0$.  Consequently, one may easily check that the inverse is given by  $(\i,\ifrak)^{-1}= (\i^{-1},\ifrak^{-1})$.  \end{remark}

\begin{remark}\label{remark:introduceInterconnection}
The lax functor $\dCrl$ in  \cref{def:deterministicControlLax} does not in general map deterministic  controls to deterministic  control.  As we saw, the best we can hope for from an arbitrary map of hybrid surjective submersions $a\xrightarrow{f}b$  is a \textit{relation} of control. However, $\dCrl$ applied to interconnection does map control to control. 
\end{remark}

\subsection{Deterministic  Control as a Map}
\begin{prop}\label{prop:interconnectionMapOndeterministicHybridControl}\label{prop:dCrlInterconnectIsMap} Let $a\defeq\big( a_{tot}\xrightarrow{p_a} a_{st}\big)$ and $b\defeq\big( b_{tot}\xrightarrow{p_b} b_{st}\big)$ be two hybrid surjective submersions,   $i:a\rightarrow b$  a hybrid interconnection (\cref{def:HybridInterconnection}),  and $(Y,\s) \in \dCrl(b)$.  Let  \begin{equation}\label{eq:interconnectControl} X\defeq T\Ub(i_{st})^{-1}\circ Y\circ\Ub(i_{tot}) \;\;\; \mbox{and}\;\;\; \r\defeq\Ub(i_{st})^{-1}\circ \s\circ\Ub(i_{tot}). \end{equation} Then $(X,\r)\in \dCrl(a)$.  \end{prop}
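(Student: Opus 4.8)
The plan is to verify directly the two defining conditions for membership in $\dCrl(a)$ from \cref{def:deterministicControlOnObjects}: the tangent-compatibility condition $\t_{a_{st}}\circ X = \Ub p_a$ and the jump-constraint $\Theta_a(\mbox{graph}(\r)) \subseteq \Lambda_{a_{st}}\big(\discats_{\g\in \Sb^{a_{st}}_1} a_{st}(\g)\big)$. Throughout I will use that $i=(i_{tot},i_{st})$ is a morphism of hybrid surjective submersions, so that applying $\Ub$ to the square $p_b\circ i_{tot} = i_{st}\circ p_a$ gives $\Ub i_{st}\circ \Ub p_a = \Ub p_b\circ \Ub i_{tot}$ (\cref{remark:unpackingHySSubMorphism}, \eqref{eq:hySSubMorphInManifolds}), and that since $i_{st}$ is an isomorphism of hybrid phase spaces, $\Ub i_{st}$ is a diffeomorphism (\cref{fact:isoHyPhForgetsToDiffeo}) with $\Ub(i_{st})^{-1} = \Ub(i_{st}^{-1})$ and $T\Ub(i_{st})^{-1} = (T\Ub i_{st})^{-1}$ by functoriality of $\Ub$ and $T$.

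For the tangent-compatibility condition I compute $\t_{a_{st}}\circ X = \t_{a_{st}}\circ T\Ub(i_{st})^{-1}\circ Y\circ \Ub i_{tot}$ and apply naturality of the tangent projection $\t$ (\cref{fact:naturalityOfProjTangentBundle}) to the diffeomorphism $\Ub(i_{st})^{-1}$, obtaining $\t_{a_{st}}\circ T\Ub(i_{st})^{-1} = \Ub(i_{st})^{-1}\circ \t_{b_{st}}$. Since $(Y,\s)\in \dCrl(b)$ satisfies $\t_{b_{st}}\circ Y = \Ub p_b$, the expression reduces to $\Ub(i_{st})^{-1}\circ \Ub p_b\circ \Ub i_{tot}$; substituting the $\Ub$-image of the commuting square yields $\Ub(i_{st})^{-1}\circ \Ub i_{st}\circ \Ub p_a = \Ub p_a$, as required. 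As a byproduct this same square gives the pointwise identity $\Ub p_a(x) = \Ub(i_{st})^{-1}\big(\Ub p_b(\Ub i_{tot}(x))\big)$, which I will reuse below.

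For the jump-constraint I use the operational reformulation of \cref{remark:checkJumpMapOpen}: it suffices to show that for each $x\in \Ub a_{tot}$ there is an edge $\g_x\in \Sb^{a_{st}}_1$ with $(\Ub p_a(x),\r(x))\in a_{st}(\g_x)$. Setting $x'\defeq \Ub i_{tot}(x)$, membership $(Y,\s)\in \dCrl(b)$ provides an edge $\eta\in \Sb^{b_{st}}_1$ with $(\Ub p_b(x'),\s(x'))\in b_{st}(\eta)$. By the byproduct identity $\Ub p_a(x) = \Ub(i_{st})^{-1}(\Ub p_b(x'))$ and by definition $\r(x) = \Ub(i_{st})^{-1}(\s(x'))$, so the pair I must place in a relation on $a_{st}$ is exactly the image of $(\Ub p_b(x'),\s(x'))$ under $\Ub(i_{st})^{-1}\times \Ub(i_{st})^{-1}$. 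I then take $\g_x\defeq \i^{-1}(\eta)$, where $\i:\Sb^{a_{st}}\xrightarrow{\sim}\Sb^{b_{st}}$ is the source-category isomorphism underlying $i_{st}$ (\cref{remark:HyPhIsomorphism}), and invoke that the inverse morphism $i_{st}^{-1} = (\i^{-1},\ifrak^{-1})$ of hybrid phase spaces satisfies the node-level relation-inclusion $(\ifrak^{-1}_{\sF{dom}(\eta)}\times \ifrak^{-1}_{\sF{cod}(\eta)})(b_{st}(\eta)) \subseteq a_{st}(\g_x)$. Matching the node-indexed diffeomorphisms $\ifrak^{-1}$ with the single coproduct map $\Ub(i_{st})^{-1}$ through the defining relations of $\Ub$ on the canonical inclusions (\cref{remark:canonicalInclusionForU:HyPh2Man}, \cref{prop:ForgetfulFunctor}) then yields $(\Ub p_a(x),\r(x))\in a_{st}(\g_x)$, establishing the inclusion.

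The main obstacle I anticipate is precisely this last bookkeeping step: reconciling the node-indexed diffeomorphisms $\ifrak^{-1}_{\fs}$ (in terms of which relation-preservation of a morphism of hybrid phase spaces is phrased) with the global underlying map $\Ub(i_{st})^{-1}$, and tracking how the auxiliary maps $\lambda$, $\Omega$, and $\Lambda$ of \cref{remark:inclusionForDeterminism} interact with $i_{st}$. This is routine but notation-heavy; the cleanest route is to observe that because $i_{st}$ is an isomorphism, its relation-inclusions hold with equality in both directions, so $\Ub(i_{st})^{-1}\times \Ub(i_{st})^{-1}$ carries each $b_{st}(\eta)$ \emph{onto} the corresponding $a_{st}(\i^{-1}(\eta))$, after which the desired membership is immediate.
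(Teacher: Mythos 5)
Your proposal is correct and follows essentially the same route as the paper's proof: the tangent-compatibility condition is verified by the same chain of equalities (naturality of $\t$ applied to the diffeomorphism $\Ub(i_{st})^{-1}$, the compatibility $\t_{b_{st}}\circ Y = \Ub p_b$, and the $\Ub$-image of the commuting square $i_{st}\circ p_a = p_b\circ i_{tot}$), and the jump constraint is checked pointwise exactly as in the paper, by pulling back the edge $\eta$ witnessing $(\Ub p_b(\Ub i_{tot}(x)),\s(\Ub i_{tot}(x)))\in b_{st}(\eta)$ along $\i^{-1}$ and using that the inverse morphism of hybrid phase spaces carries $b_{st}(\eta)$ into (indeed onto, by \cref{lemma:isoHyPhImpliesEqualityOnRelations}) $a_{st}(\i^{-1}(\eta))$. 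The bookkeeping step you flag is handled in the paper precisely as you suggest, by identifying $\Ub(i_{st})^{-1}$ with the node-indexed $\ifrak^{-1}_{\fs}$ on the relevant coproduct component.
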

Consequently,  for interconnection morphism  $i = (\i,\ifrak):a\rightarrow b$ there is a well-defined map  $\dCrl(i): \dCrl(b)  \rightarrow \dCrl(a) $ given by $(Y,\s)\mapsto \dCrl(i)(Y,\s) \defeq (X,\r)$, where $(X,\r)$ is defined in \eqref{eq:interconnectControl}.  Proposition \ref{prop:interconnectionMapOndeterministicHybridControl} guarantees that $\dCrl$ lands in the target.   We collect this fact in a definition:
\begin{definition}\label{def:interconnectionMapOndeterministicHybridControl}
Let $i=(\i,\ifrak):a\rightarrow b$ be an interconnection of hybrid surjective submersions (\cref{def:HybridInterconnection}). We define the map $\dCrl(i): \dCrl(b)\rightarrow \dCrl(a)$  by  \begin{equation}\label{eq:definingdcrlOnInterconnection} (Y,\s)\mapsto  \dCrl(i)(Y,\s)\defeq \big( T\Ub(i_{st})^{-1}\circ Y\circ\Ub(i_{tot}),  \Ub(i_{st})^{-1}\circ \s\circ\Ub(i_{tot})\big).\end{equation} \end{definition}

We need a lemma for the proof of \cref{prop:interconnectionMapOndeterministicHybridControl}.
\begin{lemma}\label{lemma:isoHyPhImpliesEqualityOnRelations}
	Let $(\i,\ifrak):a\rightarrow b$ be an isomorphism of hybrid phase spaces. Then for every arrow $\fs'\xrightarrow{\g}\fs\in \Sb^a_1$, there is equality of relations $$\big(\ifrak_{\fs'}\times \ifrak_\fs \big) \big( a(\g)\big)  = b(\i(\g)).$$ Moreover $$\big(\Ub i\times \Ub i\big) \left(\Lambda_a\left(\discats_{\g\in \Sb_1^a} a(\g) \right)\right)= \Lambda_b\left(\discats_{\eta\in \Sb_1^b} b(\eta)\right),$$
where $\Lambda_a:\discats_{\g\in \Sb^a_1}a(\g)\rightarrow \Ub a \times \Ub a$ (\cref{remark:inclusionForDeterminism}). \end{lemma}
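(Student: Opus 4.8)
The plan is to prove the two equalities in turn, bootstrapping the second from the first. For the first equality, I would note that one inclusion is immediate: since $(\i,\ifrak):a\to b$ is in particular a morphism of hybrid phase spaces, \cref{def:hybridPhaseSpaceMorphism} gives $(\ifrak_{\fs'}\times\ifrak_\fs)(a(\g))\subseteq b(\i(\g))$ for the edge $\fs'\xrightarrow{\g}\fs$. For the reverse inclusion I would invoke \cref{remark:HyPhIsomorphism}, which identifies the inverse morphism as $(\i^{-1},\ifrak^{-1})$ with component $(\ifrak^{-1})_{\i(\fs)}=(\ifrak_\fs)^{-1}$ at each node. Applying the morphism condition of \cref{def:hybridPhaseSpaceMorphism} to $(\i^{-1},\ifrak^{-1})$ at the edge $\i(\g)$ yields $((\ifrak_{\fs'})^{-1}\times(\ifrak_\fs)^{-1})(b(\i(\g)))\subseteq a(\i^{-1}\i(\g))=a(\g)$; since each $\ifrak_\fs$ is a diffeomorphism (hence a bijection on underlying sets), pushing this inclusion forward by $\ifrak_{\fs'}\times\ifrak_\fs$ gives $b(\i(\g))\subseteq(\ifrak_{\fs'}\times\ifrak_\fs)(a(\g))$, completing the equality.

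For the second equality I would argue at the level of elements, which is legitimate since $\Lambda_a$, $\Lambda_b$, and $\Ub i$ are all set maps. Recall from \cref{remark:inclusionForDeterminism} that $\Lambda_a\big(\discats_{\g\in\Sb^a_1}a(\g)\big)$ is precisely the set of pairs $\big(in_{a(\sF{dom}(\g))}(\tilde x),\,in_{a(\sF{cod}(\g))}(\tilde x')\big)\in\Ub a\times\Ub a$ arising from an edge $\g$ and a related pair $(\tilde x,\tilde x')\in a(\g)$. The key compatibility I would use is the defining property of $\Ub$ on morphisms extracted from the proof of \cref{prop:ForgetfulFunctor}, namely $\Ub i\circ in_{a(\fs)}=in_{b(\i(\fs))}\circ\ifrak_\fs$. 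Feeding a representative $(\tilde x,\tilde x')\in a(\g)$ through this identity, and then invoking the first equality to land $(\ifrak_{\sF{dom}(\g)}(\tilde x),\ifrak_{\sF{cod}(\g)}(\tilde x'))$ inside $b(\i(\g))$, exhibits $(\Ub i\times\Ub i)(x,x')$ as the $\Lambda_b$-image of an element of $\discats_{\eta}b(\eta)$; this proves the inclusion $(\Ub i\times\Ub i)\big(\Lambda_a(\cdots)\big)\subseteq\Lambda_b(\cdots)$. Running the identical argument for the inverse isomorphism $i^{-1}:b\to a$, and using $\Ub(i^{-1})=(\Ub i)^{-1}$ from \cref{fact:isoHyPhForgetsToDiffeo}, gives the opposite inclusion after applying $\Ub i\times\Ub i$, yielding the stated equality.

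Equivalently, and perhaps more cleanly, I could package the element-wise computation as the assertion that the square with top edge the bijection $\discats_\g a(\g)\to\discats_\eta b(\eta)$ induced by $\i$ on edges and by the $\ifrak$'s on relations (a bijection precisely by the first equality), left and right edges $\Lambda_a,\Lambda_b$, and bottom edge $\Ub i\times\Ub i$, commutes; taking images along a commuting square with bijective top then gives the result at once. I expect the only real friction to be bookkeeping: correctly matching the coproduct inclusions $in_{(\cdot)}$ through the canonical maps $\lambda$ and $\Omega$ hidden inside $\Lambda=\Omega\circ\lambda$, and keeping the domain/codomain indices of $\ifrak$ and $\ifrak^{-1}$ straight. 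None of these steps is conceptually difficult once the compatibility $\Ub i\circ in_{a(\fs)}=in_{b(\i(\fs))}\circ\ifrak_\fs$ and the first equality are in hand.
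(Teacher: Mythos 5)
Your proposal is correct and follows essentially the same route as the paper: one inclusion from the morphism condition, the reverse by applying the same condition to the inverse $(\i^{-1},\ifrak^{-1})$ and pushing forward, and then the second equality via the compatibility $\Ub i\circ in_{a(\fs)}=in_{b(\i(\fs))}\circ\ifrak_\fs$ together with the inverse isomorphism. Your ``cleaner packaging'' as a commuting square with the induced bijection $\discats_\g a(\g)\xrightarrow{\sim}\discats_\eta b(\eta)$ on top is in fact exactly how the paper writes the argument.
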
 
	\begin{proof}
		Since $(\i,\ifrak):a\rightarrow b$ is a morphism of hybrid phase spaces, for  edge $\fs'\xrightarrow{\g}\fs\in \Sb^{a}_1$,  we have inclusion  $$\big(\ifrak_{\fs'}\times \ifrak_\fs \big) \big(a(\g)\big) \subseteq b(\i(\g)).$$ We must show the opposite inclusion $b(\i(\g))\subseteq \big(\ifrak_{\fs'}\times \ifrak_\fs\big)\big(a(\g)\big).$
		
	The inverse $(\i,\ifrak)^{-1}$ is also a morphism of hybrid phase spaces equal to $(\i^{-1},\ifrak^{-1})$  (\cref{remark:HyPhIsomorphism}). Let $\eta \defeq \i(\g)$.  Then  we  have inclusion $$\big(\ifrak_{\i(\fs')}^{-1}\times \ifrak_{\i(\fs)}^{-1} \big)\big(b(\eta)\big) \subseteq  a\big(\i^{-1}(\eta)\big) = a(\g).$$ Applying $\ifrak_{\fs'}\times \ifrak_\fs$ to both sides: $$b(\eta)  = \big((\ifrak_{\fs'}\circ \ifrak_{\i(\fs')}^{-1})\times (\ifrak_\fs\circ \ifrak_{\i(\fs)}^{-1})\big)  \big(b(\eta)\big)=\left(\big(\ifrak_{\fs'}\times \ifrak_\fs\big)\circ \big(\ifrak_{\i(\fs')}^{-1}\times \ifrak_{\i(\fs)}^{-1}\big)\right) \big(b(\eta)\big)  \subseteq  \big(\ifrak_{\fs'}\times \ifrak_\fs  \big)\big(a(\g)\big),$$ which proves that $\big(\ifrak_{\fs'}\times \ifrak_\fs \big)\big(a(\g)\big) = b(\i(\g))$.
	
	We introduce  notation: let $\ifrak_\g\defeq \ifrak_{\sF{cod}(\g)}\times\ifrak_{\sF{dom}(\g)}$ where $\sF{dom}(\g)\xrightarrow{\g}\sF{cod}(\g)$ is edge in $\Sb^a_1$. Then $\ifrak_\g \big(a(\g)\big)  = b(\i(\g))$ for every $\g\in \Sb^a_1$ and the isomorphism $\i:\Sb^a\xrightarrow{\sim }\Sb^b$ imply that $\discats_{\g\in \Sb^a_1}a(\g) \cong \discats_{\eta\in \Sb^b_1}b(\eta)$ by the universal property of coproduct (\cref{def:coproducts}).  This can be verified by  the following diagram: $$\begin{tikzcd}[column sep = large, row sep = large]
	\discats_{\g\in \Sb^a_1}a(\g)\arrow[r,dashed,"\xi_a"]\arrow[rr,bend left,dashed,swap,"id"] & \discats_{\eta\in \Sb^b_1}b(\eta) \arrow[r,dashed,"\xi_b"] & \discats_{\g\in \Sb^a_1}a(\g) \\
	a(\g')\arrow[u,hookrightarrow,"in_{\g'}"]\arrow[rr,bend right,"id_{a(\g')}"]\arrow[r,"\ifrak_{\g'}"] & b(\i(\g'))\arrow[u,hookrightarrow,"in_{\i(\g')}"]\arrow[r,"\ifrak^{-1}_{\g'}"] & a(\i^{-1}(\i(\g'))),\arrow[u,hookrightarrow,"in_{\g'}"] 
\end{tikzcd}$$ where the maps $in_{\g'}$ and $in_{\i(\g')}$ are the canonical inclusions.  Thus, $\xi_a\left(\discats_{\g\in \Sb_1^a}a(\g)\right) = \discats_{\eta\in \Sb^b_1}b(\eta)$ and $\xi_b = \xi_a^{-1}$. 

Let $\zeta_a\defeq \Ub i\times \Ub i$, $\z_b\defeq \Ub i^{-1}\times \Ub i^{-1}$.   We then conclude from the universal property of coproduct (this time for $\Ub a$) and commuting diagram \small   $$\begin{tikzcd}[column sep = huge, row sep = large]
	\discats_{\g\in \Sb^a_1}a(\g)\arrow[r,"\xi_a"]\arrow[rr,bend left,"id"]\arrow[d,"\Lambda_a"] & \discats_{\eta\in \Sb^b_1}b(\eta)\arrow[r,"\xi_b"]\arrow[d,"\Lambda_b"] & \discats_{\g\in \Sb^a_1} a(\g)\arrow[d,"\Lambda_a"] \\
	\Ub a\times \Ub a\arrow[rr,dashed,bend right, "id_{\Ub a}\times id_{\Ub a}"] \arrow[r,dashed,"\z_a"] & \Ub b\times \Ub b\arrow[r,dashed,"\z_b"] & \Ub a\times \Ub a
\end{tikzcd}$$ \normalsize that $\big (\Ub i \times \Ub i\big)\left(\Lambda_a\left(\discats_{\g\in \Sb_1^a} a(\g)\right) \right) = \Lambda_b\left(\discats_{\eta\in \Sb_1^b}b(\eta)\right)$.
	\end{proof}
\begin{proof}[Proof of \cref{prop:interconnectionMapOndeterministicHybridControl}]
Let $(X,\r) \defeq \dCrl(\i,\ifrak)(Y,\s)$ as in \cref{prop:interconnectionMapOndeterministicHybridControl} and  \eqref{eq:interconnectControl}.  We must show that $\big(a,X,\r\big)$ is a deterministic  hybrid system: namely that $\Ub(p_a) = \t_{a_{st}}\circ  X$  and that $$\Theta_a\big(\mbox{graph}(\r)\big)\subseteq \Lambda_{a_{st}}\left(\discats_{\g\in \Sb^{a_{st}}_1} a_{st}(\g)\right),$$ where $\t_{a_{st}}:T\Ub a_{st}\rightarrow \Ub a_{st}$ is the canonical projection of the tangent bundle, and $\Theta_a$ is defined in \eqref{eq:defineTheta}. 

To demonstrate the equality  $\Ub(p_a) = \t_{a_{st}}\circ  X$, we refer to the diagram: \small  \begin{equation}\label{diagram:relatedVectorFields}  \begin{tikzcd}[column sep = large, row sep = large]
	& \Ub a_{tot} \arrow[dd,near start,swap,"\Ub(p_a)"] \arrow[dl,swap,"X"] \arrow[rr,"\Ub(i_{tot})"] & & \Ub b_{tot}\arrow[dd,swap,"\Ub(p_b)"] \arrow[dl,swap,"Y"]\\
	T\Ub a_{st}\arrow[rr,shift right,near end,swap,"T\Ub(i_{st})"]\arrow[dr,"\t_{a_{st}}"] & & T\Ub b_{st}\arrow[dr,"\t_{b_{st}}"]\arrow[ll,shift right, near start,swap,"T\Ub i_{st}^{-1}"]& \\
	& \Ub a_{st}\arrow[rr,"\Ub(i_{st})"] & & \Ub b_{st},
\end{tikzcd}\end{equation} \normalsize where each subdiagram---except the left triangle---is already known to commute. Equality \begin{equation}\label{eq:p1} \Ub(i_{st})\circ \Ub(p_a)= \Ub(p_b)\circ \Ub (i_{tot})\end{equation}  holds because $i$  is  a map of hybrid surjective submersions.   Equality \begin{equation}\label{eq:p2} \t_{b_{st}}\circ Y = \Ub(p_b)\end{equation}  follows by assumption that $(b,Y)$ is a hybrid open system. Equality \begin{equation}\label{eq:p3} X = T\Ub(i_{st})^{-1}\circ Y \circ \Ub(i_{tot})\end{equation}   follows by definition of $X$  (c.f.\ \eqref{eq:interconnectControl}). Finally, equality \begin{equation}\label{eq:p4} \Ub(i_{st})\circ \t_{a_{st}}\circ  T\Ub(i_{st})^{-1} = \t_{b_{st}}\end{equation} follows from the fact that $\t$ is natural (\cref{fact:naturalityOfProjTangentBundle}) and that $\Ub(i_{st})$ is a diffeomorphism (\cref{fact:isoHyPhForgetsToDiffeo}). 

Starting from $\Ub(p_a)$, we thus have a string of equalities  $$\begin{array}{lll} \Ub(p_a) & = \Ub(i_{st})^{-1}\circ \Ub(p_b)\circ \Ub(i_{tot}) & (\mbox{c.f.\ \eqref{eq:p1}})\\
& = \Ub(i_{st})^{-1} \circ \t_{b_{st}}\circ Y \circ \Ub(i_{tot}) & (\mbox{c.f.\ \eqref{eq:p2}}) \\ 
& = \Ub(i_{st})^{-1} \circ \t_{b_{st}} T\Ub(i_{st})\circ T\Ub(i_{st})^{-1}\circ Y \circ \Ub(i_{tot}) &  (T\;\mbox{is functorial})\\
& = \t_{a_{st}} \circ X & (\mbox{c.f.\ \eqref{eq:p3} and \eqref{eq:p4}}),
\end{array}$$ thus proving that $(a,X)$ is a hybrid open system (\cref{def:hybridOS}). 

To show that $(a,X,\r)$ is \textit{deterministic} hybrid open system,  we must show that $\Theta_a\big(\mbox{graph}(\r)\big) \subseteq \Lambda_{a_{st}}\left(\discats_{\g\in \fS(a_{st})_1} a_{st}(\g)\right)$, where  $\Theta_a\defeq \Ub(p_a)\times  id_{\Ub a_{st}} $ (c.f.\ \eqref{eq:defineTheta}).  Alternatively, we show that for each $x\in \Ub a_{tot}$, there is $\g_x\in \Sb^{a_{st}}_1$ such that $\big(\Ub p_a(x), \r(x)\big)\in a_{st}(\g_x)$ (\cref{remark:checkJumpMapOpen}).  For verification of this relation, we refer to the (not entirely commuting!)  diagram \begin{equation}\label{eq:diagramForLemmaInterconnectControlMap}\begin{tikzcd}
	\Ub a_{tot}\arrow[dd,near start,swap,"\r"]\arrow[rr,"\Ub(i_{tot})"]\arrow[dr,"\Ub(p_a)"] & & \Ub b_{tot}\arrow[dr,"\Ub(p_b)"]\arrow[dd,near start,swap,"\s"] & \\
	& \Ub a_{st}\arrow[rr,near start,"\Ub(i_{st})"]\arrow[dd,near start,"id_{\Ub a_{st}}"] & & \Ub b_{st}\arrow[dd,near start,"id_{\Ub b_{st}}"] \\
	\Ub a_{st}\arrow[dr,near start,swap,"id_{\Ub a_{st}}"]\arrow[rr,near start,"\Ub(i_{st})"] & & \Ub b_{st}\arrow[dr,near start,"id_{\Ub b_{st}}"] & \\
	& \Ub a_{st} \arrow[rr,near start,"\Ub(i_{st})"] & & \Ub b_{st}.
\end{tikzcd}\end{equation}
Equality \begin{equation}\label{eq:pi}\Ub(i_{st})\circ \r   = \s \circ\Ub(i_{tot})\end{equation} follows 
 by definition of $\r\defeq\Ub(i_{st})^{-1}\circ \s\circ \Ub(i_{tot})$ (c.f.\ \eqref{eq:interconnectControl}).  Equality \begin{equation}\label{eq:p11} \Ub(p_b)\circ \Ub(i_{tot}) = \Ub(i_{st})\circ  \Ub(p_a)\end{equation} holds   because $\Ub i:a\rightarrow b$  is a map of surjective submersions (\cref{def:hybridSSubMorphism}) and $\Ub$ is a functor (\cref{prop:ForgetfulFunctor}).

 Now let $x\in \Ub a_{tot}$, set $y\defeq \Ub i_{tot}(x)$, and let $\eta_y\in \Sb^{b_{st}}_1$ such that \begin{equation}\label{eq:random12345} (\Ub p_b (y),\s(y))\in b_{st}(\eta_y).\end{equation} Let $\g_x\defeq \i_{st}^{-1}(\eta_y)$.  Applying $\big(\ifrak_{\sF{dom}(\eta_y)}^{-1}\times \ifrak_{\sF{cod}(\eta_y)}^{-1}\big)_{st}$ to both sides of \eqref{eq:random12345}, we observe (c.f.\ \cref{lemma:isoHyPhImpliesEqualityOnRelations})  that  \begin{equation}\label{eq:inclusionABC} \left(\ifrak_{\sF{dom}(\eta_y)}^{-1}\big(\Ub p_b (y)\big)  , \ifrak_{\sF{cod}(\eta_{y})}^{-1} \big(\s(y)\big)\right)\in a_{st}(\g_x).\end{equation}  Now $$\begin{array}{ll} \Ub p_a(x) & = \Ub i_{st}^{-1} \circ \Ub p_b \circ \Ub i_{tot} (x) \\& = \Ub i_{st}^{-1}\circ \Ub p_b(y)  \\ & = \ifrak_{st,\sF{dom}(\eta_y)}^{-1}\big(\Ub p_b(y)\big).  \end{array}   $$  Similarly $$\begin{array}{ll} \r(x) & = \Ub i_{st}^{-1}\circ \s\circ \Ub i_{tot}(x)\\ & = \Ub_{st}^{-1} \circ \s(y)  \\& = \ifrak_{st,\sF{cod}(\eta_y)}^{-1}(\s(y)).	
\end{array}$$  In other words, together with \eqref{eq:inclusionABC}, we conclude that $(\Ub p_a (x), \r(x))\in a_{st}(\g_x)$, proving  that  $\dCrl(i):\dCrl(b)\rightarrow \dCrl(a)$ is  well defined. \end{proof}

 
 \begin{remark}
 We can also prove condition \eqref{eq:ratEq} abstractly. 
 Since $(b,Y,\s)$ is  a deterministic  hybrid open system (\cref{def:deterministicHyOS}), we have inclusion   $$\Theta_b\big( \mbox{graph}(\s)\big) \subseteq \Lambda_{b_{st}}\left(\discats_{\eta\in \Sb^{b_{st}}_1}b_{st}(\eta)\right),$$  where $\Theta_b\defeq  \Ub(p_b)\times id_{\Ub b_{st}}$ (c.f.\ \eqref{eq:defineTheta}).
 Since $i^{-1}:b_{st}\xrightarrow{\sim} a_{st}$ is an isomorphism of hybrid phase spaces, $\big(\ifrak_{\sF{dom}(\eta)}^{-1}\times \ifrak_{\sF{cod}(\eta)}^{-1}\big) \big(b_{st}(\eta)\big) = a_{st}\big(\i^{-1}(\eta)\big)$ (\cref{lemma:isoHyPhImpliesEqualityOnRelations}) for arrow $\big(\sF{dom}(\fd)\xrightarrow{\eta} \sF{cod}(d')\big)\in \Sb^{b_{st}}_1$. Therefore $$\discats_{\eta\in \Sb^{b_{st}}_1}  \big(\ifrak_{\sF{cod}(\eta)}^{-1}\times \ifrak_{\sF{dom}(\eta)}^{-1}\big) \big(b_{st}(\eta)\big) = \discats_{\eta\in \Sb^{b_{st}}_1} a_{st}\big(\i^{-1}(\eta)\big)= \discats_{\g\in \Sb^{a_{st}}_1}a_{st}(\g),$$ which we write (also by \cref{lemma:isoHyPhImpliesEqualityOnRelations})  as \begin{equation}\label{eq:inclusionForIsoHyPh}\big(\Ub(i_{st})^{-1}\times \Ub(i_{st})^{-1}\big) \left(\Lambda_{b_{st}}\left(\discats_{\eta\in \Sb^{b_{st}}_1}b_{st}(\eta) \right)\right) = \Lambda_{a_{st}}\left( \discats_{\g\in \Sb^{a_{st}}_1}a_{st}(\g)\right).\end{equation}

We notate  \begin{equation}\label{eq:123ForRef} \aleph_b\defeq \Ub p_b\circ \Ub i_{tot}\end{equation} for reference and compute: \footnotesize  $$\begin{array}{lll} 
\Theta_a(\mbox{graph}(\r)) & =  \left\{\big(\Ub p_a(x),\r(x)\big):\, x\in \Ub a_{tot}\right\} &  \mbox{since} \; (f\times g)(a,b) = (f(a),g(b))\\
	& =  \left\{\big(\Ub p_a(x),\Ub(i_{st})^{-1}\circ \s\circ \Ub i_{tot}(x)\big):\, x\in \Ub a_{tot}\right\}& (\mbox{c.f.\   \eqref{eq:interconnectControl}})  \\
	&  =  \left\{\big(\Ub(i_{st})^{-1}\circ\aleph_b(x),\Ub( i_{st})^{-1}\circ \s\circ \Ub i_{tot}(x)\big):\, x\in \Ub a_{tot}\right\} & (\mbox{\eqref{eq:p11}, \eqref{eq:123ForRef}})\\ 
	& =  \left\{\big(\Ub(i_{st})^{-1}\circ \Ub p_b(y),\Ub(i_{st})^{-1}\circ \s(y)\big):\, y\in im(\Ub i_{tot})\right\}& (\mbox{definition of}\; im(\Ub(\i,\ifrak)_{tot})) \\
	&= \big(\Ub(i_{st})^{-1}\times \Ub(i_{st})^{-1}\big)\left\{ \big( \Ub p_b (y),\s(y)\big):\, y\in im(\Ub i_{tot})\right\} & \mbox{since}\;(f\times g)(a,b) = (f(a),g(b))\\
	&  =   \big(\Ub(i_{st})^{-1}\times \Ub(i_{st})^{-1}\big) \circ \Theta_b\left( \mbox{graph}\left(\s\restriction_{im(\Ub(\i,\ifrak)_{tot})}\right)\right)& (\mbox{definition of graph.})
\end{array} $$ \normalsize   By assumption $$\Theta_b \left( \mbox{graph}\left(\s\right)\right)\subseteq \Lambda_{b_{st}}\left(\discats_{\eta\in \Sb^{b_{st}}_1}b_{st}(\eta)\right).  $$ Therefore we  conclude (c.f.\ \eqref{eq:inclusionForIsoHyPh}) that \small   $$\Theta_a(\mbox{graph}(\r))  \subseteq \big(\Ub(i_{st})^{-1}\times \Ub(i_{st})^{-1}\big)  \left(\Theta_b\left( \mbox{graph}\left(\s\right)\right)\right)\subseteq \Lambda_{a_{st}}\left(\discats_{\g\in \Sb^{a_{st}}_1}a_{st}(\g)\right),$$ \normalsize as desired. 
\end{remark}

\subsubsection{Examples of Deterministic Control on Interconnection}

	Now we provide a few examples of interconnection and control. Example \ref{ex:interconnectThermostat} models a thermostat, \cref{ex:BouncingBallInterconnect} a bouncing ball, while \cref{ex:stateDependentSwitching} and \cref{ex:timeDependentSwitching}  are general switched systems.  The examples are constructed to illustrate that a deterministic  hybrid system may be realized as an interconnection of deterministic  hybrid open systems, analogous to the way that  a vector field $(X:\R^n\rightarrow T\R^n) \in \Xf(\R^n)$ may be obtained as the interconnection of $n$ open systems $\{X_i:\R^n\rightarrow T\R_i\}_{i=1,\ldots,n}$ (\cite[example 3.2]{lermanopennetworks}). We recover the digital control hybrid system $(c,Z,\nu)$ from \cref{example:ThermostatVanilla} in \cref{ex:interconnectThermostat} and the bouncing ball $(c,Z,\mu)$ from \cref{ex:bouncingBallVanilla} in \cref{ex:BouncingBallInterconnect}.  

\begin{remark}\label{remark:outlineForExamples}
Because each example follows an identical template, we preface them with an outline. We present two hybrid phase spaces $a:\Sb^a\rightarrow\sF{RelMan}$ and  $b:\Sb^b\rightarrow \sF{RelMan}$.  From their product, we form hybrid surjective submersions $p_a:a\times b\rightarrow a$ and $p_b:a\times b\rightarrow b$ (\cref{remark:hyssubFromHyPh}).  Out of these two hybrid surjective submersions, we  build deterministic  hybrid open systems $(a\times b\rightarrow a, X,\r)$ and $(a\times b\rightarrow b, Y,\s)$, which amounts to defining pairs $(X,\r)\in\dCrl(a\times b\xrightarrow{p_a} a) , (Y,\s)\in \dCrl(a\times b\xrightarrow{p_b}b)$ (\cref{def:deterministicControlLax}), or sections of the c.d.\ bundle $\varpi_a:\Tb a\rightarrow\Ub a$ satisfying some  conditions (\cref{def:deterministicHyOS}, \cref{remark:deterministicHySysAsSection}). 

We then construct---with the product---a deterministic  hybrid open system (\cref{remark:deterministicHyOSProducts}) $$(a\times b\times a\times b\xrightarrow{p_a\times p_b} a\times b, X\times Y,\r\times \s),$$ and  an interconnection morphism $$\begin{tikzcd}[column sep = large] a\times b\arrow[d,"id_{a\times b}"]\arrow[r,swap,"i_{tot}"] & a\times b \times a\times b\arrow[d,"p_a\times p_b"] \\
a\times b\arrow[r,"i_{st}"] & a\times b,
\end{tikzcd}$$ where $i_{st} \defeq id_{a\times b}$ (\cref{def:HybridInterconnection}).  The interconnection map $i_{tot}$  sends point $$\big((x,y)\in (a\times b)(\fs_a,\fs_b) \big) \mapsto\big((x,y,x,y)\in (a\times b\times a \times b)(\fs_a,\fs_b,\fs_a,\fs_b)\big),\footnote{Implicit in this assignment is the map on nodes $\left((\fs_a,\fs_b)\in \Sb^{a\times b}\right) \mapsto \big((\fs_a,\fs_b,\fs_a,\fs_b)\in \Sb^{a\times b\times a\times b}\big). $} $$ while the projection map $p_a\times  p_b$ sends  point  $$\left((x,y,x',y')\in (a\times b\times a \times b)(\fs_a,\fs_b,\fs_a',\fs_b') \right) \mapsto\left( (x,y')\in (a\times b)(\fs_a,\fs_b')\right).\footnote{And maps node $(\fs_a,\fs_b,\fs_a',\fs_b')\mapsto (\fs_a,\fs_b')$.}$$ The deterministic hybrid  control is defined as follows  (\cref{remark:deterministicHyOSProducts}): we have  $$X\times Y:\Ub(a\times b\times a\times b)\rightarrow T\Ub(a\times b)$$ sending $(x,y,x',y')\mapsto \big(X(x,y),\, Y(x',y')\big)$ and $$\r\times \s:\Ub(a\times b\times a\times b)\rightarrow \Ub(a\times b)$$ sending $(x,y,x',y')\mapsto \big(\r(x,y),\,\s(x',y')\big)$. 

Finally, we apply \cref{prop:interconnectionMapOndeterministicHybridControl} to obtain a deterministic  hybrid system (in the sense of \cref{def:HySys}): $$\left(a\times b, \dCrl(\i,\ifrak)\big(X\times Y,\r\times \s\big) \right), $$ which, putting everything together, maps $$\begin{tikzcd}
	(x,y)\arrow[rr,mapsto,"\Ub(\i{,}\ifrak)_{tot}"]\arrow[rrrr,mapsto ,bend right,"\dCrl(\i{,}\ifrak)\left(X\times Y{,}\r\times \s\right)"] &  & (x,y,x,y)\arrow[rr,mapsto,"\left(X\times Y{,}\r\times \s\right)"] & &  \left(\big(X(x,y),\, Y(x,y)\big),\big(\r(x,y),\,\s(x,y)\big)\right).
\end{tikzcd}$$
\end{remark}

\begin{remark}
	For now, we acknowledge---but bracket addressing---the implicit  isomorphism $\Ub(a\times b)\cong \Ub a \times \Ub b$ which we used in this outline.  We will prove in \cref{lemma:PUUPLite} that $\Ub((\cdot)\times (\cdot)) \cong \Ub(\cdot) \times \Ub(\cdot)$, natural in each factor (see also \cref{prop:uFromHyPh2ManIsMonoidal}), so the examples are well defined. 
\end{remark}

\begin{example}\label{ex:interconnectThermostat} We model the thermostat introduced  in \cref{example:ThermostatVanilla} as an  interconnection of two deterministic  hybrid open systems. 
We start by defining hybrid phase spaces $a$ and $b$. In the following table, the first line is the source category $\Sb^a$ and $\Sb^b$, the second is the assignment of manifolds, and the third is assignment of relations.  We only display nontrivial relations, but every node $\fs_a\in \Sb^a$ has trivial relation $a(id_{\fs_a}) \defeq \Delta(a(\fs_a))$, and similarly for each $\fs_b\in \Sb^b$.
$$ \begin{array}{lll|crll}\Sb^a : & \{\fs_a\} &  & \Sb^b: &  \begin{tikzcd} 0\arrow[r,bend left,"e_{1,0}"] & 1\arrow[l,bend left,"e_{0,1}"] \end{tikzcd} &  \\
\hline 
 & a(\fs_a) & = \R & & b(j) & = \{j\},\;\;\; j =0,1 \\
 &   &   & & b(e_{1-j,j}) & = \{(j,1-j)\},\;\;\; j =0,1
	\end{array}$$	Phase space $a$ represents temperature and phase space $b$ represents digital control.

Now we consider the hybrid surjective submersions arising from the products $a\times b\xrightarrow{p_a} a$ and $a\times b\xrightarrow{p_b} b$, and define two hybrid deterministic  open systems.  In the following table, the first three lines depict the surjective submersions (1.\ the map source and target, 2.\ assignment on nodes, 3.\ assignment of points in manifold), the next two lines show the maps $X:\Ub(a\times b)\rightarrow T\Ub a $ and $Y:\Ub(a\times b)\rightarrow T\Ub b$ for vectors, and the last two lines depict the jump maps $\r:\Ub(a\times b)\rightarrow\Ub a$ and $\s:\Ub(a\times b)\rightarrow\Ub b$:
$$
\begin{array}{rrl|rrl} 
p_a: & a\times b \rightarrow & a &  p_b: & a \times b \rightarrow & b\\
& (\fs_a,j) \mapsto & \fs_a & & (\fs_a',j') \mapsto  & j'\\
 &(T,j) \mapsto & T & &(T',j')  \mapsto & j'\\
\hline 
	X: & \R \times \{0,1\} \rightarrow & T\R &   Y :&\R\times  \{0,1\} \rightarrow & T\{0,1 \}\\
	& (T,j)\mapsto & (-1)^{1-j} & &   (T',j') \mapsto & 0 	\\
	\hline 
	\r: & \R  \times \{0,1\} \rightarrow & \R  &  \s:&  \R\times \{0,1\}  \rightarrow & \{0,1\}\\
	& (T,j)\mapsto & T  & &   (T',j') \mapsto & \left\{\begin{array}{ll} 1-j' &\mbox{if}\; (-1)^{1-j'}T'\; \geq 1\\ j' &\mbox{else} \end{array}\right.	
	\end{array}$$
 These define a deterministic  open system since $(T,\r(T,j)) = (T,T) \in \Delta(\R)=a(id_{\fs_a})$ and $(j,\s(T,j)) = (j,1-j)\in b(e_{j-1,j})$ or $(j,\s(T,j)) = (j,j)\in \Delta(\{j\})=b(id_{j})$ for $j=0,\,1$ (\cref{remark:checkJumpMapOpen}).

 We combine both  open systems to form  the deterministic   hybrid open system with hybrid surjective submersion $(a\times b\times a\times b)\xrightarrow{p_a\times p_b}a\times b$ which on nodes maps $(\fs_a,j,\fs_a',j')\mapsto (\fs_a,j')$ and on manifolds maps points $(T,j,T',j')\mapsto (T,j')$.   Deterministic control $(X\times Y,\r\times \s)\in \dCrl(a\times b\times a \times b\rightarrow a\times b)$ is defined by $$\begin{array}{ll}  X\times Y (T,j,T',j') & \defeq \big(X(T,j),\,Y(T',j')\big)\\
 \r\times \s(T,j,T',j') & \defeq \big(\r(T,j),\, \s(T',j')\big).
\end{array}$$

We take hybrid interconnection $\begin{tikzcd} a\times b\arrow[d,"id_{a\times b}"]\arrow[r,"(\i{,}\mathfrak{i})_{tot}"] & a\times b\times a\times b\arrow[d,"p_a\times p_b"] \\ a\times b\arrow[r,"id_{a\times b}"] & a\times b\end{tikzcd}$  (\cref{remark:outlineForExamples}), and this induces a map on control $\dCrl(\i,\ifrak):\dCrl\left(a\times b\times a\times b\xrightarrow{p_a\times p_b} a\times b\right) \rightarrow\dCrl\left(a\times b\xrightarrow{id_{a\times b}} a\times b\right)$ defined by: $$\dCrl(\i,\mathfrak{i})(X\times Y)(T,j) \defeq (X(T,j),Y(T,j)) = \left((-1)^{1-j},0\right)$$ and  $$\dCrl(\i,\mathfrak{i})(\r\times \s) (T,j) = \left\{ \begin{array}{ll}  (T,1-j) &\mbox{if}\; (-1)^{1-j}T \geq  1\\ (T,j) &\mbox{else}.\end{array}\right.$$
We thus recover the thermostat from \cref{example:ThermostatVanilla} and \eqref{eq:vanillaThermostat}, and conclude that $$\left(a\times b, \dCrl(\i,\ifrak)\big(X\times Y,\r\times \s\big)\right) = (c,Z,\nu).$$
 \end{example}

 \begin{example}\label{ex:BouncingBallInterconnect}
 We now consider the bouncing ball as interconnection of two deterministic  hybrid open systems. This example decomposes a ``high dimensional'' hybrid system into components, by contrast with the thermostat which decomposes the hybrid system into continuous state with digital control (the distinction, however, is purely heuristic).   
 
 Here the hybrid phase space $a$ will correspond to position (height) and $b$ to velocity: 
$$ 
\begin{array}{lll|clll} \Sb^a: & \{\fs_a\} &  & \Sb^b: &  \begin{tikzcd} \fs_b \arrow[loop right,"e"] \end{tikzcd} &  \\
\hline 
 & a(\fs_a) & = \R^{\geq 0} & & b(\fs_b) & = \R \\
 &   &   & & b(e) & = \{(v,v')\in \R^2:\, v\cdot v'<0\}.
	\end{array} $$ Fix a coefficient of restitution $r\in (0,1)$. Then deterministic  hybrid open systems on projection of products are given as: \small 
	$$
\begin{array}{rrl|rrl} 
p_a: & a\times b \rightarrow & a &  p_b: &a\times b\rightarrow & b\\
& (\fs_a,\fs_b) \mapsto & \fs_a & & (\fs_a',\fs_b') \mapsto  & \fs_b'\\
 &(h,v) \mapsto & h & &(h',v')  \mapsto & v'\\
\hline 
	X: & \R^{\geq 0} \times \R  \rightarrow & T\R^{\geq 0} &   Y :&  \R \times \R^{\geq0}\rightarrow & T\R \\
	& (h,v)\mapsto &v \pdiv{}{h}  & &   (h',v') \mapsto & -\pdiv{}{v'} 	\\
	\hline
	\r: & \R^{\geq0} \times \R \rightarrow & \R^{\geq0}   &  \s:&  \R\times \R^{\geq0}  \rightarrow & \R \\
	& (h,v) \mapsto & h  & &   (h',v') \mapsto & \left\{\begin{array}{ll} -rv' &\mbox{if}\; h' = 0 \; \mbox{and} \; v'<0 \\ v' &\mbox{else}. \end{array}\right.	
	\end{array}$$\normalsize These are valid open systems (c.f.\ \cref{remark:checkingJumpMap}) because $(h,\rho(h,v)) = (h,h) = \Delta(\R^{\geq0})= a(id_{\fs_a})$ and $$(v',\s(h',v')) = \left\{\begin{array}{ll} (v',-rv') & \in b(e) \\ (v',v') & \in b(id_{\fs_b}).\end{array}\right.$$ 
	
From these we obtain deterministic  hybrid open systems $(a\times b\times a\times b, X\times Y, \r\times \s)$ where $$X\times Y(h,v,h',v') = v\pdiv{}{h} -\pdiv{}{v'}$$ and $$\r\times \s(h,v,h',v') = \left\{\begin{array}{ll} (h,-rv') & \mbox{if}\; v'<0 \;\mbox{and} \; h'=0\\ (h,v') &\mbox{else}. \end{array}\right.$$

	Interconnection $(\i,\mathfrak{i}):a\times b\rightarrow a\times b\times a\times b$, sends $ (h,v)  \mapsto (h,v,h,v)$. The map $$\dCrl(\i,\ifrak):\dCrl(p_a\times p_b)\rightarrow\dCrl(id_{a\times b})$$ on control gives us $$\begin{array}{ll}\dCrl(\i,\mathfrak{i})(X\times Y) (h,v) & = v\pdiv{}{h} -\pdiv{}{v}\\ \dCrl(\i,\mathfrak{i})(\r\times \s) (h,v) & = \r\times \s(h,v,h,v) =\left\{\begin{array}{ll}(0,-rv) & \mbox{if} \, h=0,\;\mbox{and}\; v<0\\ (h,v) &\mbox{else,}\end{array}\right.\end{array}$$ exactly the deterministic  hybrid system $(c,Z,\mu)$ in \cref{ex:bouncingBallVanilla}. 
\end{example}

\begin{example}\label{ex:stateDependentSwitching}
	Now we consider state dependent switched systems \begin{equation}\label{eq:switchedSystem}\dx = f_{\tilde{\s}(x)}(x)\in T_xM,\end{equation} for $x\in M$,  switch signal $\tilde{\s}:M\rightarrow \{1,\ldots,k\}$, and each $f_j\in \Xf(M)$. We decompose such a system as an interconnection of two deterministic  hybrid open systems. We will define two hybrid phase spaces $a$ and $b$ representing the state $x\in M$ and switching signal $\tilde{\s}(x)\in \{1,\ldots,k\}$, respectively. Consider complete graph $ G=\left\{i\in \{1,\ldots,k\}:\, \exists!\, e_{j,i}\, \forall i,j\in \{1,\ldots,k\}\right\}$.\footnote{The node set is $G_0=\{1,\ldots,k\}$ and edge set is $G_1 = \{e_{i,j}:\, i,j\in G_0\}$. When $i=j$, $e_{i,j}$ is the identity arrow.}  Let source category of $b$ be the path category on this graph, denoted $\fS(G)$.  Then:	$$ 
\begin{array}{lll|clll} \Sb^a: & \{\fs_a\} &  &\Sb^b: &   \fS(G) &  \\
\hline 
 & a(\fs_a) & =  M &  b(j)  & = \{j\} \\
 &   &   &   b(e_{j,i}) &  = \{(i,j)\}.	\end{array}$$
	
	And on the projection of  products we have deterministic  hybrid open systems $$
\begin{array}{rrl|rrl} 
p_a: & a\times b \rightarrow & a &  p_b: &a \times b \rightarrow & b\\
& (\fs_a,j) \mapsto & \fs_a & & (\fs_a,j') \mapsto  & j'\\
 &(x,j) \mapsto & x & &(x',j')  \mapsto & j'\\
\hline 
	X: & M\times \{1,\ldots,k\} \rightarrow & TM&   Y :&    M \times \{1,\ldots,k\} \rightarrow & T\{1,\ldots,k \}\\
	& (x,j)\mapsto & f_{j}(x) & &  (x',j') \mapsto & 0 	\\
	\hline 
	\r: & M  \times \{1,\ldots,k\} \rightarrow & \R  &  \s:&  M \times \{1,\ldots,k\}\rightarrow & \{1,\ldots,k\}\\
	& (x,j)\mapsto & x  & &   (x',j') \mapsto &  \tilde{\s}(x'),
	\end{array}$$ 
	which are valid deterministic  open systems since $(x,x) \in \Delta(M)$ and $(j',\s(x',j')) = (j',\tilde{\s}(x'))\in b(e_{\tilde{\s}(x'),j})$ (\cref{remark:checkJumpMapOpen}). 
	
	Taking product results in hybrid deterministic  open system $$\big(a\times b \times a\times b\rightarrow a\times b, X\times Y, \r\times \s\big),$$
	defined as 
	$$\begin{array}{lll}  X\times Y (x,j,x',j') & \defeq \big(X(x,j),\,Y(x',j')\big) & = (f_{j}(x),0),\\
 \r\times \s(x,j,x',j') & \defeq \big(\r(x,j),\, \s(x',j')\big)  & = (x,\tilde{\s}(x')). 
\end{array}$$ There is  induced map on control \begin{equation}\label{eq:switchedSystemAsInterconnection} (Z,\t)\defeq \dCrl(i)\big(X\times Y,\r\times\s\big), \end{equation} sending $(x,j)$ to  $Z(x,j) = (f_{j}(x),0)$ and  $\t(x,j)=  (x,\tilde{\s}(x))$.

This determined hybrid system $(c,Z,\t)$ is  the switched system in \eqref{eq:switchedSystem}: in this representation we notationally decouple switching and continuous-time dynamics.  More accurately: they are still coupled, but we have isolated continuous time dynamics and switching into separate components of a deterministic  hybrid system. 
	\end{example}

\begin{prop}
Let $\dx = f_{\tilde{\s}}(x)$ be a state-dependent switched system, with $x\in M$, $\tilde{\s}:M\rightarrow \{1,\ldots,k\}$, and $f_i\in \Xf(M)$ for $i=1,\ldots,k$.  This system arises as interconnection $\dCrl(i)(X\times Y,\r\times \s)$ of deterministic  hybrid open systems $$\begin{array}{l} \big(a\times b\rightarrow a, X, \r\big)\\
\big(a\times b \rightarrow b, Y, \s\big) 	
\end{array}$$
 as defined in \cref{ex:stateDependentSwitching}, \eqref{eq:switchedSystemAsInterconnection}. 	
\end{prop}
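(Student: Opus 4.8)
The entire construction has already been carried out in \cref{ex:stateDependentSwitching}, so the plan is simply to verify that the data assembled there are legitimate and then to read off the interconnected system. First I would check that the two triples $(a\times b\xrightarrow{p_a}a, X,\r)$ and $(a\times b\xrightarrow{p_b}b,Y,\s)$ are genuine deterministic hybrid open systems in the sense of \cref{def:deterministicHyOS}. For the continuous part this means verifying $\t_{a_{st}}\circ X=\Ub p_a$ and $\t_{b_{st}}\circ Y=\Ub p_b$: the former holds because $X(x,j)=f_j(x)\in T_xM$ projects to $x=\Ub p_a(x,j)$, and the latter holds because $b$ has discrete fibers, so $T\Ub b$ is the zero bundle and $Y\equiv 0$ projects to $j'=\Ub p_b(x',j')$. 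For the jump maps I would apply the operational criterion of \cref{remark:checkJumpMapOpen}: for $\r(x,j)=x$ the pair $(\Ub p_a(x,j),\r(x,j))=(x,x)$ lies in the identity relation $\Delta(M)=a(id_{\fs_a})$, and for $\s(x',j')=\tilde\s(x')$ the pair $(j',\tilde\s(x'))$ lies in $b(e_{\tilde\s(x'),j'})=\{(j',\tilde\s(x'))\}$.

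Next, \cref{remark:deterministicHyOSProducts} guarantees that the product $(a\times b\times a\times b\xrightarrow{p_a\times p_b}a\times b,\,X\times Y,\,\r\times\s)$ is again a deterministic hybrid open system, so no separate verification of the product is needed. I would then confirm that the interconnection map $i$ is legitimate: its state component is $i_{st}=id_{a\times b}$, which is trivially an isomorphism of hybrid phase spaces, so \cref{prop:interconnectionMapOndeterministicHybridControl} applies and produces a pair $(Z,\t)\in\dCrl(id_{a\times b})$.

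To compute $(Z,\t)$ I would substitute into \eqref{eq:interconnectControl} (equivalently \cref{def:interconnectionMapOndeterministicHybridControl}). Because $i_{st}=id_{a\times b}$ we have $T\Ub(i_{st})^{-1}=id$ and $\Ub(i_{st})^{-1}=id$, while $\Ub(i_{tot})$ sends $(x,j)\mapsto(x,j,x,j)$. Hence $Z(x,j)=(X\times Y)(x,j,x,j)=(f_j(x),0)$ and $\t(x,j)=(\r\times\s)(x,j,x,j)=(x,\tilde\s(x))$, which is exactly the closed deterministic hybrid system recorded in \eqref{eq:switchedSystemAsInterconnection}. Since $(Z,\t)$ lies in $\dCrl(id_{a\times b})$, this triple is genuinely a deterministic hybrid system and not merely a formal expression.

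The remaining---and only substantive---step is to justify the phrase ``arises as,'' i.e.\ that executions of $(c,Z,\t)$ realize the switched ODE \eqref{eq:switchedSystem}. The idea is that $Z$ restricted to the mode-$j$ slice gives the continuous flow $\dx=f_j(x)$ with $\dot j=0$, while the jump map $\t$ resets the mode by $j\mapsto\tilde\s(x)$ and is a self-jump precisely when $j=\tilde\s(x)$. Thus along any execution (\cref{def:deterministicExecution}) the discrete coordinate agrees with $\tilde\s(x)$ on each flow interval, so the $M$-coordinate solves $\dx=f_{\tilde\s(x)}(x)$, recovering \eqref{eq:switchedSystem}. I expect this identification to be the main obstacle, since it is really a statement about executions rather than about the system data: one must either invoke the hypotheses of \cref{prop:uniqueExecutions} and \cref{remark:uniqueExecutions} to guarantee that executions are well defined at the switching surfaces, or else be content with an identification at the level of the deterministic hybrid system together with its (possibly non-unique) executions. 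Making the correspondence precise therefore amounts to matching the transition times of the execution with the instants at which $\tilde\s(x)$ changes value.
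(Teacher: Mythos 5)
Your proposal is correct and matches the paper's approach exactly: the paper gives no separate proof of this proposition, treating the construction and verification in \cref{ex:stateDependentSwitching} (the compatibility checks $(x,x)\in\Delta(M)=a(id_{\fs_a})$ and $(j',\tilde\s(x'))\in b(e_{\tilde\s(x'),j'})$, the product via \cref{remark:deterministicHyOSProducts}, and the computation $(Z,\t)=\dCrl(i)(X\times Y,\r\times\s)$ in \eqref{eq:switchedSystemAsInterconnection}) as the entire argument. Your final paragraph about matching executions to solutions of $\dx=f_{\tilde\s(x)}(x)$ is a reasonable extra precaution, but the paper does not attempt it---it identifies $(c,Z,\t)$ with the switched system only at the level of system data, explicitly calling it a notational decoupling of switching and continuous dynamics.
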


Now we consider time-dependent switching, which turns out to be a special case of state-dependent switching. 
\begin{example}\label{ex:timeDependentSwitching}

Consider system $\dx = f_{\tilde{\s}(t)}(x)$ with $\tilde{\s}:\R\rightarrow \{1,\ldots,k\}$. Here we  keep $b$ the same hybrid phase spaces as in \cref{ex:stateDependentSwitching}, but this time $a(\fs_a) = M\times \R$.  
Then open systems are defined  $$ 
	\begin{array}{lrlclrl} X: & M\times \R  \times \{1,\ldots,k\} \rightarrow & TM\times \R  &  & Y :& M\times \R \times \{1,\ldots, k\} \rightarrow & T\{1,\ldots ,k \}\\
	& (x,t,j)\mapsto & f_j(x) & & &  (x',t',j') \mapsto & 0 	
	\end{array}$$
	and functions $\r,\s$ as 
	 $$ \begin{array}{lrlclrl} \r: & M\times \R  \times \{1,\ldots,k\} \rightarrow & M\times \R  &  & \s:&  M\times \R  \times \{1,\ldots, k\} \rightarrow & \{1,\ldots ,k \}\\
	& (x,t,j)\mapsto & (x,t)  & & &  (x',t',j') \mapsto & \tilde{\s}(t)	
	\end{array}$$

As in the previous example, we obtain time-dependent switched system as interconnection.
\end{example}
 \begin{remark}\label{remark:timeSwitchingSpecialCaseStateSwitching}
 	We observe how  \cref{ex:timeDependentSwitching} is a special case of \cref{ex:stateDependentSwitching}.  It is well known (\cite[\S3.3.1]{liberzonswitch}) that a time-dependent continuous-time dynamical system $X\in \Xf(M)$ of dimension $n$ is secretly a time-\textit{in}dependent dynamical system $\tilde{X}\in \Xf(\tilde{M})$ in disguise, of dimension $n+1$. Indeed, the state manifold may be given by $\tilde{M} = M\times \R$ and the  $(n+1)$th variable  given constant dynamics $\dx_{n+1} = 1$. 
 \end{remark}

\subsection{Monoidal Structure of Hybrid Phase Spaces}
We show in this section that products in hybrid phase spaces behave well with forgetful functor $\Ub :\sF{HyPh}\rightarrow\sF{Man}$, namely that $\Ub$ is a strong monoidal functor. Concretely $\Pi\circ \Ub \cong  \Ub \circ \Pi$. First we develop some facts about $\dCrl$.

\subsubsection{Deterministic  Control as a Functor}
	Deterministic control is only lax functorial on arbitrary maps of hybrid surjective submersion (\cref{prop:deterministicControlIsLax}), but functorial on interconnection: 
\begin{prop}\label{prop:dCrlInterconnectFunctorial}
	The map  $\dCrl$ defined on interconnection in \cref{prop:interconnectionMapOndeterministicHybridControl} extends to  a contravariant functor $$\dCrl:\left(\sF{HySSub_{int}}\right)^{op}\rightarrow\sF{Set}.$$ 
\end{prop}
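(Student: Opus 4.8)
The plan is to verify the three requirements for $\dCrl$ to be a contravariant functor on $\sF{HySSub_{int}}$: that the assignment on morphisms is well defined (lands in the target set), that it preserves identities, and that it reverses composition. Well-definedness is already secured: \cref{prop:interconnectionMapOndeterministicHybridControl} guarantees precisely that for an interconnection $i\colon a\rightarrow b$ and $(Y,\s)\in\dCrl(b)$, the pair $\dCrl(i)(Y,\s)$ defined in \eqref{eq:definingdcrlOnInterconnection} again lies in $\dCrl(a)$. I would first note that $\sF{HySSub_{int}}$ is genuinely closed under composition: if $i\colon a\rightarrow b$ and $j\colon b\rightarrow c$ are interconnections, then $(j\circ i)_{st}=j_{st}\circ i_{st}$ is a composite of isomorphisms of hybrid phase spaces, hence itself an isomorphism, so $j\circ i$ is an interconnection and $\dCrl(j\circ i)$ makes sense.

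For identities, I would use \emph{functoriality of $\Ub$ on identity} together with \emph{functoriality of $T$}. Since $\Ub(id_{a,tot})=id_{\Ub a_{tot}}$, $\Ub(id_{a,st})=id_{\Ub a_{st}}$, and $T\,\Ub(id_{a,st})^{-1}=T(id_{\Ub a_{st}})=id_{T\Ub a_{st}}$, the formula \eqref{eq:definingdcrlOnInterconnection} collapses to
$$\dCrl(id_a)(X,\r)=\big(id_{T\Ub a_{st}}\circ X\circ id_{\Ub a_{tot}},\ id_{\Ub a_{st}}\circ\r\circ id_{\Ub a_{tot}}\big)=(X,\r),$$
so $\dCrl(id_a)=id_{\dCrl(a)}$.

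The substantive step is contravariance on composition, $\dCrl(j\circ i)=\dCrl(i)\circ\dCrl(j)$. Here the only inputs are functoriality of $\Ub$ and $T$ on composition and the fact that inverses reverse composites. Concretely, I would compute $\Ub((j\circ i)_{st})^{-1}=\big(\Ub(j_{st})\circ\Ub(i_{st})\big)^{-1}=\Ub(i_{st})^{-1}\circ\Ub(j_{st})^{-1}$ and $T\Ub((j\circ i)_{st})^{-1}=T\Ub(i_{st})^{-1}\circ T\Ub(j_{st})^{-1}$ (using that $T$ preserves isomorphisms, \cref{fact:functorsPreserveIsos}, so $T$ commutes with inversion), together with $\Ub((j\circ i)_{tot})=\Ub(j_{tot})\circ\Ub(i_{tot})$. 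Substituting these into \eqref{eq:definingdcrlOnInterconnection} for $(Z,\t)\in\dCrl(c)$ rewrites each component of $\dCrl(j\circ i)(Z,\t)$ as the corresponding component of $\dCrl(i)\big(\dCrl(j)(Z,\t)\big)$; for instance the control component becomes
$$T\Ub(i_{st})^{-1}\circ\big(T\Ub(j_{st})^{-1}\circ Z\circ\Ub(j_{tot})\big)\circ\Ub(i_{tot}),$$
which is exactly $\dCrl(i)$ applied to the control part of $\dCrl(j)(Z,\t)$, and the jump component works identically without the $T$'s.

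I expect no deep obstacle: the entire argument is the mantra ``by functoriality of $T$ and $\Ub$'' (\cref{remark:functorialityT&U}) combined with contravariance of inversion. The only point demanding genuine care is bookkeeping of the $(\cdot)^{-1}$ on the state component and confirming that $(T\Ub(i_{st}))^{-1}=T(\Ub(i_{st})^{-1})$ before the substitution, which is where restricting to interconnections—so that $i_{st}$ is an isomorphism and $\Ub(i_{st})$ a diffeomorphism (\cref{fact:isoHyPhForgetsToDiffeo})—is essential and explains the appearance of $(\sF{HySSub_{int}})^{op}$ rather than all of $\sF{HySSub}$.
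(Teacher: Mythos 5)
Your proposal is correct and follows essentially the same route as the paper: the identity case by functoriality of $T$ and $\Ub$ on identities, and the composition case by functoriality on composites together with reversal of inverses, which is exactly the content of the paper's commuting-diagram chase. Your explicit algebraic substitution and the remark that $T$ commutes with inversion via \cref{fact:functorsPreserveIsos} are just an unwound version of the same argument.
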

\begin{proof} Let $a\xrightarrow{f} b\xrightarrow{g}c$ be hybrid interconnections.  
	We must show that $\dCrl(id_a) = id_{\dCrl(a)}$  and that $\dCrl\big(g\circ f\big)= \dCrl(f)\circ\dCrl(g)$. Fix $(X,\r)\in \dCrl(a)$.       Deterministic  control $\dCrl$ in \cref{def:interconnectionMapOndeterministicHybridControl}, \eqref{eq:definingdcrlOnInterconnection} defines the left vertical arrow in each  diagram $$\begin{array}{ccc} 
	\begin{tikzcd}[column sep = large] \Ub a_{tot}\arrow[d,"?"] \arrow[r,"\Ub id_{a_{tot}}"] & \Ub a_{tot}\arrow[d,"X"] \\ T\Ub a_{st} & T\Ub a_{st}\arrow[l,"T\Ub id^{-1}_{a_{st}}"] \end{tikzcd} & \mbox{and} &  
	\begin{tikzcd}[column sep = large] \Ub a_{tot}\arrow[d,"?"] \arrow[r,"\Ub id_{a_{tot}}"] & \Ub a_{tot}\arrow[d,"\r"] \\ T\Ub a_{st} & \Ub a_{st}.\arrow[l,"\Ub id^{-1}_{a_{st}}"] \end{tikzcd}
\end{array}
	$$ Since  $\Ub$ and $T$  are functors,  $T\Ub id_{a_{st}}^{-1} = id_{T\Ub a_{st}}$ and $\Ub id_{a_{st}} = id_{\Ub a_{st}}$, which implies that $\dCrl(id_a)(X,\r) = (X,\r)$.	
	Now let  $(Z,\t)\in \dCrl(c)$,  and consider the following diagrams 	$$	\begin{array}{ccc}	\begin{tikzcd}[column sep = large]
\Ub a_{tot}\arrow[r,"\Ub f_{tot}"] \arrow[rr,bend left,"\Ub\big(g\circ f\big)_{tot}"]\arrow[d] & \Ub b_{tot} \arrow[r,"\Ub g_{tot}"]\arrow[d] & \Ub c_{tot}  \arrow[d,"Z"] \\
T\Ub a_{st} & T\Ub b_{st}\arrow[l,shift right,"T\Ub f_{st}^{-1}"]& T\Ub c_{st} \arrow[l,shift right,"T\Ub g_{st}^{-1}"]\arrow[ll,bend left,"T\Ub\big(g\circ f)\big)_{st}^{-1}"] 	\end{tikzcd} & \mbox{and} & \begin{tikzcd}[column sep = large]
\Ub a_{tot} \arrow[r,"\Ub f_{tot}"] \arrow[rr,bend left,"\Ub\big(g\circ f\big)_{tot}"]\arrow[d] & \Ub b_{tot}\arrow[r,"\Ub g_{tot}"]\arrow[d] & \Ub c_{tot} \arrow[d,"\r"] \\
\Ub a_{st}  & \Ub b_{st} \arrow[l,"\Ub f_{st}^{-1}"]& \Ub c_{st},  \arrow[l,"\Ub g_{st}^{-1}"]\arrow[ll,bend left,"\Ub\big(g\circ f\big)_{st}^{-1}"] 	\end{tikzcd} 
\end{array}$$ where left and center vertical arrows defined by \eqref{eq:definingdcrlOnInterconnection}. 
 Each subdiagram commutes by functoriality of $T$ and $\Ub$ (\cref{remark:functorialityT&U}), so  $\dCrl(g)\big(\dCrl(f) (Z,\t)\big)  = \dCrl\big(g\circ f\big) (Z,\t)$, and $\dCrl(g)\circ  \dCrl(f)  = \dCrl\big(g\circ f\big)$. 
\end{proof}


\subsubsection{Monoidality of functor $\Ub$}
We  turn to what will be a key component for networks of deterministic  hybrid systems: the forgetful functor $\Ub:\sF{HyPh}\rightarrow \sF{Man}$ is a \textit{monoidal} functor.  We have already defined the monoidal structure of $\sF{HyPh}$ (\cref{def:monoidalCategoryHyPh}). That of $\sF{Man}$ is similarly constructed: we define the monoidal product $\otimes_{\sF{Man}}$ of $\sF{Man}$ to be the cartesian product of manifolds, and the monoidal unit $1_{\sF{Man}}$ to be a terminal object, a one point discrete manifold. 

First a lemma.

\begin{lemma}\label{lemma:PUUPLite}
	There is an isomorphism $\g_{(a,b)}:\Ub(a\times b)\rightarrow \Ub a\times \Ub b$, natural in $a$ and $b$. 
\end{lemma}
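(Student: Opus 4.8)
The plan is to unwind the definition of the forgetful functor $\Ub$ (\cref{prop:ForgetfulFunctor}), which sends a hybrid phase space to the coproduct of manifolds indexed over its nodes, and to compare the two coproducts that arise from $\Ub(a\times b)$ and $\Ub a\times \Ub b$. By \cref{remark:ProductIngredients}, the source category of $a\times b$ is $\Sb^{a\times b}=\Sb^a\times \Sb^b$, so its set of nodes is $\Sb^{a\times b}_0=\Sb^a_0\times \Sb^b_0$, and the manifold assignment on a node $(\fc,\fd)$ is $(a\times b)(\fc,\fd)=a(\fc)\times b(\fd)$ (c.f.\ \eqref{eq:BinProductManifoldAssignment}). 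Therefore
$$\Ub(a\times b)=\discats_{(\fc,\fd)\in \Sb^a_0\times \Sb^b_0} a(\fc)\times b(\fd),$$
whereas
$$\Ub a\times \Ub b=\left(\discats_{\fc\in \Sb^a_0} a(\fc)\right)\times \left(\discats_{\fd\in \Sb^b_0} b(\fd)\right).$$
These are precisely the two sides of the canonical map from a coproduct of products to a product of coproducts, specialized to the index sets $J=\{a,b\}$, $K_a=\Sb^a_0$, $K_b=\Sb^b_0$.

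First I would invoke \cref{prop:canonicalMapCoproduct2Product} to obtain the canonical map $\Omega$ between these two objects, and then appeal to \cref{prop:coproductProductCommuteInSet} to conclude that $\Omega$ is a bijection of the underlying sets. The subtlety is that \cref{prop:coproductProductCommuteInSet} is stated for $\fC=\sF{Set}$, whereas here the objects are manifolds; so the next step is to promote the set bijection to a diffeomorphism. This follows because the canonical injections of a coproduct of manifolds are open embeddings (\cref{prop:coproductCanonicalInjectionOpen}) and products of open embeddings are open embeddings (\cref{prop:productOpenEmbeddings}): the map $\Omega$ is built entirely out of canonical projections of products and canonical injections of coproducts, each of which is smooth in $\sF{Man}$, so $\Omega$ and its inverse are smooth, making $\Omega$ a diffeomorphism. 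I would set $\g_{(a,b)}\defeq \Omega$.

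It then remains to verify naturality in $a$ and $b$, i.e.\ that for morphisms $a\xrightarrow{f}a'$ and $b\xrightarrow{g}b'$ of hybrid phase spaces the square
$$\begin{tikzcd}[column sep = large]
\Ub(a\times b)\arrow[r,"\g_{(a,b)}"]\arrow[d,swap,"\Ub(f\times g)"] & \Ub a\times \Ub b\arrow[d,"\Ub f\times \Ub g"]\\
\Ub(a'\times b')\arrow[r,"\g_{(a',b')}"] & \Ub a'\times \Ub b'
\end{tikzcd}$$
commutes. I expect this to be the main obstacle, though a formal rather than a deep one: both $\g$ and $\Ub$ on morphisms are defined by universal properties, so commutativity reduces to checking equality after precomposing with each canonical coproduct injection $in_{(\fc,\fd)}$ and postcomposing with each canonical product projection $p_{\fc'}$, at which point both legs unwind to the same composite of the component maps $f_\fc$, $g_\fd$ with injections and projections. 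Since a map out of a coproduct is determined by its restrictions to the summands and a map into a product by its components, this diagram-chase pins down both composites uniquely and shows they agree. This is the same bookkeeping used in the proof of \cref{prop:canonicalMapCoproduct2Product} and in the functoriality checks for $\Ub$ in \cref{prop:ForgetfulFunctor}, so I would carry it out by citing functoriality of coproduct and product rather than recomputing indices by hand.
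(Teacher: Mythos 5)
Your proposal follows essentially the same route as the paper's proof: identify $\Ub(a\times b)$ and $\Ub a\times\Ub b$ as a coproduct of products and a product of coproducts, take $\g_{(a,b)}$ to be the canonical map $\Omega$ of \cref{prop:canonicalMapCoproduct2Product}, use \cref{prop:coproductProductCommuteInSet} for bijectivity, upgrade to a diffeomorphism via \cref{prop:coproductCanonicalInjectionOpen} and \cref{prop:productOpenEmbeddings}, and check naturality by the universal properties. One small caution: smoothness of the inverse does not follow merely from $\Omega$ being ``built out of smooth canonical maps'' (the inverse requires element-chasing in $\sF{Set}$); the correct step, which your cited lemmas do support and which the paper uses, is that $\Omega$ restricts to an open embedding on each summand, hence is a local diffeomorphism, hence a diffeomorphism since it is bijective.
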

In other words,  there is natural isomorphism $$  
 \nat{\sF{HyPh\times HyPh}}{\sF{HyPh}}{\Ub\big((\cdot) \times (\cdot)\big)}{\Ub(\cdot)\times \Ub(\cdot)}{\g}$$ between bifunctors  $\Ub\big((\cdot) \times (\cdot)\big), \, \Ub(\cdot)\times \Ub(\cdot):\sF{HyPh}\times \sF{HyPh}\rightarrow \sF{HyPh}.$  Thus, for   pair of maps $a\xrightarrow{f} a'$, $ b\xrightarrow{g} b'$ of hybrid phase spaces, we have commuting diagram $$\begin{tikzcd}[row sep = large, column sep = large]
	\Ub(a\times b)\arrow[d,swap,"\Ub\big(f\times g\big) "]\arrow[rr,"\g_{(a{,}b)}"] & &  \Ub a\times \Ub b\arrow[d,"\Ub f\times \Ub g"]
	\\
	\Ub(a'\times b')\arrow[rr,"\g_{(a'{,}b')}"] & &  \Ub a'\times \Ub b'.
\end{tikzcd}$$

  \begin{proof} Recall that $\Ub(a)\defeq \discats_{\fc\in \Sb^a}a(\fc)$ is a coproduct in the category of manifolds.  We  define the map  $\g_{(a,b)}:\Ub(a\times b)\rightarrow \Ub a \times \Ub b$ as the  canonical map $\Omega$ in  \cref{prop:canonicalMapCoproduct2Product}. As a map of \textit{sets}, $\g_{(a,b)}$ is a bijection (\cref{prop:coproductProductCommuteInSet}).  Therefore, we must show that this map is smooth.

    Recall the variant construction of $\g\defeq\g_{(a,b)}$ in \eqref{eq:constructVarOmega1}: the map (drop subscript $(a,b)$) $$\g:\discats_{(\fc,\fd)\in \Sb^{a\times b}_0} (a\times b)(\fc,\fd)\rightarrow \left(\discats_{\fc\in \Sb^a_0}a(\fc)\right)\times \left(\discats_{\fd\in \Sb^b_0}b(\fd)\right) $$ is  uniquely defined by collection of maps $$\left\{ a(\fc')\times b(\fd')\xrightarrow{\g_{(\fc',\fd')}} \left(\discats_{\fc\in \Sb^a_0}a(\fc)\right)\times\left(\discats_{\fd\in\Sb^b_0} b(\fd)\right)\right\}_{\fc'\in \Sb^a_0,\, \fd'\in \Sb^b_0}.$$
And each map $\g_{(\fc',\fd')}$ is uniquely defined by maps $$\begin{array}{lll} a(\fc')\times b(\fd') \xrightarrow{\g_{(\fc',\fd')}^1} \discats_{\fc\in \Sb^a_0}a(\fc) & \mbox{and} & a(\fc')\times b(\fd') \xrightarrow{\g_{(\fc',\fd')}^2} \discats_{\fd\in \Sb^b_0}b(\fd).\end{array}$$  These maps are defined by $\g_{(\fc',\fd')}^1\defeq i_{\fc'}\circ p_1$ and $\g_{(\fc',\fd')}^2\defeq i_{\fd'}\circ p_2$ where $i_{\fc'}:a(\fc')\hookrightarrow\Ub a$ is the canonical injection and $p_1:a(\fc')\times b(\fd')\rightarrow a(\fc')$ is the canonical projection. They induce map $\g_{(\fc',\fd')} = i_{\fc'}\times i_{\fd'}$.    Since both  $i_{\fc'}:a(\fc')\hookrightarrow\Ub a$ and $i_{\fd'}:b(\fd')\hookrightarrow\Ub b$ are open embeddings (\cref{prop:coproductCanonicalInjectionOpen}), the  induced map $\g_{(\fc',\fd')}:a(\fc')\times b(\fd')\hookrightarrow \Ub a\times \Ub b$ is also an open embedding (\cref{prop:productOpenEmbeddings}).  Consequently, the map $\g$ in diagram $$\begin{tikzcd}[column sep = large]
	\Ub(a\times b)\arrow[r,dashed,"\g"] & \Ub a\times \Ub b\\
	a(\fc')\times b(\fd')\arrow[u,hookrightarrow,"i_{\fc'{,}\fd'}"] \arrow[ur,hookrightarrow,swap,"i_{\fc'}\times i_{\fd'}"]  & 
\end{tikzcd}$$
is also an open embedding.  Indeed, every $x\in \Ub(a\times b)$ is contained in an open set $\Oc_x$ which is wholly contained in the image of $i_{\fc',\fd'}$.   Thus $\g:\Ub(a\times b)\rightarrow\Ub a\times \Ub b$ is a local diffeomorphism (e.g.\ \cite[Proposition 5.1]{leeManifolds} and inverse function theorem), and therefore a diffeomorphism.

For naturality, let $a\xrightarrow{f} a'$ and $b\xrightarrow{g} b'$ be two maps of hybrid phase spaces. We must show that the diagram $$\begin{tikzcd}[column sep = large]
	\Ub(a\times b)\arrow[r,"\g_{(a{,}b)}"]\arrow[d,swap,"\Ub\big(f\times g\big)"] & \Ub a\times \Ub b\arrow[d,"\Ub f\times \Ub g"]\\
	\Ub(a'\times b')\arrow[r,"\g_{(a'{,}b')}"] & \Ub a'\times \Ub b'
\end{tikzcd}$$ commutes. 

The diagram 
$$\begin{tikzcd}[column sep = large]
	a\times b\arrow[r,"p_a"] \arrow[d,swap,"f\times g"] & a\arrow[d,"f"] \\a'\times b'\arrow[r,"p_{a'}"] & a'
\end{tikzcd}$$ in $\sF{HyPh}$ commutes. Therefore, the diagram $$\begin{tikzcd}[column sep = large]
	\Ub(a\times b)\arrow[r,"\Ub(p_a)"] \arrow[d,swap,"\Ub\big(f\times g\big)"] & \Ub a\arrow[d,"\Ub f"] \\ \Ub(a'\times b')\arrow[r,"\Ub(p_{a'})"] & \Ub a'
\end{tikzcd}$$ commutes as well, since $\Ub$ is a functor (\cref{lemma:functorsPreserveDiagrams}, \cref{prop:ForgetfulFunctor}).  There is an analogous commuting diagram, where $b$, $b'$, $g$ replace $a$, $a'$, and $f$, respectively.

Therefore, in diagram $$\begin{tikzcd}
	\Ub(a\times b) \arrow[drr,shift left,bend left,"\upsilon_1"] \arrow[drr,shift right,bend left,swap,"\upsilon_2"]\arrow[ddr,shift left,bend right,"\z_1"]\arrow[ddr,shift right,bend right,swap,"\z_2"] \arrow[dr,shift left,dashed,"\chi_1"] \arrow[dr,dashed, shift right, swap,"\chi_2"] & & \\ 
	& \Ub a'\times \Ub b' \arrow[r,"p_{\Ub b'}"]\arrow[d,"p_{\Ub a'}"] & \Ub b'\\
	& \Ub a', & 
\end{tikzcd}$$ where $$\begin{array}{llll} \upsilon_1  & \defeq  \Ub(p_{b'}) \circ \Ub\big(f\times g\big), & \upsilon_2 & \defeq \Ub g\circ \Ub(p_b)\\  \z_1 & \defeq \Ub(p_{a'}) \circ \Ub\big(f\times g\big), & \z_2 & \defeq \Ub f\circ \Ub(p_a), 	
\end{array}$$ the canonically induced maps $\chi_1= \chi_2$ are equal since $\upsilon_1 = \upsilon_2$ and $\z_1 = \z_2$. In other words, $$\begin{array}{lll}\chi_1= \g_{(a',b')}\circ \Ub\big(f\times g\big) & \mbox{and} & \chi_2 = \big(\Ub f\times \Ub g\big)\circ \g_{(a,b)},\end{array}$$  proving naturality.   \end{proof}

We restate \cref{lemma:PUUPLite} in a way which will be directly useful to us later. 
\begin{prop}\label{prop:uFromHyPh2ManIsMonoidal}
 The forgetful functor $\Ub:(\sF{HyPh},\times, 1_{\sF{HyPh}})\rightarrow(\sF{Man},\times  , 1_{\sF{Man}})$ is a strong monoidal functor (\cref{def:strongMonoidalFunctor}). We denote the isomorphism $\eta_{(a,b)}:\Ub(a)\times \Ub(b) \xrightarrow{\sim} \Ub(a\times b)$, $\eta = \g^{-1}$ (\cref{fact:inverseOfNaturalIsoIsNatural}). 
\end{prop}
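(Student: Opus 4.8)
The plan is to restate \cref{lemma:PUUPLite} in the language of monoidal functors (\cref{def:strongMonoidalFunctor}), which requires me to produce two natural isomorphisms and verify that they are genuinely natural. The first is the comparison morphism $\eta_{(a,b)}:\Ub(a)\times\Ub(b)\xrightarrow{\sim}\Ub(a\times b)$; the second is a morphism $e:1_{\sF{Man}}\xrightarrow{\sim}\Ub(1_{\sF{HyPh}})$ on the monoidal units. The heavy lifting for the first isomorphism is already completely done in \cref{lemma:PUUPLite}: it produces a diffeomorphism $\g_{(a,b)}:\Ub(a\times b)\xrightarrow{\sim}\Ub a\times\Ub b$ together with the naturality square. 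So my plan is simply to set $\eta_{(a,b)}\defeq\g_{(a,b)}^{-1}$ and observe, by \cref{fact:inverseOfNaturalIsoIsNatural}, that the inverse of a natural isomorphism is again a natural isomorphism, so $\eta$ is natural in both $a$ and $b$.

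First I would dispatch the unit comparison. The monoidal unit $1_{\sF{HyPh}}$ (\cref{fact:terminalHyPh}) is the hybrid phase space whose source category is a one-object discrete category and which assigns the one-point discrete manifold to that node. Applying $\Ub$ takes the coproduct over the single node $\Sb^{1_{\sF{HyPh}}}_0$, i.e. $\Ub(1_{\sF{HyPh}}) = \discats_{\fs}(\ast)$, which is just the one-point manifold. But the one-point discrete manifold is exactly $1_{\sF{Man}}$, a terminal object in $\sF{Man}$, so there is a canonical (indeed unique) isomorphism $e:1_{\sF{Man}}\xrightarrow{\sim}\Ub(1_{\sF{HyPh}})$. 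This step is essentially bookkeeping once I unwind the definition of $\Ub$ on the terminal object.

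Next I would verify the coherence conditions that a strong monoidal functor must satisfy: compatibility of $\eta$ with the associators and with the left/right unitors. Here the key observation—already flagged in the paragraph preceding \cref{prop:uFromHyPh2ManIsMonoidal}—is that both $\sF{HyPh}$ and $\sF{Man}$ are \emph{cartesian} monoidal (\cref{def:monoidalCategoryHyPh}, and the stipulated cartesian structure on $\sF{Man}$), with monoidal products given by categorical products and units given by terminal objects. For cartesian monoidal categories, the associators and unitors are themselves the canonical isomorphisms induced by the universal property of products, and the comparison map $\eta$ (being the canonical coproduct-to-product map $\g^{-1}$ from \cref{prop:canonicalMapCoproduct2Product}) is likewise canonical. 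Consequently each coherence hexagon/triangle commutes because \emph{all} its edges are maps uniquely determined by the universal property of product into a common target; two maps into a product agreeing after postcomposition with both projections must be equal. So I would argue coherence by the now-standard ``it suffices to check after projecting, and both composites project identically'' technique, exactly as in the proofs of \cref{prop:productIsosIsIso} and \cref{prop:canonicalMapCoproduct2Product}.

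The main obstacle—though a mild one—is that the coherence verification, while conceptually forced by cartesianness, is notationally heavy if done by brute force, since it involves iterated products $\Ub(a)\times\Ub(b)\times\Ub(c)$ and three applications of $\eta$ threaded through the associator. I would therefore not grind through it elementwise; instead I would state that coherence follows formally from the fact that $\eta$ is the canonical comparison between two product-preserving (here, product-to-coproduct) constructions and invoke the universal property, in the spirit of \cref{lemma:arrowProductPreservingToo} where naturality and functoriality alone—not element chasing—did the work. The substantive content (that $\eta$ exists, is an isomorphism of manifolds, and is natural) is entirely carried by \cref{lemma:PUUPLite}; what remains is the formal packaging as a strong monoidal functor, so I expect the proof to be short, citing \cref{lemma:PUUPLite} and \cref{fact:inverseOfNaturalIsoIsNatural} for the main comparison and \cref{fact:terminalHyPh} for the unit.
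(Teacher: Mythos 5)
Your proposal is correct and follows essentially the same route as the paper, whose proof is a one-line citation of \cref{lemma:PUUPLite}, \cref{fact:inverseOfNaturalIsoIsNatural}, and the definition of strong monoidal functor. Note that the paper's \cref{def:strongMonoidalFunctor} does not actually include the associator/unitor coherence axioms, so the coherence verification you sketch (while correct and standard) is not formally required by the paper's own definition; the unit comparison and $\eta=\g^{-1}$ already suffice.
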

\begin{proof}
	This follows immediately from \cref{def:strongMonoidalFunctor}, \cref{fact:inverseOfNaturalIsoIsNatural}, and \cref{lemma:PUUPLite}. 
\end{proof}
We will additionally need the following fact:
\begin{remark}\label{remark:mapProductOfControlsToControlPoduct}\label{prop:mapFromProductdCrl2dCrlProductLite}
	There is a map $\scP_{a,b}: \dCrl(a)\times \dCrl(b)\rightarrow\dCrl(a\times b)$. 	Recall that there is  natural isomorphism $\g_{(a,b)}:\Ub(a\times b)\xrightarrow{\sim}\Ub a\times \Ub b$  (\cref{lemma:PUUPLite}).  Let  $$\left((X,\r),(Y,\s)\right) \in \dCrl(a)\times \dCrl(b),$$  $z\in \Ub(a\times b)$, and set $(x,y)\defeq\g_{(a,b)_{tot}}(z) \in \Ub a_{tot}\times \Ub b_{tot}$.   We define \small\begin{equation}\label{eq:defineProductOfControl} \begin{array}{ll} \scP_{a,b}\left(\big(X,\r\big),\big(Y,\s\big)\right)(z) & \defeq \left(T\g_{(a,b)_{st}}^{-1}\circ \big(X\times Y\big) \circ \g_{(a,b)_{tot}}(z),\; \g_{(a,b)_{st}}^{-1}\circ \big(\r\times \s \big) \circ \g_{(a,b)_{tot}}(z)\right)\\
& = \left( T\g_{(a,b)_{st}}^{-1} \big(X(x),Y(y)\big), \g_{(a,b)_{st}}^{-1}\big(\r(x),\s(y)\big)\right). 
\end{array}\end{equation}\normalsize
Compare this definition with \eqref{eq:interconnectControl}. The verification that $\scP_{a,b}\big((X,\r),(Y,\s)\big) \in \dCrl(a\times b)$---that $\varpi_{\Ub (a\times b)_{st}}\circ T\g_{(a,b),st}^{-1}\big((X(x),Y(y)\big) = \Ub\big(p_a\times p_b\big)$ and a similar check for the jump map---is formally identical to (the proof of) \cref{prop:dCrlInterconnectIsMap}, considering this time diagram 
\small\begin{equation}\label{eq:diagramProductSortOfLikeInterconnect}
\begin{tikzcd}[column sep = huge, row sep = huge]
	\Ub(a\times b)_{tot}\arrow[dd,"\Ub p_{a\times b}"]\arrow[rr,"\g_{tot}"]\arrow[dr,"\scP"]& & \Ub a_{tot}\times b_{tot}\arrow[dd,near start, swap,"\Ub p_a\times\Ub p_b"]\arrow[dr,"v\times w"] & \\
	& \Tb(a\times b)_{st}\arrow[rr,near start,shift right,swap,"\Tb\g_{st}"]\arrow[dl,"\varpi_{a\times b}"]& & \Tb a_{st}\times \Tb b_{st}\arrow[ll,near start, shift right, swap,"\Tb\eta_{st}"] \arrow[dl,"\varpi_a\times \varpi_b"]\\
	\Ub(a\times b)_{st} & & \Ub a_{st}\times \Ub b_{st}\arrow[ll,"\eta_{st}"] & 
\end{tikzcd}
\end{equation}\normalsize where  $v = (X,\r)$, $w = (Y,\s)$, $\scP= \scP_{a,b}(v,w)$, $\g_\bullet:\Ub(a\times b)_\bullet\xrightarrow \Ub a_\bullet \times \Ub b_\bullet$,  $\eta_\bullet$ its inverse, and $\Tb \g = T\g\times \g$.  
\end{remark}

We extend \cref{lemma:PUUPLite} and \cref{remark:mapProductOfControlsToControlPoduct} to arbitrary finite products.  The arguments are essentially the same, and therefore omitted.  
\begin{prop}\label{lemma:PUUPGeneral}
	Let $\fX$ be a finite set and $a_\fx$ a hybrid phase space for each $\fx\in \fX$.  Then there is isomorphism $$\g_\fX:\Ub\left(\discatp_{\fx\in \fX} a_\fx\right)\xrightarrow{\sim}\discatp_{\fx\in \fX}\Ub a_\fx$$ natural in $a_\fx$.  
\end{prop}

\begin{prop}\label{prop:mapFromProductdCrl2dCrlProductGeneral}
	Let $\fX$ be a finite set and $a_\fx$ a hybrid phase space for each $\fx\in \fX$.  Then there is map $\scP_\fX:\discatp_{\fx\in \fX}\dCrl(a_\fx)\rightarrow \dCrl\left(\discatp_{\fx\in \fX} a_\fx\right)$.  Consider  collection $(v_\fx,\r_\fx)_{\fx\in \fX}\in \discatp_{\fx\in \fX}\dCrl(a_\fx)$, and let  $(z_\fx)_{\fx\in \fx}\defeq \g_{\fX,tot}(z)$ for $z\in \Ub\left(\discatp_{\fx\in \fX}a_\fx\right)$ (\cref{lemma:PUUPGeneral}). The the map is defined thus: \begin{equation}\label{eq:definingProductOfdeterministicControlArbitrary}
		\begin{array}{ll}\scP_\fX\left((v_\fx,\r_\fx)_{\fx\in \fX}\right)(z) & \defeq \left(T\g_{\fX,st}^{-1}\circ\left(\discatp_{\fx\in \fX} v_\fx\right)\circ \g_{X,tot}(z), \g_{\fX,st}^{-1}\circ \left(\discatp_{\fx\in \fX}\r_\fx\right)\circ \g_{X,tot}(z)\right) \\
		& = \left(T\g_{X,st}^{-1}\discatp_{\fx\in \fX}v_\fx(z_\fx), \g_{X,st}^{-1}\left(\discatp_{\fx\in \fX}\r_\fx(z_\fx)\right)\right).
		\end{array}
	\end{equation}
\end{prop}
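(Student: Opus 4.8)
The plan is to prove \cref{prop:mapFromProductdCrl2dCrlProductGeneral} by generalizing the binary case \cref{prop:mapFromProductdCrl2dCrlProductLite} (equivalently \cref{remark:mapProductOfControlsToControlPoduct}) verbatim, since the statement itself notes the arguments are ``essentially the same.'' The content of the claim is that the formula \eqref{eq:definingProductOfdeterministicControlArbitrary} actually lands in $\dCrl\left(\discatp_{\fx\in \fX} a_\fx\right)$, i.e.\ that the pair $\scP_\fX\left((v_\fx,\r_\fx)_{\fx\in \fX}\right)$ is a legitimate deterministic control: the vector-field component satisfies the tangent-bundle projection condition $\t_{(\cdot)_{st}}\circ X = \Ub p$, and the jump component satisfies the relation-inclusion condition \eqref{eq:ratEq}. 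First I would fix the natural isomorphism $\g_\fX:\Ub\left(\discatp_{\fx\in \fX} a_\fx\right)\xrightarrow{\sim}\discatp_{\fx\in \fX}\Ub a_\fx$ supplied by \cref{lemma:PUUPGeneral}, together with its inverse $\eta_\fX$, noting that $\g_\fX$ respects the split into total and state spaces (so we have $\g_{\fX,tot}$ and $\g_{\fX,st}$, each a diffeomorphism), and that these are compatible with the projections $\Ub p_{\fx}$ by naturality.

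The core computation is exactly the diagram chase used in \cref{prop:dCrlInterconnectIsMap}, reorganized around diagram \eqref{eq:diagramProductSortOfLikeInterconnect} but with the finite product $\discatp_{\fx\in\fX}$ replacing the binary product. Concretely, I would verify the vector-field condition by the string of equalities mirroring the proof of \cref{prop:interconnectionMapOndeterministicHybridControl}: starting from $\Ub\left(\discatp_{\fx\in\fX} p_{a_\fx}\right)$, use naturality of $\g_\fX$ (so that $\g_{\fX,st}\circ \Ub p = \left(\discatp \Ub p_\fx\right)\circ \g_{\fX,tot}$), then that each $(a_\fx,v_\fx)$ satisfies $\t_{a_{\fx,st}}\circ v_\fx = \Ub p_{a_\fx}$, then functoriality of $T$ (so that $T\g_{\fX,st}\circ T\g_{\fX,st}^{-1}=id$), and finally naturality of the tangent-bundle projection $\t$ (\cref{fact:naturalityOfProjTangentBundle}) together with the fact that $\g_{\fX,st}$ is a diffeomorphism. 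This yields $\varpi_{\left(\discatp a_\fx\right)_{st}}\circ T\g_{\fX,st}^{-1}\circ\left(\discatp v_\fx\right)\circ\g_{\fX,tot}=\Ub\left(\discatp p_{a_\fx}\right)$, which is the required projection compatibility. For the jump map I would argue pointwise as in \cref{remark:checkJumpMapOpen}: given $z$, set $(z_\fx)_{\fx\in\fX}\defeq\g_{\fX,tot}(z)$; for each $\fx$ the hypothesis $(v_\fx,\r_\fx)\in\dCrl(a_\fx)$ supplies an edge $\g_{z_\fx}\in\Sb^{a_{\fx,st}}_1$ with $\left(\Ub p_{a_\fx}(z_\fx),\r_\fx(z_\fx)\right)\in a_{\fx,st}(\g_{z_\fx})$; assembling the tuple $(\g_{z_\fx})_{\fx\in\fX}$ gives an edge of $\Sb^{\left(\discatp a_\fx\right)_{st}}_1$ whose assigned relation contains the required pair, using the product-relation formula (the arbitrary-arity analog of \eqref{eq:BinProductRelationAssignment}) and the fact that $\g_{\fX,st}$ transports relations correctly. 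This establishes \eqref{eq:ratEq}.

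The main obstacle I anticipate is purely notational bookkeeping rather than conceptual difficulty: keeping the total/state decorations straight across the finite product, and correctly invoking the arbitrary-arity version of the product-of-relations assignment and of the canonical isomorphism $\Omega$ (\cref{prop:canonicalMapCoproduct2Product}, \cref{prop:coproductProductCommuteInSet}) that underlies \cref{lemma:PUUPGeneral}. The one genuine point requiring care is that the product-relation assignment for $\discatp_{\fx\in\fX} a_\fx$ is defined via the iterated canonical isomorphisms $\m$ reshuffling factors (as in \eqref{eq:mapIsoForRelations}), so I must confirm that the tuple of witnessing edges really indexes a single edge in the product source category $\Sb^{\discatp a_\fx}_1=\discatp_{\fx\in\fX}\Sb^{a_\fx}_1$ and that the reshuffling isomorphism carries $\discatp_{\fx\in\fX} a_{\fx,st}(\g_{z_\fx})$ onto the relation $\left(\discatp a_{\fx,st}\right)\!\left((\g_{z_\fx})_\fx\right)$; this is the finite-arity generalization of the single application of $\m_\eta$ in the binary proof. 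Since each of these ingredients is already established for binary products and the product construction is associative up to the coherence isomorphisms of the cartesian monoidal structure (\cref{def:monoidalCategoryHyPh}, \cref{lemma:productMonoidalIsMonoidal}), the extension is routine, and I would simply remark that the verification is formally identical to \cref{prop:mapFromProductdCrl2dCrlProductLite}, carried out with $\discatp_{\fx\in\fX}$ in place of the binary product.
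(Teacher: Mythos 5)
Your proposal is correct and follows exactly the route the paper intends: the paper omits the proof of this proposition, stating only that the argument is ``essentially the same'' as the binary case in \cref{remark:mapProductOfControlsToControlPoduct}, which in turn reduces to the diagram chase of \cref{prop:dCrlInterconnectIsMap} together with the pointwise edge-witness argument for the jump map. Your expansion of that reduction---verifying projection compatibility via naturality of $\g_\fX$ and of $\t$, and assembling the tuple of witnessing edges into a single edge of $\Sb^{\discatp_{\fx\in\fX}a_\fx}_1$ via the reshuffling isomorphism---is precisely the omitted content.
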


\subsubsection{Monoidality of Deterministic Control}
Before stating the main theorem, we conclude this subsection by showing that $\dCrl$ on interconnection is monoidal.  We already showed in \cref{prop:dCrlInterconnectFunctorial} that $\dCrl$ is a functor. We show the functor also preserves monoidal unit and that the map $\scP_{a,b}:\dCrl(a)\times \dCrl(b)\rightarrow\dCrl(a\times b)$ is natural in $a$ and $b$ (c.f. \cite[\S11.2]{maclane}).  

\begin{prop}\label{prop:dCrlInterconnectMonoidal}
	The functor $\dCrl:\left(\sF{HySSub_{int}},\times,1\right)^{op}\rightarrow \big(\sF{Set},\times,1_{\sF{Set}}\big)$ from \cref{prop:dCrlInterconnectFunctorial} is  monoidal  (\cref{def:monoidalCat}).  
\end{prop}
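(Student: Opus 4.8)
The plan is to supply the two pieces of coherence data required by \cref{def:monoidalFunctor}—a structure transformation and a unit morphism—and then check naturality and the coherence axioms. The structure transformation is already in hand: the map $\scP_{a,b}\colon\dCrl(a)\times\dCrl(b)\to\dCrl(a\times b)$ constructed in \cref{prop:mapFromProductdCrl2dCrlProductLite} (eq.\ \eqref{eq:defineProductOfControl}) will play the role of $\eta$. Because $\dCrl$ is contravariant on interconnections, I regard it as a covariant functor on $(\sF{HySSub_{int}})^{op}$, whose monoidal product is still $\times$; all naturality squares below are therefore indexed by interconnections $f\colon a\to a'$, $g\colon b\to b'$, read backwards.

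First I would produce the unit morphism $e\colon 1_{\sF{Set}}\to\dCrl(1_{\sF{HySSub}})$. The monoidal unit $1_{\sF{HySSub}}$ is the terminal object (\cref{fact:HySSubIsMonoidal}), whose underlying manifold $\Ub(1_{\sF{HySSub}})$ is a one-point manifold; hence $T\Ub(1_{\sF{HySSub}})$ is again a single point, and both the vector-field component and the jump map of any element of $\dCrl(1_{\sF{HySSub}})$ are forced. Thus $\dCrl(1_{\sF{HySSub}})$ is a singleton, i.e.\ terminal in $\sF{Set}$, and there is a unique (iso)morphism $e$ out of $1_{\sF{Set}}$; this even shows that $\dCrl$ is \emph{strong} monoidal on the unit.

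Next I would verify naturality of $\scP$. Note first that $f\times g$ is again an interconnection, since its state map $f_{st}\times g_{st}$ is an isomorphism (\cref{prop:productIsosIsIso}), so $\dCrl(f\times g)$ is defined via \eqref{eq:definingdcrlOnInterconnection}. The required square asserts $\dCrl(f\times g)\circ\scP_{a',b'}=\scP_{a,b}\circ\big(\dCrl(f)\times\dCrl(g)\big)$. Expanding both sides through the defining formulas \eqref{eq:defineProductOfControl} and \eqref{eq:definingdcrlOnInterconnection}, each side is a composite of the tangent and forgetful functors applied to the interconnection isomorphisms, together with the comparison maps $\g_{(a,b)}$; the square then commutes by functoriality of $T$ and $\Ub$ and, crucially, by the naturality of $\g$ established in \cref{lemma:PUUPLite}, which intertwines $\Ub(f\times g)$ with $\Ub f\times\Ub g$. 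This is the same style of diagram chase already carried out in \cref{prop:dCrlInterconnectIsMap} and \cref{prop:dCrlInterconnectFunctorial}.

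Finally, the associativity and unit coherence axioms of \cref{def:monoidalFunctor} must be checked. Since the monoidal products on $\sF{HySSub_{int}}$ and on $\sF{Set}$ are both cartesian (\cref{fact:HySSubIsCartesian}, \cref{def:monoidalProductCartesian}), their associators and unitors are the canonical maps coming from universal properties, and $\scP$ is assembled entirely from the product-preserving natural isomorphism $\g$ underlying the strong monoidal functor $\Ub$ (\cref{prop:uFromHyPh2ManIsMonoidal}); the coherence pentagons and triangles therefore reduce to the coherence already available for $\times$ together with naturality of $\g$, and commute formally. The main obstacle is the naturality check of the third step: one must keep careful track of the contravariance (so that the interconnection isomorphisms appear as $\Ub(i_{st})^{-1}$ on the correct side) and reconcile the two a priori different recipes—interconnection pullback \eqref{eq:definingdcrlOnInterconnection} versus product \eqref{eq:defineProductOfControl}—which meet precisely through the naturality square for $\g$. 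Everything else is formal.
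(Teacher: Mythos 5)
Your proposal is correct and follows essentially the same route as the paper's proof: the unit check via terminality of $\Ub(1_{\sF{HySSub}})$ forcing $\dCrl(1_{\sF{HySSub}})$ to be a singleton, and naturality of $\scP_{a,b}$ established by expanding \eqref{eq:defineProductOfControl} and \eqref{eq:definingdcrlOnInterconnection} and chasing a cube whose faces commute by functoriality of $T$, $\Ub$ and naturality of $\g_{(a,b)}$ from \cref{lemma:PUUPLite}. Your additional sketch of the associativity and unit coherence axioms goes slightly beyond what the paper records (its \cref{def:monoidalFunctor} and proof only supply $e$ and the natural transformation $\scP$), but it is consistent with the argument and does not change the approach.
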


\begin{remark}\label{remark:productInterconnectionIsInterconnect}
We note that  $f\times g$ is an interconnection when both $f$ and $g$ are. This follows directly from \cref{prop:productIsosIsIso}.
\end{remark}

\begin{proof}
We start by verifying that $\dCrl(1_{\sF{HySSub}}) = 1_{\sF{Set}}$: the tangent space $T\Ub(1)$ is the zero space, and hence the  space of maps $\{X:\Ub(1)\rightarrow T\Ub(1)\}$ is the zero space also. Similarly, $\Ub(1)$ terminal implies that a  map $\r:\Ub(1)\rightarrow\Ub(1)$  can only be the identity; there is thus only one map $\r:\Ub(1)\rightarrow\Ub(1)$.   Hence $\dCrl(1_{\sF{HySSub}})=  \{(0,1)\}$, with only one element.
	
	We now show  that there is map $\scP_{a,b}:\dCrl(a)\times \dCrl(b)\rightarrow \dCrl(a\times b)$ natural in $a, b\in \sF{HySSub}$.  Naturality means that the diagram $$\begin{tikzcd}
	\dCrl(a')\times \dCrl(b')\arrow[r,"\scP_{a'{,}b'}"]\arrow[d,swap,"\dCrl(f)\times \dCrl(g)"] & \dCrl(a'\times b')\arrow[d,"\dCrl\big(f\times g\big)"] \\
	\dCrl(a)\times \dCrl(b)\arrow[r,"\scP_{a{,}b}"] & \dCrl(a\times b)
\end{tikzcd}$$ commutes, for hybrid interconnections $a\xrightarrow{f}a'$ and $b\xrightarrow{g}b'$. 

We have already defined the map $\scP_{a,b}$ for hybrid surjective submersions $a$ and $b$   in  \cref{remark:mapProductOfControlsToControlPoduct} as:  	\begin{equation}\label{eq:defineProductOfControlAbstract}
	\scP_{a,b}(\cdot,\cdot) \defeq  \Tb \g_{(a,b)_{st}}^{-1}\circ \big((\cdot)\times (\cdot)\big) \circ 	\g_{(a,b)_{tot}}. 
	\end{equation}
	
	Naturality of $\scP$ follows from naturality of $\gamma$.  Let $a\xrightarrow{f}a' $ and   $b\xrightarrow{g}b' $ be interconnections of hybrid surjective submersions (\cref{def:HybridInterconnection}), and let $(X',\r')\in \dCrl(a')$,  $(Y',\s')\in \dCrl(b')$.  Set  \begin{equation}\label{eq:someDefs}\begin{array}{llll} (X,\r) \defeq \dCrl(f)(X',\r') & \mbox{and} & (Y,\s)\defeq \dCrl(g)(Y',\s') & (\mbox{c.f. \cref{def:interconnectionMapOndeterministicHybridControl}}).\end{array}\end{equation}  We must show that \begin{equation}\label{eq:monoidalNaturality} \dCrl\big(f\times g\big) \left(\scP_{a',b'}\big((X',\r'),(Y',\s')\big)\right)= \scP_{a,b}\left(\big(X,\r\big),\big(Y,\s\big) \right).\end{equation}  Indeed, consider the following diagram, which we claim commutes: 
	
		$$\small \begin{tikzcd}
	\Ub(a'\times b')_{tot}\arrow[rr,"\xi_1"]\arrow[dd,"\xi_2"] & & \Ub a'_{tot}\times \Ub b'_{tot} \arrow[dd,near start,"\xi_5"] & \\
	& \Ub(a\times b)_{tot}\arrow[ul,"\xi_3"] \arrow[rr,near start,"\xi_4"]\arrow[dd,near start,swap,shift right,"\xi_7"] \arrow[dd,near start,shift left,"\xi_7'"] & & \Ub a_{tot}\times \Ub b_{tot}\arrow[ul,"\xi_6"]\arrow[dd,"\xi_8"] \\
	\Tb\Ub(a'\times b')_{st}\arrow[dr,"\xi_{10}"]& & \Tb\Ub a'_{st}\times \Tb\Ub b'_{st} \arrow[dr,"\xi_{11}"]\arrow[ll,near start,"\xi_9"] & \\ 
	& \Tb\Ub(a\times b)_{st} & & \Tb\Ub a_{st}\times \Tb\Ub b_{st},\arrow[ll,"\xi_{12}"] 
	 \end{tikzcd}$$  where \small  
	  $$\begin{array}{ll|ll|ll} \xi_1 & = \g_{(a',b')_{tot}} & \xi_5 & = (X',\r')\times (Y',\s')  & \xi_9 & = \Tb\g_{(a',b')_{st}}^{-1} \\
\xi_2 & = \scP_{a',b'}\big((X',\r'),(Y',\s')\big) & \xi_{6} & = \Ub(f_{tot})\times \Ub(g_{tot}) &  \xi_{10} & = \Tb\Ub\big(f\times g\big)_{st}^{-1} \\
\xi_3 & = \Ub\big(f\times g\big)_{tot} & \xi_7& \stackrel{?}{=} \xi_7' & \xi_{11} &  = \Tb\Ub(f_{st})^{-1}\times \Ub(\g_{st})^{-1} \\
\xi_4 & = \g_{(a,b)_{tot}}  & \xi_8 & =  (X,\r) \times (Y,\s)    &  \xi_{12}& = \Tb\g_{(a,b)_{st}}^{-1}.
\end{array}$$\normalsize

We want to show $\xi_7=\xi_7'$, where $\xi_7= \mbox{left-hand side of}$ \eqref{eq:monoidalNaturality} and $\xi_7' = \mbox{right-hand side of}$ \eqref{eq:monoidalNaturality}. We argue that each square face in the diagram commutes.  First, $\xi_7 = \xi_{10}\circ \xi_2\circ \xi_3$ by definition of $\dCrl\big(f\times g\big)$ (\cref{def:interconnectionMapOndeterministicHybridControl}, \cref{remark:productInterconnectionIsInterconnect}). Equalities $\xi_2 = \xi_9\circ \xi_5\circ \xi_1$ and $\xi_7' = \xi_{12} \circ \xi_8\circ \xi_4$ follow from  definition of $\scP_{(\cdot),(\cdot)} $ (c.f.\ \eqref{eq:defineProductOfControl}). Equality  $\xi_1\circ \xi_3 = \xi_6\circ \xi_4$  follows by naturality of $\g_{tot}$ (c.f.\ \cref{lemma:PUUPLite}).  Similarly,  $\xi_{10}\circ \xi_9= \xi_{12}\circ \xi_{11}$ follows by naturality of $\g_{st}^{-1}$ and functoriality of $\Tb$ (\cref{prop:cdBundleFunctorial}).    Finally,  $\xi_8 = \xi_{11}\circ\xi_5\circ \xi_6$ follows by definition of  $(X,\r)$ and $(Y,\s)$ (c.f.\ \eqref{eq:someDefs}). Since each face in the diagram commutes, this proves that $\xi_7 = \xi_7'$, and hence that $\scP_{a,b}$ is natural in $a$ and $b$. \end{proof}

\section{Networks of Deterministic  Hybrid Open Systems}\label{section:NetworksHybrid}

We now have the ingredients to both define networks of deterministic  hybrid open systems, and state the main theorem (\cref{theorem:mainTheoremFordeterministicHybridSystems}) which says that a collection of morphisms of deterministic open systems induces a morphism of deterministic open systems.  
\begin{definition}\label{def:networkDetHyOS}
	We define a  \textit{network} $\left(\big\{\Hc_{\fX,\fx}\big\}_{\fx\in \fX}, i_{a,\fX}:a\hookrightarrow\discatp_{\fx\in \fX}\Hc_{\fX,\fx}\right)$ \textit{of hybrid open systems} to be a pair where $\Hc_\fX:\fX\rightarrow\sF{HySSub}$ is a list of hybrid surjective submersions (\cref{def:categoryOfLists}, \cref{def:hybridSurjectiveSubmersion}) and $i_{a,\fX}:a\rightarrow\discatp_{\fx\in \fX}\Hc_{\fX,\fx}$ is a hybrid interconnection (\cref{def:HybridInterconnection}). 
\end{definition}

And morphisms:
\begin{definition}\label{def:networkDetHyOSMorphism}
	Let $\left(\big\{\Hc_{\fX,\fx}\big\}_{\fx\in \fX}, i_{a,\fX}:a\hookrightarrow\discatp_{\fx\in \fX}\Hc_{\fX,\fx}\right)$ and $\left(\big\{\Hc_{\fY,\fy}\big\}_{\fy\in \fY}, i_{b,\fY}:b\hookrightarrow\discatp_{\fy\in \fY}\Hc_{\fY,\fy}\right)$ be two networks of hybrid open systems, denoted by $(\Hc_\fX, i_{a,\fX})$ and $(\Hc_\fY, i_{b,\fY})$. We define a \textit{morphism} $$(\Hc_\fX, i_{a,\fX})\xrightarrow{\big((\ph,\Phi),z\big)}(\Hc_\fY, i_{b,\fY})$$ \textit{of networks} to be a pair where $(\ph,\Phi):\Hc_\fX\rightarrow\Hc_\fY$ is a morphism of lists of hybrid surjective submersions (\cref{def:categoryOfLists}) and $z:b\rightarrow a$ is a morphism of surjective submersions (\cref{def:hybridSSubMorphism}) compatible with $(\ph,\Phi)$.  Namely, the diagram $$\begin{tikzcd}[column sep = large]
	\discatp_{\fy\in \fY}\Hc_{\fY,\fy}\arrow[r,swap,"\Pi(\ph{,}\Phi)"{name = f1, below}] & \discatp_{\fx\in \fX}\Hc_{\fX,\fx}\\
	b\arrow[u,hookrightarrow,"i_{b{,}\fY}"] \arrow[r,"z"{name = f2, above}] & a\arrow[u,hookrightarrow,"i_{a{,}\fX}"]\ar[shorten <= 7pt, shorten >= 7pt, Rightarrow, from = f2, to = f1]
\end{tikzcd}$$ commutes, where $\Pi(\ph,\Phi)$ is as in \cref{prop:NietzscheProp}.

\end{definition}

\begin{theorem}\label{theorem:mainTheoremFordeterministicHybridSystems}
	Let \small $$\left(\big\{\Hc_{\fX,\fx}\big\}_{\fx\in \fX}, i_{a,\fX}:a\hookrightarrow \discatp_{\fx\in \fX}\Hc_{\fX,\fx}\right)\;\mbox{and}\left(\big\{\Hc_{\fY,\fy}\big\}_{\fy\in \fY},i_{b,\fY}:b\hookrightarrow\discatp_{\fy\in \fY}\Hc_{\fY,\fy}\right)$$\normalsize be two networks of deterministic  hybrid open systems (\cref{ex:HybridOpenSystemsInstanceOfAbstractOpenSystem}).  
A morphism $$(\Hc_\fX,(\i,\ifrak)_{a,\fX})\xrightarrow{\big((\ph,\Phi),z\big)} (\Hc_\fY,(\i,\ifrak)_{b,\fY})$$  of networks  of deterministic  hybrid open systems (\cref{def:networkDetHyOSMorphism})	induces a 1-morphism
$$\begin{tikzcd}[column sep = large, row sep = large]
\discatp_{\fy\in \fY}\dCrl(\Hc_{\fY,\fy})\arrow[d,swap,"\dCrl (i_{b,\fY})\circ\scP_\fY"]\arrow[r,"\dCrl(\Phi)"{name = foo1, below}] & \discatp_{\fx\in \fX}\dCrl(\Hc_{\fX,\fx})\arrow[d,"\dCrl (i_{a,\fX})\circ \scP_\fX"]\\
\dCrl(b)\arrow[r,"\dCrl(z)"{name =foo2, above}] & \dCrl(a),\ar[shorten <= 7pt, shorten >= 7pt, Rightarrow, from = foo1, to = foo2]
\end{tikzcd}
$$ where $$\dCrl(\Phi) = \left\{\big((w_\fy)_{\fy\in \fY},(v_\fx)_{\fx\in \fX}\big)\in \discatp_{\fy\in \fY}\dCrl(\Hc_{\fY,\fy})\times \dCrl(\Hc_{\fX,\fx}):\, (w_{\ph(\fx)},v_\fx)\in \dCrl(\Phi_\fx)\;\forall \; \fx\in \fX\right\}.$$
Thus $$\left( \dCrl\big(i_{b,\fY} )\left(\scP_\fY\big((w_\fy)_{\fy\in \fY}\big)\right), \dCrl\big(i_{a,\fX}\big)\left(\scP_\fX\big((v_\fx)_{\fx\in \fX}\big)\right)\right)$$ are $\Pi(z)$-related (\cref{def:detHySysMorphism}) whenever $(w_{\ph(\fx)},v_\fx)$ are $\Phi_\fx$-related  for all $\fx\in \fX$. 
\end{theorem}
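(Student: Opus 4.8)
The plan is to decompose the statement into two halves that mirror the paper's own architecture: a ``product'' half and an ``interconnection'' half, then compose them. Concretely, the claim is that the morphism of networks $\big((\ph,\Phi),z\big)$ yields a commuting square (a $1$-morphism in $\sF{Set}^\square$) whose vertical legs are $\dCrl(i_{b,\fY})\circ \scP_\fY$ and $\dCrl(i_{a,\fX})\circ \scP_\fX$. I would first unwind the definition of $\dCrl(\Phi)$ and of $\Pi(z)$-relatedness, so that the target $1$-morphism becomes the concrete assertion: if $(w_{\ph(\fx)},v_\fx)\in \dCrl(\Phi_\fx)$ for every $\fx\in\fX$, then the images under the two composite vertical maps are $\dCrl(z)$-related, i.e.\ form an element of $\dCrl(\Pi(z))$ (using \cref{prop:NietzscheProp} to interpret $\Pi(z)$). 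The key structural inputs are already in place: $\scP_\fX$ exists and is natural (\cref{prop:mapFromProductdCrl2dCrlProductGeneral}, \cref{prop:dCrlInterconnectMonoidal}), $\dCrl$ is functorial on interconnections (\cref{prop:dCrlInterconnectFunctorial}) and monoidal (\cref{prop:dCrlInterconnectMonoidal}), and $\Ub$ is strong monoidal (\cref{prop:uFromHyPh2ManIsMonoidal}).

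First I would establish the product half. Suppose $(w_{\ph(\fx)},v_\fx)$ are $\Phi_\fx$-related for all $\fx\in\fX$; I want to conclude that $\scP_\fY\big((w_\fy)_{\fy\in\fY}\big)$ and $\scP_\fX\big((v_\fx)_{\fx\in\fX}\big)$ are $\Pi(\ph,\Phi)$-related in the sense of $\dCrl$. This is exactly a naturality/compatibility statement for $\scP$ with respect to morphisms of lists, and I would prove it by chasing the defining formula \eqref{eq:definingProductOfdeterministicControlArbitrary} through the isomorphisms $\g_\fX$ of \cref{lemma:PUUPGeneral}: the $\fx$-component of the product control is built from $v_\fx$, the $\ph(\fx)$-component from $w_{\ph(\fx)}$, and $\Phi_\fx$-relatedness of each pair forces relatedness of the assembled products because $\g$ is natural and $\Pi(\ph,\Phi)$ is defined componentwise via the universal property of product (diagram \eqref{eq:NietzscheEq}). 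The relatedness of both the vector-field parts and the jump-map parts reduces, via functoriality of $T$ and $\Ub$, to the componentwise hypotheses, just as in \cref{prop:deterministicControlIsLax}.

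Next I would handle the interconnection half, which is precisely the content of \cref{prop:dCrlInterconnectIsMap} and \cref{prop:dCrlInterconnectFunctorial}: $\dCrl$ applied to an interconnection sends control to control, and since $i_{a,\fX}$ and $i_{b,\fY}$ are interconnections, $\dCrl(i_{a,\fX})$ and $\dCrl(i_{b,\fY})$ are genuine maps of sets. The compatibility square of \cref{def:networkDetHyOSMorphism}, namely $i_{a,\fX}\circ z = \Pi(\ph,\Phi)\circ i_{b,\fY}$, together with (lax) functoriality of $\dCrl$ (\cref{prop:deterministicControlIsLax}, \cref{prop:dCrlInterconnectFunctorial}), lets me transport $\Pi(\ph,\Phi)$-relatedness across the interconnections to $\dCrl(z)$-relatedness of the interconnected controls. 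I would assemble the final square by pasting the product-half square on top of the interconnection-half square: the outer rectangle then reads $\dCrl(z)\circ\big(\dCrl(i_{b,\fY})\circ\scP_\fY\big)$ agreeing (as a relation) with $\big(\dCrl(i_{a,\fX})\circ\scP_\fX\big)\circ\dCrl(\Phi)$, which is exactly the asserted $1$-morphism.

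The main obstacle I anticipate is bookkeeping the two distinct notions of ``relatedness'' simultaneously through the isomorphisms $\g_\fX$ and the map $\Pi(\ph,\Phi)$. Unlike the binary interconnection case, here $\ph$ need not be a bijection, so several indices $\fx$ may map to the same $\fy=\ph(\fx)$, and the universal-property-induced map $\Pi(\ph,\Phi)$ only projects and reindexes; I must check carefully that $\dCrl(\Phi)$-relatedness, which is imposed componentwise over $\fx$, correctly produces $\Pi(\ph,\Phi)$-relatedness of the \emph{assembled} products, with no spurious constraint on $\fy$-components lying outside the image of $\ph$. This is the genuine analog of the laxness subtlety flagged in \cref{remark:deterministicControlIsLAX} and \cref{ex:strictInclusionOfCompositionOfRelations}, and the resolution is to verify relatedness by projecting onto each state factor $\Ub(\Hc_{\fX,\fx})_{st}$ and invoking the componentwise hypothesis, rather than attempting a single global diagram chase. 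Once the two halves are proven as standalone lemmas, their composition is formal, and indeed I would ultimately prefer to derive this theorem as the promised corollary of the abstract \cref{theorem:mainTheoremAbstract} in \cref{ch4}, with the work here amounting to checking that deterministic hybrid open systems instantiate the abstract hypotheses.
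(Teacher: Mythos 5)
Your proposal follows the paper's own architecture exactly: the paper defers the proof to \cref{ch4}, where it instantiates the abstract \cref{theorem:mainTheoremAbstract} --- itself proved by pasting the product half (\cref{prop:openSectionsProductRelatedAreRelated}) onto the interconnection half (\cref{prop:halfOfMainTheorem1stHalf}) --- and then checks that $\dCrl\subseteq\G_\upsilon$ is preserved by $\scP_\fX$ and by $\dCrl$ on interconnections, which are precisely the lemmas you cite. One small caution: in the interconnection half, ``lax functoriality'' by itself yields inclusions pointing the wrong way, and what is actually needed is the explicit formula $\dCrl(i)(Y,\s)=\big(T\Ub i_{st}^{-1}\circ Y\circ\Ub i_{tot},\;\Ub i_{st}^{-1}\circ\s\circ\Ub i_{tot}\big)$ together with invertibility of $i_{st}$, as in the diagram chase of \cref{prop:doubleFunctorASquare2SetSquare}.
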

We prove this theorem in \cref{ch4}. For now, we interpret this result in the example of a map of networks.  As we may understand invariance of a subsystem as a map of systems, so too, we see how to realize  invariance  in a network as a map of networks.

\subsection{Networked Thermostats Example}
We give an example of \cref{theorem:mainTheoremFordeterministicHybridSystems} to show the invariance of networked thermostats.  We consider two rooms with heat flow between each of them, and their own thermostats. 
We define a morphism of lists (\cref{def:categoryOfLists}), then the networks,  then the morphism of networks.  
\begin{example}
	Let $c$ be the hybrid phase space in \cref{example:ThermostatVanilla}, $c(i) = \R\times \{i\}$ for $i = 0,1$, $c(e_{i,1-i}) = \big\{(x,1-i,x',i)\in (\R\times\{0,1\})^2:\, x = x'\big\}$ and define lists of hybrid surjective submersions as follows. Let $\fX = \{1,2\}$, $\fY = \{\star\}$  and define $\Hc_\fX:\fX\rightarrow\sF{HySSub}$, $\Hc_\fY:\fY\rightarrow\sF{HySSub}$ by $\Hc_\fX(i) = \Hc_\fY(\star) = \big( c\times c \xrightarrow{p_1}c\big)$, where $p_1:\sF{HyPh}\rightarrow\sF{HyPh}$ is the projection onto the first factor (\cref{prop:BinProductHyPh}). 
	
	We define morphism of lists $\begin{tikzcd}
	\fX\arrow[r,"\ph"] \arrow[dr,swap,"\Hc_\fX",""{name = foo,above}] & \fY\arrow[d,"\Hc_\fY"]\ar[shorten <= 3pt, shorten >= 3pt, Rightarrow, to = foo,"\Phi"] \\
	 & \sF{HySSub} 
\end{tikzcd}$ by $\ph(i) = \star$ for $i = 1,2$ and $\Phi_i:\Hc_\fX(i)\rightarrow\Hc_\fY(\star)$ by $$\begin{tikzcd}(x,j,x',j')\arrow[d,mapsto,"p_1"]\arrow[r,mapsto,"\Phi_{1,tot}"] & (x,j,-x',1-j')\arrow[d,mapsto,"p_1"] \\
(x,j)\arrow[r,mapsto, "\Phi_{1,st}"] & (x,j)	
\end{tikzcd} \;\mbox{and}\; \begin{tikzcd} (x,j,x',j')\arrow[r,mapsto,"\Phi_{2,tot}"]\arrow[d,mapsto,"p_1"] & (-x,1-j,x',j')\arrow[d,mapsto,"p_1"] \\ (x,j)\arrow[r,mapsto,"\Phi_{2,st}"] & (-x,1-j).\end{tikzcd}
 $$This defines the morphism of lists.  The functorial extension by $\Pi$ to a morphism of hybrid surjective submersions  $\Pi(\ph,\Phi):\Pi(\Hc_\fY)\rightarrow\Pi(\Hc_\fX)$ (\cref{prop:NietzscheProp}, \eqref{eq:NietzscheEq}) is defined by $$\begin{tikzcd}[column sep = large]
	(x,j,x',j')\arrow[r,mapsto,"\Pi(\ph{,}\Phi)_{tot}"]\arrow[d,mapsto] & (x,j,-x',1-j',-x,1-j,x',j')\arrow[d,mapsto]\\
	(x,j)\arrow[r,mapsto,"\Pi(\ph{,}\Phi)_{st}"] & (x,j,-x,1-j).
\end{tikzcd}$$

We now define networks.  As we have a list of hybrid surjective submersions $\Hc_\fX$ and $\Hc_\fY$, we need  interconnections $a\rightarrow\Pi(\Hc_\fY)$ and $b\rightarrow\Pi(\Hc_\fX)$.  Observe that  $\Pi(\Hc_\fY) = \Hc_\fY(\star) = c\times c\xrightarrow{p_1}c$.  We define interconnection $(c\xrightarrow{id_c}c)\xrightarrow{i_{c,\fY}} (c\times c\xrightarrow{p_1}c)$ by $$\begin{tikzcd}[column sep = large]
	(x,j)\arrow[r,mapsto,"i_{c{,}\fY{,}tot}"]\arrow[d,mapsto,"id_c"] & (x,j,x,j)\arrow[d,mapsto,"p_1"] \\ 
	(x,j)\arrow[r,mapsto,"i_{c{,}\fY{,}st}"] & (x,j). 
\end{tikzcd}$$
Similarly, we define $(c\times c\xrightarrow{id_{c\times c}} c\times c)\xrightarrow{i_{c\times c, \fX}} (c\times c\times c\times c \xrightarrow{p_1\times p_1}c\times c) = \Pi(\Hc_\fX)$ by $$\begin{tikzcd}[column sep = large]
	(x,j,x',j')\arrow[r,mapsto,"i_{c\times c{,}\fX{,}tot}"]\arrow[d,mapsto] & (x,j,x',j',x',j',x,j)\arrow[d,mapsto]\\
	(x,j,x',j')\arrow[r,mapsto ,"i_{c\times c{,}\fX{,}st}"] & (x,j,x',j').
\end{tikzcd}$$

Finally, to define morphism of networks, we must define a map $z:(c\xrightarrow{id_c}c)\rightarrow(c\times c\xrightarrow{id_{c\times c}} c\times c)$ of hybrid surjective submersions compatible with each network, i.e.\ so that $$\Pi(\ph,\Phi) \circ i_{c,\fY} = (\i,\ifrak)_{c\times c,\fX}\circ z.$$ We define $z:c\rightarrow c\times c$ by $(x,j)\mapsto (x,j,-x,1-j)$ (on both state and total space). For notational ease, we denote \begin{equation}\label{eq:notateThermosatMorphisms} z\defeq (\z,\zf), \; i_\fY \defeq i_{c,\fY},\; i_\fX \defeq i_{c\times c,\fX}.\end{equation}   
 It is easy to verify that these data define a morphism  
$$(\Hc_\fX,i_\fX)\xrightarrow{\big((\ph,\Phi),z\big)} (\Hc_\fY,i_\fY)$$ of networks. Indeed $$\begin{tikzcd}[column sep = large]
(x,j)\arrow[rr,mapsto,"z_{tot}"]\arrow[d,mapsto,"i_{\fY{,}tot}"]& & (x,j,-x,1-j)\arrow[d,mapsto,"i_{\fX{,}tot}"]\\
(x,j,x,j) \arrow[rr,mapsto,"\Pi(\ph{,}\Phi)_{tot}"] & &(x,j,-x,1-j,-x,1-j,x,j),
\end{tikzcd}$$
so indeed $\Pi(\ph,\Phi) \circ i_\fY = i_\fX \circ z$. The map $z$ embeds a state into the ``antidiagonal.'' 

We now define $(\tilde{X},\tilde{\r})\in \dCrl(\Hc_\fY(\star))$, $(X_i,\r_i)\in \dCrl(\Hc_\fX(i))$ for $i =1,2$ so that $\big((X_i,\r_i),(\tilde{X},\tilde{\r})\big)$ are $\Phi_i$-related. Define  deterministic  control by \small\begin{equation}\label{eq:allthecontrolfornetworkexample} \begin{array}{lll} \tilde{X}(x,j,x',j') \defeq (-1)^{1-j} + f(x,x') &  \mbox{and} &  \tilde{\r}(x,j,x',j') \defeq \left\{\begin{array}{lll} 1-j & \mbox{if} & (-1)^{1-j}x \geq 1 \\ j & \mbox{else,} & \end{array}\right. \\
X_1(x,j,x',j') \defeq (-1)^{1-j} + f(x,-x') &  \mbox{and} & \r_1 (x,j,x',j') \defeq \left\{\begin{array}{lll} 1-j & \mbox{if} & (-1)^{1-j}x \geq 1 \\ j & \mbox{else,} & \end{array}\right.\\
X_2(x,j,x',j') \defeq (-1)^{1-j} - f(-x,x') &  \mbox{and} & \r_2 (x,j,x',j') \defeq \left\{\begin{array}{lll} 1-j & \mbox{if} & (-1)^{1-j}x \geq 1 \\ j & \mbox{else}. & \end{array}\right.\end{array}\end{equation}\normalsize

Here  $f:\R^2\rightarrow\R$ is any smooth map.  The first term $(-1)^{1-j}$ of each vector field  represents the heater, and the second term $f(x,x')$ represents heat flow from the outside environment.

  That the relevant maps are related is a computation.  We check $\Phi_2$-relatedness of $(\tilde{X},X_2)$: \small
\begin{equation}\label{diagram:relatednessNetworksExample} \begin{tikzcd}[column sep = large]
	(x,j,x',j')\arrow[r,"\Phi_{2,tot}"]\arrow[d,"\tilde{X}"] & (-x,1-j,x',j')\arrow[d,mapsto ,"X_2"] \\
	(-1)^{1-j} + f(x,x')\arrow[r,mapsto,"T\Phi_{2,st}"] & -\big((-1)^{1-j}+ f(x,x')\big) = (-1)^{1-(1-j)} - f(-(-x),x'),   
\end{tikzcd}\end{equation}\normalsize since $(-1)^j = (-1)^{-j}$ for $j= 0,1$. $\Phi_2$-relatedness of $(\tilde{\r},\r_2)$ is similar: $$1-\tilde{\r}(x,j,-x',1-j') =\left\{\begin{array}{ll} j & \mbox{if}\, (-1)^{1-j}x \geq 1\\1- j & \mbox{else}\end{array}\right.$$ and $$\r_2(-x,1-j,x',j') = \left\{\begin{array}{ll} 1-(1-j) & \mbox{if}\, (-1)^{1-(1-j)}(-x)\geq 1 \\  1-j & \mbox{else}\end{array}\right.$$ which are equal, so $\Phi_{2,st}\circ \tilde{\r} = \r_2 \circ \Phi_{2,tot}$. 

Since each $\big((\tilde{X},\tilde{\r}),(X_i,\r_i)\big)$ is $\Phi_i$-related, \cref{theorem:mainTheoremFordeterministicHybridSystems} says that $$\left(\dCrl(i_\fY)\big(\tilde{X},\tilde{\r}\big), \dCrl(i_\fX)\big((X_1,\r_1)\times (X_2,\r_2)\big)\right)$$ are $z$-related.  Thus the antidiagonal is invariant under the dynamics of $\dCrl(i_\fX)\big((X_1,\r_1)\times(X_2,\r_2)\big)$.

We verify relatedness directly.  On the one hand, \small\begin{equation}\label{eq:networkeq1} \dCrl(i_\fY)\big(\tilde{X},\tilde{\r}\big)(x,j) = (\tilde{X},\tilde{\r})(x,j) = \left((-1)^{1-j}+f(x,x),\left\{\begin{array}{ll} 1-j & \mbox{if}\; (-1)^{1-j}x \geq  1\\ j & \mbox{else,} \end{array}\right.\right).\end{equation} Pushing forward by $\Tb\Pi(\ph,\Phi)_{st}$ (\cref{prop:cdBundleFunctorial}, \eqref{eq:cdBundleFunctorial}), we obtain \small \begin{equation}\label{eq:networkseq2} \begin{array}{ll}\Tb\Pi(\ph,\Phi)_{st}\dCrl(i_\fY)\big(\tilde{X},\tilde{\r}\big)(x,j) &  = \left((-1)^{1-j}+ f(x,x),\left\{\begin{array}{ll} 1-j &\mbox{if} \; (-1)^{1-j}x\geq 1, \\ j &\mbox{else} \end{array}\right. \right. \\
 & (-1)^{j}-f(x,x),
 \left.\left\{\begin{array}{ll} j & \mbox{if} \; (-1)^{1-j}x \geq 1\\ 1-j & \mbox{else} \end{array}\right.\right). \end{array}\end{equation}\normalsize
On the other hand,  $$\small \begin{array}{ll}\dCrl(i_\fX)(X_1\times X_2,\r_1\times \r_2)z(x,j) & =\left( X_1(x,j,-x,1-j),\r_1(x,j,-x,1-j), X_2(-x,1-j,x,j), \r_2(-x,1-j,x,j)\right).
\end{array}
$$\normalsize Reference to \eqref{eq:allthecontrolfornetworkexample} and \eqref{diagram:relatednessNetworksExample} shows that \eqref{eq:networkeq1} equals \eqref{eq:networkseq2}.

\end{example}

\chapter{Abstract Networks of Systems}\label{ch4}

\newcommand{\open}{open }
\section{Introduction} 
In this chapter we develop a categorical framework for understanding network of systems.  We  generalize the notion of system as a pair (object and section of a bundle) in \cref{subsection:abstractSections}, and then build upon this version of system by generalizing the notions of network from \cite{lermanopennetworks} and from \cref{ch3}, \cref{section:NetworksHybrid} in \cref{subsection:monoidallyfibered}. 
 Running examples include manifolds, continuous-time dynamical systems, open systems, and networks of open systems, as defined and worked out in \cite{lermanopennetworks}, and hybrid phase spaces, hybrid systems, hybrid open systems and networks of hybrid open systems, as defined and worked out in \cref{ch3}.

\begin{assumption}\label{assumption:locallySmall}\label{assumption:concrete}
Throughout this chapter, all categories are concrete (\cref{remark:concreteCategory}) and   locally small (\cref{def:locallySmall}). \end{assumption} 

Our goal is to prove a generalized version of \cref{theorem:mainTheoremFordeterministicHybridSystems} and \cref{theorem:mainTheoremLermanOpenNetworks}, which crudely says that a morphism of networks induces a 1-morphism $$\begin{tikzcd}
	\discatp_{\fy\in \fY}\G(\fy)\arrow[r,""{below, name = foo1}]\arrow[d] & \discatp_{\fx\in \fX} \G(\fx) \arrow[d]\\ \G(b)\arrow[r,""{above, name = foo2}]  & \G(a) \ar[shorten <= 10pt, shorten >= 10 pt, Rightarrow, from = foo1 , to = foo2]
\end{tikzcd}$$ in the double category $\sF{Set}^\square$.   More concretely, a collection of relations holding between pairs  $(Y,X) \in \G(\fy')\times \G(\fx')$  induces a relation in $\G(b)\times \G(a)$.  Alternatively, a collection of pairs of related systems induces a pair of related systems.  The vertical arrows take a collection of systems, by interconnection, to a single system.  This succinct result is meant to convey the idea of building a map of systems from a collection of maps of subsystems.  

We have not yet assigned content to the symbol $\G$.  In this chapter, we will define $\G$ as \textit{abstract sections} of some natural transformation, and prove the main theorem (\cref{theorem:mainTheoremAbstract}) in two parts.  After constructing $\G$ which assigns sections to some split epimorphisms---namely, epimorphisms coming from a natural transformation---in \cref{subsection:fiberedCategories},   we introduce the notion of interconnection,  formally a class of morphisms in an arrow category which are isomorphisms on state.  We  show that a certain commuting square $$\begin{tikzcd}
	a \arrow[r,""{below, name = foo1}] & b\\
	c \arrow[u]\arrow[r,""{above, name = foo2}] & d\arrow[u] \ar[shorten <= 5pt, shorten >= 5pt, Rightarrow, from = foo2, to = foo1]
\end{tikzcd}$$ (where vertical arrows are interconnection maps) induces a 1-morphism $$\begin{tikzcd}
	\G(a) \arrow[r,""{below, name = foo1}] \arrow[d]& \G(b)\arrow[d]\\
	\G(c)\arrow[r,""{above, name = foo2}] & \G(d) \ar[shorten <= 5pt, shorten >= 5pt, Rightarrow, from = foo1, to = foo2]
\end{tikzcd}$$ in $\sF{Set}^\square$. This is the first half of the main theorem. 

 In \cref{subsection:monoidallyfibered}, we introduce and work with a monoidal structure on the category.  This is a key component of networks, used for putting  many  (separate) systems together into one.  Interconnection is then used to relate them together, or to \textit{interconnect} them.  This is the networks piece of the puzzle.   We  show that a collection of relations induce a relation on the product, diagrammatically indicated by 1-morphism
$$\begin{tikzcd}
\discatp_{\fy\in \fY}\G(\fy)\arrow[r,""{below,name = foo1}] \arrow[d] & \discatp_{\fx\in \fX} \G(\fx)\arrow[d]\\
\G\left(\bigotimes_{\fy\in \fY}\fy\right) \arrow[r,""{above, name = foo2}] & \G\big(\bigotimes_{\fx\in \fX}\fx\big) \ar[shorten <= 10pt, shorten >= 10pt, Rightarrow, from = foo1, to = foo2]
\end{tikzcd}$$ in $\sF{Set}^\square$, which gives the second  half of the main theorem.  We combine results and interpret them for systems by proving concrete results about morphisms of networks of continuous-time open systems, networks of hybrid open systems, and finally networks of deterministic hybrid open systems.

Before working out a pure category theory for networks, we develop an abstract notion of system as object and section of some split epimorphism.    Recall, for reference, that a continuous-time  dynamical system  is a pair $(M,X)$ where $M$ is a manifold and $X\in \Xf(M)$ is a vector field on the manifold  (\cref{def:ctDySys}).   In this case, the manifold is the object and $X$ is a section of the tangent bundle $TM\xrightarrow{\t_M}M$. More generally, we consider a natural transformation $\t:\Tc\Rightarrow\Uc$ between two functors which is \textit{split epimorphism}, and take a system to be an object $\fc$ and section of the epimorphism $\t_\fc:\Tc\fc\rightarrow\Uc\fc$. In the case of manifolds, both $\Tc$ and $\Uc$ are endofunctors on $\sF{Man}$, where the source functor $\Tc$ is the tangent endofunctor and the target functor $\Uc$ is the identity. 

 So far, this formalism appears unnecessarily abstract.  We make use of the extra generality when considering hybrid systems, which we have defined as a hybrid phase space together with a vector field on a manifold.  In this case, our source category is $\sF{HyPh}$, target category $\sF{Man}$, $\Uc$ is the forgetful functor $\Ub:\sF{HyPh}\rightarrow\sF{Man}$ from \cref{prop:ForgetfulFunctor}, and $\Tc$ is the tangent endofunctor on $\sF{Man}$ composed with $\Ub$.  We thus pin down a way of describing different kinds of systems by using functors, natural transformations, and typical categorical nonsense to make sense of dynamics in one category via another proxy category ($\sF{HyPh}$ and $\sF{Man}$, respectively, in our example of hybrid systems). Secondly, we isolate what appears to us as particular to the very notion of dynamical system, itself. 
 
 We motivate the latter endeavor. We contrast our perspective of dynamical system as space and vector field, as opposed to  trajectory or flow.  This distinction  is like the difference between a differential equation and  its solution.  At the end of the day, we care about flows, but we find it convenient to work with vector fields and related vector fields; in much the same way as solving a differential equation is usually harder than merely having one.  Additionally, the focus on vector field makes it very easy to abstract: a smooth section of the tangent bundle is a section of a map in the relevant category.  By extension, an abstract system is an object with a section.  The downside of this abstract definition is interpreting dynamics in time, or ``solutions'' to the system.

 Our optimism that there is a categorical interpretation of time comes  from the Yoneda version of existence and uniqueness (\cref{prop:existenceAndUniquenessRepresentable}): the forgetful functor $\upsilon:\sF{DySys}\rightarrow \sF{Set}$ from the category of complete dynamical systems which forgets dynamics and smooth structure is representable.  This formulation is unique to \textit{complete} systems, but the notion of completeness is not essential to a theory of continuous-time dynamical systems: integral curves $\g:(-\e,\e)\rightarrow M$ from a bounded domain are perfectly fine.  Representability translates to initiality of an object in the category of elements $\disg_{\sF{DySys}}\upsilon,$ but the similar forgetful functor from the category of (possibly incomplete) continuous-time dynamical systems is not representable.   Lack of completeness means there may not be any map from $(\R,\frac{d}{dt})$ to the desired dynamical system; at best there is at most one. We therefore interpret solution of a system as a map from some ``quasi-initial'' object in the category of elements: a map from an object which is unique when it exists.

\section{Fibered Transformations}\label{subsection:fiberedCategories}\label{subsection:abstractSections}

We begin with the notion of fibered transformation. 
\begin{definition}\label{def:CfiberedInDandSections}\label{def:fiberedTransformationInD}
Let $\fC, \fD$ be locally small concrete categories,  $\Tc,\Uc:\fC\rightarrow\fD$   functors, and  $\t:\Tc\Rightarrow \Uc$ a natural transformation. We say that $\t$ is $\fD$-\textit{fibered} (or simply \textit{fibered}) if for each object $\fc\in \fC_0$, $\Tc\fc\xrightarrow{\t_\fc}\Uc \fc$ is a split epimorphism (\cref{def:splitEpi}).
For a $\fD$-fibered transformation, we define  $\t$-\textit{sections} by the set of right inverses of $\t_\fc$: 
 \begin{equation}\label{eq:abstractSections} \G_\t(\fc)\defeq \left\{\big(\Uc\fc\xrightarrow{X} \Tc\fc\big)\in \fD_1:\, \t_{\fc}\circ X =id_{\Uc\fc}\right\}.\end{equation}
	 \end{definition}
	 
	 \begin{remark}\label{remark:whySplit?}
	 	Since $\t_\fc:\Tc\fc\rightarrow\Uc\fc$ is split epi, $\t$-sections $\G_\t(\fc)$ are guaranteed to be nonempty (\cref{def:splitEpi}). 
	 \end{remark}
	
We make sure this definition passes a few sanity tests.

\begin{example}\label{ex:identityFunctorForfiberedCategories}
Let $\fC=\fD=\sF{Man}$, $\Uc = id_{\sF{Man}}$, and $\Tc:\sF{Man}\rightarrow\sF{Man}$ assign the tangent bundle $TM$ to each manifold $M$.  This assignment is functorial (\cref{prop:differentialIsFunctorial}). Moreover, the canonical projection of the tangent bundle $\t_M:TM\rightarrow M$ assembles into a natural transformation (\cref{fact:naturalityOfProjTangentBundle}).  Finally, the  projection $\t_M:TM\rightarrow M$ is a split epimorphism (\cref{fact:tangentProjSplitEpi}).  Therefore the natural transformation $\t$ is $\sF{Man}$-fibered, or in this case, simply fibered. 
\end{example}

The next example pertains to and is used in our development of hybrid systems. 
\begin{example}\label{ex:tauSectionForHyPh} Let $\fC = \sF{HyPh}$, $\fD = \sF{Man}$, and let $\Uc:\fC\rightarrow\fD$ be the forgetful functor $\Ub:\sF{HyPH}\rightarrow\sF{Man}$ (\cref{prop:ForgetfulFunctor}). Finally, let $\Tc:\fC\rightarrow\fD$ be the composition $\Tc = T\circ \Ub$, where $T$ is the tangent endofunctor in \cref{ex:identityFunctorForfiberedCategories}. 
Similarly, let $\t:\Tc\Rightarrow\Uc$ be defined on components as the canonical projection of the (underlying) tangent bundle $\t_a:T\Ub a\rightarrow\Ub a$, which is again a split epimorphism  (\cref{fact:tangentProjSplitEpi}). We conclude that  $\t$ is $\sF{Man}$-fibered.  
\end{example}
	
\begin{definition}\label{def:DSystemsFromC}\label{def:tauSystem}
	Let $\Tc,\Uc:\fC\rightarrow \fD$ be functors, and $\t:\Tc\Rightarrow \Uc$ a $\fD$-fibered transformation (\cref{def:fiberedTransformationInD}). We define a $\t$-\textit{system} to be a pair $(\fc,X)$ where $\fc\in \fC_0$ is an object in $\fC$ and $X\in \G_\t(\fc)$ is a  $\t$-section. We also define a  \textit{morphism $(\fc,X)\xrightarrow{f}(\fd,Y)$ of $\t$-sections} to be a morphism $\fc\xrightarrow{f}\fd$ in $\fC$ such that $(X,Y)$ are $f$-related, i.e.\ $\Tc f \circ X = Y\circ \Uc f$.  The collection of $\t$-systems and morphisms make up a category, which we denote by $\t$-$\sF{Sys}$.
\end{definition}

\begin{example}\label{ex:discSysAsprojCsysInC}
	A discrete (time) dynamical system $(f:X\rightarrow X, x_0)$ may be thought of as an endomorphism $f$ with a choice of base point $x_0\in X$ (\cite[\S2.1]{riehlb}). We will recover the base point in \cref{ex:iteratedMapasAbstractSolution}. For now, we think of such a system  only as an endomorphism $X\xrightarrow{f}X$ and interpret in the lens of \cref{def:DSystemsFromC}. 
	
	Let $\fC$ be a concrete category with products and $\fD = \fC$. Let $\Tc = (\cdot)\times (\cdot)$ and $\Uc = id_\fC$.\footnote{Formally, $\Tc = (\cdot)\times (\cdot)$ is the composition of functors $\fC\xrightarrow{\Delta}\fC\times \fC\xrightarrow{\times}\fC$ sending $\fc\mapsto (\fc,\fc)\mapsto \fc\times \fc$.}  Finally, let $\t = p_1$ be the projection onto the first factor. For object $X\in \fC$, we have projection $X\times X\xrightarrow{p_1}X$ onto the first factor. An abstract $\t$-system $(X,f)$ is an object $X\in \fC$ with section $f:X\rightarrow X\times X$ (satisfying $p_1\circ f = id_X$).  Generally, it is more convenient to ignore the ``section'' part and treat $f$ simply as a (any) morphism $X\xrightarrow{f}X$ (precisely, as $p_2\circ f$).
\end{example}

\begin{example}\label{ex:dySysAsManSysInMan}
	Let $\fC=\fD=\sF{Man}$, $\Uc = id_{\sF{Man}}$, $\Tc=T$ the tangent functor, and $\t:\Tc\Rightarrow\Uc$ the canonical projection of the tangent bundle (\cref{ex:identityFunctorForfiberedCategories}).  Then a $\t$-system $(M,X)$  is a continuous-time dynamical system (\cref{def:ctDySys}). 
\end{example}
\begin{example}\label{ex:hybridSystemAsHyPhSystemInMan} For $\nat{\fC}{\fD}{\Tc}{\Uc}{\t}$ as in \cref{ex:tauSectionForHyPh}, with $\fC=\sF{HyPh}$, a $\t$-system is a hybrid system $(a,X)$  (\cref{def:hybridSys}). 
\end{example}

The notion of open system  (\cref{def:openDySys}) generalizes as well. In the setting of continuous-time systems, we considered the subcategory $\sF{SSub}\subset\sF{Arrow(Man)}$ whose objects are surjective submersions. The motivation there comes from control theory, where  typical   control systems map $f:M\times U\rightarrow TM$ satisfying $\t_M\circ f = p_1$. At this moment, we only ask for a subcategory of the arrow category (later we will ask it to also be cartesian (\cref{assumption:3}, \cref{def:abstractNetworks})!).

\begin{definition}\label{def:COpenSystemInDRelativeToA}\label{def:abstractOpenSections}\label{def:abstractOpenSystem}
	Let $\fA\subseteq\sF{Arrow(C)}$ be a subcategory of the arrow category of $\fC$ and $\nat{\fC}{\fD}{\Tc}{\Uc}{\t}$  a $\fD$-fibered transformation of functors $\Tc,\Uc:\fC\rightarrow\fD$ (\cref{def:fiberedTransformationInD}). 
   We define an \textit{$\fA$-open $\t$-system} to be a pair $(\fa\xrightarrow{p_\fa }\fa', X$) where $p_\fa\in \fA$ and $X$ is a morphism $X:\Uc\fa\rightarrow \Tc\fa'$ in $\fD$ such that $\Uc p_\fa  = \t_{\fa} \circ X$ (we say that $X$ is $p_a$-\textit{compatible}).  We define the set $$\G_\t(p_\fa)\defeq \big\{X:\Uc \fa\rightarrow\Tc\fa':\, \Uc p_\fa = \t_\fa\circ X\big\},$$ and call $X\in \G_\t(p_\fa)$ an \textit{\open $\t$-section},  \textit{\open section of $\t$}, or \textit{abstract} $\t$-\textit{control}. Also, we may refer to $X$ itself as an (abstract) open $\t$-system. 
\end{definition}
\begin{remark}\label{remark:openSectionsNonemtpy}
	For any object $\fa\xrightarrow{p_\fa}\fa'$ in $\fA$, $\G_\t(p_\fa)$ is guaranteed to be nonempty.  First, $\G_\t(\fa')\neq \varnothing$ as (ordinary) $\t$-sections since $\t:\Tc\Rightarrow \Uc$ is $\fD$-fibered (\cref{def:fiberedTransformationInD}, eq.\ \eqref{eq:abstractSections}). Thus, for section $X'\in \G_\t(\fa')$, we define $X\defeq X'\circ \Uc p_\fa$ which is  an open $\t$-section     because $$\Uc p_\fa = id_{\Uc\fa'} \circ \Uc p_\fa = \t_{\fa'} \circ X'\circ \Uc p_\fa=\t_{\fa'}\circ X.$$
\end{remark}
\begin{notation}\label{notation:OpenSectionsEtc.}
For $p_\fa\in \fA$, we will generally denote the domain $\sF{dom}(p_\fa)$ of $p_\fa$ by $\fa_{tot}$ (\textit{total} space) and the codomain $\sF{cod}(p_\fa)$ by $\fa_{st}$  (\textit{state} space), consistent with   convention in \cite[definition 2.14]{lermanopennetworks}.  By extension, we denote objects $p_\fa\in\fA$ by $\fa$, open $\t$-sections of $\fa_{tot}\xrightarrow{p_\fa}\fa_{st}$ by $\G_\t(\fa)$, and open systems by  $(\fa,X)$.
\end{notation}
\begin{example}\label{ex:recoverAbstractSystemFromOpen}
	Let $\nat{\fC}{\fD}{\Tc}{\Uc}{\t}$ be any $\fD$-fibered natural transformation (\cref{def:fiberedTransformationInD}), and $\fA\subset \sF{Arrow(C)}$ the arrow subcategory of only identity morphisms:  $$\fA\defeq \big\{ id_\fc:\, \fc\in \fC_0\big\}.$$ Then  $\fA$-open $\t$-systems (\cref{def:abstractOpenSystem}) are $\t$-systems (\cref{def:tauSystem}).  \end{example}

\begin{example}\label{ex:discreteOpenasAbstract}
	Let $\nat{\fC}{\fC}{\Tc}{\Uc}{\t}$ be as in \cref{ex:discSysAsprojCsysInC} for discrete-time dynamical systems and let $\fA \subset \sF{Arrow(C)}$ be the collection of surjections: an arrow $(X'\xrightarrow{p_X}X)\in \fA$ as long as $p_X:X'\twoheadrightarrow X$ is a surjective map of sets. Then an $\fA$-open $\t$-system $(p_X, f)$ is a pair where $p_X:X'\rightarrow X$ is a surjective map and $f:X'\rightarrow X$ is any map. Compatibility of $f$ and $p_X$ in \cref{def:abstractOpenSystem} is vacuous because properly speaking, an open system $f$ is required to make the  diagram commute  $$\begin{tikzcd}
	X'\arrow[r,"f"]\arrow[dr,swap,"p_X"] & X\times X,\arrow[d,"\t_X"]\\
	& X.
\end{tikzcd}$$We treat $f$ as the map to the second factor.  In other words, maps $p_X:X'\rightarrow X$ and $f:X'\rightarrow X$ induce a map $X'\rightarrow X\times X$, which by abuse of notation we have written in the diagram above simply as $f$ itself.  
\end{example}

\begin{example}\label{ex:ordinaryOpenSystemsInstanceOfAbstract}
	Let $\nat{\fC}{\fD}{\Tc}{\Uc}{\t}$ be as in \cref{ex:dySysAsManSysInMan} ($\fC=\sF{Man}$, etc.) and let $\fA\subset \sF{Arrow(C)}$ be $\fA=\sF{SSub}$.  Then  $\fA$-open $\t$-systems $(a,X)$ are open systems (\cref{def:openDySys}). 
\end{example}
\begin{example}\label{ex:HybridOpenSystemsInstanceOfAbstractOpenSystem}
Let $\nat{\fC}{\fD}{\Tc}{\Uc}{\t}$ be as in  \cref{ex:tauSectionForHyPh} ($\fC = \sF{HyPh}$ etc.) and $\fA = \sF{HySSub}$ (\cref{def:categoryHySSub}).  An  $\fA$-open $\t$-system $(a,X)$ is   a hybrid open system (\cref{def:hybridOS}). 
\end{example}
We now consider a modified version of \cref{ex:HybridOpenSystemsInstanceOfAbstractOpenSystem} which accounts for determinism. 
\begin{example}\label{ex:detHyOSInstanceOfAbstractOpenSystem}
	Let $\fC = \sF{HySSub}$, $\fD= \sF{Set}$, $\Tc a \defeq \Tb a$ the c.d.\ bundle (\cref{def:cdBundle}, \cref{prop:cdBundleFunctorial}), and $\Uc a\defeq \big\{x\in \Ub a\big\}$, the set of points in the underlying manifold $\Ub a$.  Define projection $\varpi:\Tc a\Rightarrow \Uc a$ on objects   $\varpi_a:\Tc a\rightarrow \{x\in \Ub a\}$ by $\varpi \defeq \{\t_{\Ub a}\circ p_1\}$ where $p_1$ is projection onto the first factor, $\t_M:TM\rightarrow M$ is the canonical projection of the tangent bundle 
(c.f.\ eq.\ 	\eqref{eq:definingCDBundleProj}), and $\{\cdot\}$ is the forgetful functor which returns the underlying set.  This map is natural in $a$ since $\varpi$ is defined as the composition of canonical maps, and easily seen to be a split epimorphism. Therefore $\nat{\fC}{\fD}{\Tc}{\Uc}{\varpi}$ is $\fD$-fibered.  For $\fA =\sF{HySSub}$, a deterministic hybrid open system $(a,X,\r)$  (\cref{def:deterministicHyOS})  is an abstract $\fA$-open $\varpi$-system.  However, not all $\fA$-open $\varpi$-systems are deterministic hybrid open systems: since the ambient category $\fD = \sF{Set}$, sections in general need not satisfy the smoothness condition for $X$ and jump-compatibility condition for $\r$ (\cref{def:detHySys}).  We call attention to this asymmetry, as we will need to be careful when proving \cref{theorem:mainTheoremFordeterministicHybridSystems} in \cref{subsection:mainDeterministicTheorem}.  
\end{example}
\begin{definition}\label{def:COpenRelatedness}\label{def:relatedOpenSections}
	Let $f:\fa\rightarrow\fa'$ be a morphism in $\fA$ (\cref{def:arrowCategory}), so that  diagram \begin{equation}\label{diagram:arrowDiagram} \begin{tikzcd}
\fa_{tot}\arrow[d,"p_\fa"]\arrow[r,"f_{tot}"] & \fa'_{tot}\arrow[d,"p_{\fa'}"] \\
\fa_{st}\arrow[r,"f_{st}"] & \fa_{st}' \end{tikzcd}\end{equation} commutes.  Let  $X\in \G_\t(\fa)$, $X'\in \G_\t(\fa')$	be  $\fA$-open $\t$-systems  (\cref{def:abstractOpenSystem}, \cref{notation:OpenSectionsEtc.}).  We say that $(X,X')$ are $f$-\textit{related} if $\Tc f_{st} \circ X = X'\circ \Uc f_{tot}$, i.e.\ if the diagram $\begin{tikzcd} \Uc\fa_{tot}\arrow[d,"X"]\arrow[r,"
\Uc f_{tot}"] & \Uc\fa_{tot}' \arrow[d,"X'"] \\
\Tc\fa_{st}\arrow[r,"\Tc f_{st}"] & \Tc\fa_{st}'
\end{tikzcd}$ commutes.
\end{definition}

\begin{definition}\label{def:COpenSystemMorphism}
	Let $(\fa,X)$ and $(\fa',X')$ be two open systems (\cref{def:COpenSystemInDRelativeToA}).  We define a \textit{morphism} $f:(\fa,X)\rightarrow (\fa',X')$ \textit{of $\fA$-open $\t$-systems} to be a morphism $f:\fa\rightarrow \fa'$ in $\fA$ (c.f.\ \eqref{diagram:arrowDiagram})  such that $(X,X')$ are $f$-related (\cref{def:COpenRelatedness}).
\end{definition}
\begin{remark}\label{remark:commutativityConditionsForMorphismAbstractSystem}
	In addition to $f$-relatedness, a map $f:(\fa,X)\rightarrow(\fb,Y)$ of $\fA$-open $\t$-systems consists of numerous commutating diagrams, namely each subdiagram in the following:$$\begin{tikzcd}[column sep = large,row sep = large]
& \Uc \fa_{tot}\arrow[dl,swap,"X"]\arrow[rr, "\Uc f_{tot}"]\arrow[dd,near start,"\Uc p_\fa"]  & & \Uc \fb_{tot}\arrow[dl,swap,"Y"]\arrow[dd,near start,"\Uc p_\fb"] \\
\Tc \fa_{st}\arrow[rr,near start,"\Tc f_{st}"]\arrow[dr,"\t_{\fa_{st}}"] & & \Tc\fb_{st}\arrow[dr,"\t_{\fb_{st}}"] & \\ & 
\Uc\fa_{st}\arrow[rr,"\Uc f_{st}"] & & \Uc \fb_{st}.
\end{tikzcd}
 $$
\end{remark}

Equalities $\t_{\fa_{st}}\circ X = \Uc p_\fa$ and $\t_{\fb_{st}}\circ Y = \Uc p_\fb$ hold by assumption $X\in \G_\t(\fa)$, $Y\in \G_\t(\fb)$. Equality $\Uc f_{st} \circ \Uc p_\fa = \Uc p_\fb \circ \Uc f_{tot}$ holds since $f:\fa\rightarrow \fb$ is a morphism in $\fA$ and $\Uc$ is a functor.  Equality $\Uc f_{st}\circ \t_{\fa_{st}} = \t_{\fb_{st}} \circ \Tc f_{st}$ follows by naturality of $\t$. Finally, $\Tc f_{st}\circ X = Y \circ \Uc f_{tot}$ is  $f$-relatedness of $(X,Y)$ (\cref{def:relatedOpenSections}). 

\begin{remark}\label{remark:catTauSystem}
There is a category $\t$-$\sF{Sys}$ of $\t$-(open)-systems  (\cref{def:tauSystem}, \cref{def:COpenSystemInDRelativeToA}) and their morphisms (\cref{def:COpenSystemMorphism}). Since $\fC$ is concrete, there is also forgetful functor $\upsilon:\t\text{-}\sF{Sys}\rightarrow \sF{Set}$ sending $(\fc,X)\mapsto \{x\in \fc\}$.\footnote{To be precise, we are composing two functors $\t\text{-}\sF{Sys}\rightarrow\fC\rightarrow\sF{Set}$, where the first drops the $\t$-section and the second is  faithful functor $\scU:\fC\rightarrow\sF{Set}$ (\cref{remark:concreteCategory}). The assignment, properly speaking, is $(\fc,X)\mapsto \scU(\fc).$}
\end{remark}

Still missing from the theory of continuous-time systems in our abstraction is the notion of ``solution''.  Taking a hint from \cref{prop:existenceAndUniquenessRepresentable}, we may trying defining a solution for abstract systems as a morphism from an initial object in the relevant category of elements.  Unfortunately, the notion of execution of hybrid systems (\cref{def:deterministicExecution})---our deterministic hybrid version of solution---is not representable as ``morphism from initial object'' since the domain of executions are not initial in the relevant category of elements for deterministic hybrid systems. The problem is that a morphism may not always exist.  For example, when the set of jump times is $\Tc =\{0,1,2,3,4,\ldots\}$, there is no morphism from $(\omega,T,\t)_\Tc$ to the bouncing ball system $(c,Z,\varsigma)$ in \cref{ex:bouncingBallExecution}. It turns out that this is not so unfortunate, as the same ``problem'' exists in the category of (possibly incomplete) dynamical systems.  For example, there is no morphism $(\R,\dx =1) \rightarrow(\R,\dx = x^2)$ (such systems exhibit finite escape).

\begin{definition}\label{def:quasiInitial}
	Let $\fC$ be a category and $\fc\in \fC_0$ an object.  We say that $\fc$ is \textit{quasi-initial} if for any object $\fc'\in \fC_0$, there is \textit{at most} one morphism $\fc\xrightarrow{f}\fc'$ in $\fC$. Furthermore, if there is  morphism $\fc'\xrightarrow{g}\fc$, then we require that $g$ be a monomorphism (\cref{def:monomorphism}). 
\end{definition}

Unlike initial objects (\cref{def:initialObject}), quasi-initial objects need not be unique up to unique isomorphism, nor even isomorphic.  

We are now ready to introduce a notion of solution.

\begin{definition}\label{def:abstractSolution}
	Let $(\fc,X)\in \t\text{-}\sF{Sys}$ be an abstract $\t$-system.  We define a \textit{solution} of $(\fc,X)$ to be a morphism $((\sF{i},I),0)\rightarrow ((\fc,X),x_0)$ from quasi-initial object $((\sF{i},I),0)\in \disg_{\t\text{-}\sF{Sys}} \upsilon$ in the category of elements of forgetful functor $\upsilon:\t\text{-}\sF{Sys}\rightarrow\sF{Set}$ (\cref{remark:catTauSystem}).
\end{definition}

\begin{example}\label{ex:iteratedMapasAbstractSolution} This example comes from \cite[\S2.2]{riehlb}.
 Let $f:X\rightarrow X$ be a discrete dynamical system (\cref{ex:discSysAsprojCsysInC}) and suppose that the natural numbers $\N\in \fC$.  Consider successor map $\s:\N\rightarrow\N$ defined by $\s(n)\defeq n+1$. This map defines a discrete dynamical system as well, and a map $(\N,\s)\xrightarrow{\a}(X,f)$ of systems satisfies \begin{equation}\label{eq:discRelated} \a\circ\s = f\circ \a.\end{equation}  Choosing initial point $x_0$ as the image of $0$ under $\a$, we have entirely determined the map $\a$: for $\a$-relatedness in \eqref{eq:discRelated} implies that $\a(1) = a(\s(1)) = f(x_0)$ and in general, $\a(n) = \underbrace{f\circ \cdots \circ f}_{\text{n-times}}(x_0).$  Therefore, the map $$\left(\big(\N,\s\big),0\right)\xrightarrow{\a}\left(\big(X,f\big),x_0\right)$$  in $\disg_{\sF{dSys}}\upsilon$ is a solution of $(X,f)$ in the sense of \cref{def:abstractSolution}.
\end{example}
\begin{example}\label{ex:intCurveasAbstractSolution}
	Let $(M,X)$ be a continuous-time dynamical system (\cref{ex:dySysAsManSysInMan}). An integral curve $\g:(-\e,\e)\rightarrow M$ of $(M,X)$ sends time $0\mapsto \g(0) = x_0$ to initial condition in $M$. Consequently there is morphism $$\left(\big((-\e,\e),\frac{d}{dt}\big),0\right)\xrightarrow{\g}\left(\big(M,X\big),x_0\right)$$ in the category $\disg_{\sF{DySys}} \upsilon$ where $\sF{DySys}$ denotes the category of continuous-time dynamical systems and $\upsilon:(M,X)\mapsto \big\{x\in M\big\}$ is the forgetful functor.  By existence and uniqueness, this map is a solution in the sense of \cref{def:abstractSolution}. 
\end{example}

\begin{example}\label{ex:ExecutionasAbstractSolution}
Similarly, $\Tc$-universal systems induce abstract solutions (\cref{prop:uniqueExecutions}, \cref{remark:uniqueExecutions}) in $\disg_{\sF{dHySys}}\upsilon$, where $\sF{dHySys}$ is the category of deterministic hybrid systems (\cref{lemma:detHySysCategory}) and $\upsilon:(a,X,\r)\mapsto\big\{x\in \Ub a\big\}$. 
\end{example}

We now interpret $\t$-systems as an element in   category of \textit{related} elements.  Recall the category $\sF{Rel}$ (\cref{def:Rel}), whose objects are sets and morphisms are relations.  First we observe that open  $\t$-sections $\G_\t(\cdot)$ extend to a lax functor.

\begin{prop}\label{prop:openSectionsLaxFunctorial} Let $\nat{\fC}{\fD}{\Tc}{\Uc}{\t}$ be $\fD$-fibered natural transformation (\cref{def:fiberedTransformationInD}) and $\fA\subseteq \sF{Arrow(C)}$ an arrow subcategory of $\fC$. Then the assignment of $\G_\t$ on objects to $\fA$-open $\t$-sections (\cref{def:abstractOpenSections}) extends to a lax functor $\G_\t:\fA\rightarrow\sF{Rel}$. In particular (\cref{ex:functorToRel}), for morphisms $\fa\xrightarrow{f}\fa'\xrightarrow{g}\fa''$ in $\fA$, there is inclusion \begin{equation}\label{eq:strictInclusion}
\G_\t(g)\circ \G_\t(f) \subseteq \G_\t(g\circ f).
\end{equation}
\end{prop}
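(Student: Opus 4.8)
The plan is to define $\G_\t$ on morphisms and then verify the lax functoriality inclusion \eqref{eq:strictInclusion}. For the action on morphisms, given $f:\fa\rightarrow\fa'$ in $\fA$, I would follow the pattern of \cref{ex:functorToRel} and set $\G_\t(f)$ to be the relation consisting of $f$-related pairs of open sections, namely
\begin{equation}\label{eq:defGammaOnMorphisms}
\G_\t(f)\defeq\big\{(X,X')\in \G_\t(\fa)\times \G_\t(\fa'):\, (X,X')\;\text{are $f$-related}\big\},
\end{equation}
where $f$-relatedness means $\Tc f_{st}\circ X = X'\circ \Uc f_{tot}$ (\cref{def:relatedOpenSections}). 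Since $\fA$ is thought of as a discrete double category (\cref{def:ordinaryCatasDoubleCat}), a lax functor to $\sF{Rel}$ is exactly an assignment of a set $\G_\t(\fa)$ to each object and a relation $\G_\t(f)\subseteq \G_\t(\fa)\times \G_\t(\fa')$ to each morphism, subject to the lax composition inclusion.

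First I would check that the assignment respects identities, i.e.\ $\G_\t(id_\fa)=id_{\G_\t(\fa)}=\Delta(\G_\t(\fa))$. This amounts to observing that $(X,X')$ is $id_\fa$-related precisely when $\Tc(id_{\fa_{st}})\circ X = X'\circ\Uc(id_{\fa_{tot}})$, which by functoriality on identity of $\Tc$ and $\Uc$ (\cref{remark:functoriality}) reduces to $X=X'$. Next, for the main inclusion, I would take a composable pair $\fa\xrightarrow{f}\fa'\xrightarrow{g}\fa''$ and an element $(X,X'')\in \G_\t(g)\circ \G_\t(f)$. By the definition of composition of relations \eqref{eq:definingCompositionOfRelations}, there exists $X'\in \G_\t(\fa')$ with $(X,X')\in\G_\t(f)$ and $(X',X'')\in\G_\t(g)$, i.e.\ both $(X,X')$ are $f$-related and $(X',X'')$ are $g$-related. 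The goal is to conclude $(X,X'')$ are $(g\circ f)$-related, which lands it in $\G_\t(g\circ f)$.

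The key computational step is a diagram chase showing that relatedness composes. Concretely, I would paste together the two commuting squares expressing $f$-relatedness and $g$-relatedness along their shared middle column, obtaining a diagram whose outer rectangle expresses $(g\circ f)$-relatedness; the outer rectangle commutes by functoriality on composition of $\Tc$ and $\Uc$ (applied to $(g\circ f)_{st}=g_{st}\circ f_{st}$ and $(g\circ f)_{tot}=g_{tot}\circ f_{tot}$), exactly as in the two diagrams of \eqref{diagram:Related&CompatibleAreTheSame} and the proof of \cref{prop:deterministicControlIsLax}. Explicitly, $\Tc(g\circ f)_{st}\circ X = \Tc g_{st}\circ \Tc f_{st}\circ X = \Tc g_{st}\circ X'\circ \Uc f_{tot} = X''\circ \Uc g_{tot}\circ \Uc f_{tot} = X''\circ \Uc(g\circ f)_{tot}$. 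This establishes $\G_\t(g)\circ\G_\t(f)\subseteq\G_\t(g\circ f)$.

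I expect the main obstacle to be conceptual rather than technical: articulating clearly why only the one-directional inclusion holds and laxness cannot be improved to equality. As \cref{remark:deterministicControlIsLAX} and \cref{ex:strictInclusionOfCompositionOfRelations} illustrate, an element of $\G_\t(g\circ f)$ is a pair $(X,X'')$ that is $(g\circ f)$-related, and there need be no intermediate $X'\in\G_\t(\fa')$ simultaneously $f$-related to $X$ and $g$-related to $X''$; the commuting of the outer rectangle does not force the inner squares to commute. So the proof itself is a short diagram chase via functoriality, and the only care needed is to present the identity and composition checks cleanly and to resist overclaiming equality in \eqref{eq:strictInclusion}.
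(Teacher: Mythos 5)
Your proposal is correct and follows essentially the same route as the paper: the paper likewise defines $\G_\t(f)$ as the set of $f$-related pairs and proves the inclusion by pasting the two relatedness squares and invoking functoriality of $\Tc$ and $\Uc$ on composition (diagram \eqref{diagram:538a}). Your explicit identity check and the string of equalities are just an equational rendering of the paper's diagram chase, so there is nothing to add.
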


\begin{proof}[Proof Sketch]
	We have defined $\G_\t$ on objects (\cref{def:COpenSystemInDRelativeToA}).  Let $\fa,\fa'\in \fA$ and $f:\fa\rightarrow\fa'$ be a morphism in $\fA$. We define relation \begin{equation}\label{eq:definingOpenSectionsRelationOnMorphisms} \G_\t(f)\defeq \big\{(X,X')\in \G_\t(\fa)\times \G_\t(\fa'):\;(X,X')\,\mbox{are f-related}\big\}.\end{equation}
	
	To show inclusion \eqref{eq:strictInclusion},  suppose that $(X',X'')\in \G_\t(g)$ and $(X,X')\in \G_\t(f)$ for morphisms $\fa\xrightarrow{f}\fa'\xrightarrow{g}\fa''$.  Then each sub-diagram of \begin{equation}\label{diagram:538a}\begin{tikzcd}[column sep = large]
	\Uc \fa_{tot}\arrow[r,"\Uc f_{tot}"]\arrow[d,"X"] \arrow[rr,bend left,"\Uc(g_{tot}\circ f_{tot})"] & \Uc \fa_{tot}'\arrow[d,"X'"]\arrow[r,"\Uc g_{tot}"] & \Uc \fa_{tot}'' \arrow[d,"X''"] \\
	\Tc\fa_{st}\arrow[r,"\Tc f_{st}"]\arrow[rr,bend right, swap, "\Tc(g_{st}\circ f_{st})"] & \Tc\fa_{st}'\arrow[r,"\Tc g_{st}"] & \Tc \fa_{st}''
\end{tikzcd}\end{equation} commutes: e.g.\ $\Tc f_{st} \circ X = X'\circ \Uc f_{tot}$ since $(X,X')$ are $f$-related, and $\Uc(g_{tot}\circ f_{tot}) = \Uc g_{tot} \circ \Uc f_{tot}$ since $\Uc$ is a functor.  Therefore, the outer diagram $$\begin{tikzcd}[column sep = huge] \Uc \fa_{tot}\arrow[r,"\Uc(g_{tot}\circ f_{tot})"] \arrow[d,"X"] & \Uc \fa_{tot}''\arrow[d,"X''"]\\
\Tc\fa_{st}\arrow[r,"\Tc(g_{st}\circ f_{st})"] & \Tc\fa_{st}''	
\end{tikzcd}$$ also commutes, and hence $(X,X'') \in \G_\t(g\circ f)$. 
	 \end{proof}
	\label{remark:generalizingProofOfLaxnessForTauSections}
	 See the proof of \cref{prop:deterministicControlIsLax} for a similar argument.

	 \begin{remark}\label{remark:openSystemsAreCategoryOfRelatedElements}
	We now formally observe that open systems (\cref{def:COpenSystemInDRelativeToA}) and their morphisms (\cref{def:COpenSystemMorphism}) form a category $\fA\fO\fS$, \textit{$\fA$-open ($\t$)-systems}, the category of related elements $\disg_\fA \G_\t$ (\cref{def:categoryOfRelatedElements}, \cref{prop:openSectionsLaxFunctorial}): objects are pairs $(\fa,X)$ where $\fa\in \fA_0$ and $X\in \G_\t(\fa)$. Morphisms $(\fa,X)\rightarrow (\fb,Y)$ are morphisms $\fa\xrightarrow{f}\fb$ in $\fA$ such that $(X,Y)\in \G_\t(f)$ (c.f.\ \eqref{eq:definingOpenSectionsRelationOnMorphisms}).   \end{remark}

We switch gears and turn to interconnection. We have seen used for connecting disparate open systems together.  Abstractly, interconnection is a special morphism of arrows. 
 
	\begin{definition}\label{def:CInterconnectionInA}
		Let  $\nat{\fC}{\fD}{\Tc}{\Uc}{\t}$  a $\fD$-fibered transformation  and $\fA\subseteq \sF{Arrow(C)}$ a subcategory of the arrow category of $\fC$.  We say that a morphism $\big(\fa'\xrightarrow{f}\fa\big)\in \fA_1$ is an \textit{$\fA$-interconnection} (or simply \textit{interconnection}) if $f_{st}:\fa'_{st}\xrightarrow{\sim}\fa_{st}$ is an isomorphism in $\fC$.
	\end{definition}
	
	\begin{remark}\label{remark:CInterconnectionInASubcategory}
		There is subcategory $\fA_{int}$ whose objects are the same as $\fA$ but whose morphisms are $\fA$-interconnections (\cref{def:CInterconnectionInA}). 
	\end{remark}
	
	We saw that $\G_\t:\fA\rightarrow\sF{Rel}$ is a lax functor (\cref{prop:openSectionsLaxFunctorial}).  We can define $\G_\t$ on interconnections differently, so that $\G_\t$ is strictly (contravariantly) functorial. 
	\begin{prop}\label{prop:functorOpenSectionsOnInterconnect}
		There is contravariant functor $\G_\t:\big(\sF{A_{int}}\big)^{op}\rightarrow\sF{Set}$.
	\end{prop}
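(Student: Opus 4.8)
The plan is to define $\G_\t$ on interconnections exactly as in the concrete case of deterministic control (\cref{prop:interconnectionMapOndeterministicHybridControl}, \cref{prop:dCrlInterconnectFunctorial}), and then to check well-definedness, functoriality on the identity, and contravariant functoriality on composition. Since $\G_\t$ has already been defined on objects (\cref{def:abstractOpenSections}), for an interconnection $i:\fa\rightarrow\fb$ in $\fA_{int}$ — so that $i_{st}:\fa_{st}\xrightarrow{\sim}\fb_{st}$ is an isomorphism in $\fC$ (\cref{def:CInterconnectionInA}) — and a section $Y\in\G_\t(\fb)$, I would set $\G_\t(i)(Y)\defeq\Tc(i_{st})^{-1}\circ Y\circ\Uc(i_{tot})$. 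This makes sense because $i_{st}$ is an isomorphism and $\Tc$ is a functor, so $\Tc(i_{st})$ is an isomorphism with inverse $\Tc(i_{st})^{-1}=\Tc(i_{st}^{-1})$ (\cref{fact:functorsPreserveIsos}); in particular $\G_\t(i)(Y):\Uc\fa_{tot}\rightarrow\Tc\fa_{st}$ is a legitimate morphism of $\fD$.

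The crucial step — and the only one with real content — is to verify that $\G_\t(i)(Y)$ is again an open $\t$-section, i.e. that $\t_{\fa_{st}}\circ\G_\t(i)(Y)=\Uc p_\fa$. Here I would chase the prism diagram from the proof of \cref{prop:interconnectionMapOndeterministicHybridControl}, using three facts: that $i$ is a morphism in $\fA$, so $\Uc(i_{st})\circ\Uc p_\fa=\Uc p_\fb\circ\Uc(i_{tot})$ since $\Uc$ is a functor; that $Y\in\G_\t(\fb)$, so $\t_{\fb_{st}}\circ Y=\Uc p_\fb$; and naturality of $\t$, so $\t_{\fb_{st}}\circ\Tc(i_{st})=\Uc(i_{st})\circ\t_{\fa_{st}}$. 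Since $\Uc(i_{st})$ is invertible, inserting $\Tc(i_{st})\circ\Tc(i_{st})^{-1}=id$ and applying these three identities yields the string $\Uc p_\fa=\Uc(i_{st})^{-1}\circ\t_{\fb_{st}}\circ Y\circ\Uc(i_{tot})=\t_{\fa_{st}}\circ\Tc(i_{st})^{-1}\circ Y\circ\Uc(i_{tot})=\t_{\fa_{st}}\circ\G_\t(i)(Y)$, exactly as in that proof but with the tangent bundle projection replaced by the abstract fibered transformation $\t$.

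Functoriality is then formal. For the identity, functoriality of $\Tc$ and $\Uc$ on identities gives $\G_\t(id_\fa)(X)=\Tc(id_{\fa_{st}})^{-1}\circ X\circ\Uc(id_{\fa_{tot}})=X$, so $\G_\t(id_\fa)=id_{\G_\t(\fa)}$. For composition, given $\fa\xrightarrow{i}\fb\xrightarrow{j}\fc$ in $\fA_{int}$ — a composable pair, since $(j\circ i)_{st}=j_{st}\circ i_{st}$ is again an isomorphism, confirming that $\fA_{int}$ is indeed a subcategory (\cref{remark:CInterconnectionInASubcategory}) — I would compute, for $Z\in\G_\t(\fc)$, the equality $\G_\t(i)\big(\G_\t(j)(Z)\big)=\Tc(i_{st})^{-1}\circ\Tc(j_{st})^{-1}\circ Z\circ\Uc(j_{tot})\circ\Uc(i_{tot})$, and then use functoriality on composition of $\Tc$ and $\Uc$ together with uniqueness of inverses (\cref{def:isomorphism}) to rewrite $\Tc(i_{st})^{-1}\circ\Tc(j_{st})^{-1}=\Tc\big((j\circ i)_{st}\big)^{-1}$ and $\Uc(j_{tot})\circ\Uc(i_{tot})=\Uc\big((j\circ i)_{tot}\big)$, obtaining $\G_\t(j\circ i)(Z)$. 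This is the contravariant composition law $\G_\t(j\circ i)=\G_\t(i)\circ\G_\t(j)$. I expect no genuine obstacle: the whole argument is the abstraction of \cref{prop:interconnectionMapOndeterministicHybridControl} and \cref{prop:dCrlInterconnectFunctorial}, the one point of care being that invertibility of $\Tc(i_{st})$ rests on $i_{st}$ being an isomorphism — precisely the defining property of an interconnection — whereas on an arbitrary morphism of $\fA$ one only obtains the lax, relation-valued functor of \cref{prop:openSectionsLaxFunctorial}.
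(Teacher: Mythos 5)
Your proposal is correct and follows essentially the same route as the paper: the paper defines $\G_\t(f)X\defeq\Tc f_{st}^{-1}\circ X\circ\Uc f_{tot}$ on interconnections, verifies well-definedness by the same diagram chase as in \cref{prop:interconnectionMapOndeterministicHybridControl} (with naturality of $\t$ replacing naturality of the tangent projection), and cites \cref{prop:dCrlInterconnectFunctorial} for functoriality. Your write-up merely makes explicit the steps the paper leaves as a sketch.
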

	Temporarily, we use the same symbol $\G_\t$ for both the strict (contravariant) functor  in \cref{prop:functorOpenSectionsOnInterconnect} and for the lax functor of \cref{prop:openSectionsLaxFunctorial}.  In \cref{prop:doubleFunctorASquare2SetSquare}, we will tie both usages together into a  double functor.
	\begin{proof}[Proof Sketch]
	The definition of $\G_\t$ on objects remains the same (\cref{def:COpenSystemInDRelativeToA}).  We now define $\G_\t$ on interconnection morphisms.  		Let $f:\fa'\rightarrow \fa$ be an $\fA$-interconnection (\cref{def:CInterconnectionInA}), and $X\in \G_\t(\fa)$ an \open $\t$-section (\cref{def:abstractOpenSections}). We define \begin{equation}\label{eq:openSectionsOnInterconnection}\G_\t(f) X\defeq \Tc f_{st}^{-1}\circ X \circ \Uc f_{tot}.\end{equation}
		
		That $X'\defeq \G_\t(f) X\in \G_\t(\fa')$  follows a diagram chase identical to that of diagram \eqref{diagram:relatedVectorFields} (the analog of \eqref{eq:p4} follows by naturality of $\t$).  Functoriality of $\G_\t$ on interconnection is a straightforward generalization of \cref{prop:dCrlInterconnectFunctorial}, where $ \Uc= \Ub $ and $\Tc = T\Ub$. 
	\end{proof}
	
	\begin{remark}\label{remark:compareWithdeterministicControl}
		Modify \cref{ex:HybridOpenSystemsInstanceOfAbstractOpenSystem}, where $\fA = \sF{HySSub}$, and let    $a\xrightarrow{i}b$ be an interconnection of hybrid surjective submersions.  Then the map $\G_\varpi(i)$ \eqref{eq:openSectionsOnInterconnection} is the map $\dCrl(i)$  in \cref{def:interconnectionMapOndeterministicHybridControl}, \eqref{eq:definingdcrlOnInterconnection}. 
		
		A cautionary note: $\dCrl$ is defined identically to $\G_\varpi$ on \textit{morphisms}, but not on objects.  In general, $\dCrl(a) \subsetneq \G_\varpi(a)$ (\cref{ex:detHyOSInstanceOfAbstractOpenSystem}).  Deterministic control are sections of $\varpi$ satisfying some extra conditions (\cref{def:deterministicControlLax}). 
	\end{remark}
	For a subcategory $\fA\subseteq \sF{Arrow(C)}$ of the arrow category of $\fC$, recall that $\fA$ may be interpreted as a double category (\cref{ex:arrowCatAsDoubleCat}). 
		 We next consider an augmented version of $\fA$  with interconnection,  as a double category. 
	 \begin{definition}\label{def:doubleCategoryASquare} 
	 	Let  $\fA\subseteq \sF{Arrow(C)}$ be a subcategory of the arrow category of category $\fC$. We define double category $\fA^\square$ by the following: the object category is $\fA^\square_0= \fA_{int}$ (\cref{remark:CInterconnectionInASubcategory}).  $1$-objects are morphisms in $\fA$, and $1$-morphisms are commuting squares: \begin{equation}\label{diagram:arrowDiagramInAsquare} \begin{tikzcd}\fc\arrow[r,"f"{name=two,below}] & \fd\\
	 	\fc'\arrow[u,swap,"\a"]\arrow[r,"g"{name=one,above}] & \fd'.  \arrow[u,"\b"] \ar[shorten  <= 3pt, shorten >= 3pt, Rightarrow, from = one, to = two]
 \end{tikzcd}\end{equation} 
In $\fC$,  diagram \eqref{diagram:arrowDiagramInAsquare} amounts to the commuting of  diagram \begin{equation}\label{diagram:cubeForASquare}\begin{tikzcd}
 	\fc_{tot}\arrow[rr,"f_{tot}"]\arrow[dr,"p_\fc"] & & \fd_{tot} \arrow[dr,"p_\fd"] & \\
 	& \fc_{st}\arrow[rr,near start,"f_{st}"] & & \fd_{st}\\
 	\fc_{tot}'\arrow[uu,swap,"\a_{tot}"]\arrow[dr,"p_{\fc'}"]\arrow[rr,near start,"g_{tot}"] & & \fd'_{tot}\arrow[uu,near start,swap,"\b_{tot}"]\arrow[dr,"p_{\fd'}"] & \\
 	 & \fc'_{st}\arrow[uu,near start,swap,"\a_{st}"]\arrow[rr,"g_{st}"] & & \fd_{st}',\arrow[uu,swap,"\b_{st}"] 
 \end{tikzcd}\end{equation} with the extra condition that $\a_{st}:\fc_{st}'\xrightarrow{\sim}\fc_{st}$ and $\b_{st}:\fd_{st}'\xrightarrow{\sim}\fd_{st}$ are both isomorphisms. 
 \end{definition}
 
 \begin{example}\label{ex:doubleCategoryHySSubSquare}
 	When $\fA = \sF{HySSub}$, $0$-morphisms of $\fA^\square$ are hybrid interconnections (\cref{def:HybridInterconnection}) and $1$-objects are morphisms of hybrid surjective submersions.  1-morphisms \eqref{diagram:cubeForASquare} are commuting  diagrams.
 \end{example}
 
 \begin{prop}\label{prop:doubleFunctorASquare2SetSquare}
 Abstract \open sections $\G_\t$ (\cref{def:abstractOpenSections}) extend to a  double functor $\G_\t:\fA^\square\rightarrow\sF{Set}^\square$. 
 \end{prop}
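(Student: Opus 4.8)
The statement asserts that abstract open sections assemble into a double functor $\G_\t:\fA^\square\to\sF{Set}^\square$. By \cref{def:functorDoubleCat}, a (strict) double functor is a pair of ordinary functors $(\G_{\t,0},\G_{\t,1})$ — one on object categories, one on arrow categories — that commute with the four structure functors $\scS,\scT,\scU,\scC$. The key observation is that I have already constructed both component functors in the preceding propositions: $\G_{\t,0}$ is the contravariant functor $\G_\t:(\fA_{int})^{op}\to\sF{Set}$ of \cref{prop:functorOpenSectionsOnInterconnect} (recall $\fA^\square_0=\fA_{int}$ and $\sF{Set}^\square_0=\sF{Set}$), while the assignment on $1$-objects is the lax functor $\G_\t:\fA\to\sF{Rel}$ of \cref{prop:openSectionsLaxFunctorial}, whose values are the relation sets $\G_\t(f)$ from \eqref{eq:definingOpenSectionsRelationOnMorphisms}. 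So the real content is (i) defining $\G_{\t,1}$ on $1$-morphisms and checking it is functorial, and (ii) verifying the four compatibility conditions.

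First I would define $\G_{\t,1}$ on a $1$-morphism, i.e.\ on a commuting cube \eqref{diagram:cubeForASquare} whose vertical $0$-morphisms $\a,\b$ are interconnections. Such a cube is a morphism from the $1$-object $f$ to the $1$-object $g$; its image must be a $1$-morphism in $\sF{Set}^\square$, that is, a commuting square of sets relating $\G_\t(f)\subseteq \G_\t(\fc)\times\G_\t(\fd)$ to $\G_\t(g)\subseteq\G_\t(\fc')\times\G_\t(\fd')$ via the maps $\G_\t(\a):\G_\t(\fc)\to\G_\t(\fc')$ and $\G_\t(\b):\G_\t(\fd)\to\G_\t(\fd')$ of \eqref{eq:openSectionsOnInterconnection}. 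Concretely, I must show that if $(X,X')\in\G_\t(f)$ — so $X,X'$ are $f$-related — then $\bigl(\G_\t(\a)X,\,\G_\t(\b)X'\bigr)\in\G_\t(g)$, i.e.\ these pulled-back sections are $g$-related. This is the heart of the argument and is exactly the first half of the main theorem flagged in the introduction. It reduces to a single diagram chase: paste the $f$-relatedness square for $(X,X')$ together with the defining equations $\G_\t(\a)X=\Tc\a_{st}^{-1}\circ X\circ\Uc\a_{tot}$ and $\G_\t(\b)X'=\Tc\b_{st}^{-1}\circ X'\circ\Uc\b_{tot}$, and use the commutativity of the cube's faces together with naturality of $\t$, functoriality of $\Tc,\Uc$, and the fact that $\a_{st},\b_{st}$ are isomorphisms (so $\Tc\a_{st},\Tc\b_{st}$ are invertible by \cref{fact:functorsPreserveIsos}). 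This is formally the same computation as \eqref{diagram:relatedVectorFields} in the proof of \cref{prop:dCrlInterconnectIsMap}, now run with general $\Tc,\Uc$ in place of $T\Ub,\Ub$.

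Next I would check functoriality of $\G_{\t,1}$: that it preserves identity $1$-morphisms and composes correctly under \emph{vertical} composition of cubes (composition in the category $\fA^\square_1$). Preservation of identities is immediate since $\G_\t(id)=id$ by \cref{prop:functorOpenSectionsOnInterconnect}, and vertical composition follows from $\G_\t(\b\circ\a)=\G_\t(\a)\circ\G_\t(\b)$, again by that same proposition. Finally I would verify the four structure-functor conditions of \cref{def:functorDoubleCat}. Conditions $1$ and $2$ (source and target) say that $\scS$ and $\scT$ of a cube are recovered by taking domain/codomain, and hold because $\G_\t$ sends the $0$-morphism $\a=\scS(\text{cube})$ to the map $\G_\t(\a)=\scS$ of the image square — a definitional bookkeeping check. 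Condition $4$ (units) asks that $\G_\t$ of a unit cube is a unit square, which holds since $\G_\t$ restricted to $1$-objects is the lax functor of \cref{prop:openSectionsLaxFunctorial}. Condition $3$ (horizontal/$\scC$-composition) is where laxity enters: for $1$-objects the inclusion $\G_\t(g)\circ\G_\t(f)\subseteq\G_\t(g\circ f)$ of \eqref{eq:strictInclusion} is only one-directional, so $\G_\t$ is a \emph{lax} double functor rather than strict, and I would state condition $3'$ of \cref{def:functorDoubleCat} accordingly.

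\textbf{Main obstacle.} The genuine work — and the step I expect to be hardest to write cleanly — is the relatedness-preservation diagram chase in step (i), since the cube \eqref{diagram:cubeForASquare} has many faces and one must track which commute by hypothesis (the cube), which by naturality of $\t$, and which by functoriality, while correctly inverting $\Tc\a_{st}$ and $\Tc\b_{st}$. I would present it by appealing directly to the structurally identical computation in \cref{prop:dCrlInterconnectIsMap} and \eqref{diagram:relatedVectorFields} rather than redrawing the full cube, noting that the only substantive hypothesis used is that $\a_{st},\b_{st}$ are isomorphisms on state.
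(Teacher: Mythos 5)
Your proposal is correct and follows essentially the same route as the paper: the paper's proof consists precisely of the relatedness-preservation diagram chase you identify as the heart of the argument (showing that $(\G_\t(\a)X,\G_\t(\b)Y)$ is $g$-related by chaining the defining formula \eqref{eq:openSectionsOnInterconnection}, the commuting faces of the cube, functoriality of $\Tc$ and $\Uc$, and $f$-relatedness of $(X,Y)$), with the remaining double-functor axioms left as bookkeeping inherited from \cref{prop:functorOpenSectionsOnInterconnect} and \cref{prop:openSectionsLaxFunctorial}. Your observation that the horizontal-composition condition holds only laxly, via the inclusion \eqref{eq:strictInclusion}, is accurate and slightly more explicit than the paper's own presentation.
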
 See 	\cite[Lemma 8.12]{lermanopennetworks}. The diagram chase is identical. 
 \begin{proof} We apply $\G_\t$ to diagram \eqref{diagram:arrowDiagramInAsquare}.   Let $\left(X \in \G_\t(\fc),Y\in \G_\t(\fd)\right)$ be $f$-related (\cref{prop:openSectionsLaxFunctorial}, \eqref{eq:definingOpenSectionsRelationOnMorphisms}) and consider diagram\begin{equation}\label{diagram:cubeForDoubleFunctor}
\begin{tikzcd}
\Uc\fc_{tot}\arrow[dr,"X"]\arrow[rr,"\Uc f_{tot}"] & & \Uc\fd_{tot}\arrow[dr,"Y"] & \\
& \Tc\fc_{st}\arrow[dd,near start,"\Tc\a_{st}^{-1}"]\arrow[rr,near start,"\Tc f_{st}"] & & \Tc \fd_{st}\arrow[dd,near start,"\Tc\b_{st}^{-1}"]\\
\Uc\fc_{tot}'\arrow[dr,"X'"] \arrow[rr,near start,"\Uc g_{tot}"]\arrow[uu,near start,"\Uc \a_{tot}"] & & \Uc\fd_{tot}'\arrow[dr,"Y'"]\arrow[uu,near start,swap,"\Uc\b_{tot}"] & \\
& \Tc \fc_{st}'\arrow[rr,"\Tc g_{st}"] & & \Tc\fd_{st}'.
\end{tikzcd}
 \end{equation}  In diagram \eqref{diagram:cubeForDoubleFunctor}, $X'\defeq \G_\t(\a)X$ and $Y'\defeq \G_\t(\b)Y$ (c.f.\ \eqref{eq:openSectionsOnInterconnection}).
 We must show that $(X',Y')$ are $g$-related, i.e.\ that $\Tc g_{st}\circ X' = Y'\circ \Uc g_{tot}$. We compute:
 $$\begin{array}{lll}\Tc g_{st}\circ X' & = \Tc g_{st}\circ \Tc \a_{st}^{-1}\circ  X\circ \Uc\a_{tot} & (\mbox{by \eqref{eq:openSectionsOnInterconnection}})\\
 & = \Tc\b_{st}^{-1}\circ \Tc f_{st}\circ X \circ \Uc \a_{tot} & (\mbox{by \eqref{diagram:cubeForASquare} and functoriality of $\Tc$})\\
 & = \Tc\b_{st}^{-1}\circ Y \circ \Uc f_{tot} \circ \Uc \a_{tot} & (\mbox{since}\, (Y,X)\in \G_\t(f))\\
 & = \Tc\b_{st}^{-1}\circ Y \circ \Uc\b_{tot} \circ \Uc g_{tot} & (\mbox{by \eqref{diagram:cubeForASquare} and functoriality of $\Uc$}) \\
 & = Y'\circ \Uc g_{tot} & (\mbox{by \eqref{eq:openSectionsOnInterconnection}})
 \end{array}$$
 proving that $(X',Y')$ are $g$-related. 
 \end{proof}

 \section{Monoidally Fibered Transformations and Networks}\label{subsection:monoidallyfibered}
 In \cref{subsection:abstractSections}, we worked out the notion of abstract section, open system, and interconnection. Interconnection---a class of morphisms of arrows which is isomorphism on state---by itself is an apparently unmotivated notion.  For us, interconnection is the way we interconnect a collection of subsystems into one, and hence build networks.  There are two steps to this process.  We start with a collection of spaces and combine them with some sort of product.  Then we interconnect, by defining some interconnection map to the product. Both steps together comprise our formalism of networks. 
 
 Having established the requisite theory for abstract systems and interconnection, we now turn to working out what we need for products.     We assume all material from \cref{subsection:fiberedCategories} and briefly identify relevant notions from \cref{subsection:MonoidalCat}. Start with  $\fD$-fibered natural transformation $\nat{\fC}{\fD}{\Tc}{\Uc}{\t}$ (\cref{def:fiberedTransformationInD}).  We assume that both $\fC$ and $\fD$ are   monoidal categories (\cref{def:monoidalCat}), and that $\fD$ is cartesian  (\cref{def:monoidalProductCartesian}). Let $\fA\subseteq \sF{Arrow(C)}$ be a subcategory of the arrow category of $\fC$,   and $\fA^\square$ the  double category in \cref{def:doubleCategoryASquare}. Consider $\fX$-indexed collection $\big\{\Ac_\fx\big\}_{\fx\in \fX}$ of $\fA$-objects and recall (c.f.\ \eqref{eq:productFinSet})    the product assignment \begin{equation}\label{eq:productFinSet2} \Pi(\Ac_\fX)\defeq \disbop_{\fx\in \fX}\Ac_\fx. \end{equation} Suppose, moreover,  that $\Pi:\big(\sF{FinSet/A}^\Leftarrow\big)^{op}\rightarrow\fA$ is functorial.   When $\fA$ is cartesian, for example, this assignment is functorial (\cref{prop:productIsFunctorialForCartesianMonoidal}). Finally, for the arrow category $\fA_\fD \defeq \sF{Arrow(D)}$ of $\fD$, functors $\Tc,\,\Uc:\fC\rightarrow\fD$ extend to functors $\Tc_*,\,\Uc_*:\fA\rightarrow\fA_\fD$ which are also strong monoidal (\cref{prop:arrowMonoidalProductPreserving}).
 
 \begin{notation}\label{notation:listOfAobjects}
   For $\fX$-indexed $\fA$-objects $\Ac_\fX:\fX\rightarrow\fA$ or $\big\{\Ac_\fx\big\}_{\fx\in \fX}$, we denote $\Ac_\fx \defeq \big(\Ac_{\fx,tot}\xrightarrow{p_\fx} \Ac_{\fx,st}\big)$. Therefore, $$\Pi(\Ac_\fX) =\left(\Pi\big(\Ac_{\fX,tot}\big)\xrightarrow{\Pi(p_\fx)}\Pi\big(\Ac_{\fX,st}\big)\right).$$
\end{notation}

\begin{example}\label{ex:manifoldsAndOpenSystemsAsSatisfyingAssumptions}
Recall \cref{ex:identityFunctorForfiberedCategories} and \cref{ex:ordinaryOpenSystemsInstanceOfAbstract} where $\fC = \fD = \sF{Man}$ are the category of manifolds, $\Tc = T$ is the tangent endofunctor, $\Uc =id_{\sF{Man}}$ is the identity functor,  $\t:\Tc\Rightarrow\Uc$ is the canonical projection of the tangent bundle, and $\fA = \sF{SSub}$ is the category of surjective submersions.  The category of manifolds is cartesian  (\cref{fact:productManifolds}, \cref{prop:productManifoldsCategorical}), as is $\sF{SSub}$ (\cite[\S4]{lermanopennetworks}), and the tangent functor $\Tc$ is strong monoidal (\cref{prop:isoProductTangentSpaces}). 
\end{example}

We now restate and isolate  our assumptions for later reference:
\begin{assumption}\label{assumption:3}
 Categories  $(\fA,\otimes_\fA,1_\fA)$ and  	$(\fD,\otimes_\fD,1_\fD)$ are monoidal and cartesian monoidal, respectively (\cref{def:monoidalProductCartesian}), and the product assignment $\Pi:\big(\sF{FinSet/A}^\Leftarrow\big)^{op}\rightarrow\fA$ is functorial. \end{assumption}
\begin{remark}\label{remark:showAFunctorial}
We will show functoriality of $\Pi$ in our examples by observing that $(\fA,\otimes_\fA,1_\fA)$ is itself cartesian.  Cartesianality of $\fA$ is used only for functoriality while we need cartesianality of $\fD$ in order to ensure that agreement of ``dynamics on components''  is sufficient for network coherence (\cref{prop:openSectionsProductRelatedAreRelated}). \end{remark}

\begin{assumption}\label{assumption:2}
	Monoidal functors $\Uc,\Tc:(\fC,\otimes_\fC,1_\fC)\rightarrow(\fD,\otimes_\fD,1_\fD)$ are \textit{strong monoidal} (\cref{def:strongMonoidalFunctor}). 
\end{assumption}

We require one more condition on the natural transformation  $\t:\Tc\Rightarrow\Uc$, that $\t$ respects monoidal products with  both monoidal functors $\Tc$ and $\Uc$:

\begin{assumption}\label{assumption:1}
	The $\fD$-fibered (\cref{def:fiberedTransformationInD}) natural transformation $\nat{\fC}{\fD}{\Tc}{\Uc}{\t}$ is   monoidal  (\cref{def:monoidalTransformation}).
\end{assumption} We recall   monoidality of natural transformation $\t:\Tc\Rightarrow \Uc$  means that for  natural  transformations (in this case, isomorphisms) $\eta^\Xc: \Xc(\cdot)\otimes_\fD(\cdot) \Rightarrow\Xc((\cdot)\otimes_\fC(\cdot))$ of monoidal functors  $\Xc=\Tc,  \Uc$ (\cref{def:monoidalFunctor}), and  for every pair of objects $\fc,\fc'\in \fC_0$, we have commuting diagram $$\begin{tikzcd}
	\Tc\fc\otimes_\fD\Tc\fc\arrow[r,"\eta^\Tc_{\fc{,}\fc'}"] \arrow[d,"\t_\fc\otimes\t_{\fc'}"] & \Tc(\fc\otimes_\fC\fc)\arrow[d,"\t_{\fc\otimes\fc'}"] \\
	\Uc\fc\otimes_\fD\Uc\fc'\arrow[r,"\eta^{\Uc}_{\fc{,}\fc'}"] & \Uc(\fc\otimes_\fC\fc').
\end{tikzcd}$$

\begin{remark}\label{remark:redundancyOfAssumption3}
	We observe that monoidality of transformation $\t:\Tc\Rightarrow \Uc$ is a consequence of cartesianality 
	of monoidal category $\fD$ and strong  monoidality of functors $\Tc,\Uc:\fC\rightarrow \fD$.   Indeed, let $\g_{\fc,\fc'}^\Xc:\Xc(\fc\times \fc')\xrightarrow{\sim}\Xc\fc\times \Xc\fc'$ denote the natural inverse of $\eta^{\Xc}_{\fc,\fc'}$ for $\Xc = \Tc,\Uc$.  The following diagrams commute: \small$$\begin{array}{ccc}\begin{tikzcd}
	\Tc(\fc\times\fc')\arrow[r]\arrow[d,"\t_{\fc\times\fc'}"]  & \Tc\fc\times \Tc\fc'\arrow[d,"\t_\fc\times\t_{\fc'}"]\\ \Uc(\fc\times\fc')\arrow[r] & \Uc\fc\times\Uc\fc'
\end{tikzcd}& \mbox{and} & \begin{tikzcd}[column sep = large,row sep = large]
 	\Tc(\fc\times \fc')\arrow[rr,bend left,swap,"id_{\Tc(\fc\times \fc')}"] \arrow[r,swap,"\g^\Tc_{\fc{,}\fc'}"] \arrow[d,"\t_{\fc\times\fc'}"] & \Tc\fc\times \Tc\fc'\arrow[r,swap,"\eta^\Tc_{\fc{,}\fc'}"]\arrow[d,"\t_\fc\times\t_{\fc'}"] & \Tc(\fc\times \fc')\arrow[d,"\t_{\fc\times \fc'}"] \\
 	\Uc(\fc\times\fc')\arrow[rr,bend right, "id_{\Uc(\fc\times\fc')}"] \arrow[r,"\g_{\fc{,}\fc'}^\Uc"] & \Uc\fc\times\Uc\fc' \arrow[r,"\eta^\Uc_{\fc{,}\fc'}"] & \Uc(\fc\times \fc').
 \end{tikzcd}\end{array}$$\normalsize
 The first diagram commutes by the universal property of product (\cref{def:products}) and naturality of $\t$ (applied to projection maps $p_\fc:\fc\times\fc\rightarrow\fc$ and $p_{\fc'}:\fc\times\fc'\rightarrow\fc'$).  This diagram appears as the left-hand square in the diagram on the right, and the outer diagram obviously commutes. We readily conclude that the inner diagram on the right does as well.  

\end{remark}
\begin{remark}
	Very loosely speaking, we may unify monoidal assumptions \ref{assumption:3}, \ref{assumption:2}, and \ref{assumption:1} as follows: the categories are cartesian monoidal, the functors are (strong) monoidal, and the natural transformation is monoidal. We noted in \cref{remark:showAFunctorial} that $\fA$ need not be cartesian, though it is in the examples we consider. 
\end{remark}

Having stated a slew of assumptions, we reflect on where we are going.  We want to build a theory of  abstract networks of open systems.  As in our previous examples, an abstract notion of network is something like `product + interconnection'.  The product is over a collection of $\fA$-objects, and  interconnection is a map into this product.  Use of a category $\fA\subseteq \sF{Arrow(C)}$ is the ``open'' part (e.g.\ \cref{def:abstractOpenSystem}) and functoriality of $\Pi$ ensures that we can make networks into a category. Now for the precise definition.

\begin{definition}\label{def:abstractNetworks} Fix cartesian subcategory $\fA\subseteq \sF{Arrow(C)}$. 
We define an \textit{abstract network of $\fA$-open $\t$-systems} (or simply: \textit{network of open systems}) to be a pair $$\left(\big\{\Ac_\fx\big\}_{\fx\in \fX}, \i_\fX:\fa\rightarrow \discatp_{\fx\in \fX} \Ac_\fx\right),$$ where $\Ac_\fX:\fX\rightarrow \fA$ is a finite indexed collection of $\fA$-objects (\cref{def:categoryOfLists}) and $\i_\fX:\fa\rightarrow \Pi(\Ac_\fX)$ is an interconnection morphism (\cref{def:CInterconnectionInA}, eq.\ \eqref{eq:productFinSet2}).   We will write $(\Ac_\fX, \i_\fX:\fa\rightarrow \Pi(\Ac_\fX))$ or just $(\Ac_\fX,\i_\fX)$ as shorthand for such a network (and $\sF{dom}(\i)$ for the source of morphism $\i_\fX$ if it is not otherwise specified).   
	\end{definition}

	\begin{remark}\label{remark:whyCallItNetworkOfSYSTEMS?}
	We previously defined $\fA$-open \textit{system} to be a pair $(\fa,X)$ where $\fa\defeq (\fa_{tot}\xrightarrow{p_\fa}\fa_{st}) \in \fA\subseteq \sF{Arrow(C)}$ and $X\in \G_\t(\fa)$ (\cref{def:COpenSystemInDRelativeToA}), while no sections appear in the definition of \textit{networks} of open systems.  We will see that the  ``systems'' part of \cref{def:abstractNetworks} comes from applying  \open sections functor  $\G_\t$ to some 1-morphism in $\fA^\square$ involving the interconnection $\i_\fX:\fa\rightarrow\Pi(\Ac_\fX)$.
\end{remark}

There is also a notion of morphisms  of networks. 
\begin{definition}\label{def:abstractNetworksMorphism}
	Let $(\Ac_\fX,\i_\fX:\fa\rightarrow \Pi(\Ac_\fX))$ and $(\Ac_\fY,\i_\fY:\fa'\rightarrow\Pi(\Ac_\fY))$ be two abstract networks of $\fA$-open $\t$-systems (\cref{def:abstractNetworks}).  We define a  \textit{morphism} $\big((\ph,\Phi),f\big):(\Ac_\fX,\i_\fX)\rightarrow (\Ac_\fY,\i_\fY)$ \textit{of  networks} to be a  morphism $\begin{tikzcd}[column sep = large]\fX\arrow[r,"\ph"]\arrow[dr,swap,"\Ac_\fX",""{name = foo, above}] & \fY\arrow[d,"\Ac_\fY"]\ar[shorten <= 5pt, Rightarrow, to = foo,"\Phi"] \\
  & \fA
\end{tikzcd}$ of $\fX$-indexed $\fA$-objects (i.e.\ in the category $(\sF{FinSet/A})^\Leftarrow$ (\cref{def:categoryOfLists})) and a morphism $f:\fa'\rightarrow\fa $ in $\fA$ such that $\begin{tikzcd}[column sep = large, row sep = large]
	\Pi\big(\Ac_\fY\big)\arrow[r,swap,"\Pi(\ph{,}\Phi)",""{name = foo1,below}] &\Pi\big(\Ac_\fX\big)\\
	\fa'\arrow[u,swap,"\i_\fY"]\arrow[r,"f",""{name = foo2,above}] & \fa \arrow[u,"\i_\fX"]\ar[shorten <= 15pt, shorten >= 15pt, Rightarrow, from = foo2, to = foo1]
\end{tikzcd}$ is a 1-morphisms in $\fA^\square$. 
\end{definition}

\begin{remark}\label{remark:networksAreACategory}
	It is easy to see that networks of $\fA$-open $\t$-systems (\cref{def:abstractNetworks}) and morphisms of networks  (\cref{def:abstractNetworksMorphism}) form a category.  The verification is formally very similar to \cref{lemma:HyPhisaCategory}. 
\end{remark}
The next result partly explains the motivation behind the name system (\cref{remark:whyCallItNetworkOfSYSTEMS?}).  A morphism of networks induces a 1-morphism in $\sF{Set}^\square$ (\cref{prop:doubleFunctorASquare2SetSquare}):
\begin{prop}\label{prop:halfOfMainTheorem1stHalf}
	Let $\big((\ph,\Phi),f\big):(\Ac_\fX,\i_\fX:\fa\rightarrow\Pi(\Ac_\fX))\rightarrow (\Ac_\fY,\i_\fY:\fa'\rightarrow\Pi(\Ac_\fY))$ be a morphism of abstract networks.  Then there is 1-morphism in $\sF{Set}^\square$ \begin{equation}
		\begin{tikzcd}[column sep = large, row sep = large]
			\G_\t\left(\Pi\big(\Ac_\fY\big)\right)\arrow[r,swap,"\G_\t(\Pi(\ph{,}\Phi))"{name = foo4},""{name = foo1, below}]\arrow[d,"\G_\t(\i_\fY)"] & \G_\t\left(\Pi\big(\Ac_\fX\big)\right) \arrow[d,swap,"\G_\t(\i_\fX)"] \\
			\G_\t(\fa')\arrow[r,"\G_\t(f)"{name = foo3},""{name = foo2, above}] & \G_\t(\fa) \ar[shorten <= 5pt, shorten >= 5pt, Rightarrow, from = foo4, to = foo3]
		\end{tikzcd}
	\end{equation}
	\end{prop}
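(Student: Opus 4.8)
The plan is to recognize that this proposition is essentially an immediate corollary of the double functoriality of abstract open sections (\cref{prop:doubleFunctorASquare2SetSquare}), together with the observation that a morphism of networks packages exactly a $1$-morphism of the double category $\fA^\square$. The bulk of the substantive work has therefore already been done, and what remains is to match the data correctly and to track the variances.

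First I would unpack \cref{def:abstractNetworksMorphism}: a morphism $\big((\ph,\Phi),f\big)$ of abstract networks supplies, as part of its data, a commuting square
$$
\begin{tikzcd}[column sep = large, row sep = large]
\Pi(\Ac_\fY)\arrow[r,"\Pi(\ph{,}\Phi)"] & \Pi(\Ac_\fX)\\
\fa'\arrow[u,"\i_\fY"]\arrow[r,"f"] & \fa \arrow[u,"\i_\fX"]
\end{tikzcd}
$$
which is declared to be a $1$-morphism in $\fA^\square$. Here the horizontal arrows $\Pi(\ph,\Phi)$ and $f$ are $1$-objects, i.e.\ morphisms in $\fA$ (that $\Pi(\ph,\Phi)$ is a morphism in $\fA$ is guaranteed by \cref{assumption:3} and \cref{prop:productIsFunctorialForCartesianMonoidal}), while the vertical arrows $\i_\fY$ and $\i_\fX$ are $0$-morphisms, i.e.\ interconnections (\cref{def:CInterconnectionInA}), being precisely the interconnection maps of the two networks per \cref{def:abstractNetworks}.

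Second, I would simply apply the double functor $\G_\t:\fA^\square\rightarrow\sF{Set}^\square$ of \cref{prop:doubleFunctorASquare2SetSquare} to this $1$-morphism; a double functor sends $1$-morphisms to $1$-morphisms, so the image is a $1$-morphism in $\sF{Set}^\square$. The one point requiring attention is variance: on interconnections $\G_\t$ is contravariant (\cref{prop:functorOpenSectionsOnInterconnect}, \eqref{eq:openSectionsOnInterconnection}), so $\i_\fY:\fa'\rightarrow\Pi(\Ac_\fY)$ is carried to $\G_\t(\i_\fY):\G_\t(\Pi(\Ac_\fY))\rightarrow\G_\t(\fa')$ and likewise to $\G_\t(\i_\fX):\G_\t(\Pi(\Ac_\fX))\rightarrow\G_\t(\fa)$. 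These become the downward vertical $0$-morphisms of the target square, exactly as in the statement, while the horizontal arrows become the relations $\G_\t(\Pi(\ph,\Phi))$ and $\G_\t(f)$ (\cref{prop:openSectionsLaxFunctorial}, \eqref{eq:definingOpenSectionsRelationOnMorphisms}), serving as the $1$-objects of $\sF{Set}^\square$.

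The genuine content of being a $1$-morphism in $\sF{Set}^\square$ is the inclusion $\big(\G_\t(\i_\fY)\times\G_\t(\i_\fX)\big)\big(\G_\t(\Pi(\ph,\Phi))\big)\subseteq\G_\t(f)$, i.e.\ that the $\i_\fY$- and $\i_\fX$-images of a $\Pi(\ph,\Phi)$-related pair form an $f$-related pair; but this is exactly the compatibility established in the diagram chase of \eqref{diagram:cubeForDoubleFunctor} in the proof of \cref{prop:doubleFunctorASquare2SetSquare}. Consequently there is no real obstacle to surmount. The only thing one must be careful to confirm is that the square furnished by \cref{def:abstractNetworksMorphism} is a bona fide $1$-morphism of $\fA^\square$ (a commuting square both of whose vertical legs are interconnections), which is built into the definition, after which the result follows by citing \cref{prop:doubleFunctorASquare2SetSquare}. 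I expect the proposition to read as a short, direct repackaging of the already-proven double functoriality rather than a computation of its own.
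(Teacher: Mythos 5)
Your proposal is correct and matches the paper's own proof, which is exactly the one-line citation of \cref{def:abstractNetworksMorphism} together with \cref{prop:doubleFunctorASquare2SetSquare}. Your additional unpacking of the variance and of what the $1$-morphism condition in $\sF{Set}^\square$ amounts to is accurate but not needed beyond what those two references already supply.
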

\begin{proof}
	This follows directly from \cref{def:abstractNetworksMorphism} and \cref{prop:doubleFunctorASquare2SetSquare}. 
\end{proof}
Proposition \ref{prop:halfOfMainTheorem1stHalf} brings to light the significance of interconnection in our notion of network.  Notice that the interconnection induces a map of open $\t$-sections on a product to open $\t$-sections of another space.  We still need some way to take a \textit{collection} of open $\t$-sections of a collection of spaces  to an open $\t$-section of the product.  The next result gives us exactly that.

\begin{lemma}\label{lemma:strongMonoidalInducesMonoidalFunctor}
Suppose \cref{assumption:2} and \cref{assumption:1} hold, namely  that  $\fD$-fibered (\cref{def:fiberedTransformationInD}) transformation
$\nat{\fC}{\fD}{\Tc}{\Uc}{\t}$ is  monoidal  (\cref{def:monoidalTransformation}) and both $\Tc$ and $\Uc$ are  strong monoidal (\cref{def:strongMonoidalFunctor}).    Let $\fA\subseteq \sF{Arrow(C)}$  and $\Ac_\fX:\fX\rightarrow \fA$ be a list of $\fA$-objects. Then there is a map $\scP_\fX:\discatp_{\fx\in \fX}\G_\t(\Ac_\fx) \rightarrow\G_\t\left(\Pi\big(\Ac_\fX\big)\right)$. 
\end{lemma}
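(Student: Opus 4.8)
The plan is to construct the map $\scP_\fX$ explicitly by mimicking the construction of $\scP_{a,b}$ from the two-factor case (\cref{remark:mapProductOfControlsToControlPoduct}, \eqref{eq:defineProductOfControl}) and the arbitrary-finite case (\cref{prop:mapFromProductdCrl2dCrlProductGeneral}, \eqref{eq:definingProductOfdeterministicControlArbitrary}), then verify that its output genuinely lands in $\G_\t(\Pi(\Ac_\fX))$. Since $\Tc$ and $\Uc$ are strong monoidal (\cref{assumption:2}), there are natural isomorphisms $\eta^\Tc:\Tc(\cdot)\otimes_\fD\Tc(\cdot)\Rightarrow\Tc((\cdot)\otimes_\fC(\cdot))$ and $\eta^\Uc:\Uc(\cdot)\otimes_\fD\Uc(\cdot)\Rightarrow\Uc((\cdot)\otimes_\fC(\cdot))$, with inverses $\g^\Tc,\g^\Uc$. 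Because $\fD$ is cartesian (\cref{assumption:3}), $\otimes_\fD = \times$, so these give canonical isomorphisms comparing the functors applied to a product with the product of the functors applied factorwise. Writing $\Ac_\fx = (\Ac_{\fx,tot}\xrightarrow{p_\fx}\Ac_{\fx,st})$ (\cref{notation:listOfAobjects}), I would define, for a tuple $(X_\fx)_{\fx\in\fX}\in\discatp_{\fx\in\fX}\G_\t(\Ac_\fx)$,
\begin{equation}
\scP_\fX\big((X_\fx)_{\fx\in\fX}\big)\defeq \g^\Tc_{\fX,st}{}^{-1}\circ\left(\discatp_{\fx\in\fX}X_\fx\right)\circ\g^\Uc_{\fX,tot},
\end{equation}
where $\g^\Xc_{\fX,\bullet}:\Xc(\Pi(\Ac_{\fX,\bullet}))\xrightarrow{\sim}\discatp_{\fx\in\fX}\Xc(\Ac_{\fx,\bullet})$ are the iterated strong-monoidal comparison isomorphisms for $\bullet = tot, st$, and $\discatp_\fx X_\fx$ is the product map in the cartesian category $\fD$.

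First I would confirm the map is well-defined as a map of the correct type: each $X_\fx\colon\Uc\Ac_{\fx,tot}\to\Tc\Ac_{\fx,st}$, so the product $\discatp_\fx X_\fx$ goes from $\discatp_\fx\Uc\Ac_{\fx,tot}$ to $\discatp_\fx\Tc\Ac_{\fx,st}$, and precomposing with $\g^\Uc_{\fX,tot}$ and postcomposing with $(\g^\Tc_{\fX,st})^{-1}$ yields a morphism $\Uc(\Pi(\Ac_{\fX,tot}))\to\Tc(\Pi(\Ac_{\fX,st}))$, which is a morphism of the shape required for an element of $\G_\t(\Pi(\Ac_\fX))$ (recall $\Pi(\Ac_\fX) = (\Pi(\Ac_{\fX,tot})\xrightarrow{\Pi(p_\fx)}\Pi(\Ac_{\fX,st}))$). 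The substantive step is then checking the compatibility condition $\t_{\Pi(\Ac_{\fX,st})}\circ\scP_\fX((X_\fx)_\fx) = \Uc(\Pi(p_\fx))$, i.e.\ that the output is a genuine open $\t$-section (\cref{def:abstractOpenSystem}).

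The verification rests on \emph{monoidality of the transformation} $\t$ (\cref{assumption:1}): the commuting square relating $\eta^\Tc,\eta^\Uc$ through $\t\otimes\t$ and $\t_{\otimes}$, iterated over the finite index set $\fX$, says precisely that $\g^\Uc_{\fX,st}\circ\t_{\Pi(\Ac_{\fX,st})} = \left(\discatp_\fx\t_{\Ac_{\fx,st}}\right)\circ\g^\Tc_{\fX,st}$. Combining this with the factorwise section conditions $\t_{\Ac_{\fx,st}}\circ X_\fx = \Uc p_\fx$ and the naturality of $\g^\Uc$ applied to the morphisms $\Pi(p_\fx)$, I would chase the diagram \eqref{eq:diagramProductSortOfLikeInterconnect}-style cube (exactly as in the proof of \cref{prop:dCrlInterconnectIsMap}) to obtain the desired equality. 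The calculation is
\begin{equation}
\t_{\Pi(\Ac_{\fX,st})}\circ\scP_\fX = \t_{\Pi(\Ac_{\fX,st})}\circ\g^\Tc_{\fX,st}{}^{-1}\circ\left(\discatp_\fx X_\fx\right)\circ\g^\Uc_{\fX,tot} = \g^\Uc_{\fX,st}{}^{-1}\circ\left(\discatp_\fx\Uc p_\fx\right)\circ\g^\Uc_{\fX,tot} = \Uc\big(\Pi(p_\fx)\big),
\end{equation}
the last equality by naturality of the comparison isomorphism $\g^\Uc$ (or equivalently its functoriality against the product morphism $\Pi(p_\fx)$).

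\textbf{The main obstacle} I anticipate is bookkeeping rather than conceptual: carefully setting up the iterated strong-monoidal comparison isomorphisms $\g^\Xc_\fX$ over an arbitrary finite index set (rather than just two factors) and confirming that \cref{assumption:1} indeed propagates from the binary monoidality square to the $\fX$-fold version. Because the paper has already carried out the $n=2$ case in \cref{remark:mapProductOfControlsToControlPoduct} and asserted the finite generalization in \cref{prop:mapFromProductdCrl2dCrlProductGeneral} with ``essentially the same'' arguments, I would state the iterated comparison isomorphisms as following from associativity and naturality of $\eta^\Tc,\eta^\Uc$ and the coherence conditions of the monoidal structure, and reduce the finite case to repeated application of the binary one by induction on $|\fX|$, thereby avoiding grinding through the full multi-index diagram chase.
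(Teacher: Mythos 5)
Your proposal is correct and follows essentially the same route as the paper: the same definition $\scP_\fX = \eta_{\fX,st}\circ(\bigotimes_\fx X_\fx)\circ\g_{\fX,tot}$, and the same verification combining monoidality of $\t$, the factorwise section conditions, and naturality of the comparison isomorphism for $\Uc$. One tiny remark: you invoke cartesianness of $\otimes_\fD$ (\cref{assumption:3}) to form $\discatp_\fx X_\fx$, but this is not a hypothesis of the lemma and is not needed --- bifunctoriality of $\otimes_\fD$ already gives the morphism $\bigotimes_\fx X_\fx$, which is all the paper uses.
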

\begin{proof}
Let \begin{equation}\label{diagrams:naturalIsosForStrongMonoidal} \nat{\fC\times \fC}{\fD}{\Tc(\cdot) \otimes_\fD \Tc(\cdot)}{\Tc\big((\cdot)\otimes_\fC(\cdot)\big)}{\eta}\;\;\mbox{and}\;\;\nat{{\fC\times \fC}}{\fD}{\Uc\big((\cdot)\otimes_\fC(\cdot)\big)}{\Uc(\cdot)\otimes_\fD \Uc(\cdot)}{\g}\end{equation} be the natural isomorphisms of \cref{def:strongMonoidalFunctor} (also recall \cref{fact:inverseOfNaturalIsoIsNatural}).  These extend naturally to  transformations \begin{equation}\label{eq:natTransformForMonoidalProductsGeneral} \begin{array}{lll}\g_\fX:\Uc\big(\bigotimes_{\fx\in \fX}(\cdot)\big)\Rightarrow \bigotimes_{\fx\in \fX}\Uc(\cdot) & \mbox{and} & \eta_\fX: \bigotimes_{\fx\in \fX}\Tc(\cdot)\Rightarrow	\Tc \big(\bigotimes_{\fx\in \fX}(\cdot)\big)\end{array}\end{equation} for functors $\fC^\fX\rightarrow\fD$. Let $(v_\fx)_{\fx\in \fX}\in \discatp_{\fx\in \fX}\G_\t(\Ac_\fx)$ be an $\fX$-indexed collection of \open sections. By assumption, there is isomorphism $$ \Uc\left(\disbop_{\fx\in \fX}\Ac_{\fx,tot}\right) \xrightarrow{\g_{\fX,tot}} \disbop_{\fx\in \fX}\Uc\big(\Ac_{\fx,tot}\big).$$ Since $\otimes_\fD$ is a functor, there is also map \begin{equation}\label{eq:monoidalProductOpenSectionsV_x} V_\fX\defeq \left(\disbop_{\fx\in \fX}\Uc\big(\Ac_{\fx,tot}\big)\xrightarrow{ \disbop_{\fx\in \fX}v_\fx} \disbop_{\fx\in \fX}\Tc\big(\Ac_{\fx,st}\big)\right).\end{equation} Finally, again there is isomorphism $$ \disbop_{\fx\in \fX}\Tc\big(\Ac_{\fx,st}\big)\xrightarrow{\eta_{\fX,st}}\Tc\left(\disbop_{\fx\in \fX}\Ac_{\fx,st}\right).$$
	
	We thus define the map $$\scP_\fX\left((v_\fx)_{\fx\in \fX}\right):\Uc\left( \Pi\big(\Ac_{\fx,tot}\big)\right)\rightarrow\Tc\left(\Pi\big(\Ac_{\fX,st}\big)\right)$$ by  \begin{equation}\label{eq:defineSCP} \scP_\fX\left((v_\fx)_{\fx\in \fX}\right)\defeq \eta_{\fX,st}\circ V_\fX\circ \g_{\fX,tot}.\end{equation}
	
	For notational convenience, set $v_\scP\defeq \scP_\fX\big((v_\fx)_{\fx\in \fX}\big)$.  	Verification that $v_\scP\in \G_\t\big(\Pi(\Ac_\fX)\big)$---i.e.\ that $\t_{\Pi(\Ac_\fX)_{st}}\circ v_\scP  = \Uc  p_{\Pi(\Ac_\fx)}$---uses the following diagram: \small  $$\begin{tikzcd}[column sep = large, row sep = large]
	\Uc \left( \disbop_{\fx\in \fX} \Ac_\fX(\fx)_{tot}\right)\arrow[dd,swap,"\Uc p_{\Pi(\Ac_\fX)}"]\arrow[dr,near end,"v_\scP"]\arrow[rr,"\g_{\fX,tot}"] & & \disbop_{\fx\in \fX}\Uc \Ac_{\fx,tot} \arrow[dd,near start,swap,"\bigotimes_{\fx\in \fX}\Uc p_\fx"]\arrow[rd,"\bigotimes_{\fx\in \fX}v_\fx"] & \\
	& \Tc\left(\disbop_{\fx\in \fX}\Ac_{\fx,st}\right)\arrow[rr,near start,shift right,swap,"\eta_{\fX,st}^{-1}"]\arrow[dl,"\t_{\Pi(\Ac_{\fX,st})}"] & & \disbop_{\fx\in \fX}\Tc\Ac_{\fx,st}\arrow[dl,"\bigotimes_{\fx\in \fX}\t_{\fx,st}"]\arrow[ll,near start,shift right,swap,"\eta_{\fX,st}"] \\
	\Uc\left(\disbop_{\fx\in \fX}\Ac_{\fx,st}\right) & & \disbop_{\fx\in \fX}\Uc\Ac_{\fx,st},\arrow[ll,"\g_{\fX,st}^{-1}"] & 
\end{tikzcd}
	$$\normalsize where $$\t_{\fx,st}:\Tc \Ac_{\fx,st}\rightarrow \Uc\Ac_{\fx,st}$$ is the epimorphism from natural transformation $\t:\Tc\Rightarrow\Uc$ at object $\Ac_{\fx,st}$, and $$ p_\fx:\Ac_{\fx,tot}\rightarrow \Ac_{\fx,st}$$ is the object $\Ac_\fx$ of $\fA$ (\cref{notation:listOfAobjects}). 
	
The following subdiagrams commute:  $$\begin{array}{lll}
	v_\scP& = \eta_{\fX,st}\circ V_\fX \circ \g_{\fX,tot} & (\mbox{definition of}\,  \scP, \, \eqref{eq:defineSCP})\\
	\bigotimes_{\fx\in \fX}\Uc p_\fx & = 	\bigotimes_{\fx\in \fX} \t_{\fx,st} \circ 	V_\fX & (v_\fx \in \G_\t\left(\Ac_\fX(\fx)\right)\, \forall\, \fx, \eqref{eq:monoidalProductOpenSectionsV_x}, \mbox{and $\otimes$ is a functor})\\
	\g_{\fX,st}\circ \Uc p_{\Pi(\Ac_\fX)} & = 	\bigotimes_{\fx\in \fX} \Uc p_\fx\circ \g_{\fX,tot} & (\Uc \, \mbox{is strong monoidal, and $\g$ is natural})\\
	\g_{\fX,st}^{-1} \circ 	\bigotimes_{\fx\in \fX}\t_{\fx,st} & = \t_{\Pi(\Ac_\fX)} \circ \eta_{\fX,st} & (\t\, \mbox{is monoidal transformation}, \\
	 &  & \mbox{c.f.\ \cref{def:monoidalTransformation} and \cref{assumption:1}}). 
\end{array}$$
Thus, the final triangle diagram commutes: $\Uc p_{\Pi(\Ac_\fX)} = \t_{\Pi(\Ac_\fX)_{st}} \circ \scP_\fX\big((v_\fx)_{\fx\in \fX}\big)$, and hence proves that   $$\scP_\fX:\discatp_{\fx\in \fX}\G_\t\big(\Ac_\fx\big) \rightarrow\G_\t\left(\bigotimes_{\fx\in \fX}\Ac_\fx\right)$$ is well defined. 
\end{proof}

The next proposition is the last piece in our puzzle, the ``second half of our main theorem'', which says that individually related open $\t$-sections assemble to an open $\t$-section on the product.

\begin{prop}\label{prop:openSectionsProductRelatedAreRelated}
	Let $(\ph,\Phi):(\Ac_\fX:\fX\rightarrow \fA)\rightarrow(\Ac_\fY:\fY\rightarrow\fA)$ be a morphism of lists (\cref{def:categoryOfLists}) and suppose that $(w_\fy)_{\fy\in \fY}\in \discatp_{\fy\in \fY}\G_\t(\Ac_\fY(\fy))$ and $(v_\fx)_{\fx\in \fX}\in \discatp_{\fx\in \fX}\G_\t(\Ac_\fX(\fx))$ are two collections of open sections (\cref{notation:OpenSectionsEtc.}) with the following property: $(w_{\ph(\fx)},v_\fx)$ are $\Phi_\fx$-related for each $\fx\in \fX$ (\cref{def:relatedOpenSections}). Suppose, further, that assumptions \ref{assumption:3}, \ref{assumption:2}, and \ref{assumption:1}   hold. Then $$\left(\scP_\fY\big((w_\fy)_{\fy\in \fY}\big),\scP_\fX\big((v_\fx)_{\fx\in \fX}\big)\right)$$ are $\Pi(\ph,\Phi)$-related. 
\end{prop}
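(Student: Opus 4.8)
The plan is to unwind $\Pi(\ph,\Phi)$-relatedness into the single equation it asserts and then verify that equation by a diagram chase assembled from naturality of the monoidal isomorphisms together with the componentwise hypotheses. Write $w_\scP\defeq \scP_\fY\big((w_\fy)_{\fy\in\fY}\big)$ and $v_\scP\defeq\scP_\fX\big((v_\fx)_{\fx\in\fX}\big)$. By \cref{def:relatedOpenSections}, what must be shown is
\begin{equation*}
\Tc\Pi(\ph,\Phi)_{st}\circ w_\scP = v_\scP\circ \Uc\Pi(\ph,\Phi)_{tot}.
\end{equation*}
Substituting the defining formula \eqref{eq:defineSCP} from \cref{lemma:strongMonoidalInducesMonoidalFunctor}, namely $w_\scP=\eta_{\fY,st}\circ W_\fY\circ\g_{\fY,tot}$ with $W_\fY\defeq\disbop_{\fy\in\fY}w_\fy$ (and symmetrically $v_\scP=\eta_{\fX,st}\circ V_\fX\circ\g_{\fX,tot}$ with $V_\fX$ as in \eqref{eq:monoidalProductOpenSectionsV_x}), the target equality becomes the outer boundary of a single diagram whose principal objects are $\Uc\Pi(\Ac_\fY)_{tot}$, $\disbop_{\fy}\Uc\Ac_\fY(\fy)_{tot}$, $\disbop_{\fx}\Tc\Ac_\fX(\fx)_{st}$, and $\Tc\Pi(\Ac_\fX)_{st}$. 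I would reduce the whole statement to filling in the interior cells of this square-of-prisms one at a time.

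The interior decomposes into three kinds of cells. First, cells expressing how $\Uc\Pi(\ph,\Phi)_{tot}$ and $\Tc\Pi(\ph,\Phi)_{st}$ transport through the monoidal isomorphisms $\g$ and $\eta$: here I would invoke the explicit description of $\Pi(\ph,\Phi)$ from \cref{prop:NietzscheProp}, eq. \eqref{eq:NietzscheEq}, namely that the $\fx$-component of $\Pi(\ph,\Phi)$ is $\Phi_\fx\circ p_{\ph(\fx)}$. Because $\Uc$ is strong monoidal, naturality of $\g$ (extended to arbitrary finite products as $\g_\fX$ in \eqref{eq:natTransformForMonoidalProductsGeneral}) intertwines $\Uc\Pi(\ph,\Phi)_{tot}$, conjugated by $\g_{\fY,tot}$ and $\g_{\fX,tot}$, with the reindex-then-apply map $\disbop_{\fy}\Uc\Ac_\fY(\fy)_{tot}\to\disbop_{\fx}\Uc\Ac_\fX(\fx)_{tot}$ whose $\fx$-component is $\Uc\Phi_{\fx,tot}\circ p_{\ph(\fx)}$, and symmetrically for $\eta$ and $\Tc$. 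Second, the central cell is the $\fX$-indexed monoidal product of the componentwise relatedness squares $\Tc\Phi_{\fx,st}\circ w_{\ph(\fx)} = v_\fx\circ\Uc\Phi_{\fx,tot}$, which hold by the hypothesis that $(w_{\ph(\fx)},v_\fx)$ are $\Phi_\fx$-related; since $\otimes_\fD$ is a bifunctor these assemble into one commuting square. Third, routine functoriality cells for $\Uc$, $\Tc$.

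The main obstacle is bookkeeping the reindexing $\ph\colon\fX\to\fY$ inside the monoidal product: when $\ph$ is not a bijection the source product is indexed by $\fY$ while the target is indexed by $\fX$, and $\Pi(\ph,\Phi)$ simultaneously projects onto the $\ph(\fx)$-factor and applies $\Phi_\fx$. To control this I would use that $\fD$ is cartesian (\cref{assumption:3}), so that $\eta_{\fX,st}$ exhibits $\Tc\Pi(\Ac_\fX)_{st}\cong\disp_{\fx\in\fX}\Tc\Ac_\fX(\fx)_{st}$ and it suffices to check the desired equation after composing with $\eta_{\fX,st}^{-1}$ and projecting to each factor $\Tc\Ac_\fX(\fx)_{st}$. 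On the $\fx$-factor the left-hand side collapses, via the transport cell above and the component formula $\Phi_\fx\circ p_{\ph(\fx)}$, to $\Tc\Phi_{\fx,st}\circ w_{\ph(\fx)}\circ p_{\ph(\fx)}\circ\g_{\fY,tot}$, while the right-hand side collapses to $v_\fx\circ\Uc\Phi_{\fx,tot}\circ p_{\ph(\fx)}\circ\g_{\fY,tot}$; the two agree precisely by $\Phi_\fx$-relatedness. This componentwise reduction is exactly where cartesianness of $\fD$ is essential (cf.\ \cref{remark:showAFunctorial}), since it lets agreement on components force agreement of the assembled maps, and it is also where \cref{assumption:2} and \cref{assumption:1} enter, guaranteeing the transport cells commute.

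I note that this argument is the product analogue of the interconnection computation already carried out in \cref{prop:dCrlInterconnectMonoidal} and of the main theorem of \cite{lermanopennetworks}; the cube/prism diagrams there serve as a template, and once the cells above are verified the conclusion that $\big(\scP_\fY((w_\fy)_{\fy\in\fY}),\scP_\fX((v_\fx)_{\fx\in\fX})\big)$ are $\Pi(\ph,\Phi)$-related is immediate. Functoriality of $\Pi$ (\cref{assumption:3}) ensures that $\Pi(\ph,\Phi)$ is a legitimate morphism in $\fA$, so that the relatedness assertion is even well-posed.
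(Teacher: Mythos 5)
Your proposal is correct and follows essentially the same route as the paper: both reduce the claim to the single relatedness equation for $w_\scP$ and $v_\scP$, decompose it into cells governed by naturality of $\g$ and $\eta$ (strong monoidality of $\Uc$ and $\Tc$), and handle the reindexing through $\ph$ by using cartesianness of $\otimes_\fD$ to check the central cell componentwise against the $\Phi_\fx$-relatedness hypotheses. The paper organizes this as two cube diagrams rather than your square-of-prisms, but the content of each face is the same.
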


We express \cref{prop:openSectionsProductRelatedAreRelated} differently.  First a definition. 
\begin{definition}\label{def:productRelationOfOpenSections}
Let $(\ph,\Phi):(\Ac_\fX:\fX\rightarrow\fA)\rightarrow(\Ac_\fY:\fY\rightarrow\fA)$ be a morphism of lists. 
	We define the relation $\G_\t(\ph,\Phi)\subseteq \discatp_{\fy\in \fY}\G_\t(\Ac_\fy)\times \discatp_{\fx\in \fX}\G_\t(\Ac_\fx)$ by $$\small \G_\t(\ph,\Phi)\defeq \left\{\big((w_\fy)_{\fy\in \fY},(v_\fx)_{\fx\in \fX}\big)\in \discatp_{\fy\in \fY}\G_\t(\Ac_\fy)\times \discatp_{\fx\in \fX}\G_\t(\Ac_\fx):\, (w_{\ph(\fx)},v_\fx)\in  \G_\t(\Phi_\fx)\;\forall \;\fx\in \fX \right\}.$$\normalsize Recall that $(w_{\ph(\fx)},v_\fx)\in \G_\t(\Phi_\fx)$ means that the pair of $\t$-\open sections $(w_{\ph(\fx)},v_\fx)$ are $\Phi_\fx$-related (\cref{def:relatedOpenSections}).  Contrast $\G_\t(\ph,\Phi)$ with $\G_\t(\Pi(\ph,\Phi))$, the latter of which is the relation $$\G_\t(\Pi(\ph,\Phi)) =\left\{\big(W,V\big)\in  \G_\t(\Pi(\Ac_\fY))
	\times\G_\t(\Pi(\Ac_\fX)):\,(W,V)\,\mbox{are}\, \Pi(\ph,\Phi)\mbox{-related}\right\}.$$ 
\end{definition} 
Then 
\cref{prop:openSectionsProductRelatedAreRelated} says there is $1$-morphism \begin{equation}\label{diagram:2CellProductRelatedRelatedInProduct}\begin{tikzcd} [column sep = large, row sep = large]
	\discatp_{\fy\in \fY}\G_\t(\Ac_\fy)\arrow[d,"\scP_\fY"] \arrow[r,"\G_\t(\ph{,}\Phi)"{name = foo1, below}] & \discatp_{\fx\in \fX}\G_\t(\Ac_\fx) \arrow[d,swap,"\scP_\fX"] \\
	\G_\t\left(\disbop_{\fy\in \fY}\Ac_\fy\right)\arrow[r,"\G_\t(\Pi(\ph{,}\Phi))"{name = foo2, above}] & \G_\t\left(\disbop_{\fx\in \fX}\Ac_\fx\right).\ar[shorten <=15pt,shorten >=15pt,Rightarrow,from=foo1,to=foo2]\end{tikzcd}
\end{equation} in $\sF{Set}^\square$.

We introduce  notation.  For indexed collection of  $\Ac_\fX:\fX\rightarrow \fA$ of $\fA$-objects and functor $\Uc:\fC\rightarrow\fD$, there is indexed collection of  $\sF{Arrow(D)}$-objects $\Ac_\fX^\Uc:\fX\rightarrow \sF{Arrow(D)}$ defined by post-composition  $\Ac_\fX^\Uc\defeq \Uc_*\Ac_\fX$ (c.f.\ \eqref{eq:extendingFunctorToFunctorOnArrows}).  Thus, for morphism 	$(\ph,\Phi):(\Ac_\fX^\Uc:\fX\rightarrow \sF{Arrow(D)})\rightarrow (\Ac_\fY^\Uc:\fX\rightarrow \sF{Arrow(D)})$ of indexed $\sF{Arrow(D)}$-objects (\cref{def:categoryOfLists}), there is morphism $\Pi_\Uc(\ph,\Phi):\Pi(\Ac_\fY^\Uc)\rightarrow \Pi(\Ac_\fX^\Uc)$  since $(\fD,\otimes_\fD,1_\fD)$ is cartesian (c.f.\ \cref{prop:NietzscheProp}). Here $\Pi(\Ac_\fX^\Uc)\defeq \discatp_{\fx\in \fX}\Uc\big(\Ac_{\fx}\big)$. 
\begin{proof}
	We must show that $\Tc\big(\Pi(\ph,\Phi)_{st}\big)\circ w_\scP = v_\scP\circ \Uc \big(\Pi(\ph,\Phi)_{tot}\big)$, where $v_\scP\defeq \scP_\fX\left((v_\fx)_{\fx\in \fX}\right)$, $w_\scP\defeq \scP_\fY\left((w_\fy)_{\fy\in \fY}\right) $ (\cref{lemma:strongMonoidalInducesMonoidalFunctor}). We consider the following diagram \begin{equation}\label{diagram:provingRelatednessOfProductOfOpenSections}
 	\begin{tikzcd}[column sep = large, row sep = large]
 		\Uc\big(\Pi(\Ac_\fY)_{tot}\big)\arrow[rr,"\xi_1"]\arrow[dr,"\xi_3"]\arrow[dd,"\xi_2"] & & \Uc\big(\Pi(\Ac_\fX)_{tot}\big)\arrow[dd,near start,"\xi_4"]\arrow[dr,"\xi_5"] & \\
 		& \Pi(\Ac_\fY^\Uc)_{tot}\arrow[dd,near start,"\xi_6"]\arrow[rr,near start,"\xi_7"] & & \Pi(\Ac_\fX^\Uc)_{tot}\arrow[dd,"\xi_8"]\\
 		\Tc\big(\Pi(\Ac_\fY)_{st}\big) \arrow[rr,near start,"\xi_9"] & & \Tc\big(\Pi(\Ac_\fX)_{st}\big)  &\\
 		& \Pi(\Ac^\Tc_\fY)_{st}\arrow[rr,"\xi_{11}"]\arrow[ul,"\xi_{10}"] & & \Pi(\Ac^\Tc_\fX)_{st},\arrow[ul,"\xi_{12}"]
 	\end{tikzcd}
 \end{equation}
 where---recalling that $\g$ and $\eta$ are the natural isomorphisms from \eqref{eq:natTransformForMonoidalProductsGeneral}  of strong monoidal functors $\Tc$ and $\Uc$---we have \small 
 $$\begin{array}{ll|ll|ll|ll} \xi_1 & = \Uc\Pi(\ph,\Phi)_{tot} &\xi_4 & = v_\scP&\xi_7 & = \Pi_\Uc(\ph,\Phi)_{tot}   &   \xi_{10} & = \eta_{\fY,st}\\ 
 \xi_2 & = w_\scP &   \xi_5 & = \g_{\fX,tot}& \xi_8 & =  \discatp_{\fx\in \fX}v_\fx   & \xi_{11} &  =\Pi_\Tc(\ph,\Phi)_{st} \\
\xi_3 & = \g_{\fY,tot} & \xi_{6} & = \discatp_{\fy\in \fY}w_\fy  &
  \xi_9 & = \Tc\Pi(\ph,\Phi)_{st}&  \xi_{12}& = \eta_{\fX,{st}}. \\
\end{array}$$\normalsize 
Equalities \begin{equation}\label{eq:fzs1}
 \xi_2 = \xi_{10}\circ \xi_6\circ \xi_3 \;\mbox{and}\; \xi_4 = \xi_{12}\circ\xi_8\circ \xi_5	
 \end{equation} follow by definition of $\scP_\fY$ and $\scP_\fX$ (\cref{lemma:strongMonoidalInducesMonoidalFunctor}).  Equalities \begin{equation}\label{eq:fzs2}
 \xi_7\circ \xi_3=\xi_5\circ \xi_1 \; \mbox{and} \;	\xi_9\circ\xi_{10} = \xi_{12}\circ \xi_{11}
 \end{equation} follow by naturality of $\g$ and $\eta$ (\cref{def:strongMonoidalFunctor}, \eqref{diagrams:naturalIsosForStrongMonoidal}). Finally, \begin{equation}\label{eq:fzs3}
 	\xi_{11}\circ\xi_6 = \xi_8\circ \xi_7
 \end{equation} follows  by assumption that $(w_{\ph(\fx)},v_\fx)$ are $\Phi_\fx$-related and that $\otimes_\fD$ is  cartesian.  In more detail,  consider diagram \small$$\begin{tikzcd}
 \discatp_{\fy\in \fY}\Uc\Ac_{\fy,tot}\arrow[rr,dashed,"\z_1"]\arrow[dd,"\z_2"]\arrow[dr,"\z_3"] & & \discatp_{\fx\in \fX}\Uc\Ac_{\fx,tot}\arrow[dd,near start,"\z_4"]\arrow[dr,"\z_5"] & \\
 & \Uc\Ac_{\ph(x'),tot}\arrow[dd,near start,"\z_6"]\arrow[rr,near start,"\xi_7"] & & \Uc\Ac_{\fx',tot}\arrow[dd,"\z_8"] \\
 \discatp_{\fy\in \fY}\Tc\Ac_{\fy,st}\arrow[rr,dashed,near start,"\z_9"]\arrow[dr,"\z_{10}"] & & \discatp_{\fx\in \fX}\Tc\Ac_\fX(\fx)_{st}\arrow[dr,"\z_{11}"] & \\
 & \Tc\Ac_{\ph(\fx'),st}\arrow[rr,"\z_{12}"] & & \Tc\Ac_{\fx',st},
\end{tikzcd}$$ \normalsize
where 
 \small 
 $$\begin{array}{ll|ll|ll|ll} \z_1 & = \Pi_\Uc(\ph,\Phi)_{tot} (=\xi_7) &\z_4 & = \bigsqcap_{\fx\in \fX}v_\fx &\z_7 & = \Uc\Phi_{\fx',tot}   &   \z_{10} & =p_{\Tc\Ac_{\ph(\fx'),{st}}}\\ 
 \z_2 & = \bigsqcap_{\fy\in\fY}w_\fy &   \z_5 & = p_{\Uc\Ac_{\fx',{tot}}}& \z_8 & = v_{\fx'}  & \z_{11} &  =  p_{\Tc\Ac_{\fx',st}} \\
\z_3 & = p_{\Uc\Ac_{\ph(\fx'),tot}} & \z_{6} & = w_{\ph(\fx')}  &
  \z_9 & = \Pi_\Tc(\ph,\Phi) (=\xi_{11})&  \z_{12}& = \Tc\Phi_{\fx',st}. \\
\end{array}$$\normalsize 
Then $\Phi_\fx$-relatedness of $(w_{\ph(\fx')},v_{\fx'})$ for every $\fx'\in \fX$ means that $\z_{12}\circ \z_6 = \z_8\circ \z_7$.  Cartesianality of $\otimes_\fD$  implies that the back face also commutes: in other words $\z_9\circ\z_2 = \z_4 \circ \z_1$, or $\xi_{11}\circ \xi_6= \xi_8\circ \xi_7$ (eq.\ \eqref{eq:fzs3}).

We have thus shown that every face of diagram \eqref{diagram:provingRelatednessOfProductOfOpenSections} commutes, and in particular that $$\xi_9\circ \xi_2 = \xi_4\circ \xi_1.$$
We conclude that $\left(\scP_\fY\left((w_\fy)_{\fy\in \fY}\right),\scP_\fX\left((v_\fx)_{\fx\in \fX}\right)\right)$ are $\Pi(\ph,\Phi)$-related (\cref{def:relatedOpenSections}).  \end{proof}

We collect results and state the main theorem. 

\begin{theorem}\label{theorem:mainTheoremAbstract} Let $\nat{\fC}{\fD}{\Tc}{\Uc}{\t}$ be a $\fD$-fibered  (\cref{def:fiberedTransformationInD})  monoidal transformation (\cref{def:monoidalTransformation}, \cref{assumption:1}) between strong monoidal functors $\Tc$ and $\Uc$ (\cref{assumption:2}).  Suppose  that monoidal product $\otimes_\fD$ is cartesian, and let $\fA\subseteq \sF{Arrow(C)}$ be a category for which $\Pi:\big(\sF{FinSet/A}^\Leftarrow\big)^{op}\rightarrow\fA$ is functorial (\cref{assumption:3}). 
 Let $$\big((\ph,\Phi),f\big):(\Ac_\fX,\i_\fX:\fa\hookrightarrow\Pi(\Ac_\fX))\rightarrow(\Ac_\fY,\i_\fY:\fa'\hookrightarrow\Pi(\Ac_\fY))$$ be a morphism of networks of $\fA$-open $\t$-systems (\cref{def:abstractNetworksMorphism}).  Then there is induced  1-morphism \begin{equation}\label{diagram:main2Cell} \begin{tikzcd}[column sep = large,row sep = large]
 \discatp_{\fy\in \fY}\G_\t(\Ac_{\fY,\fy})\arrow[r,"\G_\t(\ph{,}\Phi)"{name = foo1, below}]\arrow[d,"\G_\t(\i_\fY)\circ \scP_{\fY}"] & \discatp_{\fx\in \fX}\G_\t(\Ac_{\fX,\fx})\arrow[d,swap,"\G_\t(\i_\fX)\circ \scP_\fX"]\\
 \G_\t(\fa')\arrow[r,swap,"\G_\t(f)"{name = foo2, above}] & \G_\t(\fa)\ar[shorten <= 10pt, shorten >= 10pt, Rightarrow,from = foo1, to = foo2]
 \end{tikzcd}\end{equation} in $\sF{Set}^\square$, where   (\cref{def:productRelationOfOpenSections})
 $$\small \G_\t(\ph,\Phi)\defeq \left\{\big((w_\fy)_{\fy\in \fY},(v_\fx)_{\fx\in \fX}\big)\in \discatp_{\fy\in \fY}\G_\t(\Ac_\fy)\times \discatp_{\fx\in \fX}\G_\t(\Ac_\fx):\, (w_{\ph(\fx)},v_\fx)\in  \G_\t(\Phi_\fx)\;\forall \;\fx\in \fX \right\}.$$  \normalsize
\end{theorem}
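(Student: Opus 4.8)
The plan is to obtain the desired $1$-morphism \eqref{diagram:main2Cell} by \emph{vertically pasting} two $1$-morphisms in $\sF{Set}^\square$ that have already been built, namely the ``product half'' supplied by \cref{prop:openSectionsProductRelatedAreRelated} and the ``interconnection half'' supplied by \cref{prop:halfOfMainTheorem1stHalf}. All of the genuine content — assembling a collection of pairwise related open sections into a single related open section on the product, and transporting related sections across an interconnection — already lives in those two results; the present theorem is the bookkeeping that glues them together along a common edge.

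First I would record the two $2$-cells and check that their shared edge matches. Since assumptions \ref{assumption:3}, \ref{assumption:2}, and \ref{assumption:1} hold, \cref{lemma:strongMonoidalInducesMonoidalFunctor} produces the maps $\scP_\fX$ and $\scP_\fY$, and then \cref{prop:openSectionsProductRelatedAreRelated} (equivalently diagram \eqref{diagram:2CellProductRelatedRelatedInProduct}) gives the top $2$-cell, with vertical maps $\scP_\fY,\scP_\fX$, top relation $\G_\t(\ph,\Phi)$, and bottom relation $\G_\t(\Pi(\ph,\Phi))$. On the other hand, the morphism-of-networks datum $\big((\ph,\Phi),f\big)$ is by \cref{def:abstractNetworksMorphism} a $1$-morphism in $\fA^\square$ whose vertical edges are the interconnections $\i_\fX,\i_\fY$; applying the double functor $\G_\t:\fA^\square\to\sF{Set}^\square$ of \cref{prop:doubleFunctorASquare2SetSquare} exactly as in \cref{prop:halfOfMainTheorem1stHalf} yields the bottom $2$-cell, with vertical maps $\G_\t(\i_\fY),\G_\t(\i_\fX)$ (contravariant on interconnection, \cref{prop:functorOpenSectionsOnInterconnect}), top relation $\G_\t(\Pi(\ph,\Phi))$, and bottom relation $\G_\t(f)$. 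The bottom edge of the first $2$-cell and the top edge of the second coincide — both are the relation $\G_\t(\Pi(\ph,\Phi))$, with the same left/right orientation — so the two squares are composable in the vertical direction.

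Next I would carry out the vertical composition. Unwinding the definition of a $1$-morphism in $\sF{Set}^\square$ (the inclusion \eqref{eq:inc123} of \cref{def:categoryRelSet}), the two $2$-cells assert the relation containments
\[
(\scP_\fY\times\scP_\fX)\big(\G_\t(\ph,\Phi)\big)\subseteq \G_\t(\Pi(\ph,\Phi))
\quad\text{and}\quad
\big(\G_\t(\i_\fY)\times\G_\t(\i_\fX)\big)\big(\G_\t(\Pi(\ph,\Phi))\big)\subseteq \G_\t(f).
\]
Chaining them, any $\big((w_\fy)_{\fy\in\fY},(v_\fx)_{\fx\in\fX}\big)\in\G_\t(\ph,\Phi)$ is carried into $\G_\t(\Pi(\ph,\Phi))$ by $\scP_\fY\times\scP_\fX$ and thence into $\G_\t(f)$ by $\G_\t(\i_\fY)\times\G_\t(\i_\fX)$, so that
\[
\Big((\G_\t(\i_\fY)\circ\scP_\fY)\times(\G_\t(\i_\fX)\circ\scP_\fX)\Big)\big(\G_\t(\ph,\Phi)\big)\subseteq\G_\t(f).
\]
This is precisely the statement that the outer square with vertical maps $\G_\t(\i_\fY)\circ\scP_\fY$ and $\G_\t(\i_\fX)\circ\scP_\fX$, top relation $\G_\t(\ph,\Phi)$, and bottom relation $\G_\t(f)$ is a $1$-morphism in $\sF{Set}^\square$, i.e.\ diagram \eqref{diagram:main2Cell}.

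The step I expect to require the most care is definitional rather than computational: making the vertical pasting precise. This amounts to checking (i) that the two constituent $2$-cells genuinely share the horizontal edge $\G_\t(\Pi(\ph,\Phi))$ with matching orientation, and (ii) that stacking them is the composition in the arrow category $\sF{Set}^\square_1$ (the ``vertical composition'' of \cref{ex:arrowCatAsDoubleCat}), under which the composite vertical edges are exactly $\G_\t(\i_\fY)\circ\scP_\fY$ and $\G_\t(\i_\fX)\circ\scP_\fX$. Concretely this reduces to the transitivity of the two relation containments displayed above. Everything else is a direct appeal to the two halves together with the standing monoidal assumptions \ref{assumption:3}, \ref{assumption:2}, and \ref{assumption:1}.
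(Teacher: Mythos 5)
Your proposal is correct and is essentially the paper's own proof: the paper also cites \cref{prop:openSectionsProductRelatedAreRelated} for the top square and \cref{prop:halfOfMainTheorem1stHalf} for the bottom square, then applies vertical composition in $\sF{Set}^\square_1$. Your additional unwinding of the relation containments and the check that the shared edge $\G_\t(\Pi(\ph,\Phi))$ matches is just a more explicit rendering of the same pasting argument.
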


\begin{proof}
	By \cref{prop:openSectionsProductRelatedAreRelated} (c.f.\ \eqref{diagram:2CellProductRelatedRelatedInProduct}), we have 1-morphism  $$\begin{tikzcd}[column sep = large,row sep = large]
 \discatp_{\fy\in \fY}\G_\t(\Ac_{\fY,\fy})\arrow[r,"\G_\t(\ph{,}\Phi)"{name = foo1, below}]\arrow[d,"\scP_\fY"] & \discatp_{\fx\in \fX}\G_\t(\Ac_{\fX,\fx})\arrow[d,swap,"\scP_\fX"]\\
 \G_\t\left(\Pi\big(\Ac_\fY\big)\right)\arrow[r,swap,"\G_\t(\Pi(\ph{,}\Phi))"{name = foo2, above}] & \G_\t\left(\Pi\big(\Ac_\fX\big)\right),\ar[shorten <= 10pt, shorten >= 10pt,Rightarrow,from = foo1, to = foo2]
 \end{tikzcd}$$
 and by \cref{prop:halfOfMainTheorem1stHalf}, we have 1-morphism  $$\begin{tikzcd}[column sep = large, row sep = large]
			\G_\t\left(\Pi\big(\Ac_\fY\big)\right)\arrow[r,swap,"\G_\t(\Pi(\ph{,}\Phi))"{name = foo4},""{name = foo1, below}]\arrow[d,"\G_\t(\i_\fY)"] & \G_\t\left(\Pi\big(\Ac_\fX\big)\right) \arrow[d,swap,"\G_\t(\i_\fX)"] \\
			\G_\t(\fa')\arrow[r,"\G_\t(f)"{name = foo3},""{name = foo2, above}] & \G_\t(\fa).\ar[shorten <= 5pt, shorten >= 5pt, Rightarrow, from = foo4, to = foo3]
		\end{tikzcd}$$
		
		Applying vertical composition in $\sF{Set}^\square_1$, the result follows immediately. 
\end{proof}

\section{Examples of Networks of Open Systems}
This section generates a slew of concrete instances of \cref{theorem:mainTheoremAbstract}.  
\begin{remark}\label{remark:outlineExampleMainTheorem} An outline for  these examples is as follows. We define  categories $\fC$, $\fD$, subarrow category $\fA\subseteq \sF{Arrow(C)}$,  functors $\Tc, \Uc:\fC\rightarrow\fD$, and natural transformation $\t:\Tc\Rightarrow \Uc$. Then we verify that assumptions \ref{assumption:3}, \ref{assumption:2}, and \ref{assumption:1} hold. We conclude by citing \cref{theorem:mainTheoremAbstract}.
 \end{remark}
\subsection{Networks of  Open Systems}\label{subsec:normalExampleforMain}

\begin{example}\label{ex:concreteOpenSystems}
	A special case of \cref{theorem:mainTheoremAbstract} was stated in \cref{theorem:mainTheoremLermanOpenNetworks} from  \cite[Theorem 9.3]{lermanopennetworks} for networks of $\sF{Man}$-open systems (\cref{def:abstractNetworks}).  We must show that a morphism \small $$\big((\ph,\Phi),f\big):\left(\Sc_\fX:\fX\rightarrow \sF{SSub},\i_\fX:a\rightarrow \discatp_{\fx\in \fX}\Sc_\fX(\fx)\right)\rightarrow\left(\Sc_\fY:\fY\rightarrow\sF{SSub},\i_\fY:b\rightarrow\discatp_{\fy\in \fY}\Sc_\fY(\fy)\right) $$\normalsize of networks of open systems (\cref{def:abstractNetworksMorphism}, \cref{def:SSub}) induces a 1-morphism (\cref{theorem:mainTheoremAbstract})  \begin{equation}\label{eq:1-morphismOS}\begin{tikzcd}[column sep = large, row sep = large]
	\discatp_{\fy\in \fY}\sF{Crl}(\Sc_\fY(\fy))\arrow[r,"\sF{Crl}(\ph{,}\Phi)"{name = foo1, below}]\arrow[d,"\sF{Crl}(\i_\fY)\circ \scP_\fY"] & \discatp_{\fx\in \fX}\sF{Crl}(\Sc_\fX(\fx))\arrow[d,swap,"\sF{Crl}(\i_\fX)\circ \scP_\fX"] \\
	\sF{Crl}(b)\arrow[r,"\sF{Crl}(f)"{name = foo2, above}] & \sF{Crl}(a) \ar[shorten <= 7pt, shorten >= 7pt, Rightarrow, from = foo1, to = foo2]\end{tikzcd}\end{equation}
in $\sF{Set}^\square$.  In this context, $\sF{Crl}$ is the open sections functor $\G_\t$ (\cite[c.f.\ (2.7)]{lermanopennetworks}).
	\begin{proof}[Proof of \cref{theorem:mainTheoremLermanOpenNetworks}]
	We follow the outline in \cref{remark:outlineExampleMainTheorem}. 
		Let $\fC = \fD= \sF{Man}$ be the category of smooth manifolds,  $\Tc= T$ the tangent endofunctor, $\Uc= id_{\sF{Man}}$ the identity functor, and $\t:\Tc\Rightarrow \Uc$ the canonical projection of the tangent bundle.   Let $\fA\subseteq\sF{Arrow(Man)}$ be $\fA\defeq \sF{SSub}$ and observe that $\sF{Crl}$ as defined in \cref{def:controlOnSubmersions} is $\G_\t$ (\cref{def:abstractOpenSections}).  The category $\sF{Man}$ of manifolds has products (\cref{fact:productManifolds}), and this defines cartesian monoidal structure, which $\fA$ also inherits (\cite[\S4]{lermanopennetworks}, \cref{assumption:3}). Naturality of isomorphism $T(M\times N)\cong TM\times TN$ (\cref{corollary:TProductDiffeoProductT})  follows from commutative diagram $$\begin{tikzcd}
	M\arrow[d,"f"] & M\times N\arrow[l,swap,"p_M"] \arrow[r,"p_N"]\arrow[d,"f\times g"] & N\arrow[d,"g"]\\
	M' & M'\times N'\arrow[l,swap,"p_{M'}"] \arrow[r,"p_{N'}"] & N',
\end{tikzcd}$$ functoriality of $T$ (\cref{prop:differentialIsFunctorial}), and universal property of product (c.f.\ \cref{prop:isoProductTangentSpaces}). This implies that $\Tc$ is strong monoidal (\cref{assumption:2}).  Finally, the projection $\t_M:TM\rightarrow M$ is split epimorphism (\cref{fact:tangentProjSplitEpi}) and $\t$ is easily seen to be monoidal (\cref{assumption:1}), e.g.\ now from naturality of $\t$ (\cref{fact:naturalityOfProjTangentBundle}) and commuting diagram $$\begin{tikzcd}
	TM\arrow[d,"\t_M"] & T(M\times N)\arrow[l,swap,"Tp_M"] \arrow[r,"Tp_N"]\arrow[d,"\t_{M\times N}"] & TN\arrow[d,"\t_N"]\\
	M & M\times N\arrow[l,swap,"p_{M}"] \arrow[r,"p_{N}"] & N.
\end{tikzcd}$$ We thus obtain 1-morphism in  \eqref{eq:1-morphismOS}. 	\end{proof}	 

\end{example}

\subsection{Networks of Hybrid Open Systems}\label{subsec:hybridExampleforMain}
\begin{example}\label{ex:networkOfHyOS}
We extend \cref{ex:concreteOpenSystems} for (non-deterministic) hybrid open systems.  This result was proven directly in  \cite[Theorem 6.19]{lermanSchmidt1}. Here we prove as a corollary of \cref{theorem:mainTheoremAbstract}. 
\begin{theorem}\label{prop:mainTheoremForHybridSystems} Let  $$\left(\big\{\Hc_{\fX,\fx}\big\}_{\fx\in \fX}, i_{a,\fX}:a\hookrightarrow \discatp_{\fx\in \fX}\Hc_{\fX,\fx}\right)\;\;\mbox{and}\;\;\left(\big\{\Hc_{\fY,\fy}\big\}_{\fy\in \fY},i_{b,\fY}:b\hookrightarrow\discatp_{\fy\in \fY}\Hc_{\fY,\fy}\right)$$\normalsize be two networks of hybrid open systems (\cref{ex:HybridOpenSystemsInstanceOfAbstractOpenSystem}).  
A morphism $$(\Hc_\fX,i_{a,\fX})\xrightarrow{\big((\ph,\Phi),z\big)} (\Hc_\fY,i_{b,\fX})$$  of networks  of hybrid open systems (\cref{def:hybridOS}, \cref{def:abstractNetworksMorphism})	induces a 1-morphism

$$\begin{tikzcd}[column sep = large, row sep = large]\discatp_{\fy\in \fY}\sF{Crl}_\Ub(\Hc_{\fY,\fy})\arrow[r,"\sF{Crl}_\Ub(\Phi)"{name = foo1, below}]\arrow[d,swap,"\sF{Crl}_\Ub (i_{b{,}\fY})\circ \scP_\fY"] & \discatp_{\fx\in \fX}\sF{Crl}_\Ub(\Hc_{\fX, \fx})\arrow[d,"\sF{Crl}_\Ub (i_{a{,}\fX})\circ \scP_\fX"]\\
\sF{Crl}_\Ub(b)\arrow[r,"\sF{Crl}_\Ub (z)"{name = foo2, above}] & \sF{Crl}_\Ub(a)  \ar[shorten <= 5pt, shorten >= 5pt, Rightarrow, from = foo1, to = foo2]
\end{tikzcd}
$$  where (c.f.\ \cref{def:abstractOpenSections}) $$\sF{Crl}_\Ub (a)\defeq  \big\{X: \Ub a_{tot}\rightarrow T\Ub a_{st}:\, p_a = \t_{a_{st}}\circ X\big\},$$ and $\sF{Crl}_\Ub(\ph,\Phi)$ is as in \cref{def:productRelationOfOpenSections}. 
\end{theorem}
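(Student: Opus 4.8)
The plan is to follow the outline of \cref{remark:outlineExampleMainTheorem}: exhibit networks of hybrid open systems as an instance of the abstract framework and then cite \cref{theorem:mainTheoremAbstract} verbatim. First I would fix the data. Take $\fC = \sF{HyPh}$ and $\fD = \sF{Man}$, let $\Uc = \Ub:\sF{HyPh}\to\sF{Man}$ be the forgetful functor (\cref{prop:ForgetfulFunctor}), let $\Tc = T\circ\Ub$ be the composite of $\Ub$ with the tangent endofunctor, and let $\t:\Tc\Rightarrow\Uc$ have components the canonical projection $\t_a:T\Ub a\to\Ub a$ of the tangent bundle of the underlying manifold. By \cref{ex:tauSectionForHyPh} this $\t$ is $\sF{Man}$-fibered, and taking $\fA = \sF{HySSub}$, \cref{ex:HybridOpenSystemsInstanceOfAbstractOpenSystem} identifies the $\fA$-open $\t$-systems with hybrid open systems and the open-sections functor $\G_\t$ with $\sF{Crl}_\Ub$. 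Under this identification a network of hybrid open systems and a morphism thereof (\cref{def:abstractNetworksMorphism}) are precisely an abstract network and morphism, since the hybrid interconnections $i_{a,\fX}$, $i_{b,\fY}$ are $\fA$-interconnections in the sense of \cref{def:CInterconnectionInA}.

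Next I would verify the three monoidal assumptions. For \cref{assumption:3}, the category $\sF{Man}$ is cartesian (\cref{fact:productManifolds}, \cref{prop:productManifoldsCategorical}) and $\fA = \sF{HySSub}$ is cartesian monoidal (\cref{fact:HySSubIsCartesian}); cartesianality of $\fA$ yields functoriality of $\Pi:\big(\sF{FinSet/A}^\Leftarrow\big)^{op}\to\fA$ by \cref{prop:productIsFunctorialForCartesianMonoidal}. For \cref{assumption:2}, the functor $\Ub$ is strong monoidal (\cref{prop:uFromHyPh2ManIsMonoidal}), the tangent endofunctor $T$ is strong monoidal (\cref{corollary:TProductDiffeoProductT}, with naturality from \cref{prop:isoProductTangentSpaces}), and since a composite of strong monoidal functors is again strong monoidal, $\Tc = T\circ\Ub$ is strong monoidal. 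For \cref{assumption:1}, rather than check monoidality of $\t$ by hand, I would invoke \cref{remark:redundancyOfAssumption3}: because $\fD = \sF{Man}$ is cartesian and both $\Tc$ and $\Uc$ are strong monoidal, monoidality of the fibered transformation $\t$ is automatic.

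With all hypotheses of \cref{theorem:mainTheoremAbstract} in force, I would apply that theorem directly to the given morphism of networks $\big((\ph,\Phi),z\big)$, obtaining the asserted $1$-morphism in $\sF{Set}^\square$ with vertical legs $\sF{Crl}_\Ub(i_{b,\fY})\circ\scP_\fY$ and $\sF{Crl}_\Ub(i_{a,\fX})\circ\scP_\fX$ and horizontal legs $\sF{Crl}_\Ub(\Phi)$ and $\sF{Crl}_\Ub(z)$, after unwinding the identification $\G_\t = \sF{Crl}_\Ub$ together with the matching of the hybrid product/interconnection notation against the abstract $\Pi$/interconnection notation. The relation $\sF{Crl}_\Ub(\Phi)$ on the top edge is exactly $\G_\t(\ph,\Phi)$ of \cref{def:productRelationOfOpenSections}.

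The main obstacle I anticipate is the bookkeeping in \cref{assumption:2}: one must confirm not merely that $T$ and $\Ub$ are individually strong monoidal, but that the composite coherence isomorphism $\Tc(a\times b) = T\Ub(a\times b)\cong T(\Ub a\times\Ub b)\cong T\Ub a\times T\Ub b = \Tc a\times\Tc b$ is natural in $a$ and $b$. This amounts to pasting the naturality square of the isomorphism $\Ub(a\times b)\cong\Ub a\times\Ub b$ (\cref{lemma:PUUPLite}) with that of $T(M\times N)\cong TM\times TN$ (\cref{prop:isoProductTangentSpaces}) and appealing to functoriality of $T$ (\cref{prop:differentialIsFunctorial}). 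Everything else reduces to matching notation, since \cref{theorem:mainTheoremAbstract} performs all the genuine categorical work.
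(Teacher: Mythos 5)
Your proposal is correct and follows essentially the same route as the paper's proof: same choice of $\fC$, $\fD$, $\fA$, $\Tc$, $\Uc$, $\t$, the same verification of Assumptions \ref{assumption:3}--\ref{assumption:1}, and the same final appeal to \cref{theorem:mainTheoremAbstract}. Your use of \cref{remark:redundancyOfAssumption3} for Assumption \ref{assumption:1} is just a cleaner packaging of the paper's ``by naturality and the universal property.''
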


\end{example}
\begin{proof}
Here $\fC = \sF{HyPh}$, $\fD = \sF{Man}$,  $\fA = \sF{HySSub}$,  $\Uc = \Ub:\fC\rightarrow\fD$ is the forgetful functor (\cref{prop:ForgetfulFunctor}), and $\Tc \defeq  T\circ \Uc$ is the tangent endofunctor composed with $\Ub$. 
 The  transformation $\nat{\sF{HyPh}}{\sF{Man}}{T\circ \Ub}{\Ub}{\t}$ is the canonical projection  $\t_a:T\Ub a\rightarrow \Ub a$  of the tangent bundle of the underlying manifold $\Ub a$, and is  $\fD$-fibered.  We have seen that $\sF{HyPh}$ is a cartesian monoidal category (\cref{prop:BinProductHyPh}, \cref{def:monoidalCategoryHyPh}) and that $\sF{HySSub}$ is cartesian as well (\cref{fact:HySSubIsCartesian}), which shows that  \cref{assumption:3} is satisfied. 
 
  The forgetful functor  $\Ub:\sF{HyPh}\rightarrow\sF{Man}$ is strong monoidal (\cref{prop:uFromHyPh2ManIsMonoidal}) and since the endofunctor $T:\sF{Man}\rightarrow\sF{Man}$ is strong monoidal, so is $T\circ \Ub$ (\cref{assumption:2}). Finally, the natural transformation is monoidal---satisfying \cref{assumption:1}---again, by naturality and the universal property.  Therefore, the result follows as a direct application of \cref{theorem:mainTheoremAbstract}. 
\end{proof}

\subsection{Networks of Deterministic Hybrid Open Systems}\label{subsection:mainConcreteTheorem}\label{subsection:mainDeterministicTheorem}
We now restate and finally prove \cref{theorem:mainTheoremFordeterministicHybridSystems}.  While we cite \cref{theorem:mainTheoremAbstract}, the proof is not as  immediate a consequence as the preceding two examples.

\begin{theorem}\label{prop:mainTheoremFordeterministicHybridSystemsPrime}
	Let $$\left(\big\{\Hc_{\fX,\fx}\big\}_{\fx\in \fX}, i_{a,\fX}:a\hookrightarrow \discatp_{\fx\in \fX}\Hc_{\fX,\fx}\right)\;\;\mbox{and}\;\;\left(\big\{\Hc_{\fY,\fy}\big\}_{\fy\in \fY},i_{b,\fY}:b\hookrightarrow\discatp_{\fy\in \fY}\Hc_{\fY,\fy}\right)$$ be two networks of deterministic hybrid open systems (\cref{ex:HybridOpenSystemsInstanceOfAbstractOpenSystem}).  
A morphism $$(\Hc_\fX,i_{a,\fX})\xrightarrow{\big((\ph,\Phi),z\big)} (\Hc_\fY,i_{b,\fX})$$  of networks  of deterministic hybrid open systems (\cref{def:hybridOS}, \cref{def:abstractNetworksMorphism})	induces a 1-morphism
$$\begin{tikzcd}[column sep = large, row sep = large]
\discatp_{\fy\in \fY}\dCrl(\Hc_{\fY,\fy})\arrow[d,swap,"\dCrl i_{b,\fY}\circ\scP_\fY"]\arrow[r,"\dCrl(\ph{,}\Phi)"{name = foo1, below}] & \discatp_{\fx\in \fX}\dCrl(\Hc_{\fX,\fx})\arrow[d,"\dCrl i_{a,\fX}\circ \scP_\fX"]\\
\dCrl(b)\arrow[r,"\dCrl (z)"{name =foo2, above}] & \dCrl(a).\ar[shorten <= 5pt, shorten >= 5pt, Rightarrow, from = foo1, to = foo2]
\end{tikzcd}
$$
Thus $$\left( \dCrl \big( i_{b,\fY} \big)\left(\scP_\fY\big((w_\fy)_{\fy\in \fY}\big)\right), \dCrl \big(i_{a,\fX}\big)\left(\scP_\fX\big((v_\fx)_{\fx\in \fX}\big)\right)\right)$$ are $\Pi(z)$-related (\cref{def:detHySysMorphism}) whenever $(w_{\ph(\fx)},v_\fx)$ are $\Phi_\fx$-related  for all $\fx\in \fX$. 
\end{theorem}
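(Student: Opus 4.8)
The plan is to obtain the theorem as an application of \cref{theorem:mainTheoremAbstract}, instantiated with the deterministic hybrid data of \cref{ex:detHyOSInstanceOfAbstractOpenSystem}, and then to descend from the abstract open $\varpi$-sections $\G_\varpi$ to the smaller collection $\dCrl$ of genuine deterministic hybrid controls. Concretely, I would take $\fC = \sF{HyPh}$, $\fD = \sF{Set}$, $\Uc = \{x \in \Ub(\cdot)\}$, $\Tc = \Tb$ the c.d.\ bundle functor (\cref{prop:cdBundleFunctorial}), $\t = \varpi$ the canonical projection, and $\fA = \sF{HySSub} \subseteq \sF{Arrow(HyPh)}$. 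The first task is to check the three standing assumptions. \cref{assumption:3} holds because $\sF{HySSub}$ is cartesian (\cref{fact:HySSubIsCartesian}) --- giving functoriality of $\Pi$ by \cref{prop:productIsFunctorialForCartesianMonoidal} --- and $\sF{Set}$ is cartesian. For \cref{assumption:2}, I would note that $\Ub$ is strong monoidal (\cref{prop:uFromHyPh2ManIsMonoidal}), hence so is $\Uc = \{\Ub(\cdot)\}$ after composing with the product-preserving forgetful functor to $\sF{Set}$; and $\Tb = \{T\Ub(\cdot)\} \times \{\Ub(\cdot)\}$ is a finite product of strong monoidal functors into the cartesian category $\sF{Set}$, so is itself strong monoidal. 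Finally \cref{assumption:1} is automatic by \cref{remark:redundancyOfAssumption3}, since $\fD$ is cartesian and $\Tc, \Uc$ are strong monoidal. With the assumptions in hand, \cref{theorem:mainTheoremAbstract} produces a $1$-morphism in $\sF{Set}^\square$ whose corners are the abstract open-section sets $\discatp_{\fy} \G_\varpi(\Hc_{\fY,\fy})$, $\discatp_{\fx} \G_\varpi(\Hc_{\fX,\fx})$, $\G_\varpi(b)$, $\G_\varpi(a)$, with vertical maps $\G_\varpi(i_{b,\fY}) \circ \scP_\fY$ and $\G_\varpi(i_{a,\fX}) \circ \scP_\fX$ and horizontal relations $\G_\varpi(\ph,\Phi)$ and $\G_\varpi(z)$.

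The remaining and genuinely hybrid-specific work is to pass from $\G_\varpi$ to $\dCrl$, and this is the main obstacle: as emphasized in \cref{ex:detHyOSInstanceOfAbstractOpenSystem}, we only have a \emph{strict} inclusion $\dCrl(a) \subsetneq \G_\varpi(a)$ of corner sets, since a general $\varpi$-section need satisfy neither the smoothness of $X$ nor the jump-compatibility of $\r$. So I cannot simply read off the statement; I must check that every structural map in the abstract square restricts to the sub-collections $\dCrl(\cdot) \subseteq \G_\varpi(\cdot)$. This is exactly what the earlier deterministic propositions supply: $\scP_\fX$ carries $\discatp_\fx \dCrl(\Hc_{\fX,\fx})$ into $\dCrl(\Pi(\Hc_\fX))$ by \cref{prop:mapFromProductdCrl2dCrlProductGeneral} (and agrees there with the abstract $\scP_\fX$ of \cref{lemma:strongMonoidalInducesMonoidalFunctor}, both being built from the same underlying isomorphism $\g_\fX$ of \cref{lemma:PUUPGeneral}); the interconnection map $\G_\varpi(i) = \dCrl(i)$ agrees on morphisms (\cref{remark:compareWithdeterministicControl}) and carries $\dCrl(b)$ into $\dCrl(a)$ by \cref{prop:dCrlInterconnectIsMap}; and the horizontal relations restrict to $\dCrl(\ph,\Phi)$ and $\dCrl(z)$, which by \cref{remark:compareWithdeterministicControl} are precisely the intersections of $\G_\varpi(\ph,\Phi)$ and $\G_\varpi(z)$ with the relevant products of $\dCrl$-sets (relatedness being the identical condition in either setting).

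With these preservation facts, I would conclude as follows. Given $\big((w_\fy)_{\fy},(v_\fx)_{\fx}\big) \in \dCrl(\ph,\Phi) \subseteq \G_\varpi(\ph,\Phi)$, the abstract $1$-morphism guarantees that the pair $\big(\G_\varpi(i_{b,\fY})(\scP_\fY((w_\fy))),\, \G_\varpi(i_{a,\fX})(\scP_\fX((v_\fx)))\big)$ lies in the relation $\G_\varpi(z)$, i.e.\ the two images are $\Pi(z)$-related. But by the preservation statements these images in fact lie in $\dCrl(b) \times \dCrl(a)$, so the pair lies in $\G_\varpi(z) \cap (\dCrl(b) \times \dCrl(a)) = \dCrl(z)$. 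Restricting the corner sets, vertical maps, and horizontal relations of the abstract square to their $\dCrl$-parts therefore yields a commuting square in $\sF{Set}^\square$, which is exactly the claimed $1$-morphism; the $\Pi(z)$-relatedness phrased in the final sentence of the theorem is just the unwinding of membership in the bottom relation $\dCrl(z)$. The only delicate bookkeeping, as flagged above, is verifying that the abstract and concrete definitions of $\scP$ and of $\G_\varpi/\dCrl$ on interconnections literally coincide; once that is in place the descent is formal.
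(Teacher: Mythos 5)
Your proposal is correct and follows essentially the same route as the paper's proof: instantiate \cref{theorem:mainTheoremAbstract} with $\fC=\sF{HyPh}$, $\fD=\sF{Set}$, $\Uc=\{\Ub(\cdot)\}$, $\Tc=\Tb$, $\fA=\sF{HySSub}$, verify the three monoidal assumptions, and then descend from $\G_\varpi$ to $\dCrl$ using \cref{prop:mapFromProductdCrl2dCrlProductGeneral} and \cref{prop:interconnectionMapOndeterministicHybridControl}. Your explicit attention to the strict inclusion $\dCrl(a)\subsetneq\G_\varpi(a)$ and to the agreement of the abstract and concrete $\scP$ and interconnection maps is exactly the "delicate bookkeeping" the paper also flags and resolves by the same citations.
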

\begin{remark}
	Observe that the first half of \cref{prop:mainTheoremFordeterministicHybridSystemsPrime} (\cref{theorem:mainTheoremFordeterministicHybridSystems}) is identical to the first half of \cref{prop:mainTheoremForHybridSystems}. In other words, networks of hybrid open systems are the same as networks of deterministic hybrid open systems.  The difference appears in open sections (\cref{remark:whyCallItNetworkOfSYSTEMS?}).  In \cref{prop:mainTheoremForHybridSystems},  $\G_\t = \sF{Crl}_\Ub$ while in \cref{prop:mainTheoremFordeterministicHybridSystemsPrime}, we will see that $ \dCrl\subset\G_\t$.  Moreover, in \cref{prop:mainTheoremForHybridSystems}, the target category $\fD$ was $\sF{Man}$, while in \cref{prop:mainTheoremFordeterministicHybridSystemsPrime} (\cref{theorem:mainTheoremFordeterministicHybridSystems}), the target category is $\sF{Set}$. 
\end{remark}
\begin{proof} Let $\fC=\sF{HyPh}$, $\fD=\sF{Set}$, $\fA = \sF{HySSub}$, as in \cref{ex:networkOfHyOS}.  We immediately observe that \cref{assumption:3} is satisfied. 
Let functor  $\Uc:\sF{HyPh}\rightarrow\sF{Set}$ be defined by $\Uc a \defeq \{x\in \Ub a\}$, the set of points in the underlying manifold of $a$; in other words, as the composition of forgetful functors $\sF{HyPh}\xrightarrow{\Ub}\sF{Man}\rightarrow\sF{Set}$, where the second functor forgets the smooth structure.  We define functor $\Tc:\sF{HyPh}\rightarrow\sF{Set}$ on objects by $$\Tc a \defeq \{v\in T\Ub a\}\times \{s\in \Ub a\} = \{(v,s):\, v\in T\Ub a\, \mbox{and}\, s\in \Ub a\}.$$  It is a quick check to verify that $\Tc$ is  a functor.  For morphism $a\xrightarrow{f}b$ of hybrid phase spaces, $\Tc f\defeq T\Ub f\times \Ub f$. We define natural transformation $\upsilon:\Tc\Rightarrow \Uc$ on objects by $\upsilon_a: \Tc a\rightarrow\Uc a$ by $\upsilon_a \defeq \t_{\Ub a} \circ p_1$, where $p_1: X\times Y\rightarrow X$ is canonical projection to first factor, and $\t_{\Ub a}$ is the canonical projection of tangent bundle. Since both $\t$ and $p_1$ are split epimorphisms, the composition $\upsilon$ is as well. The definition of $\upsilon$ on objects assembles into a natural transformation $\nat{\sF{HyPh}}{\sF{Set}}{\Tc}{\Uc}{\upsilon}$ which is $\sF{Set}$-fibered.  We know that $\Uc$ is strong monoidal since $\Ub$ is (\cref{prop:uFromHyPh2ManIsMonoidal}), and to see that $\Tc$ is strong monoidal, we compute: \small $$\begin{array}{lll} 
\Tc a\times \Tc b & = \{u\in T\Ub a\}\times\{r\in \Ub a\}\times \{v\in T\Ub b\}\times \{s\in \Ub b\} & \\
& =   \{u\in T\Ub a\}\times \{v\in T\Ub b\}\times\{r\in \Ub a\}\times \{s\in \Ub b\} & \\
& = \{(u,v)\in T\Ub a \times T\Ub b\}\times \{(r,s)\in \Ub a\times \Ub b\} & \\
& \cong \{w\in T(\Ub a\times \Ub b)\}\times \{t\in \Ub(a\times b)\} & (\mbox{by \cref{lemma:PUUPLite}})\\
& = \{w \in T\Ub(a\times b)\}\times \{t\in \Ub(a\times b)\} & (\mbox{also by \cref{lemma:PUUPLite}})\\ & = \Tc(a\times b), & \end{array}$$ \normalsize verifying  that \cref{assumption:2} holds. Finally, it  is not difficult to see that $\upsilon:\Tc\Rightarrow \Uc$ is monoidal (\cref{assumption:1}).  Therefore,  \cref{theorem:mainTheoremAbstract} implies a morphism of networks of deterministic hybrid systems induces 1-morphism $$\begin{tikzcd}[column sep = large]
	\discatp_{\fy\in \fY}\G_\upsilon(\Hc_{\fY,\fy})\arrow[r,"\G_\upsilon(\ph{,}\Phi)"{name = foo1, below}]\arrow[d,swap,"\G_\upsilon(i_{b{,}\fY})\circ \scP_\fY"] & \discatp_{\fx\in \fX}\G_\upsilon(\Hc_{\fX,\fx})\arrow[d,"\G_\upsilon(i_{a{,}\fX})\circ \scP_\fX"]\\
	\G_\upsilon(b)\arrow[r,"\G_\upsilon(z)"{name = foo2, above}] & \G_\upsilon(a) \ar[shorten <= 5pt, shorten >= 5pt, Rightarrow, from = foo1, to = foo2]
\end{tikzcd}$$

To complete the proof, we must check  that $\dCrl(a)\subseteq \G_\upsilon(a)$,   that there is well defined  map $\scP_\fX:\discatp_{\fx\in \fX}\dCrl(\Hc_{\fX,\fx})\rightarrow \dCrl\left(\discatp_{\fx\in \fX}\Hc_{\fX,\fx}\right)$, and that $\dCrl(i):\dCrl(b)\rightarrow \dCrl(a)$ for hybrid interconnection $a\xrightarrow{i}b$.  The inclusion $\dCrl(a)\subseteq \G_\upsilon(a)$ follows by definition of $\dCrl$, since we have defined $\dCrl(a) = \big\{X\in \G_\upsilon(a):\, \mbox{satisfying some conditions}\big\}$. That $\scP_\fX:\discatp_{\fx\in \fX}\dCrl(\Hc_{\fX,\fx})\rightarrow \dCrl\left(\discatp_{\fx\in \fX}\Hc_{\fX,\fx}\right)$ is the statement of \cref{prop:mapFromProductdCrl2dCrlProductGeneral}, and the map $\dCrl(\i,\ifrak):\dCrl(b)\rightarrow \dCrl(a)$ for interconnection comes from \cref{prop:interconnectionMapOndeterministicHybridControl} (\cref{def:interconnectionMapOndeterministicHybridControl}). 
\end{proof}

\subsection{Networks of Discrete Open Systems}

We conclude with a statement of our main result for discrete-time systems. We have lightly touched upon the notion of discrete-time systems:  recall \cref{ex:discSysAsprojCsysInC} and \cref{ex:discreteOpenasAbstract}, where $\fC$ is a concrete category, and $\fA$ is the subcategory of $\sF{Arrow(C)}$ whose objects are surjections.  Echoing the example of a vector field $X\in \Xf(\R^2)$ as interconnection of open systems (c.f.\ \cref{subsubNetworksVanillaInIntro}), we may view a discrete-time system $f:X\times Y\rightarrow X\times Y$ on a product as the interconnection of two discrete-time open systems $f_1:X\times Y\rightarrow X$ and $f_2:X\times Y \rightarrow Y$. We do not develop this particular viewpoint, but all the ingredients are at our disposal to do so.  A direct application of \cref{theorem:mainTheoremAbstract} gives us a discrete-time networks theorem:
\begin{theorem}\label{theorem:mainForDiscrete}
	A morphism  \small$$\left(\big\{\Dc_{\fY,\fy}\big\}_{\fy\in \fY}, \i_\fY:b\hookrightarrow \discatp_{\fy\in \fY}\Dc_{\fY,\fy}\right)\xrightarrow{\big((\ph,\Phi),f\big)}\left(\big\{\Dc_{\fX,\fx}\big\}_{\fx\in \fX}, \i_\fX:a\hookrightarrow \discatp_{\fx\in \fX}\Dc_{\fX,\fx}\right)$$\normalsize of networks of discrete-time open system induces a $1$-morphism $$\begin{tikzcd}[column sep = large]
\discatp_{\fy\in \fY}\G_\t(\Dc_{\fY{,}\fy})\arrow[r,swap,"\G_\t(\ph{,}\Phi)"{name =foo1, below}]\arrow[d,swap,"\G_\t(\i_\fY)\circ \scP_\fY"] & \discatp_{\fx\in \fX}\G_\t(\Dc_{\fX{,}\fx})\arrow[d,"\G_\t(\i_\fX)\circ\scP_\fX"]\\
\G_\t(b)\arrow[r,"\G_\t(f)"{name = foo2, above}]\ar[shorten <= 5pt, shorten >= 5pt, Rightarrow, from = foo1, to = foo2] & \G_\t(a).
\end{tikzcd}
$$
\end{theorem}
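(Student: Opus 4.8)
The plan is to follow the template laid out in \cref{remark:outlineExampleMainTheorem}: exhibit the discrete-time setting as an instance of the hypotheses of \cref{theorem:mainTheoremAbstract}, verify Assumptions \ref{assumption:3}, \ref{assumption:2}, and \ref{assumption:1}, and then invoke that theorem directly. Taking our cue from \cref{ex:discSysAsprojCsysInC} and \cref{ex:discreteOpenasAbstract}, let $\fC$ be the ambient concrete, locally small, cartesian category (with finite products), set $\fD\defeq \fC$, let $\Uc\defeq id_\fC$, and let $\Tc\defeq (\cdot)\times(\cdot)$ be the composite $\fC\xrightarrow{\Delta}\fC\times\fC\xrightarrow{\times}\fC$ sending $\fc\mapsto \fc\times\fc$. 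Define $\t\defeq p_1:\Tc\Rightarrow\Uc$ to be the projection onto the first factor. The diagonal $\Delta_\fc:\fc\rightarrow\fc\times\fc$ is a right inverse of $p_1$ (since $p_1\circ\Delta_\fc=id_\fc$), so each component $\t_\fc$ is a split epimorphism (\cref{def:splitEpi}) and $\t$ is $\fD$-fibered (\cref{def:fiberedTransformationInD}). For $\fA$ we take the subcategory of $\sF{Arrow(C)}$ whose objects are surjections; under this dictionary $\G_\t(p_X)=\{f:X'\rightarrow X\times X:\, p_1\circ f=p_X\}$ records precisely a discrete-time open system over the surjection $p_X$, i.e.\ its free second component.

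Next I would check the three monoidal assumptions. For \cref{assumption:3}, $\fD=\fC$ is cartesian by hypothesis; and $\fA$ is cartesian because a product of surjections is again a surjection, while the projections out of $p_X\times p_Y$ and the universal map into it are morphisms of $\sF{Arrow(C)}$ lying in the full subcategory $\fA$ (\cref{prop:catHasProductsImpliesArrowsDoesToo}). Cartesianality of $\fA$ then yields functoriality of $\Pi$ via \cref{prop:productIsFunctorialForCartesianMonoidal}. For \cref{assumption:2}, $\Uc=id_\fC$ is trivially strong monoidal; and $\Tc=(\cdot)\times(\cdot)$ is strong monoidal via the ``middle-four interchange'' isomorphism $(\fc\times\fc)\times(\fc'\times\fc')\xrightarrow{\sim}(\fc\times\fc')\times(\fc\times\fc')$, which is natural because it is assembled from projections through the universal property of product. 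Finally, \cref{assumption:1} requires $\t$ to be a monoidal transformation, and this is automatic: by \cref{remark:redundancyOfAssumption3}, monoidality of $\t$ follows from cartesianality of $\fD$ together with strong monoidality of $\Tc$ and $\Uc$, both just established.

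With all hypotheses in place, \cref{theorem:mainTheoremAbstract} applies verbatim and produces the asserted $1$-morphism in $\sF{Set}^\square$, completing the proof. I expect the only genuinely non-formal steps to be the verification that $\Tc=(\cdot)\times(\cdot)$ is strong monoidal (one must produce the interchange isomorphism and confirm its naturality) together with the check that the subcategory $\fA$ of surjections is itself cartesian, which does require a short argument since the paper warns in the remark following \cref{fact:HySSubIsMonoidal} that subcategories of arrow categories need not be cartesian. Everything else reduces to citation, so, unlike the deterministic hybrid case (\cref{prop:mainTheoremFordeterministicHybridSystemsPrime}), there is no gap between the abstract sections $\G_\t$ and a more restrictive admissible class to reconcile.
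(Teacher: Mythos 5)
Your proposal is correct and follows exactly the route the paper intends: the paper itself offers no written verification for this theorem, merely asserting that it is ``a direct application of \cref{theorem:mainTheoremAbstract}'' using the setup of \cref{ex:discSysAsprojCsysInC} and \cref{ex:discreteOpenasAbstract}. You have simply filled in the checks of Assumptions \ref{assumption:3}, \ref{assumption:2}, and \ref{assumption:1} that the paper leaves implicit, and your closing observation that no reconciliation between $\G_\t$ and a smaller admissible class is needed (in contrast to the deterministic hybrid case) is also accurate.
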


Notationally, we merely replaced $\Ac$ in \cref{theorem:mainTheoremAbstract} with $\Dc$ in \cref{theorem:mainForDiscrete} to represent the indexed assignment of discrete-time open systems.  The power of a developed categorical theory: in appearances we did almost nothing, but the array of application is vast.

\chapter{Conclusion: Why Morphisms of  Systems?}\label{ch5}

\section{Introduction}
We emphasized   category theory as a primary  motivation for studying maps of networks of dynamical systems: the Yoneda embedding justifies investigating mathematical objects through their (collection of) morphisms.   Irrespective of networks or even hybrid systems, a  reasonable question is whether ordinary continuous-time dynamical systems can  \textit{in practice}  be better understood through  morphisms.   We saw, for example, that existence and uniqueness for complete continuous-time systems can even be given a wholly categorical formulation (\cref{prop:existenceAndUniquenessRepresentable}), but this observation does not properly extend the theory of dynamical systems.  Aside from connecting two apparently disparate mathematical fields, the categorical formulation tells us little we did not already know about the theory of dynamical systems.  Still,  we argue that a map-centric perspective can yield new insight into systems themselves.  We consider, in particular, the notion of Lyapunov stability which describes a property of equilibria points:    solutions starting close enough remain close enough for all time.  We show that under certain hypotheses, stable points are sent to stable points under maps of dynamical systems.  This is reminiscent of Lyapunov stability theorem, which makes an assertion in the reverse direction: a point (in the domain of some map) sent to a stable point is in fact stable. Our result provides a practitioner with the analogous ability to determine  stability when explicit solutions cannot be found. 

Working with the notion of continuous-time dynamical system as manifold and vector field pair $(M,X)$, we  introduce the \textit{solution map} which sends a point $x_0\in M$ to the solution of $X$ passing through $x_0$ at time $0$. We then review Lyapunov stability and interpret it as continuity of the solution map, with respect to the appropriate topology.    While this abstraction is not new, little use of its generality has, as far as we know, been used in the dynamical systems literature.  An immediate upshot, for example, is that it allows for a natural (useful) description of arbitrary and even unbounded trajectories as stable. In control theory, where one often cares about driving a system to some desired---not necessarily equilibrium---trajectory,  error (deviation of state from desired trajectory) may be used as proxy for the underlying system.  In this setting, what is sought is that the dynamical system representing error has solutions which go to zero, and moreover,  that zero is a stable equilibrium, thus guaranteeing that a control algorithm is robust with respect to uncertainty, noise, or disturbances.  From this perspective, not much is gained from the added generality. 

However, we are now able to consider stability in the context of \textit{maps} of systems.  It makes sense to speak of composition as preserving continuity, as long as we are careful about working in the right topology.  We detail a topology in the space of maps of dynamical systems, and use this to prove \cref{theorem:pushStableToStable} which says that an open map between dynamical systems sends bounded stable points to stable  points.  We end with an example illustrating the usefulness of this result, by mapping a \textit{linear} system (whose stability properties are entirely known by eigenvalues of the matrix representing  its dynamics) to a nonlinear system. Though the nonlinear system can be explicitly solved for (in particular, simply by pushing the linear solution forward), and therefore stability determined through other means, linearization still fails to detect stability.  Even though  continuity is a local concept,  local-in-a-topology-on-$M$ or -$TM$ is different than local-in-the-space-of-maps-of-systems, which explains why \cref{theorem:pushStableToStable} can answer stability questions which linearization of a  vector field cannot.  

\subsection{Review of Complete Dynamical Systems}

Recall the notion of complete dynamical systems (\cref{def:completeDySys}), those for which a solution existence through each point at all time, in this chapter let  $\sF{DySys}$ denote the category of complete dynamical systems.

\begin{definition}\label{def:solutionMap}
	A complete dynamical system $(M,X)\in \sF{DySys}$ defines map $$\ph_{X,(\cdot)}:M\rightarrow\sF{DySys}\left(\big(\R,\frac{d}{dt}\big),\big(M,X\big)\right)$$  by sending $x_0\mapsto \ph_{X,x_0}(\cdot)$, the integral curve of $(M,X)$ passing through $x_0\in M$ at times $t=0$. We call $\ph_{X,(\cdot)}$ the \textit{solution  map} of $(M,X)$, and $\ph_{X,x_0}(\cdot)$---the solution map evaluated at point $x_0\in M$---the \textit{solution of} $(M,X)$ \textit{with initial condition} $x_0$. 
\end{definition}

When the system $(M,X)$ is fixed, we drop the dependence of $\ph_X$ on vector field $X$ and simply write $\ph$. 

\begin{definition}\label{def:equilibria}
	A point $x_e\in M$ is said to be an \textit{equilibrium point} of $(M,X)$ if $X(x_e) = 0\in T_{x_e}M$. 
\end{definition}
\begin{remark}
	The name comes from the fact that solutions starting at equilibria go nowhere: $\ph_{X,x_e}(t) \equiv x_e$ for all $t\in \R$ when $X(x_e)= 0$. 
\end{remark}

\subsection{Lyapunov Stability}
\begin{fact}
	Any second countable smooth manifold is metrizable.  We will by default let $d_M:M\times M\rightarrow\R^{\geq 0}$ denote a metric on manifold $M$. 
\end{fact}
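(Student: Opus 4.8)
The plan is to deduce metrizability from the Urysohn metrization theorem, which asserts that a topological space is metrizable provided it is Hausdorff, regular, and second countable. Two of these three hypotheses come for free: a manifold with corners is by definition a second countable Hausdorff space (\cref{def:manifoldsWithCorners}), so only regularity is in question, together with the observation that the metric produced by Urysohn's theorem induces the original (manifold) topology rather than merely some metrizable topology on the same underlying set.

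For regularity I would first establish that $M$ is locally compact. Each point $x\in M$ lies in the domain $U_\a$ of some chart $\ph_\a:U_\a\hookrightarrow\R^n_+$ (\cref{def:manifoldsWithCorners}); since $\R^n_+$ is locally compact and $\ph_\a$ is a homeomorphism onto an open subset of $\R^n_+$, the point $x$ inherits a neighborhood base of compact sets. Thus $M$ is locally compact, and a locally compact Hausdorff space is regular (in fact completely regular): given a point and a closed set not containing it, one separates them using a relatively compact open neighborhood of the point whose closure avoids the closed set. With Hausdorffness, regularity, and second countability all verified, Urysohn's theorem then supplies a metric $d_M$ whose induced topology coincides with the topology of $M$, which is precisely the assertion.

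An alternative, more geometric route would produce the metric directly: because $M$ is second countable it is paracompact, so a partition of unity exists, and one may glue local Euclidean inner products into a Riemannian metric on $M$, whose associated path-length distance metrizes each connected component; one then extends across components (for instance by capping inter-component distances at a fixed constant) to obtain a global metric inducing the manifold topology. I would present the Urysohn argument as the main line, since it sidesteps the case analysis over connected components and the separate verification that Riemannian distance recovers the topology. The only step that demands genuine care, as opposed to routine checking, is confirming that the resulting metric induces the \emph{original} manifold topology; this is exactly what the conclusion of Urysohn's theorem delivers, so once local compactness (hence regularity) is in hand, no further work is required.
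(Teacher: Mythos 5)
Your argument is correct. Note, however, that the paper states this as a \emph{Fact} with no proof at all---it is invoked as a standard result---so there is nothing to compare your argument against. The Urysohn route you give (second countable and Hausdorff by \cref{def:manifoldsWithCorners}, local compactness from charts into $\R^n_+$, hence regularity, hence metrizability with the metric inducing the manifold topology) is the standard justification and fills the gap completely; the Riemannian alternative you sketch is equally valid provided one handles disconnected manifolds as you indicate.
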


\begin{notation}
	Let $(M,X)$ be a dynamical system.  It will be convenient to consolidate notation for the set of integral curves, i.e.\ the set of maps of dynamical systems from $(\R,\frac{d}{dt})$:  we let $$\scM_{X}\defeq \sF{DySys}\left(\big(\R,\frac{d}{dt}\big),\big(M,X\big)\right).$$ (Script M is for ``morphism.") Because $X\in \Xf(M)$, specifying the vector field alone is sufficient for disambiguation. Leaving the choice of system open, $\scM_{(\bullet)} = \sF{DySys}\left(\big(\R,\frac{d}{dt}\big),\bullet\right).$
\end{notation}

\begin{lemma}\label{lemma:inducedMetric}
	A metric $d_M:M\times M\rightarrow\R^{\geq 0}$ induces a metric $\d_X:\scM_X\times \scM_X\rightarrow\R^{\geq0}\cup\{\infty\}$ on the collection of integral curves for $(M,X)$. 
\end{lemma}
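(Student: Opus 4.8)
The plan is to equip $\scM_X$ with the uniform (supremum) metric inherited pointwise from $d_M$. Given two integral curves $\gamma_1,\gamma_2\in \scM_X$, define
$$\d_X(\gamma_1,\gamma_2)\defeq \sup_{t\in\R} d_M\big(\gamma_1(t),\gamma_2(t)\big)\in \R^{\geq0}\cup\{\infty\}.$$
The value is permitted to be infinite precisely because integral curves of a complete system need not remain within bounded $d_M$-distance of one another for all time, which is exactly why the codomain is augmented by $\{\infty\}$. This is the correct choice for the intended application: Lyapunov stability of an equilibrium will be recast as continuity of the solution map $\ph_{X,(\cdot)}$ with respect to $d_M$ on $M$ and $\d_X$ on $\scM_X$, and the supremum over all $t$ is what encodes the requirement that nearby solutions stay nearby \emph{for all time}.

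First I would check that $\d_X$ genuinely takes values in $[0,\infty]$: it is a supremum of the nonnegative reals $d_M(\gamma_1(t),\gamma_2(t))$, hence nonnegative and possibly unbounded. Symmetry $\d_X(\gamma_1,\gamma_2)=\d_X(\gamma_2,\gamma_1)$ is immediate from symmetry of $d_M$ at each $t$. For identity of indiscernibles, $\d_X(\gamma_1,\gamma_2)=0$ forces $d_M(\gamma_1(t),\gamma_2(t))=0$ for every $t\in\R$; since $d_M$ is a metric this gives $\gamma_1(t)=\gamma_2(t)$ for all $t$, hence $\gamma_1=\gamma_2$ as maps (equivalently, by existence and uniqueness (\cref{theorem:E&U}), as elements of $\scM_X$), while $\d_X(\gamma,\gamma)=0$ is trivial.

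The remaining step, and the only place requiring care, is the triangle inequality in the presence of the value $\infty$. For each fixed $t$ the pointwise triangle inequality for $d_M$ gives $d_M(\gamma_1(t),\gamma_3(t))\le d_M(\gamma_1(t),\gamma_2(t))+d_M(\gamma_2(t),\gamma_3(t))$; taking the supremum over $t$ and using subadditivity of $\sup$ yields $\d_X(\gamma_1,\gamma_3)\le \d_X(\gamma_1,\gamma_2)+\d_X(\gamma_2,\gamma_3)$. Under the standard convention $a+\infty=\infty$ for $a\in[0,\infty]$, this inequality persists when either summand is infinite, so no case analysis beyond recording the convention is needed. I therefore expect the main (mild) obstacle to be purely bookkeeping: confirming that $\d_X$ is an \emph{extended} metric (an $[0,\infty]$-valued metric) rather than a finite-valued one, and observing that this does not affect the metric-space notions—open balls, continuity—used in the sequel, since balls of finite radius are unchanged by the adjoined value $\infty$. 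An alternative would be to pull $d_M$ back along the bijection $\scM_X\cong M$, $\gamma\mapsto\gamma(0)$ (furnished by \cref{theorem:E&U}), which yields an honest finite metric; I would not adopt it, however, because it discards exactly the all-time information that the stability results of this chapter require.
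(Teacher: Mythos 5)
Your proof is correct and follows essentially the same route as the paper: the supremum metric inherited pointwise from $d_M$, with the triangle inequality obtained by taking suprema of the pointwise inequality and identity of indiscernibles settled via existence and uniqueness. The only difference is that the paper takes the supremum over $t\geq 0$ rather than all of $\R$ (which is the convention used in the later forward-time stability definitions), but this does not affect the verification of the metric axioms.
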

\begin{remark}
	Calling $\d_X$ a metric is misleading since two curves may be $\d_X$-infinitely far apart.  For the linear system $\dx = x$, for example, two integral curves $e^tx$ and $e^tx'$ have infinite $\d_X$-distance whenever $x\neq x'$. Nonetheless, we ultimately care about the topology which this ``metric'' generates.  For a space where every point is $\infty$-separated ($\{e^{(\cdot)}x:\R\rightarrow\R:\, x\in \R\}$ for example), the topology is discrete.   
\end{remark}
\begin{proof}
	Let $\ph_x,\ph_y\in \scM_X$ be two integral curves of $(M,X)$. We define $$\d_X(\ph_x,\ph_y)\defeq \dissup_{t\geq 0} d_M(\ph_x(t),\ph_y(t)).$$ 
	Then  $$\begin{array}{ll} \dissup_{t\geq 0 } \d_X(\ph_x(t),\ph_z(t))  & \leq \dissup_{t\geq 0} \big(d_M(\ph_x(t),\ph_y(t)) + d_M(\ph_y(t),\ph_z(t))\big) \\ &  \leq \dissup_{t\geq 0}d_M(\ph_x(t),\ph_y(t))+ \dissup_{t\geq 0}d_M(\ph_y(t),\ph_z(t)) \\ & = \d_X(\ph_x,\ph_y) + \d_X(\ph_y,\ph_z).\end{array} $$
	It is immediate that $\d_X(\ph_x,\ph_y)=\d_X(\ph_y,\ph_x)\geq 0$, with---by existence and uniqueness---equality when and only when $x=y$.\end{proof}

\begin{remark}\label{remark:topologyOnIntegralCurves} The metric $\d_X$ induces a topology on $\scM_X$ generated by base open sets $$B_\e(\ph_x)\defeq \big\{\ph_y\in \scM_X:\, \d_X(\ph_x,\ph_y)<\e\big\},$$ for $\e>0$ and $x\in M$. 
\end{remark}

\begin{definition}\label{def:lyapunovStabilityContinuity}
	Let $x_e\in M$ be an equilibrium point (\cref{def:equilibria}).  The point $x_e$ is said to be \textit{Lyapunov stable} if the solution map $\ph_{X,(\cdot)}:M\rightarrow\scM_X$ is continuous at $x_e$, w.r.t.\ the topology on $\scM_X$ defined in \cref{remark:topologyOnIntegralCurves}. 
\end{definition}

\begin{remark} 
	This definition captures the notion that a solution which starts close to a stable equilibrium will remain nearby for all (positive) time.  A more  standard but equivalent definition of Lyapunov stability uses the $\d$-$\e$ criterion:   for any $\e>0$ there is a $\d_\e>0$ such that $\d_X\big(\ph_{X,x_e},\ph_{X,x_0}\big)<\e$ whenever $d_M(x_e,x_0)< \d_\e$. 
\end{remark}
In fact, there is nothing sacrosanct about equilibria points in this definition:
\begin{definition}\label{def:stabilityContinuity}  We say that a point $x_o\in M $ is  \textit{stable} if the solution map $\ph_{X,(\cdot)}:M\rightarrow\scM_X$ is continuous at $x_o$.
\end{definition}
\begin{remark}\label{remark:upshotsOfGeneralStability}
	There are two advantages of this definition.  First, it may apply to any arbitrary point (and therefore integral curve) of a dynamical system $(M,X)$.  For example, every point of dynamical system $(\R,\dx = -x)$ is stable in the sense of \cref{def:stabilityContinuity}.  Secondly, stability is not restricted to bounded solutions.  For example, every point of $(\R,\dx = 1)$ is stable, even though the solution $\ph_{X,x_0}(t) = x_0+t$ is unbounded.  Yet, in both cases, stability still captures the notion we want: solutions which start close to each other remain close.   
\end{remark}

\section{Open Maps Preserving Stability} 
 Recall that the maps of dynamical systems preserve integral curves (\cref{def:integralCurveAsMap}).  They also preserver equilibria.  Let $f:(M,X)\rightarrow (N,X)$ be a map of dynamical systems and $x_e\in M$ an equilibrium.  Then  linearity of the differential implies that $$0 = Tf (0) = Tf X(x_e) = Y(f(x_e)),$$ which further implies that that $f(x_e)$ is an equilibrium of $(N,Y)$.  Alternatively, since maps of dynamical systems send  integral curves   to integral curves, $f_*\ph_{X,x_e}$ is a constant curve, so $0=\frac{d}{dt} f_*\ph_{X,x_e}(t)  = Y (\ph_{Y,f(x_e)}(t))$.

 Under some conditions on the map of systems, stability is also preserved. 
 
 First a definition: 
 \begin{definition}\label{def:stablePoints}
 	Let $(M,X)$ be a dynamical system and $x_0\in M$ a point.  We say that $x_0$ is \textit{bounded} if its solution $\ph_{X,x_0}$ is bounded, i.e.\ if $\d(\ph_{X,x_0}, \underline{x})<\infty$ for any constant map $\underline{x}:\R\rightarrow M$ defined by $\underline{x}(t)=x$, for $x\in M$. 
 \end{definition}
 \begin{remark}
 	We defined the metric $\d_X$ in \cref{lemma:inducedMetric} on $\scM_X$, so technically $\d_X(\ph_{X,x_0}, \cdot)$ may only take $\ph_{X,x'}$ as an argument, for $x'\in M$. We can extend the induced metric between solutions to a \textit{pseudo}metric on curves of $M$---the space $\Cc(\R,M)$---in the obvious way: $$\d(\phi,\psi)\defeq \dissup_{t\geq 0}d_M(\phi(t),\psi(t)).$$This definition is a pseudometric because two distinct continuous curves may agree on $\R^{\geq 0}$. 
 \end{remark}
  \begin{theorem}\label{prop:pushStableToStable}\label{theorem:pushStableToStable}
  Let $(M,X)\xrightarrow{f}(N,Y)$ be a map of dynamical systems for which $f$ is open: $f(\Oc)$ is an open set in $N$ whenever $\Oc$ is open in $M$.  Suppose, further, that  $x_0\in M$ is stable and bounded.  Then $f(x_0)$ is stable. 	
  \end{theorem}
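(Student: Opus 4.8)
The plan is to exploit the fact that a morphism of dynamical systems carries integral curves to integral curves, and thereby to factor the solution map of $(N,Y)$ through that of $(M,X)$. Recall (\cref{def:integralCurveAsMap}) that an integral curve is itself a morphism $\big((\R,\frac{d}{dt})\big)\to (M,X)$; since the composite of two morphisms of dynamical systems is again one, $f\circ \ph_{X,x}:(\R,\frac{d}{dt})\to (N,Y)$ is an integral curve of $(N,Y)$ through $f(x)$, and by existence and uniqueness (\cref{theorem:E&U}) it must equal $\ph_{Y,f(x)}$. Thus $\ph_{Y,f(x)} = f\circ \ph_{X,x}$ for every $x\in M$. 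Introducing the push-forward $f_*:\scM_X\to \scM_Y$, $\ph\mapsto f\circ \ph$, this says precisely that the square
$$\begin{tikzcd}[column sep = large]
M\arrow[r,"f"]\arrow[d,swap,"\ph_{X,(\cdot)}"] & N\arrow[d,"\ph_{Y,(\cdot)}"]\\
\scM_X\arrow[r,"f_*"] & \scM_Y
\end{tikzcd}$$
commutes, i.e.\ $\ph_{Y,(\cdot)}\circ f = f_*\circ \ph_{X,(\cdot)}$.

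First I would show that $f_*$ is continuous at $\ph_{X,x_0}$ with respect to the topologies of \cref{remark:topologyOnIntegralCurves}. Here is where boundedness of $x_0$ enters: by stability, for finite $\e$ the trajectories $\ph_{X,x}$ with $x$ near $x_0$ all lie within $\d_X$-distance $\e$ of the bounded curve $\ph_{X,x_0}$, hence sweep out a bounded region $K\subseteq M$. On (the closure of) such a region the smooth, hence continuous, map $f$ is uniformly continuous, so $\sup_{t\geq 0}d_M(\ph_{X,x_0}(t),\psi(t))$ small forces $\sup_{t\geq 0}d_N(f\ph_{X,x_0}(t),f\psi(t))$ small; that is, $\d_X(\ph_{X,x_0},\psi)<\eta$ implies $\d_Y(f_*\ph_{X,x_0},f_*\psi)<\e'$. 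Composing with the continuity of $\ph_{X,(\cdot)}$ at $x_0$ (stability of $x_0$), the commuting square yields that $\ph_{Y,(\cdot)}\circ f = f_*\circ \ph_{X,(\cdot)}$ is continuous at $x_0$: for each $\e'>0$ there is $\d>0$ with
$$d_M(x_0,x)<\d \implies \d_Y\big(\ph_{Y,f(x_0)},\ph_{Y,f(x)}\big)<\e'.$$

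The openness of $f$ is what upgrades this to genuine continuity of $\ph_{Y,(\cdot)}$ at $f(x_0)$. Since $f$ is open, the image $f\big(B^M_\d(x_0)\big)$ of the $\d$-ball about $x_0$ is an open neighborhood of $f(x_0)$ in $N$, so it contains some ball $B^N_{\d'}(f(x_0))$. Then any $y$ with $d_N(f(x_0),y)<\d'$ has the form $y=f(x)$ for some $x$ with $d_M(x_0,x)<\d$, whence $\d_Y(\ph_{Y,f(x_0)},\ph_{Y,y})=\d_Y(\ph_{Y,f(x_0)},\ph_{Y,f(x)})<\e'$. This is exactly continuity of the solution map $\ph_{Y,(\cdot)}$ at $f(x_0)$ (\cref{def:stabilityContinuity}), i.e.\ $f(x_0)$ is stable. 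Openness is essential precisely because stability must be tested against \emph{all} nearby points $y\in N$, not merely those in the image $f(M)$.

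I expect the main technical obstacle to be the continuity of $f_*$, specifically the passage from closeness uniform in $t$ in $M$ to closeness uniform in $t$ in $N$. This is clean when the bounded tube $K$ swept out by the nearby trajectories is precompact, so that continuity of $f$ upgrades to uniform continuity on $\overline{K}$; if bounded sets need not have compact closure one must argue more carefully (for instance by taking $d_M$ proper, or by restricting to the precompact neighborhood of $\ph_{X,x_0}(\R^{\geq 0})$ guaranteed by boundedness together with stability of $x_0$). Everything else—the integral-curve identity, the commuting square, and the openness step—is formal.
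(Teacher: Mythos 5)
Your proposal is correct and follows essentially the same route as the paper: the commuting square $\ph_{Y,(\cdot)}\circ f = f_*\circ \ph_{X,(\cdot)}$, continuity of the pushforward $f_*$ at the bounded curve $\ph_{X,x_0}$, and openness of $f$ to upgrade continuity along the image of $f$ to continuity on a full neighborhood of $f(x_0)$. The only divergence is in the technical lemma, and it is cosmetic: where you invoke uniform continuity of $f$ on a precompact tube of nearby trajectories, the paper instead builds a continuous modulus $\d_\e:M\to\R^{>0}$ via a partition of unity and minimizes it over the compact closure of the single trajectory $\ph_{X,x_0}(\R^{\geq 0})$ --- precisely the fallback you yourself suggest at the end.
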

 
The proof of this theorem requires a lemma, interesting in its own right. 
  \begin{lemma}\label{lemma:pushForwardContinuousAtBounded}
  	Let $f:(M,X)\rightarrow (N,Y)$ be a map of systems.  Then the pushforward $$\begin{array}{rrl} f_*: & \scM_X & \rightarrow\scM_Y\\ & \ph_{X,x} & \mapsto f\circ \ph_{X,x} = \ph_{Y,f(x)}\end{array}$$  is continuous at bounded curves.
  \end{lemma}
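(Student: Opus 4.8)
The plan is to establish continuity of $f_*$ directly from the $\d$-$\e$ description of the metric topologies on $\scM_X$ and $\scM_Y$ (\cref{remark:topologyOnIntegralCurves}), using the boundedness hypothesis to confine all relevant trajectories to a single compact set on which $f$ behaves uniformly. First I would record that $f_*$ is well defined and agrees with the stated formula: since $\ph_{X,x}:(\R,\frac{d}{dt})\to(M,X)$ is a morphism and $f:(M,X)\to(N,Y)$ is a morphism of dynamical systems, the composite $f\circ\ph_{X,x}$ is again a morphism out of $(\R,\frac{d}{dt})$, hence an integral curve of $(N,Y)$; as it passes through $f(x)$ at time $0$, existence and uniqueness (\cref{theorem:E&U}) force $f\circ\ph_{X,x}=\ph_{Y,f(x)}$.

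Next, fix a bounded curve $\ph_{X,x_0}\in\scM_X$. By \cref{def:stablePoints} its forward image lies in a closed metric ball $\overline{B}_R\defeq\{p\in M:\, d_M(p,x_0)\le R\}$ for some finite $R$. The crucial observation is that any curve $\d_X$-close to $\ph_{X,x_0}$ also stays in a slightly larger ball: if $\d_X(\ph_{X,x_0},\ph_{X,x})<1$, then $d_M(\ph_{X,x}(t),x_0)<R+1$ for all $t\ge 0$, so the forward image of $\ph_{X,x}$ is contained in $K\defeq\overline{B}_{R+1}$. Working with a complete (e.g.\ Riemannian) choice of $d_M$, which exists on any second countable manifold and for which closed bounded sets are compact by Hopf--Rinow, the set $K$ is compact, and therefore $f|_K$ is uniformly continuous.

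With $K$ in hand the estimate is routine. Given $\e>0$, uniform continuity of $f|_K$ yields $\d'>0$ such that $p,q\in K$ with $d_M(p,q)<\d'$ imply $d_N(f(p),f(q))<\e$. Set $\d\defeq\min\{\d',1\}$. If $\d_X(\ph_{X,x_0},\ph_{X,x})<\d$, then for every $t\ge 0$ both $\ph_{X,x_0}(t),\ph_{X,x}(t)\in K$ and $d_M(\ph_{X,x_0}(t),\ph_{X,x}(t))<\d\le\d'$, hence $d_N(f(\ph_{X,x_0}(t)),f(\ph_{X,x}(t)))<\e$; taking the supremum over $t\ge 0$ and invoking $f_*\ph_{X,x}=\ph_{Y,f(x)}$ gives $\d_Y(f_*\ph_{X,x_0},f_*\ph_{X,x})\le\e$. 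This is precisely continuity of $f_*$ at $\ph_{X,x_0}$.

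The main obstacle is exactly the passage from the pointwise-in-$t$ continuity of $f$ to a uniform bound valid simultaneously for all $t\ge 0$: a smooth map of manifolds is only guaranteed to be uniformly continuous on compacta, whereas the forward time axis is noncompact. Boundedness is the hypothesis that rescues the argument, packaging the entire forward trajectory together with its $\d_X$-neighbors into one compact $K$ on which a single modulus of continuity $\d'$ governs all times at once. I would flag that this step quietly relies on a metric for which closed bounded sets are compact; if one insists on an arbitrary $d_M$, the correct reading of \emph{bounded} for this lemma is that the forward orbit be \emph{precompact}, and the proof goes through verbatim.
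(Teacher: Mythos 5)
Your proof is correct, and at the conceptual level it follows the same strategy as the paper's: use boundedness to trap the forward orbit in a compact set and thereby convert pointwise continuity of $f$ into a single uniform modulus valid for all $t\geq 0$. The implementations differ in one genuine way. The paper first proves an auxiliary lemma producing a \emph{continuous} function $\d_{\e}:M\rightarrow\R^{>0}$ satisfying the $\d$-$\e$ constraint pointwise (built with a partition of unity subordinate to a locally finite cover), and then takes $\hat{\d}\defeq\min\{\d_{\e}(x):x\in\overline{\{\ph(t):t\geq0\}}\}$, which exists because the orbit closure is compact; since that modulus is centered at $\ph(t)$, no enlargement of the compact set is needed. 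You instead work with a compact ball $K$ large enough to contain the orbit \emph{and} all $\d_X$-nearby orbits, and invoke plain uniform continuity of $f|_K$; this is more elementary (no partition-of-unity construction) at the cost of the enlargement step. One point in your favor: both arguments silently require that closed bounded subsets of $M$ be compact (the paper simply asserts that $\overline{\{\ph(t):t\geq 0\}}$ is compact, which fails for a general metric inducing the topology), and you are the only one to flag this and supply the standard fixes (a complete Riemannian distance via Hopf--Rinow, or reading ``bounded'' as ``precompact orbit''). Your preliminary check that $f\circ\ph_{X,x}=\ph_{Y,f(x)}$ is also a worthwhile addition the paper omits.
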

  
We need a secondary lemma to prove \cref{lemma:pushForwardContinuousAtBounded}: 
\begin{lemma}\label{lemma:continuousDelta}
	Let $f:M\rightarrow N$ be a continuous map between manifolds and fix $\e>0$.  Then there is \textit{continuous} function $\d_\e:M\rightarrow\R^{>0}$ such that $d_M(x,x')<\d_\e(x)$ implies that $d_N(f(x),f(x'))< \e$.  
\end{lemma}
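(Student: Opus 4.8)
The plan is to construct $\d_\e$ by smoothing a pointwise ``radius of continuity'' with a partition of unity, exploiting that a manifold is metrizable (cf.\ the metrizability fact above) and paracompact. The naive candidate---the supremum of all radii $r$ for which $d_M(x,x')<r$ forces $d_N(f(x),f(x'))<\e$---need not be continuous, since it can drop abruptly as $x$ moves; so a direct definition will not serve, and the partition of unity is precisely what manufactures continuity while retaining the defining property.

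First I would fix, for each point $p\in M$, a radius $\rho_p>0$ supplied by continuity of $f$ at $p$ so that $d_M(p,y)<\rho_p$ implies $d_N(f(p),f(y))<\e/2$. The triangle inequality then gives $\mathrm{diam}_N f\big(B(p,\rho_p)\big)<\e$: any two points of the ball $B(p,\rho_p)$ have images strictly within $\e$ of one another. The open balls $\{B(p,\rho_p/2)\}_{p\in M}$ cover $M$, and since $M$ is a manifold it admits a partition of unity $\{\psi_i\}$ subordinate to this cover; I record for each $i$ a center $p_i$ with $\mathrm{supp}\,\psi_i\subseteq B(p_i,\rho_{p_i}/2)$. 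I then define
$$\d_\e(x)\defeq \diss_i \psi_i(x)\,\frac{\rho_{p_i}}{2}.$$
By local finiteness this is locally a finite sum of continuous functions, hence continuous, and since $\sum_i\psi_i\equiv 1$ with every $\rho_{p_i}>0$, it is strictly positive.

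It remains to verify the defining implication, and this step carries the one genuinely nonobvious idea. Given $x$, the set $I_x$ of indices with $\psi_i(x)>0$ is finite and nonempty; I would choose $i_0\in I_x$ \emph{maximizing} $\rho_{p_{i_0}}$. Since $\rho_{p_i}\le \rho_{p_{i_0}}$ for all $i\in I_x$ and the weights sum to $1$, I obtain the upper bound $\d_\e(x)\le \rho_{p_{i_0}}/2$. Because $i_0\in I_x$, the point $x$ lies in $\mathrm{supp}\,\psi_{i_0}\subseteq B(p_{i_0},\rho_{p_{i_0}}/2)$, so $d_M(p_{i_0},x)<\rho_{p_{i_0}}/2$. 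Now if $d_M(x,x')<\d_\e(x)$, the triangle inequality yields
$$d_M(p_{i_0},x')\le d_M(p_{i_0},x)+d_M(x,x') < \frac{\rho_{p_{i_0}}}{2}+\frac{\rho_{p_{i_0}}}{2}=\rho_{p_{i_0}},$$
so both $x$ and $x'$ lie in $B(p_{i_0},\rho_{p_{i_0}})$, whence $d_N(f(x),f(x'))<\e$ by the diameter bound.

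The main obstacle is exactly the failure of continuity of the naive radius function; the resolution---averaging radii through a partition of unity and then selecting the active index of \emph{largest} radius, so that the weighted average is bounded above by that radius while the corresponding ball still contains $x$---is the crux of the argument. Everything else (the existence of the $\rho_p$ from continuity of $f$, the existence and local finiteness of a subordinate partition of unity on the manifold $M$, and the two triangle-inequality estimates) is routine.
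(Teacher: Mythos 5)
Your proof is correct and follows essentially the same route as the paper's: the same pointwise radius from continuity with the $\e/2$ trick, the same cover by half-radius balls with a subordinate partition of unity, the same weighted-average definition of $\d_\e$, and the same key step of selecting the active index of largest radius to bound the average and place both $x$ and $x'$ in a single controlled ball. The only cosmetic difference is that you package the final estimate as a diameter bound on $f(B(p,\rho_p))$ where the paper chains the two $\e/2$ inequalities explicitly.
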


We call attention to our dual use of $\d$ as both a metric on $\scM_X$ and a function $M\rightarrow \R^{\geq0}$.  In this proof, $\d$ and all its variants only refer to the latter function. 

\begin{proof}
	Since $f$ is continuous, there is \textit{a} function \begin{equation}\label{eq:tildeDelta} \tilde{\d}:M\rightarrow\R^{>0}\end{equation} (not necessarily continuous) such  that $d_M(x,x')<\tilde{\d}(x)$ implies that $d_N(f(x),f(x'))<\e/2$.  Let  $\scB = \left\{B_{\tilde{\d}(x_0)/2}(x_0):\, x_0\in M_0\right\}$ be a locally finite open cover of $M$ and  $\big\{\r_{x_0}:M\rightarrow[0,1]	:\, x_0\in M_0\big\}$ be a partition of unity subordinate to $\scB$, where $M_0\subset M$. 
	
	We  define function $\d_\e:M\rightarrow\R^{>0}$ by \begin{equation}\label{eq:DefDelta} \d_\e(\cdot) \defeq \frac{1}{2}\diss_{x_0\in M_0}\tilde{\d}(x_0)\r_{x_0}(\cdot),\end{equation}  which is smooth, and therefore continuous, as long since each $\r_{x_0}(\cdot)$ is. We must  verify that this $\d_\e$  satisfies the delta-epsilon constraint, namely that $d_N(f(x),f(x'))<\e$ whenever $d_M(x,x')<\d_\e(x)$. 
	
	Let $x\in M$ and set $$\begin{array}{ll} \ox& \defeq \displaystyle\arg\max_{x_0\in M_0}\left\{\tilde{\d}(x_0):\, \r_{x_0}(x)\neq 0\right\},\\\overline{\d}& \defeq \tilde{\d}(\ox)= \dismax_{x_0\in M_0}\big\{\tilde{\d}(x_0):\, \r_{x_0}(x)\neq 0\big\}.\end{array}$$  Observe that  $\d_\e(x)\leq \frac{1}{2}\overline{\d}$ (c.f.\ \eqref{eq:DefDelta}), and suppose   that $d_M(x,x')<\d_\e(x)$. Since $d_M$ is a metric, \begin{equation}\label{eq:epsilonWhatevs} d_M(x,x')\leq  d_M(x,\ox)+d_M(\ox,x') \leq d_M(x,\ox) + \big( d_M(\ox,x)+d_M(x,x')\big).\end{equation}   Since $\rho_{\ox} (x)\neq 0$ and $\mbox{supp}(\rho_{\ox} )\subseteq B_{\overline{\d}/2}(\ox)$, we see that $d_M(x,\ox)<\frac{1}{2}\overline{\d} = \frac{1}{2}\tilde{\d}(\ox)<\tilde{\d}(\ox)$ which implies (c.f.\ \eqref{eq:tildeDelta}) that \begin{equation}\label{eq:epsilon1} d_N(f(x),f(\ox))<\e/2\end{equation}  Similarly,  $\big( d_M(\ox,x)+d_M(x,x')\big)< \frac{1}{2}\overline{\d}+  \frac{1}{2}\overline{\d} = \overline{\d} = \tilde{\d}(\ox)$  which implies (c.f.\ second inequality of eq.\ \eqref{eq:epsilonWhatevs}) that \begin{equation}\label{eq:epsilon2} d_N(f(\ox),f(x'))< \e/2.\end{equation}  Inequalities eq.\ \eqref{eq:epsilon1} and eq.\ \eqref{eq:epsilon2}  together imply that $$d_N(f(x),f(x'))\leq d_N(f(x),f(\ox))+d_N(f(\ox),f(x'))< \e/2+\e/2 = \e,$$  and hence $\d_\e(x)$ satisfies the delta-epsilon constraint. 
\end{proof}
  
  \begin{proof}[Proof of \cref{lemma:pushForwardContinuousAtBounded}]  	 Fix $\hat{\e}>0$ and let $\ph\in \scM_X$ be bounded, so that $\dissup_{t\geq 0 } d_M(\ph(t),x)< \infty$ for any $x\in M$. We must show that there is $\hat{\d}>0$ for which $f_*\left(B_{\hat{\d}}(\ph)\right)\subseteq B_{\hat{\e}}(f_*\ph)$.  Let $\d_{\hat{\e}}:M\rightarrow \R^{>0}$ be a continuous function  satisfying delta-epsilon condition for $\hat{\e}$ (\cref{lemma:continuousDelta}), so that $$d_N\left(f\big(\ph(t)\big),f(x)\right)<\hat{\e}$$ whenever   $$d_M(\ph(t),x)<\d_{\hat{\e}}(\ph(t)).$$  The closure $\Lc_\ph\defeq  \overline{\left\{\ph(t):\, t\geq 0 \right\}}$ is compact and $\d_{\hat{\e}}(\cdot)$ is continuous, so the minimum $$\d(\ph(t^*)) = \dismin\big\{\d_{\hat{\e}}(x):\, x\in \Lc_\ph\big\}$$ is achieved for some $t^*\geq 0$; call it $$\hat{\d}\defeq \d_{\hat{\e}}(\ph(t^*)).$$  Then we readily conclude that $$f_*(B_{\hat{\d}}(\ph))\subseteq B_{\hat{\e}}(f_*\ph), $$	as required
  \end{proof}
    
  \begin{proof}[Proof of \cref{prop:pushStableToStable}]
	Suppose that $x_0\in M$ is stable and bounded.  To show that $f(x_0)\in M$ is stable, we must show that the solution map $\ph_Y:N\rightarrow \scM_Y$ is continuous at $f(x_0)$. Let $\Oc\subseteq \scM_Y$ be open containing $\ph_{Y,f(x_0)}$ and consider the commutative diagram $$\begin{tikzcd}[column sep = large, row sep= large]
	M\arrow[r,"\ph_X"]\arrow[d,"f"] & \scM_X \arrow[d,"f_*"]\\ N\ar[r,"\ph_Y"] & \scM_Y 
	\end{tikzcd}$$  
since $\ph_Y\circ f = f_*\circ \ph_X$, we have that $f^{-1}\circ \ph_Y^{-1} (\Oc) = \ph_X^{-1}\circ f_*^{-1}(\Oc)$ and therefore $$\ph_Y^{-1}(\Oc)\supseteq f \left(f^{-1}\big(\ph_Y^{-1}(\Oc)\big)\right)  = f\left(\ph_X^{-1}\big(f_*^{-1}(\Oc)\big)\right),$$ which is open because $f_*$ is continuous (\cref{lemma:pushForwardContinuousAtBounded}), $\ph_X$ is continuous at $x_0$ by assumption (\cref{def:stabilityContinuity}), and $f$ is open by assumption. \end{proof}

  \begin{example}
  	Consider the nonlinear dynamical system $(\R,\dx = -x^3)$ and map of systems $$\begin{tikzcd}
\R\arrow[d,"-x"]\arrow[r,"f"] & \R \arrow[d,"-x^3"]\\
T\R\arrow[r, "Tf"] & T\R, 	
\end{tikzcd}$$ where  $$f(x) = \frac{1}{\sqrt{\log\left(\frac{1}{x^2}\right)+1}}.$$ 

As  observed previously, $(\R,\dx = -x)$ is stable and $f$ is open at $x=1$. Therefore  \cref{prop:pushStableToStable} implies that $f(1)=1$ is stable in $(\R,\dx = -x^3)$.

Contrast with a traditional method:
  linearization at $(x,t) = (1,0)$ does not (cannot) prove stability   of system \begin{equation}\label{eq:pi314} \dx = \left\{\begin{array}{ll} -x ^3 - t  & \mbox{if}\;\; x\geq 0\\ x^3-t&\mbox{else.} \end{array}\right.\end{equation} This system is more appropriately represented in $\R^2$ with variables $(t,x)$,  $\dt = 1$, and $\dx$ given in \eqref{eq:pi314}, and no map $\R\rightarrow\R^2$ can be open. 
  \end{example}
  
  \begin{example}
  	Consider constant-time system $\dx = 1$, whose solution is given by $x(t) = x_0 + t$.  While obviously stable, we observe this fact as a result of \cref{theorem:pushStableToStable}.  Consider map of systems $$\begin{tikzcd}
	 \R^{>0}\arrow[r,"-\log(x)"]\arrow[d,"-x"] & \R\arrow[d,"1"]\\
T\R^{>0}\arrow[r,"-\frac{1}{x}"] & T\R,  
\end{tikzcd}$$ which is an open map.  Since $\dx = -x$ is stable, we conclude that $\dx = 1$ is as well. 
  \end{example}

\cleardoublepage\phantomsection
\addcontentsline{toc}{chapter}{References}
\bibliographystyle{plain}
\bibliography{main.bib}

\end{document}